\documentclass[reqno]{amsart}
\usepackage[utf8]{inputenc}
\usepackage{amsmath}
\usepackage{amsthm}
\usepackage{amssymb}
\usepackage{mathrsfs}
\usepackage{todonotes}
\usepackage[english]{babel}

\DeclareMathOperator{\supp}{supp}
\DeclareMathOperator{\opt}{opt}
\DeclareMathOperator{\Hess}{Hess}

\theoremstyle{plain}
\newtheorem{thm}{Theorem}[section]
\newtheorem{lem}[thm]{Lemma}
\newtheorem{prop}[thm]{Proposition}
\newtheorem{cor}[thm]{Corollary}
\newtheorem{notation}[thm]{Notation}

\theoremstyle{definition}
\newtheorem{Def}[thm]{Definition}

\theoremstyle{remark}
\newtheorem{exmp}[thm]{Example}
\newtheorem{rem}[thm]{Remark}

\usepackage[bottom=3cm, top=3cm, left=3.5cm, right=3.5cm]{geometry}

\usepackage{xcolor}
\usepackage[pdftex]{hyperref}

\usepackage{bbold}
\usepackage{upgreek}
\usepackage{mathtools}

\usepackage[shortlabels]{enumitem}

\usepackage{autonum} 

\title[LQ optimal transport and interpolation inequalities]{Linear quadratic optimal transport and interpolation inequalities}

\author{Luca Rizzi}
\address{Luca Rizzi: SISSA, via Bonomea, 265 - 34136 Trieste - Italy}
\email{\href{mailto:lrizzi@sissa.it}{lrizzi@sissa.it}}

\author{Alec J. A. Schiavoni Piazza}
\address{Alec J. A. Schiavoni Piazza: SISSA, via Bonomea, 265 - 34136 Trieste - Italy}
\email{\href{mailto:aschiavo@sissa.it}{aschiavo@sissa.it}}

\thanks{This project has received funding from the European Research Council (ERC) under the European Union's Horizon 2020 research and innovation programme (grant agreement GEOSUB, No. 945655). The authors also acknowledge the INdAM support.}
\date{\today}

\newcommand{\N}{\mathbb N}

\newcommand{\Adm}{{\rm Adm}}

\newcommand{\R}{\mathbb{R}}

\begin{document}
\begin{abstract}
This paper investigates the optimal transport problem within the framework of Linear Quadratic optimal control systems. We establish the well-posedness of the Monge problem and analyze the regularity of the resulting optimal transport map, extending the results obtained in \cite{hinpomriff2011} for non-negative costs. Furthermore, we study the displacement interpolation of measures and derive general interpolation inequalities for entropy functionals. Our analysis is motivated by the role of these systems as natural model spaces for comparison theory in sub-Riemannian geometry.
\end{abstract}

\maketitle
\tableofcontents

\section{Introduction}
		The theory of optimal transport, introduced by Monge in the $18^{th}$ century \cite{monge1781}, in recent decades found new links with differential geometry and dynamical systems. One of the most relevant ones is the Lott-Sturm-Villani synthetic formulation of Ricci curvature lower bounds for metric measure spaces \cite{lott2006riccicurvmmspaces,SturmGeomMmsI,SturmGeomMmsII}, realized via entropy interpolation inequalities along Wasserstein geodesics. Recently, Barilari-Rizzi-Mondino \cite{Barilari_2016,SRintineq,BakriEmeryandmodelspaces,UnifiedsyntheticRicci} generalized these to the sub-Riemannian setting, building on previous work for Heisenberg and corank $1$ Carnot groups \cite{balogh2018geometricinequalitiesheisenberggroups,balogh2019jacobiandeterminantinequalitycorank}. In this context, Linear Quadratic (LQ) optimal control problems \cite{agrachev2013control, Jurdjevic1996GeometricCT} emerged as the natural ``constant curvature'' models.
		
		The study of optimal transport in this framework was initiated in \cite{hinpomriff2011}, specifically for $-Q \geq 0$, corresponding to ``negative curvature" and costs bounded from below. The purpose of this work is to provide a comprehensive study of the problem: we establish the well-posedness of the Monge problem for general LQ costs, analyze displacement interpolations, and derive entropy inequalities, aiming to provide a reference for future research.
		
	\subsection{LQ problems}
	
	Let  $n,m\in \mathbb{N}$, $m\leq n$, and let $A,Q\in \mathbb{R}^{n \times n}$ and $B\in\mathbb{R}^{n \times m}$ be matrices, with $Q=Q^*$. The LQ optimal control problem $(A,B,Q,T)$ consists in minimizing the functional 
	\begin{equation}\label{eq:introLQcost}
		\mathcal{J}(u)=\int_{0}^{T}\left(|u|^2-x^*Qx\right)dt
	\end{equation}
	among all trajectories $x:[0,T]\to\mathbb{R}^n$ with fixed endpoints satisfying
	\begin{equation}
		\dot{x}=Ax+Bu,
	\end{equation}
	for some $u\in L^2([0,T],\mathbb{R}^m)$. Optimal trajectories are projections on $\mathbb{R}^n$ of integral curves of the quadratic Hamiltonian $\mathcal{H}:T^*\R^n (\simeq \mathbb{R}^{2n})\to \mathbb{R}$, given by 
	\begin{equation}\label{eq_ham_LQ_intro}
		\mathcal{H}(p,x)=\frac{1}{2}\left(p^*BB^*p+2p^*Ax+x^*Qx\right).
	\end{equation}
    A time $t>0$ is called a conjugate time if there exists an integral curve of \eqref{eq_ham_LQ_intro} whose projection $x(\cdot)$ onto $\mathbb{R}^n$ satisfies $x(0)=x(t)=0$. LQ problems exhibit a dichotomy: they either possess no conjugate times or an infinite, discrete set of them that does not accumulate at $t=0$  \cite{Agrachev_2014}. Let $t^*>0$ denote the first conjugate time. We observe that if $-Q \geq 0$ (the case treated in \cite{hinpomriff2011}), then $t^* = +\infty$ and the cost functional \eqref{eq:introLQcost} is non-negative.
    
    If $T<t^*$, for any pair of endpoints $x,y\in \R^n$, there is a unique optimal trajectory with these endpoints minimizing \eqref{eq:introLQcost}. Such minimum, called LQ cost function, is an homogeneous polynomial of degree $2$,
    of the form
	\begin{equation}
		c(x,y)=\frac{1}{2}\left(x^*Cx+2x^*Dy+y^*Ey\right),\qquad\forall x,y\in\mathbb{R}^n,
	\end{equation} 
	with $C,D,E\in\mathbb{R}^{n\times n}$ and $C,E$ symmetric. The matrices $C, D$ and $E$ are explicitly computable in terms of the Hamiltonian flow of the system (and of the backwards one), and depend smoothly on $T\in(0,t^*)$. Note that, unless $-Q\geq 0$, the LQ cost function is not bounded from below.

	\subsection{Optimal transport}
	Given a cost function $c:\mathcal{X}\times\mathcal{Y}\to\mathbb{R}$ between Polish spaces and probability measures $\mu\in P(\mathcal{X})$, $\nu\in P(\mathcal{Y})$, the Monge optimal transport problem \cite{monge1781} corresponds to minimizing the quantity
	\begin{equation}
		\int_{\mathcal{X}}c(x,T(x))d\mu(x),
	\end{equation}
	among all transport maps, i.e.\ measurable maps $T:\mathcal{X}\to\mathcal{Y}$, that satisfy $T_{\sharp}\mu=\nu$. A relaxed version of it, the Kantorovich formulation \cite{Kantorovich2006}, is the problem of minimizing 
	\begin{equation}
		\int_{\mathcal{X}\times\mathcal{Y}}c(x,y)d\alpha(x,y),
	\end{equation}
	among all transport plans, i.e.\ probability measures $\alpha\in P(\mathcal{X}\times\mathcal{Y})$, such that $(\pi_{\mathcal{X}})_\sharp\alpha=\mu$ and $(\pi_{\mathcal{Y}})_\sharp\alpha=\nu$. A fundational result in the field was the one of Brenier \cite{Brenier1991}, for the case of the Euclidean squared-distance cost, proving existence
	and uniquness of solutions to the Monge problem, for $\mu \ll \mathscr{L}^n$. Since then, many authors contributed to the study of existence and regularity of optimal plans and maps (see \cite{Villanioldandnew,Usersguide} and references therein). Under suitable assumptions on the measures, existence and uniqueness of optimal transport maps is known for:
	\begin{itemize}
		\item compact Riemannian manifolds without boundary and squared-distance cost \cite{McCann2001},
		\item infinite-dimensional Hilbert spaces and squared-distance cost \cite{ambrosioGradientFlowsMetric2008},
		\item Heisenberg groups \cite{AMBROSIO2004261} and a class of sub-Riemannian manifolds \cite{figalli2009masstransSRmflds}, with cost equal to the Carnot-Carathéodory squared-distance,
		\item cost functions induced by Tonelli Lagrangians on compact manifolds \cite{bernard2007optimalmasstransportationmather}, and the generalization to non-compact manifolds \cite{FathiFigalli},
		\item cost functions induced by sub-Riemannian-type Lagrangians bounded from below \cite{agrachev2007opttransnonholconst},
		\item metric measure spaces that satisfy suitable MCP-like and non-branching assumptions, with cost equal to a convex non-decreasing function of the distance \cite{cavalletti2014existenceuniquenessoptimaltransport,Cavalletti_2017},
        \item cost functions of LQ optimal control problems with $-Q\geq 0$ \cite{hinpomriff2011}.
	\end{itemize} 

We first extend the results in \cite{hinpomriff2011} to any LQ problem, and to a broader class of measures, possibly with non-compact support, resembling the sharp version of Brenier theorem in \cite{GigliInverseBrenier2009}. In addition, we characterize optimal maps in terms of the LQ exponential map applied to the gradient of a semi-convex function. The next result corresponds to Theorem \ref{Brenier_McCann}.
	
	\begin{thm}[Well-posedness]\label{Brenier_McCann_intro}  Let $(A,B,Q,T)$ be a LQ optimal control problem satisfying the Kalman condition, with $T<t^*$. Let $\mu,\nu\in P_2(\mathbb{R}^n)$, and assume that $\mu$ does not give mass to any $c-c$-hypersurface. Then, there exists a unique optimal transport plan $\alpha\in P_2(\mathbb{R}^n\times\mathbb{R}^n)$ between $\mu$ and $\nu$ for the LQ optimal transport problem, induced by a map $T:\mathbb{R}^n\to\mathbb{R}^n$.
        In addition, any Kantorovich potential $\psi$ associated with $\alpha$ is $\mu$-almost everywhere differentiable and the optimal transport map $T$ satisfies
        \begin{equation}
            T(x)=\exp_{x,T}(\nabla\psi(x)),\qquad\qquad \mu\text{-a.e. }x\in\mathbb{R}^n,
        \end{equation}
        where $\exp_{x,\tau}:T_x\mathbb{R}^n(\cong\mathbb{R}^n)\to\mathbb{R}^n$, for $\tau\in[0,T]$, denotes the LQ exponential map.

        If $\mu\ll\mathscr{L}^n$, then $\psi$ is $\mu$-almost everywhere twice differentiable. In particular, $T$ is $\mu$-almost everywhere differentiable.
	\end{thm}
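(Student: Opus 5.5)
The plan is to reduce everything to the quadratic structure of the cost and then run the classical Brenier--McCann/Gigli argument. First I record the algebraic facts about $c(x,y)=\tfrac12(x^*Cx+2x^*Dy+y^*Ey)$ for $T<t^*$. Under the Kalman condition the matrix $D$ is invertible, since $x\mapsto \nabla_x c(x,y)=Cx+Dy$ must be a bijection in $y$; this is exactly the statement that the map $p\mapsto \exp_{x,T}(p)$ is a diffeomorphism for $T<t^*$, with the sign convention $p=-\nabla_x c$. Hence $c$ satisfies the twist condition, and the unique minimizer joining $x$ to $y$ has initial covector $-\nabla_x c(x,y)$, which gives $y=\exp_{x,T}(-\nabla_x c(x,y))$. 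I would also note that $|c(x,y)|\le K(|x|^2+|y|^2)$. Since $\mu,\nu\in P_2$, the Kantorovich functional is therefore finite on every plan and continuous under $W_2$-convergence of plans with fixed marginals. Existence of an optimal $\alpha\in P_2(\mathbb{R}^n\times\mathbb{R}^n)$ then follows by tightness and uniform integrability of the $2$-moments, even though $c$ is unbounded below.

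Second, I obtain Kantorovich potentials. Because $c$ is bounded by an integrable function $a(x)+b(y)$ with $a\in L^1(\mu)$ and $b\in L^1(\nu)$, the standard theory (Villani, Theorem~5.10) gives that optimal plans are $c$-cyclically monotone and are supported on the $c$-subdifferential of a $c$-concave $\psi$. Writing $\psi(x)=\inf_y [c(x,y)-\phi(y)]$, I get $\psi(x)-\tfrac12 x^*Cx=\inf_y[x^*Dy+\tfrac12y^*Ey-\phi(y)]$, an infimum of affine functions of $x$. So $\psi-\tfrac12x^*Cx$ is concave, and $\psi$ is semiconcave (semi-convex after the sign change in the paper's convention) on the interior of the set where it is finite. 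This set has full $\mu$-measure, because it contains the projection of $\supp\alpha$.

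Third, I prove differentiability and the form of the map. Non-differentiability points of a concave function lie in a countable union of graphs of differences of convex functions, i.e.\ of $c$-$c$-hypersurfaces in the paper's sense (Zajíček/Gigli). I expect the one delicate point here, and it is a matter of definitions: the hypersurfaces must be taken in the linear coordinates adapted to $D$, and they must be images of DC graphs under the affine change induced by the quadratic part $C$. By hypothesis $\mu$ does not charge these sets, so $\psi$ is differentiable $\mu$-a.e. At such an $x$, any $y$ with $(x,y)\in\supp\alpha$ satisfies $\nabla\psi(x)=\nabla_xc(x,y)=Cx+Dy$. This determines $y$ uniquely, and via the first step it gives $y=\exp_{x,T}(\nabla\psi(x))$ up to the sign convention. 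Consequently $\alpha=(\mathrm{id},T)_\sharp\mu$. Uniqueness follows by the usual trick: if $\alpha_1$ and $\alpha_2$ are optimal, then $\tfrac12(\alpha_1+\alpha_2)$ is optimal and hence induced by a map, which forces $\alpha_1=\alpha_2$. The same computation shows that every potential $\psi$ associated with $\alpha$ has this property.

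Finally, if $\mu\ll\mathscr{L}^n$, Alexandrov's theorem applied to the concave function $\psi-\tfrac12x^*Cx$ gives twice differentiability Lebesgue-a.e., hence $\mu$-a.e. Then $T(x)=D^{-1}(\nabla\psi(x)-Cx)$ is differentiable $\mu$-a.e. in the Alexandrov sense. The main obstacle I anticipate is the first step: verifying carefully that $D$ is invertible precisely when $T<t^*$ and the Kalman condition holds, and matching $\nabla\psi$ with the initial covector of the exponential map. Everything else is the standard machinery once the cost is known to be a twisted quadratic form.
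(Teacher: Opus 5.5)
Your proposal is correct and follows essentially the paper's proof. The ingredients match: the potential is written as a quadratic plus a concave function (semi-convex in the paper's sign convention), Zaj\'i\v cek's theorem is combined with the non-charging hypothesis, the first-order condition at contact points is combined with Lemma~\ref{lem2} (the invertibility of $D=-R_3^{-1}(T)$ you anticipate as the main obstacle is already given by Theorem~\ref{comp_cost}), and Alexandrov's theorem handles the second-order claim; your midpoint-plan argument also makes explicit the uniqueness step that the paper only sketches via the disintegration. Two minor points: no change of coordinates adapted to $D$ or $C$ is needed, since $\psi$ differs from a concave function by a smooth quadratic in the original coordinates; and you should add, as the paper does, that $\mu$ is concentrated on the interior of the convex set where $\psi$ is finite, because the boundary of that set (or the hyperplane containing it, if its interior is empty) is covered by $c-c$-hypersurfaces.
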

    
    \begin{rem}
 Due to the quadratic nature of the LQ cost function, it is natural to choose $\mu,\nu \in P_2(\R^n)$, the Euclidean $2$-Wasserstein space.
    \end{rem}
\begin{rem}    
	Since $c-c$-hypersurfaces are $(n-1)$-rectifiable, Theorem \ref{Brenier_McCann_intro} applies to all $\mu\in P_2(\mathbb{R}^n)$ that do not give mass to $(n-1)$-rectifiable sets. 
    \end{rem}
	
    Following \cite{hinpomriff2011}, and relying on the regularity theory for the Euclidean Monge-Ampère equation \cite{CaffarelliRegConvMaps}, we obtain the following result, which corresponds to Theorem \ref{continuity_transport_map}.

	\begin{thm}[Continuity of the transport map]
		 Consider two compactly supported measures $\mu, \nu\in P_2(\mathbb{R}^n)$, with $\mu, \nu\ll\mathscr{L}^n$. Assume, in addition, that $\mu=f\mathscr{L}^n$, $\nu=g\mathscr{L}^n$ with $f$ and $g$ bounded from above and below on $\supp(\mu)$ and $\supp(\nu)$, respectively, and $\supp(\mu)$ has connected interior, whilst $\supp(\nu)$ is convex. Then, the unique optimal transport map $T:\mathbb{R}^n\to\mathbb{R}^n$ between $\mu$ and $\nu$ is $\beta$-Hölder continuous, for some $\beta\in(0,1)$.
	\end{thm}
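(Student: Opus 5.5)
The plan is to reduce to the Euclidean quadratic cost by a linear change of variables on the target, and then apply Caffarelli's regularity theory \cite{CaffarelliRegConvMaps}. Throughout, $(A,B,Q,T)$ is the LQ problem of Theorem \ref{Brenier_McCann_intro}, satisfying the Kalman condition with $T<t^*$. The cost is $c(x,y)=\tfrac12(x^*Cx+2x^*Dy+y^*Ey)$.

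\textbf{Step 1: $D$ is invertible.} This is derived, not assumed. By the first-order characterization of the LQ cost, $\nabla_x c(x,y)=Cx+Dy=-p_0(x,y)$, where $p_0$ is the initial covector of the unique optimal extremal joining $x$ to $y$ in time $T$. Since $T<t^*$, the map $p\mapsto \exp_{x,T}(p)$ is an affine bijection of $\mathbb{R}^n$. This is the same fact that gives uniqueness of minimizers and the representation $T(x)=\exp_{x,T}(\nabla\psi(x))$ in Theorem \ref{Brenier_McCann_intro}. Hence $y\mapsto p_0(x,y)$ is an affine bijection, and its linear part is $-D$. So $D$ is invertible.

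\textbf{Step 2: reduce to a bilinear cost.} The measures are compactly supported, so $\int x^*Cx\,d\mu$ and $\int y^*Ey\,d\nu$ are finite. They are also the same for every transport plan $\alpha$ between $\mu$ and $\nu$. Therefore the optimal plans for $c$ coincide with those for $x^*Dy$. Set $S(y):=-Dy$, a linear isomorphism. Writing $z=S(y)$, we have $x^*Dy=-x\cdot z=\tfrac12|x-z|^2-\tfrac12|x|^2-\tfrac12|z|^2$. Consequently, $\alpha$ is optimal for $c$ if and only if $(\mathrm{id}\times S)_\sharp\alpha$ is optimal for the Euclidean quadratic cost between $\mu$ and $\tilde\nu:=S_\sharp\nu$.

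The new target has density $\tilde g(z)=g(S^{-1}z)/|\det D|$, which is bounded above and below on $\supp\tilde\nu=S(\supp\nu)$. This support is convex, being the linear image of a convex set. By Brenier's theorem, the Euclidean optimal map is $\nabla\varphi$ with $\varphi$ convex. By uniqueness in Theorem \ref{Brenier_McCann_intro}, the LQ optimal map is therefore $T=S^{-1}\circ\nabla\varphi=-D^{-1}\nabla\varphi$ $\mu$-a.e.

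\textbf{Step 3: Caffarelli regularity.} With $f$ and $\tilde g$ bounded away from $0$ and $\infty$ on their supports, the target support convex, and the source support having connected interior, Caffarelli's theorem gives the following. The Brenier potential $\varphi$ is a strictly convex Alexandrov solution of $\det D^2\varphi=f/\tilde g(\nabla\varphi)$ in the interior of $\supp\mu$, and it belongs to $C^{1,\beta}$ there for some $\beta\in(0,1)$. Hence $\nabla\varphi$ is $\beta$-Hölder. Composing with the linear map $-D^{-1}$ preserves Hölder continuity, with the constant multiplied by $\|D^{-1}\|$. This gives the claim for $T$ on $\operatorname{int}\supp\mu$, i.e.\ on a set of full $\mu$-measure.

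\textbf{Main obstacle.} The delicate point is Step 3, specifically checking Caffarelli's hypotheses. Convexity of the target is what makes the Brenier solution an Alexandrov solution, and connectedness of the interior of the source rules out gluing of different potentials on different components. I would first verify that both hypotheses are stable under the linear change $S$, which they are: $S$ is linear and acts only on the target. I would then invoke the theorem in the form stated in \cite{CaffarelliRegConvMaps}, as done in \cite{hinpomriff2011}. Step 1 is the only genuinely LQ-specific ingredient. It is exactly where the Kalman condition and $T<t^*$ enter.
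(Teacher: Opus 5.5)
Your argument is correct and is essentially the paper's proof. Since $D=-R_3^{-1}(T)$, your $S=-D$ is exactly $F^{-1}$ with $F(z)=R_3(T)z$, so your $\tilde\nu$ is the paper's $\eta=F^{-1}_{\sharp}\nu$ and $T=R_3(T)\nabla\varphi$; Caffarelli's theorem is then applied in the same way. The only cosmetic difference is how the Euclidean optimal map is identified: you note that the terms $x^*Cx$ and $y^*Ey$ integrate to constants independent of the plan, whereas the paper splits the Kantorovich potential as $\psi=-\tfrac12 x^*Cx+\phi$ with $\phi$ convex and reads off $T=R_3(T)\nabla\phi$ from the exponential-map representation of Theorem \ref{Brenier_McCann}.
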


    We refer to Remark \ref{rmk:highereg} for higher-order regularity results.

    \subsection{Displacement interpolations}
    
	Another classical topic of interest is the dynamical version of optimal transport. To illustrate the problem, consider a Riemannian manifold $(M,d_g)$ with volume measure $m_g$, and let $\mu,\nu\in P_2(M)$ be compactly supported measures, with $\mu\ll m_g$. Then, by McCann's generalization of Brenier's Theorem \cite{McCann2001}, there exists a unique optimal transport map $T:M\to M$ such that $T_{\sharp}\mu=\nu$, solving the Monge problem:
	\begin{equation}
		\int_{M}d_g^2(x,T(x))d\mu(x)=\inf_{S_{\sharp}\mu=\nu}\int_{M}d_g^2(x,S(x))d\mu.
	\end{equation}
	Moreover, for $\mu$-almost every $x\in M$, there exists a unique constant-speed geodesic $[0,1]\ni \tau\mapsto T_{\tau}(x)$ such that $T_0(x)=x$ and $T_1(x)=T(x)$. The map $T_{\tau}:M\mapsto M$ induces the ``displacement interpolation" $\mu_{\tau}=T_{\tau\sharp}\mu$, a curve of measures between $\mu$ and $\nu$. More precisely, $(\mu_{\tau})_{0\leq \tau\leq 1}$ is the unique geodesic between $\mu$ and $\nu$ in the Wasserstein space $(P_2(M),W_2)$. Moreover, $\mu_\tau\ll m_g$, for every $\tau\in [0,1)$. These results were established in a number of settings, including:
	\begin{itemize}
    \item Heisenberg groups \cite{AMBROSIO2004261,FiJu2008} and ideal sub-Riemannian manifolds \cite{figalli2009masstransSRmflds},
	\item coercive Lagrangians on Polish spaces \cite[Chapter 7 and references therein]{Villanioldandnew}.
	\end{itemize}
	Concerning LQ problems, their cost function may not be bounded from below, which is a standard hypothesis in the literature. Furthermore, the Lagrangian is not Tonelli-type nor  the corresponding cost functional \eqref{eq:introLQcost} coercive. We fill this gap, establishing a theory of displacement interpolations for LQ optimal transport. The next result corresponds to Theorem \ref{dyn_opt_coup_characterization}.
	\begin{thm}[Displacement interpolations]\label{thm:displacementinterpolation_intro}
		Let $(A,B,Q,T)$ be a LQ optimal control problem satisfying the Kalman condition, with $T<t^*$. Let $\mu, \nu\in P_2(\mathbb{R}^n)$. There is a bijection between:
        \begin{itemize}
            \item optimal plans $\alpha\in P_2(\mathbb{R}^n\times\mathbb{R}^n)$,
            \item optimal dynamical plans $\Pi\in P(\mathcal{C}([0,T],\mathbb{R}^n))$,
            \item displacement interpolations $(\mu_\tau)\in \mathcal{C}([0,T],P_2(\mathbb{R}^n))$.
        \end{itemize}
        In particular, for every optimal dynamical plan $\Pi\in P(\mathcal{C}([0,T],\mathbb{R}^n))$, the corresponding displacement interpolation is given by
		\begin{equation}
			e_{\tau\sharp}\Pi=\mu_\tau,\qquad \forall \tau\in[0,T].
		\end{equation}
		Moreover, for $0\leq t<s\leq T$, we define 
		\begin{equation}
			\mathbb{A}^{t,s}(\eta_{\tau}):=\sup\limits_{\{N\in\mathbb{N}\}}\sup\limits_{\{t=\tau_0<\dots<\tau_N=s\}}\sum_{i=1}^{N}C^{\tau_{i-1},\tau_i}\left(\eta_{\tau_{i-1}},\eta_{\tau_i}\right),\qquad(\eta_{\tau})\in\mathcal{C}([t,s],P_2(\mathbb{R}^n)),
		\end{equation} 
          where $C^{t,s}(\mu,\nu)$ is the Kantorovich transport cost, associated with the pointwise intermediate cost $c^{t,s}:\R^n\times \R^n \to \R$.
		Then, the family of functionals $\{\mathbb{A}^{t,s}\}_{0\leq t<s\leq T}$ on $P_2(\mathbb{R}^n)$ is a Lagrangian action. The corresponding family of action-minimizing costs coincides with the Kantorovich transport costs $\{C^{t,s}\}_{0\leq t<s\leq T}$. Furthermore, $(\mu_\tau)$ is a displacement interpolation between $\mu$ and $\nu$ if and only if it is an action minimizer.
	\end{thm}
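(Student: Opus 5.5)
The plan is to follow the abstract scheme of Villani's \emph{Old and New}, Chapter 7: first show that the LQ pointwise costs $c^{t,s}$, for $0\le t<s\le T$, come from a Lagrangian action on curves in $\mathbb{R}^n$, and then lift everything to measures. Since Tonelli/coercivity hypotheses fail, each step where they are normally used will be replaced by the explicit quadratic structure of $c^{t,s}$ and the fact that $s-t\le T<t^*$.

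\textbf{Step 1 (pointwise level).} Define the action $\mathcal{A}^{t,s}(\gamma)=\int_t^s(|u|^2-\gamma^*Q\gamma)\,d\tau$ for admissible curves, and $+\infty$ otherwise. Because $s-t<t^*$, the value $c^{t,s}(x,y)$ is attained by a unique minimizer $\gamma_{x,y}^{t,s}$, obtained from the Hamiltonian flow of \eqref{eq_ham_LQ_intro}. This minimizer depends linearly, hence continuously, on $(x,y)$, uniformly in $\tau$. Using these facts I will verify the axioms of a Lagrangian action: additivity, lower semicontinuity, attainment, and the restriction property. The point to stress is that, instead of coercivity, one uses that the minimizer map $(x,y)\mapsto\gamma_{x,y}$ is a continuous (linear) selection. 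This makes the measurable selection needed later trivial. It also gives the quadratic bound $|c^{t,s}(x,y)|\le C(|x|^2+|y|^2)$, so all costs are integrable on $P_2$.

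\textbf{Step 2 (the bijection).} Given an optimal plan $\alpha$, set $\Pi=(\Gamma)_\sharp\alpha$, where $\Gamma(x,y)=\gamma_{x,y}^{0,T}$, and set $\mu_\tau=e_{\tau\sharp}\Pi$. Conversely, $\Pi\mapsto(e_0,e_T)_\sharp\Pi$ recovers $\alpha$. The second map inverts the first because minimizers are unique, so any optimal dynamical plan must be concentrated on the graph of $\Gamma$. Curves $(\mu_\tau)$ should be continuous in $W_2$. This follows from linearity: $|\gamma_{x,y}(\tau)-\gamma_{x,y}(\tau')|\le \omega(|\tau-\tau'|)(|x|+|y|)$ together with second moments. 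Injectivity of $\Pi\mapsto(\mu_\tau)$ is the delicate direction. Here I would show that $(e_t,e_s)_\sharp\Pi$ is optimal for $C^{t,s}$, by the standard restriction argument. Then I would show that a displacement interpolation determines $\alpha$: the plan is to glue the optimal couplings between consecutive times and use the equality case, combined with uniqueness of minimizers, to show that the glued measure is concentrated on LQ trajectories.

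\textbf{Step 3 (measure level).} For the measure action $\mathbb{A}^{t,s}$, the inequality $C^{t,s}\le \mathbb{A}^{t,s}$ follows from the triangle-type inequality $C^{t,r}+C^{r,s}\ge C^{t,s}$, proved by gluing. Equality along $e_{\tau\sharp}\Pi$ follows from optimality of the restrictions. The remaining axioms for $\mathbb{A}^{t,s}$ are lower semicontinuity and attainment. These I would deduce from lower semicontinuity of $C^{t,s}$ under $W_2$ convergence, which requires uniform integrability of the quadratic, sign-indefinite cost. That is exactly why we work in $P_2$ and with $W_2$-convergence rather than weak convergence.

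\textbf{Main obstacle.} I expect the main difficulty to be the step ``action minimizer $\Rightarrow$ displacement interpolation'', equivalently the compactness and lower semicontinuity of $C^{t,s}$ when the cost is unbounded below. My first attempt will be to add the null-Lagrangian correction $\phi(y)-\phi(x)$ with a quadratic $\phi$. This does not change optimal plans, and I expect it to make $c^{t,s}$ bounded below on short intervals. The corrected costs can then be treated as in the classical theory, and the result extended to all of $[0,T]$ by concatenation.
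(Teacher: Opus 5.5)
The correspondence $\alpha\leftrightarrow\Pi$ through the continuous selection $(x,y)\mapsto\gamma_{x,y}$, the $W_2$-continuity of $\tau\mapsto e_{\tau\sharp}\Pi$, and your Step 3 are consistent with the paper.

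\textbf{The gap: injectivity of $\Pi\mapsto(e_{\tau\sharp}\Pi)_\tau$.} This is the step you flag yourself. Gluing optimal couplings between consecutive times and using the equality case of $c^{\tau_1,\tau_3}\le c^{\tau_1,\tau_2}+c^{\tau_2,\tau_3}$ does show that the glued measure lives on LQ minimizers. But that only \emph{produces} an optimal plan whose interpolation agrees with $(\mu_\tau)$ at the chosen times. It does not show this plan is the only one, since the couplings you glue are not known to be unique.

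Uniqueness of minimizers says that $(\gamma(0),\gamma(T))$ determines $\gamma$. What you need is that $(\gamma(0),\gamma(\tau))$, or $(\gamma(\tau),\gamma(T))$, does. This is non-branching. For $0\le t<s\le T$, the map $(e_t,e_s)$ is injective, with continuous inverse, on the set $\Gamma$ of LQ-optimal curves on $[0,T]$:
\begin{itemize}
\item the restriction of $\gamma$ to $[t,s]$ is the unique minimizer between its endpoints;
\item its Hamiltonian lift at time $t$ is $(\exp^{-1}_{\gamma(t),s-t}(\gamma(s)),\gamma(t))$, because $R_3(s-t)$ is invertible;
\item the Hamiltonian flow then determines $\gamma$ on all of $[0,T]$.
\end{itemize}

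With this, suppose $\Pi_1,\Pi_2$ give the same $(\mu_\tau)$. Fix $\tau\in(0,T)$ and glue the optimal couplings $(e_0,e_\tau)_\sharp\Pi_1$ and $(e_\tau,e_T)_\sharp\Pi_2$ along $\mu_\tau$. The equality case yields an optimal dynamical plan $\Pi_3$ with $(e_0,e_\tau)_\sharp\Pi_3=(e_0,e_\tau)_\sharp\Pi_1$ and $(e_\tau,e_T)_\sharp\Pi_3=(e_\tau,e_T)_\sharp\Pi_2$. Injectivity of these two maps on $\Gamma$ then gives $\Pi_1=\Pi_3=\Pi_2$.

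The same comparison shows that a lift built by gluing at one intermediate time has the correct marginal at every other time. This settles surjectivity without the passage from finitely many times to all times, which your sketch leaves implicit. The paper instead defers that part to Villani's dyadic approximation scheme, using $P_2$ to keep all $C^{t,s}$ finite.

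\textbf{Your ``main obstacle'' is misplaced, and the proposed fix fails as stated.} Once every $C^{t,s}$ is finite on $P_2$, ``action minimizer $\Rightarrow$ displacement interpolation'' is elementary. If $\mathbb{A}^{0,T}(\mu_\tau)=C^{0,T}(\mu_0,\mu_T)$, test the supremum on the partition $\{0,\tau_1,\tau_2,\tau_3,T\}$. Bound $C^{0,T}$ by the triangle inequality through $\{0,\tau_1,\tau_3,T\}$. Subtracting gives $C^{\tau_1,\tau_2}+C^{\tau_2,\tau_3}\le C^{\tau_1,\tau_3}$. Continuity of $C^{t,s}$ under $W_2$ follows from stability plus $|c^{t,s}|\le\Lambda(|x|^2+|y|^2)$, as you already say in Step 3.

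A correction $\phi(y)-\phi(x)$ with a single quadratic $\phi$ cannot make $c^{t,s}$ bounded below. It leaves the diagonal $c^{t,s}(x,x)$ unchanged, and the diagonal can be negative on intervals of every length. For $n=1$, $A=0$, $B=1$, $Q=1$ one has $c^{0,h}(x,y)=\frac{(x^2+y^2)\cos h-2xy}{2\sin h}$. Hence $c^{0,h}(x,x)=-x^2\tan(h/2)$, which by homogeneity is unbounded below for every $h\in(0,\pi)$. Concatenation cannot repair this.

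What does work is a time-dependent gauge $\phi(\tau,x)=\frac12x^*P(\tau)x$, with $P$ a symmetric solution of $\dot P+PA+A^*P-PBB^*P-Q=0$ on all of $[0,T]$. For example, take $P(\tau)=-R_1(\tau+\epsilon)R_3^{-1}(\tau+\epsilon)$ with $T+\epsilon<t^*$. Then $\mathcal{A}^{t,s}(\gamma)+\phi(s,\gamma(s))-\phi(t,\gamma(t))=\frac12\int_t^s|u+B^*P\gamma|^2\,d\tau\ge0$. The boundary terms $\int\phi(\tau,\cdot)\,d\mu_\tau$ are finite on $P_2$ and telescope. So optimal plans, displacement interpolations and action minimizers are unchanged, and Villani's coercive theory applies. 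This is a legitimate alternative to the paper's ``work in $P_2$'' caveat, but it still does not give injectivity, which needs the non-branching argument above.
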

    
	If $\mu\ll\mathscr{L}^n$, the displacement interpolation inherits the absolute continuity at intermediate times. The next result corresponds to Theorem \ref{absolute_continuity-dyn_opt_coupl}.
	
	\begin{thm}[Absolute continuity]
  Let $(A,B,Q,T)$ be a LQ optimal control problem satisfying the Kalman condition, with $T<t^*$. Let $\mu, \nu\in P_2(\mathbb{R}^n)$, with $\mu\ll\mathscr{L}^n$, and let $(\mu_\tau)=(e_{\tau\sharp}\Pi)\in\mathcal{C}([0,T],P_2(\mathbb{R}^n))$ be the unique displacement interpolation joining them. Then, $\mu_\tau\ll\mathscr{L}^n$, for every $\tau\in[0,T)$. 

        Additionally, let $\psi$ be a Kantorovich potential from $\mu$ to $\nu$ for the cost $c^{0,T}$. Then, the optimal transport map $T_\tau:\mathbb{R}^n\to\mathbb{R}^n$, from $\mu$ to $\mu_\tau$ for the cost $c^{0,\tau}$ is given by
        \begin{equation}
			T_\tau(x)=\exp_{x,\tau}(\nabla\psi(x)),\qquad\mu\text{-almost every }x\in\mathbb{R}^n, \quad\forall \tau\in[0,T].
		\end{equation}
        In particular, $T_\tau$ is $\mu$-almost everywhere differentiable.
	\end{thm}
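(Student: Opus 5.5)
We prove the second claim first, namely that $T_\tau$ has the stated form and is optimal, and then derive absolute continuity from it. By Theorem \ref{Brenier_McCann_intro}, since $\mu\ll\mathscr{L}^n$ (so $\mu$ gives no mass to $c$-$c$-hypersurfaces), there is a unique optimal plan $\alpha=(\mathrm{id},T)_\sharp\mu$ with $T(x)=\exp_{x,T}(\nabla\psi(x))$ for $\mu$-a.e.\ $x$. Moreover $\psi$ is twice differentiable $\mu$-a.e.

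By Theorem \ref{thm:displacementinterpolation_intro}, the unique displacement interpolation is $\mu_\tau=e_{\tau\sharp}\Pi$, where $\Pi$ is concentrated on the LQ-optimal trajectories joining $x$ to $T(x)$. Since $T<t^*$, each such trajectory is unique, and it is the projection of the Hamiltonian flow starting from the covector determined by $\nabla\psi(x)$. Its position at time $\tau$ is therefore $\exp_{x,\tau}(\nabla\psi(x))$, which gives $\mu_\tau=(T_\tau)_\sharp\mu$ with $T_\tau$ as stated. Optimality of $T_\tau$ for $c^{0,\tau}$ follows from the action-minimizing characterization: restrictions of optimal dynamical plans are optimal, because $C^{0,T}\le C^{0,\tau}+C^{\tau,T}$ holds with equality along minimizers. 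Differentiability of $T_\tau$ comes from the smooth dependence of $\exp_{x,\tau}(p)$ on $(x,p)$ together with the $\mu$-a.e.\ twice differentiability of $\psi$.

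For absolute continuity at $\tau\in(0,T)$ we follow the standard route: show $T_\tau$ is injective on a set of full $\mu$-measure, and that $\det dT_\tau(x)\neq 0$ for $\mu$-a.e.\ $x$. Then the area formula, applied on the sets where $\psi$ is Alexandrov twice differentiable, yields $\mu_\tau\ll\mathscr{L}^n$.

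\emph{Injectivity.} Suppose two optimal trajectories $\gamma_1\neq\gamma_2$ in $\supp\Pi$ met at time $\tau\in(0,T)$. We would exchange their pieces and use $c$-cyclical monotonicity of $\alpha$ to get an inequality; strictness would come from uniqueness of minimizers for $T<t^*$, since a swapped curve would be an optimal trajectory with a corner, hence not a Hamiltonian projection. This mirrors the Riemannian argument, and it is legitimate here because the costs $c^{0,\tau}$, $c^{\tau,T}$ are finite quadratic polynomials. The resulting no-crossing property gives an inverse $T_\tau^{-1}$ on $\supp\mu_\tau$. I expect the clean way to do this is to show that $x\mapsto T_\tau(x)$ is injective on the set where the optimal plan is a graph, and to control the inverse via a Lipschitz-type estimate.

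\emph{Nondegeneracy of the Jacobian — the main obstacle.} At a point of twice differentiability,
\[
dT_\tau(x)=\partial_x\exp_{x,\tau}(p)+\partial_p\exp_{x,\tau}(p)\,\Hess\psi(x),\qquad p=\nabla\psi(x).
\]
Because the system is linear, both partial derivatives are constant blocks of the Hamiltonian flow matrix, say $M_1(\tau)$ and $M_2(\tau)$. By Kalman and $\tau<t^*$, $M_2(\tau)$ is invertible. Semiconvexity and optimality give a matrix inequality: $-\Hess\psi(x)$ is bounded by the Hessian in $x$ of $c^{0,T}(\cdot,T(x))$. I would rewrite $dT_\tau=M_2(\tau)\bigl(M_2(\tau)^{-1}M_1(\tau)+\Hess\psi\bigr)$ and compare $M_2(\tau)^{-1}M_1(\tau)$, which is essentially $-\partial_x^2c^{0,\tau}$, with $M_2(T)^{-1}M_1(T)$. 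The needed monotonicity in $\tau$ of this symmetric matrix family should follow from a Riccati equation, using that there are no conjugate times in $(0,T]$. This strict ordering gives $M_2(\tau)^{-1}M_1(\tau)+\Hess\psi>0$ for $\tau<T$, hence invertibility. Verifying this monotonicity carefully, rather than just the structure, is the crux of the argument.
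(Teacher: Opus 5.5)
Your argument is correct, but it takes a different route from the paper's proof.

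\textbf{The paper's route (first order).} It places the semi-convex $f_\tau(z)=\sup_y\{\psi^c(y)-c^{\tau,T}(z,y)\}$ below the semi-concave $g_\tau(z)=\inf_x\{\psi(x)+c^{0,\tau}(x,z)\}$. The two touch on $T_\tau(E)$, so Theorem \ref{contact_semicon_pair} makes $\nabla g_\tau$ locally Lipschitz on the contact set. The first-order condition then gives an explicit locally Lipschitz inverse $S_\tau(z)=\widetilde{\exp}_{z,\tau}(-\nabla g_\tau(z))$ of $T_\tau$. Absolute continuity follows because $S_\tau$ maps Lebesgue-null sets to Lebesgue-null, hence $\mu$-null, sets. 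The paper reads off optimality of $T_\tau$ from the pair $(g_\tau^{c-},g_\tau)$; your restriction argument for $\Pi$ is equally valid.

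\textbf{Your route (second order).} What you need is exactly the content of the paper's later Main Jacobian estimate (Lemmas \ref{S_matrix} and \ref{positivity_Jacobi}). Set $S(\sigma)=R_3^{-1}(\sigma)R_4(\sigma)$; this equals $+\nabla^2_x c^{0,\sigma}$, not minus as you wrote. Then $\nabla T_\tau(x)=R_3(\tau)\bigl(\Hess\psi(x)+S(\tau)\bigr)$ and $\Hess\psi(x)+S(T)\geq 0$, so everything hinges on $S(\tau)-S(T)>0$.

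\textbf{Closing the crux.} The Hamiltonian/Riccati computation alone only gives $\dot S=-Z_V^*BB^*Z_V\leq 0$, as in Lemma \ref{S_matrix}, hence $S(\tau)\geq S(T)$. Strictness needs an extra argument, and the natural one uses the absence of conjugate times. Suppose $S(\tau)v=S(T)v$. Then the Hamiltonian trajectory with initial datum $(S(T)v,-v)$ satisfies $x(\tau)=x(T)=0$. Since $0<T-\tau<t^*$, it must vanish identically, so $v=0$. The paper packages the same fact as $S(\tau)-S(T)=R_3^{-1}(\tau)R_3(\tau-T)R_3^{-1}(-T)$.

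\textbf{Injectivity is not needed.} Once $\det\nabla T_\tau\neq 0$ holds $\mu$-a.e., apply the area formula on the countably many Lipschitz pieces of $T_\tau$ (available at Alexandrov points). This gives $\mu(T_\tau^{-1}(N))=0$ for every Lebesgue-null $N$, whatever the multiplicity. Your no-crossing step matters only for the density formula used in Theorem \ref{density_interpolation_inequality}.

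\textbf{Trade-off.} The paper's Fathi--Figalli-type argument uses only first-order information and general semiconcavity tools, does not rely on explicit flow formulas, and produces a Lipschitz inverse. Yours exploits the linear structure and directly yields the positivity of $\det\nabla T_\tau$ that the interpolation inequalities need anyway. In effect it merges Theorem \ref{absolute_continuity-dyn_opt_coupl} with Theorem \ref{main_jacobian_estimate}.
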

    
    \subsection{Interpolation inequalities}
    
	Interpolation inequalities connect optimal transport to the study of functional inequalities and geometric analysis. In the Euclidean space $\mathbb{R}^n$, typical examples are the Borell-Brascamp-Lieb inequality, and its geometric counterpart, the Brunn-Minkowski inequality. A geodesic version of these inequalities has been proven in the Riemannian setting \cite{ARiemintineq,McCann1997ACP}, provided that the geometry is taken into account by means of appropriate distortion coefficients. To illustrate these results, let $(M,d_g)$ be a Riemannian manifold of dimension $n$ and let $Z_\tau(x,y)$ be the set of $\tau$-intermediate points of geodesics between $x$ and $y$ in $M$. Then, we can define the distortion coefficients as
	\begin{equation}
		\beta_\tau(x,y):=\limsup\limits_{r\to 0}\frac{m_g(Z_\tau(x,B_r(y)))}{m_g(B_r(y))},
	\end{equation}
	where $B_r(y)$ is the Riemannian ball of center $y$ and radius $r$. If $\mu,\nu\in P_2(M)$ are compactly supported measures, with $\mu,\nu\ll m_g$, and $(\mu_\tau)=(\rho_\tau m_g)$ is the unique displacement interpolation joining $\mu=\mu_0$ and $\nu=\mu_1$, then we can estimate the density $1/\rho_\tau$ in terms of the initial and final densities. More precisely, if $\mu_\tau=T_{\tau\sharp}\mu$ for $0\leq \tau \leq 1$, then 
	\begin{equation}\label{interpolation_intro}
		\frac{1}{\rho_\tau(T_\tau(x))^{1/n}}\geq\frac{\beta_{1-\tau}(T(x),x)^{1/n}}{\rho_0(x)^{1/n}}+\frac{\beta_{\tau}(x,T(x))^{1/n}}{\rho_1(T(x))^{1/n}},
	\end{equation}
	for $\mu$-almost every $x\in M$.
	
	If $\mathrm{Ric}_g(M)\geq k\cdot g$, for some $k\in\mathbb{R}$, then the distortion coefficients $\beta_t$ are controlled from below by those of model spaces of constant Ricci curvature equal to $k$ and dimension $n$. Explicitly,
		\begin{equation}\label{reference_coefficients}
		\beta_{\tau}(x,y)\geq \beta^{k,n}_{\tau}(x,y):=\begin{cases}
			\tau\left(\frac{\sin(\tau\theta)}{\sin(\theta)}\right)^{n-1}&\text{ for }k>0,\\
			\tau^n&\text{ for }k=0,\\
			\tau\left(\frac{\sinh(\tau\theta)}{\sinh(\theta)}\right)^{n-1}&\text{ for }k<0,
		\end{cases}
	\end{equation}
	where
	\begin{equation}
		\theta=\sqrt{\frac{|k|}{n-1}}d_g(x,y).
	\end{equation}

    Inequality \eqref{interpolation_intro} is lifted along displacement interpolations in Wasserstein spaces \cite{ARiemintineq}, yielding the so-called curvature-dimension conditions. This approach allows for the extension of Ricci curvature bounds to metric measure spaces \cite{SturmGeomMmsI,SturmGeomMmsII,lott2006riccicurvmmspaces}. Such inequalities, and their equivalence to suitably defined lower Ricci curvature bounds, were investigated in \cite{rotem} for costs induced by Tonelli Lagrangians with free end-time; in the Riemannian case, this corresponds to analyzing the $L^1$ optimal transport cost, rather than the standard $L^2$ formulation.

Recently, a series of works by Barilari, Rizzi, and Mondino \cite{Barilari_2016,BakriEmeryandmodelspaces,UnifiedsyntheticRicci} established analogous results for a class of sub-Riemannian structures. In these studies, LQ optimal control problems emerge as the canonical model spaces for sub-Riemannian comparison theory. This provides the primary motivation for our study of interpolation inequalities directly within the LQ framework. The following result corresponds to Theorem \ref{density_interpolation_inequality}.

	\begin{thm}[Interpolation inequalities]\label{5}
		Let $(A,B,Q,T)$ be a LQ problem satisfying the Kalman condition, with $T<t^*$. Consider $\mu,\nu\in P_2(\mathbb{R}^n)$, with $\mu, \nu \ll\mathscr{L}^n$, and let $(\mu_\tau)=(e_{\tau\sharp}\Pi)$ be the unique displacement interpolation between them. Let $T_\tau$ be the optimal transport map from $\mu$ to $\mu_\tau$, for the cost $c^{0,\tau}$. Then, for $\mu$-almost every $x\in\mathbb{R}^n$ and every $\tau\in[0,T]$, we have
			\begin{equation}\label{density_interpolation_inequality_formula_intro}
				\frac{1}{\rho_\tau(T_\tau(x))^{1/n}}\geq\frac{\beta_{T-\tau}^{1/n}}{\rho_0(x)^{1/n}} +\frac{\beta_{\tau}^{1/n}}{\rho_T(T(x))^{1/n}},
			\end{equation}
			where $\beta_s=\beta_s^{(A,B,Q,T)}$ is the distortion coefficient of the LQ problem (see Definition \ref{def:distcoeff}) and $\mu_\tau=\rho_\tau\mathscr{L}^n$. If $\nu$ is not absolutely continuous, \eqref{density_interpolation_inequality_formula_intro} holds for $\tau\in[0,T)$ and omitting the second term in the right hand side.
	\end{thm}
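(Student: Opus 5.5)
We follow the Riemannian strategy of Cordero-Erausquin--McCann--Schmuckenschläger: a Jacobian equation along the interpolation, plus a concavity property of $\det^{1/n}$ applied to a Jacobi-field representation of $dT_\tau$.

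\emph{Step 1: Jacobian equation.} By the absolute continuity theorem above, for $\mu$-a.e.\ $x$ the map $T_\tau(x)=\exp_{x,\tau}(\nabla\psi(x))$ is differentiable, since $\psi$ is twice differentiable $\mu$-a.e. We also know $\mu_\tau=\rho_\tau\mathscr{L}^n$ for $\tau<T$. Injectivity of $T_\tau$ off a $\mu$-null set follows from uniqueness of LQ minimizers for $T<t^*$ and non-branching of the interpolating curves. With this, the standard Aleksandrov-type argument (approximate differentiability plus the area formula) gives
\begin{equation}
\rho_0(x)=\rho_\tau(T_\tau(x))\,\det dT_\tau(x),\qquad \mu\text{-a.e. }x,
\end{equation}
with $\det dT_\tau(x)>0$ for $\tau<T$. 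For $\tau=T$ the same identity holds when $\nu\ll\mathscr{L}^n$. After this step, inequality \eqref{density_interpolation_inequality_formula_intro} becomes
\begin{equation}
\det dT_\tau(x)^{1/n}\ \ge\ \beta_{T-\tau}^{1/n}+\beta_\tau^{1/n}\det dT(x)^{1/n}.
\end{equation}

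\emph{Step 2: Jacobi-field representation.} Write $\Phi_\tau$ for the Hamiltonian flow of \eqref{eq_ham_LQ_intro}. Since $\mathcal{H}$ is quadratic, $\Phi_\tau$ is linear, so
\begin{equation}
dT_\tau(x)=\pi\circ\Phi_\tau\circ(\mathrm{Id},\Hess\psi(x)+\cdots)
\end{equation}
is a matrix $M(\tau)=J_1(\tau)+J_2(\tau)S$. Here $J_1(\tau),J_2(\tau)$ are the $x$- and $p$-blocks of the projected flow, and $S$ is a symmetric matrix built from $\Hess\psi(x)$ and the cost matrices. The $c$-concavity of $\psi$ gives an inequality: $S$ lies "below" the matrix corresponding to the cut locus. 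Equivalently, $M$ is a Jacobi matrix along the optimal trajectory from $x$ to $T(x)$.

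Split $M(\tau)=M_0(\tau)+M_T(\tau)$. Here $M_0$ is the Jacobi matrix vanishing at time $T$ with $M_0(0)=\mathrm{Id}$, and $M_T$ vanishes at $0$ with $M_T(T)=M(T)=dT(x)$. This splitting is possible because $T<t^*$, so the two-point boundary problems for Jacobi matrices are uniquely solvable. Explicitly,
\begin{equation}
M_0(\tau)=\pi\Phi_\tau\Phi_{T}^{-1}(0,\cdot)\cdots \quad\text{and}\quad M_T(\tau)=N_\tau N_T^{-1}M(T),
\end{equation}
where $N_\tau$ is the $p$-to-$x$ block.

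\emph{Step 3: Distortion coefficients and concavity.} Definition~\ref{def:distcoeff} should identify $\beta_\tau=\det(N_\tau N_T^{-1})$ and $\beta_{T-\tau}=\det M_0(\tau)$. In LQ problems these are independent of the endpoints, which is why they carry no $(x,y)$ argument. This yields
\begin{equation}
dT_\tau=M_0(\tau)+N_\tau N_T^{-1}dT = M_0(\tau)\bigl(\mathrm{Id}+M_0(\tau)^{-1}N_\tau N_T^{-1}dT\bigr).
\end{equation}
The main obstacle is to show that $M_0(\tau)^{-1}N_\tau N_T^{-1}\,dT$ is conjugate to a positive semidefinite symmetric matrix. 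Granting this, Minkowski's determinant inequality $\det(P+R)^{1/n}\ge\det P^{1/n}+\det R^{1/n}$ for $P,R\ge0$ gives the claim.

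For the obstacle, I would use the Lagrangian-subspace structure, as in Barilari--Rizzi. The tangent space to the graph of $\nabla\psi$ is Lagrangian. Pushing it by the linear symplectic flow, the matrices $M(\tau)^{-1}M_T(\tau)$ and $M_0(\tau)^{-1}M(\tau)$ become symmetric: each is a Wronskian-type quotient of Jacobi matrices spanning Lagrangian subspaces. Their nonnegativity should follow from two facts. First, $\det dT_\tau>0$ for all $\tau\in[0,T)$, which holds by Step 1 and continuity. Second, there are no conjugate times before $t^*$. Together these preclude crossings with the vertical subspace, and a Sturm/Maslov index monotonicity argument then gives the sign.

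If $\nu$ is not absolutely continuous, we only use $\tau<T$. Then $\det dT\ge0$ still holds as a limit, and we simply drop the second term, which is nonnegative. For $\tau=0$ and $\tau=T$ the inequality is trivial, since $\beta_0=0$ and $\beta_T=1$.
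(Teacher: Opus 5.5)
Your architecture matches the paper's. The splitting $M=M_0+M_T$ is its change of basis $\mathbf{J}(\tau)=\mathbf{J}^V_T(\tau)R_3^{-1}(-T)+\mathbf{J}^V_0(\tau)R_3^{-1}(T)N(T)$ from Lemma~\ref{change_of_basis}. Your identifications of $\beta_\tau$ and $\beta_{T-\tau}=\det M_0(\tau)$ are correct (the latter via $R_3^*(t)=-R_3(-t)$), and the conclusion is Minkowski. The gap is in the step you flag as the obstacle, which is the real content of the proof.

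First, $M(\tau)^{-1}M_T(\tau)$ and $M_0(\tau)^{-1}M(\tau)$ are not symmetric. Write $S(\tau):=R_3^{-1}(\tau)R_4(\tau)=\nabla^2_x c^{0,\tau}$ and $N_\tau=R_3(\tau)$. The symmetric objects are
$N_\tau^{-1}M_0(\tau)=S(\tau)-S(T)$, $N_\tau^{-1}M_T(\tau)=\Hess\psi(x)+S(T)$ and $N_\tau^{-1}dT_\tau(x)=\Hess\psi(x)+S(\tau)$. By contrast, $M_0^{-1}M=(S(\tau)-S(T))^{-1}(\Hess\psi(x)+S(\tau))$ is only a product. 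So what you need is exactly:
(a) $S(\tau)-S(T)>0$ for $\tau<T$;
(b) $\Hess\psi(x)+S(T)\ge 0$.
After that you left-multiply by $R_3^{-1}(\tau)$ and apply Minkowski to the sum, as the paper does.

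For (a), your conjugate-time idea works once made precise. $\det(S(\tau)-S(T))\neq 0$ on $(0,T)$ because $R_3(\tau-T)$ is invertible. Also $S(\tau)\to+\infty$ as $\tau\to 0^+$, since $x^*S(\tau)x=2c^{0,\tau}(x,0)$ is twice the minimal cost of steering $x$ to $0$ in time $\tau$. Continuity of eigenvalues then gives positivity. The paper instead proves $\dot S\le 0$ (Lemma~\ref{S_matrix}).

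For (b), the argument fails as written. It requires $\det dT_\tau(x)\neq 0$ for \emph{every} $\tau\in(0,T)$ at the \emph{same} $x$, and ``Step 1 and continuity'' does not provide this. Step 1 gives positivity off a $\mu$-null set that depends on $\tau$. After intersecting over rational $\tau$, continuity only yields $\det dT_\tau(x)\ge 0$. That cannot exclude an even number of eigenvalues of $\Hess\psi(x)+S(\tau)$ vanishing simultaneously at one irrational time. For instance, with $A=0$, $B=\mathbb{1}_2$, $Q=0$ one has $S(\tau)=\tau^{-1}\mathbb{1}_2$, and $\Hess\psi(x)=-\tau_1^{-1}\mathbb{1}_2$ gives $\det dT_\tau(x)=(1-\tau/\tau_1)^2$. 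So the crossing argument has no valid input.

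The missing ingredient is the one you mention in Step 2 but never use (there is no cut locus here; the cost is a smooth quadratic). Since $(x,T(x))\in\partial_c\psi$, the map $z\mapsto\psi(z)+c^{0,T}(z,T(x))$ is minimized at $z=x$. At a point where $\psi$ is twice differentiable, the second-order condition is precisely $\Hess\psi(x)+S(T)\ge 0$, which is item (ii) of Lemma~\ref{positivity_Jacobi}. The uniform-in-$\tau$ nondegeneracy you wanted can also be recovered, from the locally Lipschitz inverse of $T_\tau$ built in the proof of the absolute continuity theorem, at density points of the $\tau$-independent set $E$. That is much more work than the one-line second-order condition.
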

    
    We finally prove interpolation inequalities for entropy functionals of displacement convexity class $\mathcal{DC}_n$ (see \cite{McCann1997ACP} or \cite[Chapter 17]{Villanioldandnew}). The next result corresponds to Theorem \ref{dist_displ_interp}.
	 \begin{thm}[Entropic inequalities]
 	Let $(A,B,Q,T)$ be a LQ optimal control problem satisfying the Kalman condition, with $T<t^*$.
	 	Consider $U\in\mathcal{DC}_n$ and $\mu,\nu\in P_2(\mathbb{R}^n)$, with $\mu,\nu\ll \mathscr{L}^n$. Let $(\mu_\tau)$ be the unique displacement interpolation between $\mu$ and $\nu$. Assume that $(\mu_\tau)\subset\mathrm{Dom}(\mathcal{U})$ for every $\tau\in[0,T]$. Then, the entropy functional $\mathcal{U}$ satisfies
			\begin{multline}
				\mathcal{U}(\mu_\tau)\leq\left(\frac{T}{T-\tau}\right)^{n-1}\beta_{T-\tau}\int_{\mathbb{R}^n}U\bigg(\frac{\rho_0(x)}{\beta_{T-\tau}}\left(\frac{T-\tau}{T}\right)^n\bigg)d\mathscr{L}^n(x)\\
				+\left(\frac{T}{\tau}\right)^{n-1}\beta_{\tau}\int_{\mathbb{R}^n}U\bigg(\frac{\rho_T(y)}{\beta_{\tau}}\left(\frac{\tau}{T}\right)^n\bigg)d\mathscr{L}^n(y),\qquad\forall \tau\in(0,T).
			\end{multline}
            where $\beta_s=\beta_s^{(A,B,Q,T)}$ is the distortion coefficient of the LQ problem and $\mu_\tau=\rho_\tau\mathscr{L}^n$.
            \end{thm}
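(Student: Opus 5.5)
The plan is to follow the Riemannian strategy of Cordero-Erausquin–McCann–Schmuckenschläger and \cite[Chapter 17]{Villanioldandnew}. We combine the pointwise density inequality of Theorem \ref{5} with the change of variables along $T_\tau$, and then use the convexity property defining $\mathcal{DC}_n$. That property says the map $\delta\mapsto \delta^n U(\delta^{-n})$ is convex and nonincreasing on $(0,\infty)$.

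\textbf{Step 1: rewrite $\mathcal{U}(\mu_\tau)$ as an integral against $\mu$.} Fix $\tau\in(0,T)$. By the Absolute continuity theorem, $\mu_\tau=\rho_\tau\mathscr{L}^n$ and $\mu_\tau=T_{\tau\sharp}\mu$. Since $\mu\ll\mathscr{L}^n$, the map $T_\tau$ is injective $\mu$-a.e. This follows from the interpolation inequality (or directly from non-crossing of LQ extremals for $T<t^*$), because $\rho_\tau(T_\tau(x))<\infty$ $\mu$-a.e. Hence
\begin{equation}
\mathcal{U}(\mu_\tau)=\int U(\rho_\tau)\,d\mathscr{L}^n=\int \frac{U(\rho_\tau(T_\tau(x)))}{\rho_\tau(T_\tau(x))}\,\rho_0(x)\,dx .
\end{equation}
We set $\Phi(\delta)=\delta^nU(\delta^{-n})$. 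Then the integrand equals $\rho_0(x)\,\Phi\big(\rho_\tau(T_\tau(x))^{-1/n}\big)$.

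\textbf{Step 2: apply the density inequality and convexity.} By Theorem \ref{5},
\begin{equation}
\rho_\tau(T_\tau x)^{-1/n}\ \ge\ \frac{T-\tau}{T}\,a(x)+\frac{\tau}{T}\,b(x),
\end{equation}
where
\begin{equation}
a(x)=\frac{T}{T-\tau}\,\beta_{T-\tau}^{1/n}\rho_0(x)^{-1/n},\qquad b(x)=\frac{T}{\tau}\,\beta_\tau^{1/n}\rho_T(T(x))^{-1/n}.
\end{equation}
Since $\Phi$ is nonincreasing and convex,
\begin{equation}
\Phi\big(\rho_\tau(T_\tau x)^{-1/n}\big)\le \frac{T-\tau}{T}\Phi(a(x))+\frac{\tau}{T}\Phi(b(x)).
\end{equation}
This splits the integral into two pieces.

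\textbf{Step 3: compute the two pieces.} In the first term, with $\delta^{-n}=\rho_0(x)\big(\tfrac{T-\tau}{T}\big)^n/\beta_{T-\tau}$, we get
\begin{equation}
\frac{T-\tau}{T}\,\Phi(a)\,\rho_0=\Big(\frac{T}{T-\tau}\Big)^{n-1}\beta_{T-\tau}\,U\Big(\frac{\rho_0}{\beta_{T-\tau}}\Big(\frac{T-\tau}{T}\Big)^n\Big).
\end{equation}
This gives exactly the first integral in the statement.

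For the second term we use $T_\sharp\mu=\nu$, i.e.\ $\rho_0\,dx$ pushes forward to $\rho_T\,dy$. The $\mu$-integral of a function of $T(x)$ then becomes a $\nu$-integral:
\begin{equation}
\int\frac{\tau}{T}\Phi(b(x))\rho_0(x)\,dx=\int \frac{\tau}{T}\Phi\big(\tfrac{T}{\tau}\beta_\tau^{1/n}\rho_T(y)^{-1/n}\big)\rho_T(y)\,dy .
\end{equation}
The same algebra yields $\big(\tfrac{T}{\tau}\big)^{n-1}\beta_\tau\,U\big(\tfrac{\rho_T}{\beta_\tau}(\tfrac{\tau}{T})^n\big)$.

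\textbf{Main obstacle: justifying the manipulations.} The hardest part is making these steps rigorous rather than the algebra. First, we need the change of variables in Step 1, i.e.\ $\mu$-a.e.\ injectivity of $T_\tau$ with $\rho_\tau(T_\tau x)$ given by the Jacobian formula. We take this from the proof of Theorem \ref{5} and the twice-differentiability in Theorem \ref{Brenier_McCann_intro}. Second, we must handle sets where $\rho_0=0$, $\rho_T(T(x))=0$, or $\beta_s=0$. We interpret $\Phi(+\infty)=U'(0)\cdot$(limit) using the $\mathcal{DC}_n$ conventions, and use that $\mu$ is concentrated where $\rho_0>0$. Third, $U$ may take both signs, so the integrals must be well defined. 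This is exactly what the assumption $(\mu_\tau)\subset\mathrm{Dom}(\mathcal U)$ and the standard bound $U(r)\ge -Cr$ are for: the negative parts are integrable, so integrating a pointwise inequality is legitimate, possibly with the right-hand side equal to $+\infty$.
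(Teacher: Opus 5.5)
Your proposal is correct and is essentially the paper's proof. It writes $\mathcal U(\mu_\tau)=\int u(\delta)\,d\mu$ with $\delta=\rho_\tau(T_\tau(x))^{-1/n}$, applies the density interpolation inequality, uses that $u$ is non-increasing and convex with weights $\tfrac{T-\tau}{T},\tfrac{\tau}{T}$, and transfers the second term to $\nu$. Your direct use of the push-forward identities $\mu_\tau=T_{\tau\sharp}\mu$ and $T_\sharp\mu=\nu$ is slightly cleaner than the paper's Jacobian change of variables, and it makes the injectivity claim in Step 1 unnecessary, which is just as well since finiteness of $\rho_\tau(T_\tau(x))$ does not imply injectivity.

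One side remark should be corrected: $U(r)\ge -Cr$ is false for general $U\in\mathcal{DC}_n$ (take $U(r)=-r^{1-1/n}$). The correct bound is $U(r)\ge -C(r+r^{1-1/n})$, which is why the assumption $(\mu_\tau)\subset\mathrm{Dom}(\mathcal U)$ is needed. Also, no convention for $\Phi(+\infty)$ is required: $\rho_0>0$ and $\rho_T(T(x))>0$ hold $\mu$-a.e., and $\beta_\tau,\beta_{T-\tau}>0$ for $\tau\in(0,T)$.
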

            \begin{rem}
           We stress that, for every $U\in\mathcal{DC}_n$, the entropy functional $\mathcal{U}$ is well-defined on the whole $P_2^{ac}(\mathbb{R}^n)$ for $n\geq 3$. Otherwise, the condition $(\mu_\tau)\subset\mathrm{Dom}(\mathcal{U})$ might be hard to check apriori. For $n\in\{1,2\}$, such condition is still satisfied if the endpoints of the optimal curve (and thus the whole curve) are compactly supported. See the discussion in Section \ref{entr_ineq}.
       \end{rem}

To conclude, we show that the Riemannian model coefficients \eqref{reference_coefficients} can be obtained as distortion coefficients of suitable LQ problems. This means that one can mimick the effect of Riemannian curvature-dimension bounds using entirely LQ dynamics (rather than curved geometry itself as on Riemannian space forms, or weighted measures as on 1-dimensional model metric measure spaces \cite[Example 14.10]{Villanioldandnew}). The next statement corresponds to Example \ref{reproducing_CD_kn_Riem} in the paper.
 \begin{prop}
	 	For any $k\in\mathbb{R}$, any $n\in\mathbb{N}$ and any points $x,y$ in a Riemannian space form with constant Ricci curvature equal to $k$ and dimension $n$, with $d_g(x,y)<\pi\sqrt{(n-1)/k}$ for $k>0$, there exists a LQ problem $(A,B,Q,1)$ on $\mathbb{R}^n$, whose distortion coefficients satisfy
	 	\begin{equation}
	 		\beta_\tau^{(A,B,Q,1)}=\beta_\tau^{k,n}(x,y),\qquad\forall \tau\in[0,1],
	 	\end{equation}
	 	where $\beta_\tau^{k,n}(x,y)$ are model Riemannian distortion coefficients given in \eqref{reference_coefficients}. Explicitly,
	 	\begin{equation}
	 		A=\mathbb{0}_n,\qquad B=\mathbb{1}_n,\qquad Q=\begin{pmatrix}
	 			K\mathbb{1}_{n-1} & \mathbb{0}_{n-1,1}\\
	 			\mathbb{0}_{1,n-1} & \mathbb{0}_1
	 		\end{pmatrix},
	 	\end{equation}
	 	where $\mathbb{1}_i$ is a square identity matrix of dimension $i$, $\mathbb{0}_j$ is a square null matrix of dimension $j$, $\mathbb{0}_{i,j}$ is rectangular null matrix of dimension $i\times j$ and $K=d_g^2(x,y)\cdot k/(n-1)$.
	 \end{prop}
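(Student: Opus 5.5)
The plan is a direct computation: with $A=\mathbb{0}_n$ and $B=\mathbb{1}_n$ the Hamiltonian flow of \eqref{eq_ham_LQ_intro} splits into $n$ independent scalar oscillators, and the distortion coefficient becomes an explicit ratio of determinants. I will use Definition \ref{def:distcoeff} in the form I expect it to take for LQ problems. Since the flow is linear, the Jacobian of the exponential map $p\mapsto \exp_{x,s}(p)$ does not depend on $x$. I therefore expect $\beta_s$ to be the ratio $\det\big(\partial_p \exp_{x,s}\big)/\det\big(\partial_p\exp_{x,T}\big)$, that is, the ratio of the determinants of the upper-right blocks $M(s)$ and $M(T)$ of the Hamiltonian flow matrix. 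This block governs the evolution from $x(0)=0$ with initial covector $p$. If the definition has a different normalisation, for instance a time reversal $s\mapsto T-s$ or an extra factor, I will adjust the final formula accordingly. Either way the core of the argument is computing $M(s)$.

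First I check the hypotheses. The Kalman condition holds trivially because $B=\mathbb{1}_n$ has rank $n$. The problem must also satisfy $T=1<t^*$, which is checked once the flow is computed below.

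The Hamiltonian is $\mathcal H=\frac12(|p|^2+x^*Qx)$, so Hamilton's equations are $\dot x=p$ and $\dot p=-Qx$, giving $\ddot x=-Qx$. Since $Q$ is diagonal, these decouple: $\ddot x_i=-Kx_i$ for $i\le n-1$, and $\ddot x_n=0$. With $x(0)=0$ and $p(0)=p$ the solution is
\begin{equation}
x_i(s)=\frac{\sin(\sqrt K s)}{\sqrt K}\,p_i\quad (i\le n-1),\qquad x_n(s)=s\,p_n,
\end{equation}
with $\sinh(\sqrt{|K|}s)/\sqrt{|K|}$ in place of the sine when $K<0$, and $s$ when $K=0$. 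Thus $M(s)$ is diagonal, with $\det M(s)=s\,\big(\sin(\sqrt K s)/\sqrt K\big)^{n-1}$.

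The first conjugate time is the first zero of $\det M$ for $s>0$. This gives $t^*=\pi/\sqrt K$ when $K>0$ and $t^*=+\infty$ otherwise. Therefore $T=1<t^*$ holds exactly when $\sqrt K<\pi$. Substituting $K=d_g^2(x,y)\,k/(n-1)$, this is precisely the assumption $d_g(x,y)<\pi\sqrt{(n-1)/k}$.

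Taking the ratio, the factors $1/\sqrt K$ cancel and
\begin{equation}
\beta_\tau=\frac{\det M(\tau)}{\det M(1)}=\tau\Big(\frac{\sin(\sqrt K\tau)}{\sin\sqrt K}\Big)^{n-1}.
\end{equation}
Here $\sqrt K=\sqrt{k/(n-1)}\,d_g(x,y)=\theta$, which matches \eqref{reference_coefficients} for $k>0$. The case $k<0$ is identical with $\sinh$ in place of $\sin$, and $k=0$ gives $\tau^n$.

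The only real obstacle is matching the exact form of Definition \ref{def:distcoeff}, including its normalisation and the direction of time, with the ratio used above. The oscillator computation itself is routine. If the definition is phrased through volumes of intermediate sets $Z_\tau$, as in the Riemannian definition of $\beta_\tau$, I will reduce it to the Jacobian ratio using linearity of the flow.
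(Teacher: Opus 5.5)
Your proposal is correct and follows essentially the same route as the paper. You compute the flow explicitly (you solve the decoupled oscillators where the paper sums matrix powers) and read off $R_3(\tau)=\mathrm{diag}\big(\tfrac{\sin(\tau\sqrt K)}{\sqrt K}\mathbb{1}_{n-1},\tau\big)$. This gives $t^*=\pi/\sqrt K$, so $T=1<t^*$ is exactly the diameter condition. You then apply Lemma \ref{distortion_coefficients}, which reduces the volume definition via the affine map $\exp_{x,\tau}\circ\exp_{x,T}^{-1}$, as you anticipated, to $\beta_\tau=\det R_3(\tau)/\det R_3(T)$ with no time reversal or extra normalisation.

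One small slip: in the paper's $(p,x)$ ordering, the block you call $M$ is the lower-left block $R_3$, not the upper-right one.
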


\section{Linear quadratic control problems}\label{LQ_control_problems}
\subsection{Problem statement}

    Let $n,m\in \N$, with $m\leq n$. Given $A\in \mathbb{R}^{n \times n}$ and $B\in\mathbb{R}^{n \times m}$. On $\mathbb{R}^n$, we consider the linear control system
		\begin{equation}\label{lq}
		\dot{x}=Ax+Bu.
		\end{equation}
	  The term $Ax$ represent a linear drift term, while the columns of $B$ are the so-called controllable directions.  Without loss of generality, we assume that $B$ has rank $m$.
      
      Denote by $\mathcal{C}(I,\mathbb{R}^n)$ be set of continuous curves from a bounded interval $I\subset \R$ to $\R^n$.
      \begin{Def}[Admissible trajectory] A curve $\gamma\in\mathcal{C}(I,\R^n)$ is an admissible trajectory for \eqref{lq} if it is absolutely continuous and there exists $u \in L^2\left(I,\R^m\right)$ (called control) such that
			\begin{equation}
				\dot{\gamma}(\tau)=A\gamma(\tau)+Bu(\tau),\qquad a.e.\ \tau \in I.
			\end{equation}
      \end{Def}
      
      \begin{rem}\label{bijection_trajectories_controls}
          Since $B$ has rank $m$, the control associated to an admissible trajectory is unique. Conversely, for every $x_0\in\mathbb{R}^n$ and every $u\in L^2\left(\left[t,s\right],\R^m\right)$, there is a unique admissible trajectory $\gamma_u(\cdot,x_0):\left[t,s\right]\to\mathbb{R}^n$, given by 
          \begin{equation}\label{Cauchy}
			\gamma_u(\tau,x_0)=e^{(\tau -t) A}\bigg(x_0+\int_{t}^{\tau}e^{(t-\xi) A}Bu(\xi)d\xi\bigg),\qquad \tau\in\left[t,s\right].
		\end{equation}
      \end{rem}

We say that the linear system \eqref{lq} is controllable if for any pair of points one can find an admissible trajectory joining them. The following characterization is well-known \cite{coron2007control,agrachev2013control}.
\begin{thm}[Kalman condition]\label{thm:kalman}
        The linear control system \eqref{lq} is controllable if and only if 
		\begin{equation}\label{Kalman}
			\mathrm{span}\left\{A^jb_i\mid j\in\left\{0,\dots,n-1\right\},i\in\left\{1,\dots,k\right\}\right\}=\mathbb{R}^n,
		\end{equation}
		where the $b_i$-s are the columns of the matrix $B$.
\end{thm}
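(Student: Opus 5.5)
The plan is to prove both implications by comparing the image of the controllability Gramian with the Krylov space $\mathcal K:=\mathrm{span}\{A^jb_i\}$. By Remark \ref{bijection_trajectories_controls}, the endpoint reached at time $s$ from $x_0$ with control $u$ on $[t,s]$ is
\begin{equation}
\gamma_u(s,x_0)=e^{(s-t)A}x_0+\int_t^s e^{(s-\xi)A}Bu(\xi)\,d\xi .
\end{equation}
Hence controllability between arbitrary points is equivalent to surjectivity of the linear map $L:L^2([t,s],\R^m)\to\R^n$, $L(u)=\int_t^s e^{(s-\xi)A}Bu(\xi)\,d\xi$. Indeed, to join $x_0$ to $x_1$ we need $L(u)=x_1-e^{(s-t)A}x_0$, and this right-hand side ranges over all of $\R^n$ as $x_1$ varies. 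So the whole statement reduces to showing $\mathrm{Im}\,L=\R^n$ if and only if $\mathcal K=\R^n$.

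First step: show $\mathrm{Im}\,L\subseteq\mathcal K$. Expand $e^{(s-\xi)A}=\sum_k \frac{(s-\xi)^k}{k!}A^k$. By Cayley--Hamilton, every power $A^k$ is a linear combination of $\mathbb 1,A,\dots,A^{n-1}$, so each column of $e^{(s-\xi)A}B$ lies in $\mathcal K$. Since $\mathcal K$ is a closed linear subspace, every integral $L(u)$ lies in $\mathcal K$ as well. This already gives the implication "controllable $\Rightarrow$ Kalman".

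Second step: the converse, which is the main point. Suppose $\mathrm{Im}\,L\neq\R^n$. Then there is $p\neq0$ orthogonal to the image, so $\int_t^s p^*e^{(s-\xi)A}Bu(\xi)\,d\xi=0$ for every $u$. Choose $u(\xi)=B^*e^{(s-\xi)A^*}p$. This yields
\begin{equation}
\int_t^s |B^*e^{(s-\xi)A^*}p|^2\,d\xi=0,
\end{equation}
so $p^*e^{rA}B\equiv0$ for $r\in[0,s-t]$. The function $r\mapsto p^*e^{rA}B$ is analytic, so we may differentiate $j$ times at $r=0$. This gives $p^*A^jB=0$ for all $j$, meaning $p\perp\mathcal K$, which contradicts the Kalman condition.

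The only delicate point is this choice of test control, which amounts to using the Gramian $\int e^{rA}BB^*e^{rA^*}dr$, together with the differentiation at $r=0$. Everything else is routine linear algebra. I note also that the index $i$ in the statement should run up to $m$, the number of columns of $B$.
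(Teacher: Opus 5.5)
Your proof is correct. It is the standard Cayley--Hamilton inclusion $\mathrm{Im}\,L\subseteq\mathcal K$ combined with the controllability-Gramian test control. The paper gives no proof of its own, calling the result well-known and citing Coron and Agrachev--Sachkov, where essentially this argument appears.

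Two small points in your favour. Your first step holds on every interval, and your second gives surjectivity of $L$ on every interval, so the argument also covers the paper's definition of controllability, where the time horizon is not fixed. You are also right that the index $i$ should run up to $m$.
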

\begin{Def}[LQ action and cost function]\label{cost}
            Consider the system \eqref{lq}, a symmetric matrix $Q\in\mathbb{R}^{n \times n}$ and an interval $I\subset\mathbb{R}^n$. The LQ action $\mathcal{A}^{I}:\mathcal{C}\left(I,\mathbb{R}^n\right)\to\mathbb{R}\cup\{+\infty\}$ is defined as 
            \begin{equation}
		\mathcal{A}^{I}(\gamma) :=
		\begin{cases}
			\displaystyle\frac{1}{2}\int_I\left(|u(\tau)|^2-\gamma^*(\tau)Q\gamma(\tau)\right)d\tau & \text{if $\gamma$ is admissible,} \\
			+\infty & \text{otherwise.}
		\end{cases}
		\end{equation}
        In addition, the LQ cost function $c^{I}:\mathbb{R}^n\times\mathbb{R}^n\to\mathbb{R}\cup\{-\infty\}$ is defined as
        \begin{equation}\label{LQ_infimization_problem}
			c^{I}(x,y):=\inf\{\mathcal{A}(\gamma)\mid \gamma\in \mathcal{C}\left(I,\mathbb{R}^n\right),\, \text{$\gamma$ joining $x$ with $y$}\},\qquad x,y\in\mathbb{R}^n.	
		\end{equation}
        Given $A,B,Q$ and an interval $I=[0,T]$ for $T>0$, the problem of computing \eqref{LQ_infimization_problem} is referred to as Linear Quadratic (LQ) optimal control problem, and denoted by $(A,B,Q,T)$.
	\end{Def}
        
        In the literature one often considers a LQ functional $\mathcal{J}$ acting on controls rather than the action functional $\mathcal{A}$ defined on continuous trajectories (see, e.g., \cite[Chapter 16]{agrachev2013control}). Both functionals agree on admissible trajectories. The functional $\mathcal{A}$ can be viewed as the lower semi-continuous relaxation of $\mathcal{J}$ with respect to uniform convergence (see Proposition \ref{properties_LQ_action}).
        
        \begin{notation}\label{LQ_intermediate_action_and_cost} Let $0\leq t < s \leq T$. In the following we denote the LQ action on the interval $[t,s]$ by $\mathcal{A}^{t,s}$, and the corresponding cost function by $c^{t,s}$. In the case $[t,s]=[0,T]$, when there is no risk of confusion, we omit the superscripts, namely $\mathcal{A}=\mathcal{A}^{0,T}$ and $c=c^{0,T}$.
        \end{notation}

    

\subsection{LQ Hamiltonian flow and conjugate times}
In this section, we recall some classical properties of optimal trajectories of a LQ problem, which can be found in \cite[Chapter 16]{agrachev2013control}. A necessary condition for optimality is given by the following result.
\begin{thm}\label{ham_LQ}
 Consider the system \eqref{lq} with  $A\in \mathbb{R}^{n \times n}$ and $B\in\mathbb{R}^{n \times m}$. Let $Q\in\mathbb{R}^{n\times n}$ be a symmetric matrix. Define, on $T^*\mathbb{R}^n\simeq \R^{2n}$, the corresponding LQ Hamiltonian 
	\begin{equation}\label{explicit_LQ_hamiltonian}
		\mathcal{H}(p,x) := \frac{1}{2}(p^*,x^*)\begin{pmatrix}
			BB^* & A\\
			A^* & Q
		\end{pmatrix}\begin{pmatrix}
			p\\
			x
		\end{pmatrix} = \frac{1}{2}(p^*,x^*)H\begin{pmatrix}
			p\\
			x
		\end{pmatrix}
	\end{equation}
	and its associated Hamiltonian vector field
	\begin{equation}\label{ident_ham_v_field}
	\overrightarrow{\mathcal{H}}_{(p,x)}:=-\Omega H\begin{pmatrix}
		p\\
		x
	\end{pmatrix}.
	\end{equation}
	Here, $\Omega$ is the standard simplectic form on $T^*\mathbb{R}^n$ in canonical Darboux coordinates, namely
\begin{equation}	
\Omega=\begin{pmatrix}
		\mathbb{0}_n & \mathbb{1}_n\\
		-\mathbb{1}_n & \mathbb{0}_n
	\end{pmatrix}.
\end{equation}
Then, any optimal trajectory $\gamma \in \mathcal{C}(I,\R^n)$ for the LQ action $\mathcal{A}$ is the projection of an integral curve $\tau\mapsto (p(\tau),x(\tau))\in T^*\R^n$ such that $x(\tau)=\gamma(\tau)$ and $u(\tau)=Bp(\tau)$ for all $\tau \in I$.
\end{thm}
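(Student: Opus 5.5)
The plan is to derive the statement as a first-order optimality condition for a quadratic functional on an affine subspace of $L^2$. Because the endpoint map of \eqref{lq} is affine in the control, no abnormal multipliers can appear, and a Lagrange multiplier argument in $L^2$ is enough. I read the conclusion $u=Bp$ as $u=B^*p$, which is the only dimensionally consistent form.

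\textbf{Step 1: reduction to controls.} Let $\gamma$ be optimal on $I=[t,s]$, joining $x$ to $y$. Its action is finite, so $\gamma$ is admissible, with a unique control $u\in L^2$ by Remark \ref{bijection_trajectories_controls}. Take $v\in L^2([t,s],\R^m)$ in the kernel of the linear endpoint map
\begin{equation}
E v:=\int_t^s e^{(s-\xi)A}Bv(\xi)\,d\xi .
\end{equation}
By \eqref{Cauchy}, the control $u+\varepsilon v$ generates $\gamma+\varepsilon\delta$, where $\delta(\tau)=\int_t^\tau e^{(\tau-\xi)A}Bv(\xi)\,d\xi$ satisfies $\delta(t)=\delta(s)=0$. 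Hence $\gamma+\varepsilon\delta$ is a competitor with the same endpoints.

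\textbf{Step 2: first variation.} The action is quadratic, so
\begin{equation}
\mathcal{A}(\gamma+\varepsilon\delta)-\mathcal{A}(\gamma)=\varepsilon\int_t^s\left(u\cdot v-\gamma^*Q\delta\right)d\tau+\varepsilon^2\,\mathcal{A}(\delta).
\end{equation}
Minimality for both signs of small $\varepsilon$ forces the linear term to vanish. This is the step that replaces differentiability issues, and it is elementary here.

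Next I swap the order of integration (Fubini) in $\int_t^s\gamma^*Q\delta\,d\tau$. This gives $\int_t^s v(\xi)\cdot B^*w(\xi)\,d\xi$, where
\begin{equation}
w(\xi)=\int_\xi^s e^{(\tau-\xi)A^*}Q\gamma(\tau)\,d\tau .
\end{equation}
So $u-B^*w$ is orthogonal to $\ker E$.

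\textbf{Step 3: the multiplier.} Since $E$ has finite-dimensional range, $(\ker E)^\perp=\operatorname{range}E^*$ is closed. Explicitly, this range is $\{\xi\mapsto B^*e^{(s-\xi)A^*}\lambda : \lambda\in\R^n\}$. Hence there is $\lambda\in\R^n$ with $u=B^*p$, where
\begin{equation}
p(\xi):=w(\xi)+e^{(s-\xi)A^*}\lambda .
\end{equation}

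\textbf{Step 4: the Hamiltonian system.} Differentiating gives $\dot p=-A^*p-Q\gamma$, and substituting $u=B^*p$ into the dynamics gives $\dot\gamma=A\gamma+BB^*p$. These are exactly $\dot x=\partial_p\mathcal H$ and $\dot p=-\partial_x\mathcal H$ for \eqref{explicit_LQ_hamiltonian}. The remaining check is that this matches $-\Omega H(p,x)^*$ in \eqref{ident_ham_v_field} with the paper's ordering $(p,x)$ of coordinates.

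The main point requiring care is the bookkeeping: signs in the Fubini computation and the sign and ordering conventions for $\Omega$. There is no genuine analytic obstacle.
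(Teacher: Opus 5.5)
Your proof is correct, and it takes a different route from the paper. The paper gives no argument for Theorem \ref{ham_LQ}; it recalls the result from \cite[Chapter 16]{agrachev2013control}. There the standard derivation goes through the Pontryagin Maximum Principle, and the Kalman condition is needed to rule out abnormal extremals: with zero cost multiplier, maximality forces $B^*p\equiv 0$ along $\dot p=-A^*p$, and controllability then gives $p\equiv 0$.

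You instead use two structural facts. For fixed endpoints, the admissible controls form the affine subspace $u+\ker E$, and the action is a quadratic function of the control. So the first-order condition is exact orthogonality of $u-B^*w$ to $\ker E$. Because $\operatorname{range}E^*$ is finite dimensional, hence closed, this gives you a normal multiplier $\lambda$ directly.

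What your approach buys:
\begin{itemize}
\item It is self-contained and elementary.
\item It never uses controllability, which fits the statement as written, since no Kalman hypothesis appears in it.
\item Since $p$ is $C^1$, $u$ has the continuous representative $B^*p$, so the conclusion holds for every $\tau\in I$ and not only almost everywhere.
\end{itemize}
In exchange, the PMP route is the one that extends to nonlinear dynamics and non-quadratic costs.

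Your sign bookkeeping also closes. With the paper's conventions,
\[
-\Omega H\begin{pmatrix} p\\ x\end{pmatrix}=\begin{pmatrix}-A^*p-Qx\\ BB^*p+Ax\end{pmatrix},
\]
which is exactly the system of your Step 4. Your reading $u=B^*p$ is the one the paper itself uses later, in the proof of Lemma \ref{lem1} and in Remark \ref{rmk_conj_times}.
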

Note that, in Theorem \ref{ham_LQ}, we canonically identified $T^*\mathbb{R}^n$ with $\mathbb{R}^{2n}$, so that the Hamiltonian vector field $\overrightarrow{\mathcal{H}}$ is itself identified with the linear vector field $-\Omega H$, as in equation \eqref{ident_ham_v_field}. 


\begin{Def}[Conjugate time]
  A time $t>0$ is called conjugate (to zero) if there exists a non-trivial integral curve $(p(\cdot),x(\cdot)):\left[0,t\right]\to T^*\mathbb{R}^n$ of the Hamiltonian flow \eqref{ident_ham_v_field}, i.e.\ solution of
		\begin{align}
			\dot{p}& =-A^*p-Qx,\\
			\dot{x}& =BB^*p+Ax,
		\end{align}
		that satisfies $x(0)=x(t)=0$. 
			
\end{Def}

\begin{prop}\label{corollary_LQ_optimal_trajectories}
			 Let $(A,B,Q,T)$ be a LQ optimal control problem satisfying the Kalman condition, with $T<t^*$. Let $t^*\in\left(0,+\infty\right]$ be the first conjugate time. Then, we have 
			\begin{itemize}
				\item if $T<t^*$, then for all $x_0,y_0 \in \mathbb{R}^n$ there exists a unique optimal trajectory $\gamma:[0,T]\to \mathbb{R}^n$, between $x_0$ and $y_0$,
				\item if $T=t^*$, existence of optimal trajectories does not hold for all pair of points $(x_0,y_0)\in\mathbb{R}^{2n}$, and there is no uniqueness, 
				\item if $T>t^*$, then for all $x_0,y_0 \in \mathbb{R}^n$ then there are no optimal trajectories $\gamma:[0,T]\to \mathbb{R}^n$ between $x_0$ and $y_0$.  
			\end{itemize}
\end{prop}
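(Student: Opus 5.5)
The plan is to reduce everything to the linear-algebraic structure of the Hamiltonian flow. Write $\Phi_\tau = e^{-\tau\Omega H}$ for the flow of \eqref{ident_ham_v_field} and split it into $n\times n$ blocks $\Phi_\tau=\begin{pmatrix}\Phi_{11}(\tau)&\Phi_{12}(\tau)\\ \Phi_{21}(\tau)&\Phi_{22}(\tau)\end{pmatrix}$. An extremal starting at $x(0)=x_0$ with covector $p_0$ satisfies $x(\tau)=\Phi_{21}(\tau)p_0+\Phi_{22}(\tau)x_0$. By definition, $t$ is conjugate exactly when $\det\Phi_{21}(t)=0$, since then some $p_0\neq 0$ gives $x(0)=x(t)=0$. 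Kalman controllability ensures $\Phi_{21}(\tau)$ is invertible for small $\tau>0$; its expansion is $\tau BB^*+\tfrac{\tau^2}{2}(ABB^*+BB^*A^*)+\dots$, whose invertibility is equivalent to invertibility of the controllability Gramian. Hence $t^*>0$, and for $T<t^*$ the map $p_0\mapsto x(T)$ is a bijection. Consequently, for all $x_0,y_0$ there is exactly one extremal joining them.

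\emph{First bullet ($T<t^*$).} Existence and uniqueness of minimizers comes from the quadratic structure of $\mathcal J$ on the affine space $\mathcal U_{x_0,y_0}=\{u\in L^2 : \gamma_u(T,x_0)=y_0\}$. By \eqref{Cauchy} this is a closed affine subspace, a translate of the linear space $\mathcal U_{0,0}$, and $\mathcal J(u)=\tfrac12\|u\|^2-\tfrac12\langle \mathcal K u,u\rangle+\text{(affine terms)}$. Here $\mathcal K$ is a compact self-adjoint operator coming from $\gamma^*Q\gamma$, compact because $u\mapsto\gamma_u$ is compact into $L^2$. So the quadratic part on $\mathcal U_{0,0}$ is $\tfrac12\langle(I-\mathcal K)u,u\rangle$.

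The key step is to show this form is \emph{positive definite} on $\mathcal U_{0,0}$ for $T<t^*$. I would argue by continuity in $T$. For small $T$ it holds because $\|\mathcal K\|=O(T^2)$. The form can lose positivity only when $I-\mathcal K$ restricted to $\mathcal U_{0,0}$ acquires a kernel, since by Fredholm theory the spectrum is discrete away from $1$. A kernel element is a critical point with zero endpoints, hence by Theorem \ref{ham_LQ}/Lagrange multipliers a nontrivial extremal with $x(0)=x(T)=0$, that is, a conjugate time. To make the continuity argument rigorous, rescale time to a fixed interval.

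With positive definiteness, $\mathcal J$ is coercive and strictly convex on $\mathcal U_{x_0,y_0}$. It therefore has a unique minimizer, which must be the unique extremal above.

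\emph{Third bullet ($T>t^*$).} At $t^*$ take a nontrivial extremal $(p,x)$ with $x(0)=x(t^*)=0$. Its action is $0$, by the usual integration by parts: $\int(|u|^2-x^*Qx)=[p^*x]_0^{t^*}=0$. Extend it by zero on $[t^*,T]$; this gives an admissible variation with zero endpoints and zero cost, but the extension is not smooth, hence not an extremal. Perturbing it in a direction transversal to the corner yields negative second variation, the classical Jacobi argument. So the quadratic form on $\mathcal U_{0,0}$ takes a negative value, at some $v$. Then for any admissible $u$, $\mathcal J(u+sv)\to-\infty$ as $s\to\infty$, since the linear terms are dominated by $s^2$. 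Thus $c^{0,T}\equiv-\infty$ and there are no minimizers.

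\emph{Second bullet ($T=t^*$).} The form is positive semidefinite, as a limit of the case $T<t^*$, with nontrivial kernel $\ker\Phi_{21}(t^*)\neq 0$ generating zero-cost loops. Adding these to any minimizer gives another minimizer, so uniqueness fails. Existence fails for $y_0\notin x$-reachable set $\mathrm{Im}\,\Phi_{21}(t^*)+\Phi_{22}(t^*)x_0$. For such a pair, minimizers would be extremals, yet none reach $y_0$; one also shows the infimum is finite, or $-\infty$, and not attained.

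The main obstacle is the sign-change step in the third bullet: rigorously showing strict negativity past $t^*$, and not merely loss of definiteness. I would handle it via the Morse index of the Fredholm form $I-\mathcal K$. Each crossing of a conjugate time increases the index, and a form with index $\ge1$ is negative somewhere.
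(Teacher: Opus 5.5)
Your proposal is correct and follows essentially the classical argument of \cite[Chapter 16]{agrachev2013control}, which the paper cites instead of giving a proof (and reuses in Lemma \ref{coercivity_action_on_controls}): coercivity of $\frac12\langle (I-\mathcal K)u,u\rangle$ on $\mathcal U_{0,0}$ up to the first time this form degenerates, identification of degeneracy with conjugate times via Lagrange multipliers, and the zero-cost conjugate loop extended by zero to produce negative directions beyond $t^*$. The Morse-index detour you anticipate is unnecessary: write $q$ for the quadratic part on $\mathcal U_{0,0}$ and $b$ for its polar form; the extended loop $\bar u$ has $q(\bar u)=0$ but is not a critical point, because otherwise Kalman forces the multiplier to vanish on $[t^*,T]$, hence everywhere by uniqueness for the linear Hamiltonian system; so some $w\in\mathcal U_{0,0}$ has $b(\bar u,w)\neq 0$, and $q(\bar u+\epsilon w)=2\epsilon\, b(\bar u,w)+\epsilon^2 q(w)<0$ for small $\epsilon$ of the appropriate sign.
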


        Note that if $Q\leq 0$, then the first conjugate time $t^*$ is $+\infty$ (see \cite{hinpomriff2011}).

    The Hamiltonian vector field associated to a LQ control system \eqref{lq} is linear (upon identifying $T^*\mathbb{R}^n$ with $\mathbb{R}^{2n}$), and thus complete. For the study of its dynamics, it is convenient to study its flow as a smooth curve of block matrices.
\begin{Def}[Matrix blocks of the flow]\label{blocks_R_i}
    Consider the LQ system \eqref{lq}. For $i=1,\ldots,4$, we define $R_i:\mathbb{R}\to\mathbb{R}^{n\times n}$ as the smooth curves of matrices satisfying 
    \begin{equation}\label{formula_R_i}
			e^{-\tau\Omega H}=\begin{pmatrix}
				R_1(\tau) & R_2(\tau)\\
				R_3(\tau) & R_4(\tau)
			\end{pmatrix},\qquad \tau\in\mathbb{R},
		\end{equation}
    where $\tau\mapsto e^{-\tau\Omega H}$ is the flow of the Hamiltonian vector field given by Theorem \ref{ham_LQ}.
\end{Def}
  \begin{lem}\label{time_reflection_symmetry}
        In the setting of Theorem \ref{ham_LQ}, for all $\tau \in \R$ it holds
        \begin{equation}
		R_1^*(\tau)=R_4(-\tau),\qquad R_2^*(\tau)=-R_2(-\tau),\qquad R_3^*(\tau)=-R_3(-\tau).
	\end{equation}
    \end{lem}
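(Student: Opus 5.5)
The plan is to use the fact that the flow $\Phi(\tau):=e^{-\tau\Omega H}$ is symplectic, together with the symmetry of $H$.

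First, I will check that $\Phi(\tau)^*\Omega\Phi(\tau)=\Omega$ for all $\tau\in\mathbb{R}$. Differentiating the left-hand side in $\tau$ gives
\begin{equation}
\Phi^*\big((-\Omega H)^*\Omega+\Omega(-\Omega H)\big)\Phi=\Phi^*\big(-H\Omega^*\Omega-\Omega^2H\big)\Phi .
\end{equation}
Here I use $H=H^*$ and $\Omega^*=-\Omega$. Since $\Omega^*\Omega=\mathbb{1}_{2n}$ and $\Omega^2=-\mathbb{1}_{2n}$, the bracket equals $-H+H=0$. At $\tau=0$ the expression equals $\Omega$, so it is $\Omega$ for all $\tau$.

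Next, the group property gives $\Phi(\tau)^{-1}=\Phi(-\tau)$. The symplectic identity can then be rewritten as
\begin{equation}
\Phi(-\tau)=\Phi(\tau)^{-1}=\Omega^{-1}\Phi(\tau)^*\Omega=-\Omega\,\Phi(\tau)^*\Omega .
\end{equation}
I will compute the right-hand side blockwise. We have
\begin{equation}
\Phi(\tau)^*=\begin{pmatrix} R_1^* & R_3^*\\ R_2^* & R_4^*\end{pmatrix}(\tau).
\end{equation}
Right multiplication by $\Omega$ sends a block matrix with columns $(X\ Y)$ to $(-Y\ X)$, so
\begin{equation}
\Phi(\tau)^*\Omega=\begin{pmatrix} -R_3^* & R_1^*\\ -R_4^* & R_2^*\end{pmatrix}(\tau).
\end{equation}
Left multiplication by $-\Omega$ sends rows $(U;V)$ to $(-V;U)$, so
\begin{equation}
-\Omega\,\Phi(\tau)^*\Omega=\begin{pmatrix} R_4^* & -R_2^*\\ -R_3^* & R_1^*\end{pmatrix}(\tau).
\end{equation}

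Comparing with $\Phi(-\tau)$ block by block gives
\begin{equation}
R_1(-\tau)=R_4^*(\tau),\qquad R_2(-\tau)=-R_2^*(\tau),\qquad R_3(-\tau)=-R_3^*(\tau),\qquad R_4(-\tau)=R_1^*(\tau).
\end{equation}
Replacing $\tau$ by $-\tau$ where needed and transposing, these are exactly the three claimed identities; in particular $R_1^*(\tau)=R_4(-\tau)$.

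The only real obstacle is sign bookkeeping in the block multiplication with $\Omega$. I would double-check it on the scalar case $n=1$ with $H=\mathbb{1}_2$, where $\Phi(\tau)$ is a rotation.
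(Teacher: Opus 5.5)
Your proof is correct and is essentially the paper's argument: both reduce the lemma to the identity $\bigl(e^{-\tau\Omega H}\bigr)^* = -\Omega\, e^{\tau\Omega H}\,\Omega$ (equivalently $\Phi(-\tau) = -\Omega\,\Phi(\tau)^*\Omega$) and then compare blocks. The only difference is how that identity is obtained. The paper transposes the exponential directly, using $(\Omega H)^* = -\Omega(-\Omega H)\Omega$ on the generator. You instead prove the integrated symplectic relation $\Phi^*\Omega\Phi = \Omega$ and combine it with the group law. Your block bookkeeping is correct.
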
 
    \begin{proof} Note that
    \begin{equation} 
		(\Omega H)^*=H^*\Omega^*=-H\Omega=-\Omega(-\Omega H)\Omega.
	\end{equation}
    Therefore, for every $\tau\in\mathbb{R}$, the conjugate transposed of the flow is 
    \begin{equation}\label{conj_transposed_of_flow}
        \left(e^{-\tau\Omega H}\right)^*
		=e^{\tau H\Omega}
		=e^{-\Omega(\tau\Omega H)\Omega}
		=-\Omega e^{\tau\Omega H}\Omega.
    \end{equation}
    The thesis follow by comparing block by block in  \eqref{conj_transposed_of_flow}.
    \end{proof}

    As a consequence of Proposition \ref{corollary_LQ_optimal_trajectories}, for every $x,y\in\mathbb{R}^n$, we have $c(x,y)>-\infty$, and the infimum is achieved at a unique trajectory, called optimal trajectory (or action minimizing trajectory).  The corresponding control is referred to as an optimal control. In the following result, we establish the properties of the LQ exponential map. As a byproduct of the proof, we obtain a characterization of all conjugate times $t$ in terms of the singularity of the matrix $R_3(t)$.

    \begin{Def}[Exponential map]
        In the setting of Theorem \ref{ham_LQ}, and for any $x\in\mathbb{R}^n$, we define the LQ exponential map $\exp_{x,T}:T^*_x\mathbb{R}^n\to\mathbb{R}^n$ as
		\begin{equation}
		\exp_{x,T}(p):=x(T), \qquad p \in T_x^*\R^n,
		\end{equation}
        where $x(\cdot)$ is the projection onto $\mathbb{R}^n$ of the integral curve of the LQ Hamiltonian system with initial datum $(p(0),x(0))=(p,x)$.
    \end{Def}
    
\begin{thm}\label{LQ_exponential}
		  Let $(A,B,Q,T)$ be a LQ optimal control problem satisfying the Kalman condition, with $T<t^*$. Then, for every $x\in\mathbb{R}^n$, the exponential map $\exp_{x,T}$ is a smooth diffeomorphism between $T^*_x\mathbb{R}^n$ and $\mathbb{R}^n$.
        
		  In particular, the restriction to the time interval $[0,T]$ of any integral curve of the LQ Hamiltonian system is optimal.
	\end{thm}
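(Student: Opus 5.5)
The exponential map is linear in $p$. By Definition \ref{blocks_R_i}, the integral curve with initial datum $(p,x)$ satisfies
\begin{equation}
\exp_{x,T}(p)=x(T)=R_3(T)p+R_4(T)x .
\end{equation}
So $\exp_{x,T}$ is an affine map of $p$, and it is a smooth diffeomorphism $T^*_x\mathbb{R}^n\to\mathbb{R}^n$ if and only if $R_3(T)$ is invertible. This condition does not depend on $x$. The whole statement therefore reduces to two claims:
\begin{enumerate}[(i)]
\item $\det R_3(t)\neq 0$ for all $t\in(0,t^*)$;
\item every integral curve restricted to $[0,T]$ is the optimal trajectory between its endpoints.
\end{enumerate}

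\textbf{Claim (i): conjugate times are the zeros of $\det R_3$.} I will show that $t>0$ is conjugate if and only if $R_3(t)$ is singular. An integral curve with $x(0)=0$ has initial datum $(p_0,0)$, so $x(t)=R_3(t)p_0$. The curve is nontrivial exactly when $p_0\neq 0$, because the flow is linear and invertible. Hence there is a nontrivial curve with $x(0)=x(t)=0$ if and only if $\ker R_3(t)\neq\{0\}$. Since $t^*$ is the first conjugate time, $R_3(t)$ is invertible for every $t\in(0,t^*)$, and in particular for $t=T$.

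It is worth checking that $t^*>0$ is consistent with small times. Near $\tau=0$ we have $R_3(\tau)=\tau BB^*+O(\tau^2)$. The Kalman condition is what gives invertibility for small $\tau>0$, through the controllability Gramian, since $BB^*$ alone may be degenerate. Here this is only a consistency check: positivity of $t^*$ is quoted from \cite{Agrachev_2014}, and $T<t^*$ is a hypothesis.

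\textbf{Claim (ii): optimality of integral curves.} Fix $x$ and $y$, and set $p:=R_3(T)^{-1}(y-R_4(T)x)$. Then $p$ is the unique covector at $x$ whose integral curve projects onto a curve joining $x$ to $y$ at time $T$. Proposition \ref{corollary_LQ_optimal_trajectories} gives an optimal trajectory between $x$ and $y$, since $T<t^*$. Theorem \ref{ham_LQ} says that this trajectory is the projection of some integral curve starting at $(p',x)$ for some $p'$. Its endpoint is $y$, so $R_3(T)p'=y-R_4(T)x$, and invertibility forces $p'=p$. Hence the projection of the integral curve with datum $(p,x)$ is the optimal trajectory.

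Since $(p,x)$ was arbitrary, every integral curve restricted to $[0,T]$ is optimal between its endpoints. The same argument applies on any $[0,s]$ with $s\le T$, which covers restrictions of curves to shorter intervals.

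\textbf{Main obstacle.} The only non-routine step is the identification of conjugate times with the singularity of $R_3$. It rests on two points: nontrivial curves with $x(0)=0$ correspond exactly to $p_0\neq0$, and the block $R_3$ maps the initial covector to the final position. The rest is linear algebra together with the existence-and-uniqueness result of Proposition \ref{corollary_LQ_optimal_trajectories}.
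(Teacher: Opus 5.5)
Your proposal is correct and follows the paper's proof essentially step by step. You identify conjugate times with the singularity of $R_3(t)$ via the initial datum $(p_0,0)$, conclude that $\exp_{x,T}(p)=R_3(T)p+R_4(T)x$ is an affine diffeomorphism, and get optimality from the uniqueness of the covector reaching $y$, explicitly invoking Proposition \ref{corollary_LQ_optimal_trajectories} for existence and Theorem \ref{ham_LQ} for the necessary condition, which the paper's ``only candidate, hence optimal'' argument uses implicitly.
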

	\begin{proof}
By Definition \ref{blocks_R_i}, the integral curve of the LQ Hamiltonian system with initial datum $(p,x) \in T^*\R^n$ has the form
\begin{equation}
    		\begin{pmatrix}
			p(\tau)\\
			x(\tau)
		\end{pmatrix}=e^{-\tau\Omega H}\begin{pmatrix}
			p\\
			x
		\end{pmatrix}=\begin{pmatrix}
				R_1(\tau) & R_2(\tau)\\
				R_3(\tau) & R_4(\tau)
			\end{pmatrix}\begin{pmatrix}
			p\\
			x
		\end{pmatrix}, \qquad \tau\in\mathbb{R}^n.
\end{equation}
		We claim that $R_3(t)$ is singular if and only if $t>0$ is a conjugate time. Indeed, if $t$ is a conjugate time and we take a nontrivial trajectory $(p(\cdot),x(\cdot))$ that goes from $(0,0)$ to $(0,0)$ in time $t$, then
		\begin{equation}
			0=x(t)=R_3(t)p(0),\qquad p(0)\neq 0,
		\end{equation}
		that is $R_3(t)$ is singular. Conversely, if $R_3(t)$ is singular, we can take $p_0\in \ker R_3(t)\setminus\{0\}$. The integral curve starting from $(0,0)$ with initial datum $(p_0,0)$ is nontrivial and arrives at $(0,0)$ in time $t$, proving that $t$ is conjugate.
        
		As a consequence, since $T<t^*$, we have
		\begin{equation}
			\det(R_3(t))\neq 0,\qquad \forall t\in (0,T],
		\end{equation}
		so that, trivially,
		\begin{equation}\label{LQ_exponential_formula}
			\exp_{x,T}(p)=x(T)=R_3(T)p+R_4(T)x
		\end{equation}
		is an affine smooth diffeomorphism, whose inverse is given by 
		\begin{equation}\label{LQ_inverse_exponential_formula}
			y\mapsto \exp_{x,T}^{-1}(y)=R_3^{-1}(T)(y-R_4(T)x).
		\end{equation}

To prove the last statement, for any $x,y \in \R^n$ there exists a unique $p_y\in T^*_x\mathbb{R}^n$, such that $\exp_{x,T}(p_y)=y$. As a consequence, there exists a unique optimal trajectory $(p(\cdot),x(\cdot))$, whose projection joins $x$ and $y$ in time $T$. Such trajectory $x(\cdot)$ is the only candidate to be optimal for the LQ cost between $x$ and $y$ and, therefore, it is indeed optimal.
	\end{proof}
    \begin{rem}\label{formula_opt_traj}
	From previous result, we get a formula for the unique minimizing trajectory between any two points $x,y\in\mathbb{R}^n$ in time $T$. Indeed, we can write it as
	\begin{equation}
		\tau\mapsto\exp_{x,\tau}(\exp_{x,T}^{-1}(y)),\qquad \tau\in[0,T],
	\end{equation}
	which, thanks to formulas \eqref{LQ_exponential_formula} and \eqref{LQ_inverse_exponential_formula}, can be written explicitly in terms of the matrices $R_i$, with $i=1,\ldots,4$, as
	\begin{equation}
		\tau\mapsto R_3(\tau)R_3^{-1}(T)y+\left(R_4(\tau)-R_3(\tau)R_3^{-1}(T)R_4(T)\right)x,\qquad \tau\in[0,T].
	\end{equation}
		\end{rem}

\subsection{Backwards problem}

    The linear system \eqref{lq} is not invariant under time reversal, in general. More precisely, given an admissible trajectory $\gamma_u(\cdot, x_0)$ on the interval $[t,s]$, the reversed trajectory $\tilde{\gamma}_u(\tau,x_0):=\gamma_u(s+t-\tau,x_0)$ is not a solution of \eqref{lq}. Instead, it is a solution of
			\begin{equation}\label{reversed}
				\dot{x}=-Ax+Bu,
			\end{equation}
    with control $\tilde{u}(t)=-u(s+t-\tau)$. We call \eqref{reversed} the backwards system associated with \eqref{lq}. In particular, given an LQ optimal control problem $(A,B,Q,T)$, its backwards counterpart is the LQ problem $(-A,B,Q,T)$.

We analyze the relation between the forward and the backwards Hamiltonian systems. 
\begin{lem}\label{backwards_flow}
Let $(A,B,Q,T)$ a LQ optimal control problem, and let $(-A,B,Q,T)$ be its backwards counterpart . The corresponding LQ backwards Hamiltonian $\tilde{H}:T^*\mathbb{R}^n(\simeq \R^{2n})\to\mathbb{R}$  is
 \begin{equation}
        \tilde{\mathcal{H}}(p,x) \coloneqq (p^*,x^*)\begin{pmatrix}
				BB^* & -A\\
				-A^* & Q
			\end{pmatrix}\begin{pmatrix}
				p\\
				x
			\end{pmatrix}=(p^*,x^*)\tilde{H}\begin{pmatrix}
				p\\
				x
			\end{pmatrix},
    \end{equation}
		 with Hamiltonian vector field $-\Omega\tilde{H}$. Consider the blocks $R_i:\mathbb{R}\to\mathbb{R}^{n\times n}, i=1,\dots,4,$ as in Definition \ref{blocks_R_i}. Then, the flow of the backwards Hamiltonian can be written as 
		\begin{equation}\label{back_flow}
			e^{-\tau\Omega\tilde{H}}=\begin{pmatrix}
				R_1(-\tau) & -R_2(-\tau)\\
				-R_3(-\tau) & R_4(-\tau)
			\end{pmatrix}, \quad \tau\in\mathbb{R}^n.
		\end{equation} 
	\end{lem}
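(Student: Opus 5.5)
The plan is to realize $\tilde H$ as a conjugation of $H$ by a fixed linear change of coordinates on $\mathbb{R}^{2n}$, and then transfer this conjugation to the exponentials. The natural candidate is $S=\begin{pmatrix}\mathbb{1}_n & \mathbb{0}_n\\ \mathbb{0}_n & -\mathbb{1}_n\end{pmatrix}$, i.e.\ $(p,x)\mapsto(p,-x)$. Note that $S=S^*=S^{-1}$.

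The first step is a block computation:
\begin{equation}
S H S=\begin{pmatrix} BB^* & -A\\ -A^* & Q\end{pmatrix}=\tilde H .
\end{equation}
This holds because conjugating by $S$ flips the sign of the off-diagonal blocks only. (If the factor $\tfrac12$ is kept in $\tilde{\mathcal H}$ nothing changes, since only the matrix $\tilde H$ enters the vector field $-\Omega\tilde H$.)

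The second step is the identity $S\Omega S=-\Omega$, which is again immediate blockwise: the off-diagonal blocks $\pm\mathbb{1}_n$ each get multiplied by $-1$. Combining the two identities with $S^2=\mathbb{1}_{2n}$ gives
\begin{equation}
\Omega\tilde H=\Omega S H S=-(S\Omega S)(SHS)\cdot(-1)\,(-1)=-S\,\Omega H\,S .
\end{equation}
More carefully, $\Omega S = -S\Omega$, hence $\Omega S H S=-S\Omega H S$.

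The third step exponentiates. Since conjugation commutes with the matrix exponential,
\begin{equation}
e^{-\tau\Omega\tilde H}=e^{\tau S\Omega H S}=S\,e^{\tau\Omega H}\,S=S\,e^{-(-\tau)\Omega H}\,S .
\end{equation}
By Definition~\ref{blocks_R_i}, the middle factor has blocks $R_i(-\tau)$, and conjugation by $S$ negates its two off-diagonal blocks. This yields exactly \eqref{back_flow}.

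No step is a genuine obstacle. The only point requiring care is the sign bookkeeping in $\Omega S=-S\Omega$, and I would double-check it by verifying the derivative at $\tau=0$: the derivative of the right-hand side of \eqref{back_flow} at $\tau=0$ must equal $-\Omega\tilde H$. Explicitly, $-\Omega\tilde H=\begin{pmatrix} A^* & -Q\\ BB^* & -A\end{pmatrix}$. This should match $-\Omega H=\begin{pmatrix} -A^* & -Q\\ BB^* & A\end{pmatrix}$ once it is evaluated at $-\tau$ and its off-diagonal signs are flipped, and it does.
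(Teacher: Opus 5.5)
Your proposal is correct and follows essentially the same route as the paper: the paper also writes $\tilde H = MHM$ with $M=\mathrm{diag}(\mathbb{1}_n,-\mathbb{1}_n)$, uses $M^2=\mathbb{1}_{2n}$ and $\Omega M=-M\Omega$, and exponentiates $-\tau\Omega\tilde H=\tau M\Omega H M$. Your derivative check at $\tau=0$ is a sound sanity test. The displayed chain for $\Omega\tilde H$ is awkwardly written, but your ``more carefully'' line states it correctly.
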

	\begin{proof}
		From explicit computation, we have that 
		\begin{equation}\label{relations_matrix_tilde_H_H}
		\tilde{H}=\begin{pmatrix}
			BB^* & -A\\
			-A^* & Q
		\end{pmatrix}= \begin{pmatrix}
			\mathbb{1}_n & \mathbb{0}_n\\
			\mathbb{0}_n & -\mathbb{1}_n
		\end{pmatrix}\begin{pmatrix}
			BB^* & A\\
			A^* & Q
		\end{pmatrix}\begin{pmatrix}
			\mathbb{1}_n & \mathbb{0}_n\\
			\mathbb{0}_n & -\mathbb{1}_n
		\end{pmatrix}=MHM,
		\end{equation}
		with 
		\begin{equation}\label{relations_matrix_M_Omega}
		M= \begin{pmatrix}
			\mathbb{1}_n & \mathbb{0}_n\\
			\mathbb{0}_n & -\mathbb{1}_n
		\end{pmatrix},\qquad M^2=\mathbb{1}_{2n},\qquad \Omega M=-M\Omega.
		\end{equation}
        Taking into account \eqref{relations_matrix_tilde_H_H} and \eqref{relations_matrix_M_Omega}, the thesis follows by direct exponentiation of $-\tau\Omega\tilde{H}=\tau M\Omega HM$, for every $\tau\in\mathbb{R}^n$.
	\end{proof}
    \begin{rem}\label{rmk_conj_times}
        In Theorem \ref{LQ_exponential}, we proved that a time $t>0$ is conjugate for the LQ problem if and only if the matrix $R_3(t)$ from Definition \ref{blocks_R_i} is singular.
       From Lemmas \ref{backwards_flow} and \ref{time_reflection_symmetry}, in the same way, we see that a time $t>0$ is a conjugate time for the backwards problem if and only if $-R_3(-t)=R_3^*(t)$ is not invertible. This implies that  any conjugate time of a LQ problem is a conjugate time for the corresponding backwards problem and vice versa.
       
	Lastly, we take an integral curve of the forward Hamiltonian $(p(\cdot),x(\cdot))$ with optimal control $u=B^*p$ and we consider the backwards trajectory $\tilde{x}(\cdot)=x(T-\cdot)$. Define $\tilde{u}=B^*\tilde{p}$ and $(\tilde{p}(\cdot),\tilde{x}(\cdot))$ the corresponding control and lift. We already observed that 
	\begin{equation}
		\tilde{u}(\tau)=-u(T-\tau),
	\end{equation}
	and, from the Hamiltonian equations, it is straightforward to see that also
	\begin{equation}
		\tilde{p}(\tau)=-p(T-\tau).
	\end{equation} 
    \end{rem}

	\subsection{The LQ cost function}
	We compute the family of LQ cost functions of Definition \ref{cost}. 
	\begin{thm}\label{comp_cost}
		Let $(A,B,Q,T)$ be a LQ optimal control problem  satisfying the Kalman condition, with $T<t^*$. Let 
		\begin{equation}
			e^{-\tau\Omega H}=\begin{pmatrix}
				R_1(\tau) & R_2(\tau)\\
				R_3(\tau) & R_4(\tau)
			\end{pmatrix},\qquad \tau\in\mathbb{R},
		\end{equation}
		be the associated Hamiltonian flow. Then, the LQ cost function is a homogeneous polynomial of degree $2$ of the form
		\begin{equation}\label{formula_cost_LQ}
			c(x,y)=\frac{x^*R_3(T)^{-1}R_4(T)x}{2}-x^*R_3(T)^{-1}y-\frac{y^*R_3^{-1}(-T)R_4(-T)y}{2},\qquad x,y\in\mathbb{R}^n.
		\end{equation}
	\end{thm}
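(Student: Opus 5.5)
The formula will come from evaluating the action along the unique optimal trajectory given by Theorem \ref{LQ_exponential}, using the standard identity for quadratic Hamiltonian systems: along an extremal, the action equals half the boundary term of the pairing $p^*x$.

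\textbf{Step 1: action along an extremal.} Let $(p(\cdot),x(\cdot))$ be the integral curve with $x(0)=x$ and $x(T)=y$, and take $u=B^*p$. The Hamiltonian equations give
\begin{equation}
\frac{d}{d\tau}\big(p^*x\big)=(-A^*p-Qx)^*x+p^*(BB^*p+Ax)=|B^*p|^2-x^*Qx .
\end{equation}
Since $|u|^2=|B^*p|^2$, integrating over $[0,T]$ yields
\begin{equation}
c(x,y)=\tfrac12\big(p(T)^*y-p(0)^*x\big).
\end{equation}

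\textbf{Step 2: express the covectors in terms of the endpoints.} By \eqref{LQ_inverse_exponential_formula}, $p(0)=R_3(T)^{-1}(y-R_4(T)x)$. For the final covector, run the flow backwards from $(p(T),y)$ for time $T$. Since $(e^{-T\Omega H})^{-1}=e^{T\Omega H}$, we get $x=R_3(-T)p(T)+R_4(-T)y$. The matrix $R_3(-T)=-R_3^*(T)$ is invertible by Lemma \ref{time_reflection_symmetry} and Remark \ref{rmk_conj_times}, so
\begin{equation}
p(T)=R_3(-T)^{-1}(x-R_4(-T)y).
\end{equation}

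\textbf{Step 3: substitute and simplify.} Substituting gives
\begin{equation}
2c=y^*R_3(-T)^{-*}x-y^*R_3(-T)^{-*}R_4(-T)^*y-x^*R_3(T)^{-*}y+x^*R_3(T)^{-*}R_4(T)^*x .
\end{equation}
Here one should note that $p(T)^*y=y^*p(T)$, so the transposes can be placed either way; the $yy$ term can be written as $y^*R_3(-T)^{-1}R_4(-T)y$ after symmetrization.

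By Lemma \ref{time_reflection_symmetry}, $R_3(-T)^{-*}=-R_3(T)^{-1}$, so the two mixed terms combine to $-2x^*R_3(T)^{-1}y$, using that a scalar equals its transpose. This gives the middle term of \eqref{formula_cost_LQ}.

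\textbf{Main obstacle: the quadratic terms.} The expected difficulty is showing that $R_3(T)^{-1}R_4(T)$ and $R_3(-T)^{-1}R_4(-T)$ are symmetric, so that the quadratic forms may be written without transposes. Only the symmetric part matters in a quadratic form, so the formula holds as stated regardless, but symmetry is needed to identify $C$ and $E$ as symmetric.

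I would derive symmetry from the symplecticity of $e^{-T\Omega H}$: the Lagrangian plane $\{x=0\}$ is mapped to a Lagrangian plane, which forces $R_3^*R_1$ to be symmetric, and similarly for the other block relations. Alternatively, $R_3(T)^{-1}R_4(T)$ is the matrix such that $-p(0)=\nabla_x c$ restricted to $y=0$; $c$ is a $C^2$ function, so this matrix is its Hessian and hence symmetric.

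With symmetry in hand, the $xx$ term equals $x^*R_3(T)^{-1}R_4(T)x$ and the $yy$ term equals $-y^*R_3^{-1}(-T)R_4(-T)y$, which gives \eqref{formula_cost_LQ}.
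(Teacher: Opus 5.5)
Your route is sound and genuinely different from the paper's. The paper first shows that $c$ is a quadratic form by expanding the action into the integrals $J,K,L$ (Lemma \ref{lem1}). It then uses $p=-\nabla_x c(x,y)$ (Lemma \ref{lem2}, whose proof is omitted) to read off $C=R_3^{-1}(T)R_4(T)$ and $D=-R_3^{-1}(T)$ by matching gradients, and repeats the gradient argument for the backwards problem to get $E$.

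You replace all of this with the identity $\frac{d}{d\tau}(p^*x)=|B^*p|^2-x^*Qx$, which gives $c(x,y)=\frac12\bigl(p(T)^*y-p(0)^*x\bigr)$, together with forward and backward inversion of the flow. The only structural input is $R_3(-T)^*=-R_3(T)$ from Lemma \ref{time_reflection_symmetry}. This is shorter and self-contained, and it applies verbatim to $c^{t,s}$.

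What the paper's route gives in addition is the relation $p=-\nabla_x c$ and the symmetry of $R_3^{-1}(T)R_4(T)$, both reused later (Theorem \ref{Brenier_McCann}, Lemma \ref{S_matrix}). On your route the symmetry needs the symplectic argument you sketch. Cleanly: the flow maps the graph $\{p=-R_3^{-1}(T)R_4(T)x\}$ onto the Lagrangian plane $\{x=0\}$, so that graph is Lagrangian. The gradient relation then follows by differentiating your formula.

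However, the Step 3 display is wrong and must be fixed. Since $p(T)^*=(x-R_4(-T)y)^*R_3(-T)^{-*}$, the transposes fall on whole products, not on individual factors. Your display equals
\[
y^*R_3(-T)^{-*}\bigl(x-R_4(-T)^*y\bigr)-x^*R_3(T)^{-*}\bigl(y-R_4(T)^*x\bigr),
\]
so it silently replaces $R_3^{-1}$ and $R_4$ by their transposes inside $p(0)$ and $p(T)$. Read literally, it gives the mixed term $-2x^*R_3(T)^{-*}y$ and the $xx$-term $x^*R_4(T)R_3(T)^{-1}x$. Neither matches \eqref{formula_cost_LQ} once $R_3(T)$ is not symmetric. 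For example, take $n=2$, $A$ the nilpotent Jordan block, $B=\mathbb{1}_2$, $Q=0$, $T=1$: then $R_3(1)$ has off-diagonal entries $\pm\frac12$.

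Neither of your proposed repairs works on that display. Symmetry of $R_3^{-1}(T)R_4(T)$ does not fix the $xx$-term, because $R_4(T)R_3(T)^{-1}$ is in general not symmetric and has a different symmetric part. And ``a scalar equals its transpose'' does not turn $x^*R_3(T)^{-*}y$ into $x^*R_3(T)^{-1}y$.

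The clean computation is $2c=y^*p(T)-x^*p(0)$, which gives directly
\[
2c=y^*R_3(-T)^{-1}x-y^*R_3(-T)^{-1}R_4(-T)y-x^*R_3(T)^{-1}y+x^*R_3(T)^{-1}R_4(T)x.
\]
Combine this with $y^*R_3(-T)^{-1}x=x^*R_3(-T)^{-*}y=-x^*R_3(T)^{-1}y$. The quadratic terms then already coincide with those in \eqref{formula_cost_LQ}. So your ``main obstacle'' is not an obstacle for this statement: symmetry is only needed to identify these matrices as the symmetric $C$ and $E$, not for the formula itself.
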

A similar expression was obtained in \cite{hinpomriff2011} for the case $-Q\geq0$. Our proof combines the forward and backwards problems to yield a neat and manageable formula. 
With the same argument, we deduce a neat formula also for the intermediate LQ costs. The proof is omitted.
	\begin{cor} In the same setting of Theorem \ref{comp_cost}, for $0\leq t<s\leq T$, the LQ intermediate cost functions are homogeneous polynomials of degree $2$ of the form
		\begin{equation}
			c^{t,s}(x,y)=\frac{x^*R_3^{-1}(s-t)R_4(s-t)x}{2}-x^*R_3^{-1}(s-t)y-\frac{y^*R_3^{-1}(t-s)R_4(t-s)y}{2},\qquad x,y\in\mathbb{R}^n.
		\end{equation}
	\end{cor}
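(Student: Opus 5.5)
The plan is to reduce the intermediate cost $c^{t,s}$ to the cost of a problem on $[0,s-t]$, and then apply Theorem \ref{comp_cost} with $T$ replaced by $s-t$.

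\textbf{Step 1: time-translation invariance.} The system \eqref{lq} and the Lagrangian $\frac12(|u|^2-x^*Qx)$ are autonomous. Hence the shift $\gamma\mapsto\gamma(\cdot+t)$ is a bijection between:
\begin{itemize}
\item admissible trajectories on $[t,s]$ joining $x$ and $y$;
\item admissible trajectories on $[0,s-t]$ joining $x$ and $y$.
\end{itemize}
This bijection maps controls to shifted controls and preserves the action. Therefore
\begin{equation}
c^{t,s}(x,y)=c^{0,s-t}(x,y), \qquad x,y\in\mathbb{R}^n.
\end{equation}

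\textbf{Step 2: the time $s-t$ is admissible.} Since $0<s-t\leq T<t^*$, the problem $(A,B,Q,s-t)$ still satisfies the Kalman condition and has no conjugate time in $(0,s-t]$. Thus Theorem \ref{LQ_exponential} and Theorem \ref{comp_cost} both apply verbatim with $T$ replaced by $\sigma:=s-t$. This gives
\begin{equation}
c^{0,\sigma}(x,y)=\frac{x^*R_3^{-1}(\sigma)R_4(\sigma)x}{2}-x^*R_3^{-1}(\sigma)y-\frac{y^*R_3^{-1}(-\sigma)R_4(-\sigma)y}{2}.
\end{equation}
Substituting $\sigma=s-t$ and $-\sigma=t-s$ yields exactly the claimed formula.

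\textbf{Step 3: invertibility and homogeneity.} Invertibility of $R_3(s-t)$ follows from the characterization of conjugate times in the proof of Theorem \ref{LQ_exponential}. Invertibility of $R_3(t-s)=-R_3^*(s-t)$ follows from Lemma \ref{time_reflection_symmetry} and Remark \ref{rmk_conj_times}. Homogeneity of degree $2$ is then evident from the formula.

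\textbf{Main obstacle.} Essentially none, once one is sure that the proof of Theorem \ref{comp_cost} used nothing about $T$ beyond $0<T<t^*$. The only point needing care is Step 1: one must verify that the relaxed action $\mathcal{A}^{t,s}$ on continuous curves matches under translation. This holds because admissibility is translation invariant, and non-admissible curves get $+\infty$ on both sides.
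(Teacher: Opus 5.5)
Your proposal is correct and is essentially what the paper intends. The paper omits this proof, noting only that it follows ``with the same argument'' as Theorem~\ref{comp_cost}, and your argument is exactly that: the identity $c^{t,s}=c^{0,s-t}$ from autonomy of the system and action, followed by applying Theorem~\ref{comp_cost} with horizon $s-t\in(0,T]\subset(0,t^*)$.
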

    \begin{rem}[The cost is not bounded from below and not symmetric]
	The LQ cost function is a quadratic form but not positive definite in general. Indeed, the LQ cost function may be:
    \begin{itemize}
        \item not globally bounded from below. E.g., for $n=1$ and $(A,B,Q,T)=(0,1,1,\pi/2)$, the Hamiltonian function is $\mathcal{H}(p,x)=(p^2+x^2)/2$ and the cost is $c(x,y)=-xy$,
        \item non-invariant upon exchanging $x$ with $y$, in presence of non-zero drift. E.g., for $n=1$ and $(A,B,Q,T)=(1,1,1,1)$, the Hamiltionian function is $\mathcal{H}(p,x)=(p+x)^2/2$ and the cost is $c(x,y)=x^2-xy$.
    \end{itemize}
    \end{rem}

In order to prove Theorem \ref{comp_cost} we need two lemmas.
	\begin{lem}\label{lem1}
		In the same setting of Theorem \ref{comp_cost}, there exist matrices $C,D,E\in\mathbb{R}^{n\times n}$, with $C=C^*$ and $E=E^*$, such that the LQ cost function can be written as
		\begin{equation}
			c(x,y)=\frac{x^*Cx}{2}+x^*Dy+\frac{y^*Ey}{2},\qquad x,y\in\mathbb{R}^n.
		\end{equation} 
	\end{lem}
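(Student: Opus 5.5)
To prove Lemma \ref{lem1}, I will show that $c$ is a quadratic form on $\mathbb{R}^{2n}$ in the joint variable $(x,y)$. Once that is known, every quadratic form on $\mathbb{R}^n\times\mathbb{R}^n$ can be written uniquely as $\frac12 x^*Cx+x^*Dy+\frac12 y^*Ey$ with $C,E$ symmetric, obtained by symmetrizing the diagonal blocks of the associated symmetric $2n\times 2n$ matrix. So the only real task is to establish the quadratic structure of $c$.

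I will compute $c(x,y)$ along the unique optimal trajectory given by Theorem \ref{LQ_exponential} and Remark \ref{formula_opt_traj}. Put $p_0=R_3^{-1}(T)(y-R_4(T)x)$, which is linear in $(x,y)$. By Theorem \ref{ham_LQ}, the optimal lift is
\begin{equation}
(p(\tau),x(\tau))^*=e^{-\tau\Omega H}(p_0,x)^*,
\end{equation}
with control $u(\tau)=B^*p(\tau)$. This lift depends linearly on $(x,y)$ for every $\tau$, with coefficients continuous in $\tau$. Hence both $u(\tau)$ and $x(\tau)$ are linear in $(x,y)$, uniformly on $[0,T]$.

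Next I substitute into the action. By Theorem \ref{LQ_exponential} the curve attains the infimum, so
\begin{equation}
c(x,y)=\frac12\int_0^T\big(|B^*p(\tau)|^2-x(\tau)^*Qx(\tau)\big)\,d\tau .
\end{equation}
The integrand is a quadratic form in $(x,y)$ whose coefficients are continuous in $\tau$. Integrating over $[0,T]$ therefore gives a quadratic form $\frac12 (x^*,y^*)\,\mathcal{M}\,(x^*,y^*)^*$ with $\mathcal{M}$ symmetric. Writing $\mathcal{M}$ in blocks $\begin{pmatrix} C & D\\ D^* & E\end{pmatrix}$ yields the claimed representation.

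The one step that carries any weight is that the minimizer exists, is unique, and coincides with the projected Hamiltonian curve for every pair $(x,y)$. Without this, $c$ would only be an infimum of quadratic functions and need not be quadratic. Theorem \ref{LQ_exponential}, which uses $T<t^*$ and the Kalman condition, supplies exactly this, so I expect no genuine obstacle.
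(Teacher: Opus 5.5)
Your proposal is correct and follows the paper's route: you write the unique optimal lift linearly in $(x,y)$ via $p_0=R_3^{-1}(T)(y-R_4(T)x)$, substitute it into the action, and obtain a quadratic form. The only difference is the symmetry of $C$ and $E$. You get it by symmetrizing the $2n\times 2n$ matrix of the form, whereas the paper reads off $E=E^*$ from $J=J^*$ and appeals to the backwards problem for $C$. Your symmetrization is simpler and fully sufficient for the statement.
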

	\begin{proof}
		Let $(p(\cdot),x(\cdot)):[0,T]\to \R^{2n}$ be the unique integral trajectory of $\overrightarrow{\mathcal{H}}$, whose projection is the optimal trajectory between $x$ and $y$. 
         In terms of the Hamiltonian flow, we have
        \begin{align}
        p(\tau) & = R_1(\tau) p + R_2(\tau) x,\\
        x(\tau) & = R_3(\tau) p + R_4(\tau)x.
        \end{align}
        Since $x(T)=y$, it holds
		\begin{equation}\label{inv_exp}
			p =R_3^{-1}(T)(y-R_4(T)x).
		\end{equation}
		Then, recalling that the corresponding control satisfies $u(\tau)=B^*p(\tau)$, we have 
		\begin{align}
			c(x,y) &=\frac{1}{2}\int_{0}^{T}\left(p^*(\tau)BB^*p(\tau)-x^*(\tau)Qx(\tau)\right)d\tau\\
			&=\frac{p^*Jp}{2}+x^*Kp+\frac{x^*Lx}{2},
		\end{align}
		where
		\begin{align}
			J&:=\int_{0}^{T}\left(R_1^*(\tau)BB^*R_1(\tau)-R_3^*(\tau)QR_3(\tau)\right)d\tau,\\
			K&:=\int_{0}^{T}\left(R_2^*(\tau)BB^*R_1(\tau)-R_4^*(\tau)QR_3(\tau)\right)d\tau,\\
			L&:=\int_{0}^{T}\left(R_2^*(\tau)BB^*R_2(\tau)-R_4^*(\tau)QR_4(\tau)\right)d\tau.
		\end{align}
		Further, using relation \eqref{inv_exp}, we get 
		\begin{equation}
			c(x,y)=\frac{x^*Cx}{2}+x^*Dy+\frac{y^*Ey}{2},\qquad x,y\in\mathbb{R}^n,
		\end{equation} 
		where 
		\begin{align}
			C&=R_4^*(T)R_3^{-*}(T)JR_3^{-1}(T)R_4(T)-2KR_3^{-1}(T)R_4(T)+L,\\
			D&=-R_4^*(T)R_3^{-*}(T)JR_3^{-1}(T)+KR_3^{-1}(T),\\
			E&=R_3^{-*}(T)JR_3^{-1}(T),
		\end{align}
		and $R^{-*}_3(T)$ stands for $(R_3^*(T))^{-1}$.
        
		We notice that $J=J^*$, which implies that $E=E^*$. Observe that the same computation for the backwards problem yields another formula for the same cost function (with the entries swapped). Namely, the backwards cost has the form
		\begin{equation}
			\tilde{c}(y,x)=\frac{y^*\tilde{C}y}{2}+y^*\tilde{D}x+\frac{x^*\tilde{E}x}{2},
		\end{equation}
		with $\tilde{E}=\tilde{E}^*$. Since $c(x,y)=\tilde{c}(y,x)$ for every $x,y\in\mathbb{R}^{n}$, we get $C=C^*$.
	\end{proof}
	Now that we have established the structure and regularity of the cost function, we make the matrices $C,D,E$ more explicit. The next Lemma is a version of \cite[Proposition 4.3]{Agrachev_2018}, adapted to the LQ framework. The proof is omitted.
	\begin{lem}\label{lem2}
Let $x,y\in\R^n$. The unique $p\in \R^n$ such that $y=\exp_{x,T}(p)$ can be obtained as
        \begin{equation}
            p=-\nabla_x c(x,y),
        \end{equation}
        where $\nabla_x$ denotes the Euclidean gradient w.r.t.\ the subscript variable.
 	\end{lem}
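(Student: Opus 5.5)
We prove Lemma \ref{lem2} with a first-variation argument: since $T<t^*$, the cost is a smooth function of the endpoints, and differentiating it along optimal trajectories gives the initial covector. Fix $y$ and let $x_\epsilon=x+\epsilon v$ for $v\in\R^n$. By Theorem \ref{LQ_exponential} and Remark \ref{formula_opt_traj}, the optimal trajectory $\gamma_\epsilon$ from $x_\epsilon$ to $y$ is the projection of the Hamiltonian curve with initial datum $(p_\epsilon,x_\epsilon)$, where $p_\epsilon=R_3^{-1}(T)(y-R_4(T)x_\epsilon)$. This curve depends affinely, hence smoothly, on $\epsilon$, with control $u_\epsilon=B^*p_\epsilon(\cdot)$. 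Therefore
\begin{equation}
c(x_\epsilon,y)=\frac12\int_0^T\left(|u_\epsilon|^2-\gamma_\epsilon^*Q\gamma_\epsilon\right)d\tau
\end{equation}
is differentiable in $\epsilon$. We may also use Lemma \ref{lem1}, which shows directly that $c$ is a smooth quadratic polynomial.

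The key step is to compute $\frac{d}{d\epsilon}\big|_{\epsilon=0}c(x_\epsilon,y)$. Set $\delta x=\partial_\epsilon\gamma_\epsilon|_{0}$ and $\delta u=\partial_\epsilon u_\epsilon|_0$. These satisfy the linearized dynamics $\dot{\delta x}=A\delta x+B\delta u$, with $\delta x(0)=v$ and $\delta x(T)=0$. The derivative equals
\begin{equation}
\int_0^T\left(u^*\delta u-x^*Q\delta x\right)d\tau,
\end{equation}
where $u=B^*p$. Substituting $u^*\delta u=p^*B\delta u=p^*(\dot{\delta x}-A\delta x)$ and using the costate equation $\dot p=-A^*p-Qx$, the integrand becomes $p^*\dot{\delta x}+\dot p^*\delta x=\frac{d}{d\tau}(p^*\delta x)$. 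Integrating gives
\begin{equation}
p(T)^*\delta x(T)-p(0)^*\delta x(0)=-p^*v.
\end{equation}
Hence $\nabla_x c(x,y)=-p$, where $p$ is the unique covector with $\exp_{x,T}(p)=y$, which is the claim.

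The main obstacle is justifying the first-variation formula: differentiating under the integral and identifying the variation of the control. Here this is harmless, because everything is affine in $\epsilon$ through the explicit flow matrices $R_i$, so no envelope theorem or regularity of minimizers is needed. As a consistency check, one can instead differentiate the explicit quadratic form of Lemma \ref{lem1}: $\nabla_x c=Cx+Dy$. Comparing with $-R_3^{-1}(T)(y-R_4(T)x)$ then pins down $C=R_3^{-1}(T)R_4(T)$ and $D=-R_3^{-1}(T)$, which is presumably how the lemma feeds into Theorem \ref{comp_cost}.
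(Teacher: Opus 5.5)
Your proof is correct. The paper gives no argument of its own: it omits the proof and points to Proposition 4.3 of Agrachev--Barilari--Rizzi, which covers general (nonlinear) control problems.

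In that generality the standard route is an envelope/Lagrange-multiplier argument. The action of any control dominates the cost between the endpoints it produces, with equality at the minimizer. So at a point where $c(\cdot,y)$ is differentiable, its gradient gives a normal Lagrange multiplier for the endpoint map, which is then identified with the covector of the extremal.

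You instead differentiate the exact identity $c(x_\epsilon,y)=\mathcal{A}(\gamma_\epsilon)$ along the explicit family of extremals obtained by applying $e^{-\tau\Omega H}$ to $(p_\epsilon,x_\epsilon)$. You then integrate by parts with the costate equation, which is the classical Hamilton--Jacobi computation.

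Your route is self-contained and elementary in the LQ setting. It needs no multiplier theory and no envelope inequality, and smoothness of $c$ comes out of the same formula. It does use that minimizers depend smoothly (here affinely) on the endpoints, which is exactly what invertibility of $R_3(T)$ for $T<t^*$ provides. The general argument needs only differentiability of $c(\cdot,y)$ at the single point $x$, and it still works when minimizers do not form a smooth family.

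Your computation also has a side benefit. Varying the final point instead, with $\delta x(0)=0$ and $\delta x(T)=w$, gives $\nabla_y c(x,y)=p(T)$ directly, hence $E=R_1(T)R_3^{-1}(T)$. This agrees with $-R_3^{-1}(-T)R_4(-T)$ by Lemma \ref{time_reflection_symmetry} and the symmetry of $E$. It avoids the detour through the backwards problem used in the proof of Theorem \ref{comp_cost}.
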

\begin{proof}[Proof of Theorem \ref{comp_cost}]
    Recall that by Lemma \ref{lem1} we have
    \begin{equation}
		c(x,y)=\frac{x^*Cx}{2}+x^*Dy+\frac{y^*Ey}{2}.
	\end{equation} 
    We compute $C,D,E$ in terms of the LQ Hamiltonian flow. By Lemma \ref{lem2}, and \eqref{inv_exp}  we have
\begin{equation}
    R_3^{-1}(T)(y-R_4(T)x) = -\nabla_x c(x,y) = -(Cx+Dy).
\end{equation}
 By comparing term by term, it follows that 
	\begin{align}
		C=R_3^{-1}(T)R_4(T), \qquad \text{and} \qquad
		D=-R_3^{-1}(T).
	\end{align}
     To obtain an expression for $E$, recall that $c(x,y) = \tilde{c}(y,x)$ where $\tilde{c}$ is the cost of the backwards LQ problem.        Thus, arguing as above but for the backwards problem, we obtain
    \begin{equation}
            -\nabla_y \tilde{c}(y,x) = -\nabla_y c(x,y)= -(Dx + Ey).
    \end{equation}
        Using the relation between forward and backwards Hamiltonian flow of Lemma \ref{backwards_flow}, we get
    \begin{equation}
        E = -R_3^{-1}(-T)R_4(-T),
    \end{equation}
    concluding the proof.
\end{proof}

\section{Optimal transport with linear quadratic cost}

\subsection{Monge and Kantorovich problems}
Given a lower semi-continuous function $c:\R^n \times \R^n \to\mathbb{R}$ (called cost function) and Borel probability measures $\mu,\nu\in P(\R^n)$, the Monge problem of optimal transport from $\mu$ to $\nu$ is the problem of minimizing
\begin{equation}\label{Monge}
	C_{\mathcal{M}}(T)=\int_{\R^n}c(x,T(x))d\mu(x),
\end{equation}
among all measurable maps $T:\mathbb{R}^n\to \mathbb{R}^n$, such that $T_{\sharp}\mu=\nu$, i.e.\ $\mu(T^{-1}(B))=\nu(B)$ for all Borel sets $B\subset \R^n$. Any such $T$ is called a transport map, while any map $T$ that realizes the minimum in \eqref{Monge} is called an optimal transport map.
We define the total transport cost as  
\begin{equation}\label{trans_cost}
	C_{\mathcal{M}}(\mu,\nu):=\inf \left\{C_{\mathcal{M}}(T)\mid T \text{ measurable, } T_{\sharp}\mu=\nu\right\}.
\end{equation}
The set of transport maps may be empty and, even when transport maps do exist, the constraint $T_{\sharp}\mu=\nu$ is non-linear, which makes difficult to solve directly the Monge problem from an optimization viewpoint. For these reasons, one considers instead  the relaxed version of  \eqref{Monge}, called Kantorovich problem, namely the minimization of
	\begin{equation}\label{Kantorovich}
		C_{\mathcal{K}}(\alpha)=\int_{\R^n \times \R^n}c(x,y)d\alpha(x,y),
	\end{equation} 
	among all transport plans $\alpha$, namely $\alpha \in P(\R^n \times \R^n)$ such that $\pi_{1\sharp}\alpha=\mu,\pi_{2\sharp}\alpha=\nu$.
	Any trasport map $T$ induces an associated transport plan $\alpha_T=(id,T)_{\sharp}\mu$, which is concentrated on the graph of $T$. Conversely, if a transport plan is concentrated on a graph, then it is induced by a measurable map.
	Similarly to \eqref{trans_cost}, we define the Kantorovich total transport cost as 
	\begin{equation}
		C_{\mathcal{K}}(\mu,\nu):=\inf \left\{C_{\mathcal{K}}(\alpha)\mid \alpha\in P(\mathcal{X}\times \mathcal{Y}),\pi_{1\sharp}\alpha=\mu,\pi_{2\sharp}\alpha=\nu\right\}.
	\end{equation}
	Since $C_{\mathcal{K}}(\alpha_T)=C_{\mathcal{M}}(T)$ for all transport maps $T$, we have
	\begin{equation}
		C_{\mathcal{K}}(\mu,\nu)\leq C_{\mathcal{M}}(\mu,\nu).
	\end{equation}

\subsection{Cyclical monotonicity and potentials}
           The supports of optimal transport plans satisfy a geometric condition, that ensures the impossibility of lowering the cost, by relocating the mass.
        \begin{Def}[$c$-cyclical monotonicity]
			Let $c:\R^n\times \R^n \to \R$. A set $S\subset \R^n \times \R^n$ is $c$-cyclically monotone if, for every $N\in\mathbb{N}$, for any points $\left\{(x_1,y_1),\dots,(x_N,y_N)\right\} \subset S$ and any permutation $\sigma\in S_N$, we have 
			\begin{equation}
				\sum_{i=1}^{N}c(x_i,y_i)\leq\sum_{i=1}^{N}c(x_{\sigma(i)},y_i).
			\end{equation}
		\end{Def}

An important property of $c$-cyclically monotone sets is that they are realized as contact sets of a suitable pair of functions, called Kantorovich potentials. 
	We start by the following definition.
	\begin{Def}[$c$-convexity and $c$-concavity]\label{c-convexity}
		Let $c:\mathbb{R}^n\times\mathbb{R}^n\to\mathbb{R}$.
		A function $\psi:\mathbb{R}^n\to\mathbb{R}\cup\left\{+\infty\right\}$, not identically $+\infty$, is said to be $c$-convex if there is a non-empty set $\mathcal{A}\subset\mathbb{R}^n\times\mathbb{R}$, such that
		\begin{equation}\label{def_c_convex}
			\psi(x)=\sup\left\{\lambda-c(x,y)\;|\;(y,\lambda)\in\mathcal{A}\right\},\qquad\forall x\in\mathbb{R}^n.
		\end{equation}
		The $c$-transform of $\psi$, denoted by $\psi^{c+}$, is the function $\psi^{c+}:\mathbb{R}^n\to\mathbb{R}\cup\left\{-\infty\right\}$ defined by 
		\begin{equation}
			\psi^{c+}(y):=\inf\{\psi(x)+c(x,y)\mid x\in\mathbb{R}^n\},\qquad\forall y\in\mathbb{R}^n.
		\end{equation}
		 
	Similarly, a function $\phi:\mathbb{R}^n\to\mathbb{R}\cup\{-\infty\}$, not identically $-\infty$, is said to be $c$-concave if there exists a non-empty set $\mathcal{B}\subset\mathbb{R}^n\times\mathbb{R}$, such that
	\begin{equation}
		\phi(y)=\inf\left\{\lambda+c(x,y)\;|\;(x,\lambda)\in\mathcal{B}\right\},\qquad\forall y\in\mathbb{R}^n.
	\end{equation}
	In such case, the inverse $c$-transform $\phi^{c-}$ of $\phi$ is defined as 
	\begin{equation}
		\phi^{c-}(x):=\sup\{\phi(y)-c(x,y)\mid y\in\mathbb{R}^n\},\qquad\forall x\in\mathbb{R}^n.
	\end{equation}
	A pair $\left(\psi,\psi^{c+}\right)$ or a pair $(\phi^{c-},\phi)$ is called a pair of $c$-potentials.
	\end{Def}
	\begin{rem}
		The $c$-transform of a $c$-convex function is $c$-concave, provided that it is not identically $-\infty$. Conversely, the inverse $c$-transform of a $c$-concave function is $c$-convex, if it is not identically $+\infty$.
	\end{rem}
A $c$-convex (resp.\ $c$-concave) function can be reconstructed from its $c$-transform (resp.\ inverse $c$-transform), see, for instance, \cite[Proposition 3.5]{RiffSrOptTrans}.

 	\begin{prop}\label{prop_c_concavity}
		Given a $c$-convex function $\psi$, it holds
		\begin{equation}\label{recover_c_convex}
			\psi(x)=\sup\{\psi^{c+}(y)-c(x,y)\mid  y\in\mathbb{R}^n\},\qquad\forall x\in\mathbb{R}^n.
		\end{equation}
		Similarly, given a $c$-concave function $\phi$, it holds
		\begin{equation}\label{recover_c_concave}
			\phi(y)=\inf\{\phi^{c-}(x)+c(x,y)\mid  x\in\mathbb{R}^n\},\qquad\forall y\in\mathbb{R}^n.
		\end{equation}
	\end{prop}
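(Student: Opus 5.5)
The plan is to prove both identities by a standard double inequality. For \eqref{recover_c_convex}, the first step is the inequality ``$\geq$''. By definition of the $c$-transform, for every $x,y\in\mathbb{R}^n$ we have $\psi^{c+}(y)\leq\psi(x)+c(x,y)$, whenever $\psi(x)<+\infty$. Rearranging gives $\psi^{c+}(y)-c(x,y)\leq\psi(x)$, and taking the supremum over $y$ yields
\[
\sup_y\{\psi^{c+}(y)-c(x,y)\}\leq\psi(x).
\]
When $\psi(x)=+\infty$ this inequality is trivial. Here we use that $c$ is real-valued, so no $\infty-\infty$ ambiguity arises.

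The second step is the reverse inequality, and it is where $c$-convexity enters. Fix a set $\mathcal{A}$ representing $\psi$ as in \eqref{def_c_convex}, and take any $(y,\lambda)\in\mathcal{A}$. For every $x'$ the definition gives $\psi(x')\geq\lambda-c(x',y)$, that is, $\psi(x')+c(x',y)\geq\lambda$. Taking the infimum over $x'$ gives $\psi^{c+}(y)\geq\lambda$. Therefore
\[
\psi^{c+}(y)-c(x,y)\geq\lambda-c(x,y),
\]
and taking the supremum over $(y,\lambda)\in\mathcal{A}$ shows that $\sup_y\{\psi^{c+}(y)-c(x,y)\}\geq\psi(x)$. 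This bound holds even when $\psi(x)=+\infty$, since the supremum over $\mathcal{A}$ is then infinite.

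The statement \eqref{recover_c_concave} for $c$-concave $\phi$ follows by the symmetric argument, with the roles of sup/inf and of the two variables exchanged. For ``$\leq$'', we have $\phi^{c-}(x)\geq\phi(y)-c(x,y)$, hence $\phi^{c-}(x)+c(x,y)\geq\phi(y)$, and we take the infimum over $x$. For ``$\geq$'', we pick $(x,\lambda)\in\mathcal{B}$. Then $\phi(y')-c(x,y')\leq\lambda$ for all $y'$, so $\phi^{c-}(x)\leq\lambda$, and we take the infimum over $\mathcal{B}$.

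There is no real obstacle; the only care needed is the bookkeeping of extended values. One must check that $\psi^{c+}$ may take the value $-\infty$ without invalidating the inequalities, and that the step $\psi^{c+}(y)\geq\lambda$ holds for every $y$ appearing in $\mathcal{A}$. Both points follow directly from the definitions above.
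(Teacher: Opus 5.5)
Your proof is correct: $\sup_y\{\psi^{c+}(y)-c(x,y)\}\le\psi(x)$ holds for any $\psi$, and the reverse inequality follows from $\psi^{c+}(y)\ge\lambda$ for $(y,\lambda)\in\mathcal{A}$, with the extended values handled correctly (and symmetrically for $\phi$ via $\mathcal{B}$). The paper gives no proof of its own and defers to \cite[Proposition 3.5]{RiffSrOptTrans}; your double-inequality argument is the standard one for this fact.
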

	If two functions are one the $c$-transform of the other, we call them $c$-conjugate and we always indicate with a superscript ``$c$'' the $c$-concave one of the pair, dropping the notation $\psi^{c+}$, $\phi^{c-}$ when there is no risk of confusion.
	\begin{Def}
		Let $\psi:\mathbb{R}^n\to\mathbb{R}\cup\left\{+\infty\right\}$ be a $c$-convex function. For every $x\in\mathbb{R}^n$, the $c$-subdifferential of $\psi$ at $x$ is
		\begin{equation}
			\partial_c\psi(x):=\{y\in\mathbb{R}^n\;|\;\psi^c(y)=c(x,y)+\psi(x)\}.
		\end{equation}
		We define the contact set of the pair $\left(\psi,\psi^c\right)$ as
		\begin{equation}
			\partial_c\psi:=\{(x,y)\in\mathbb{R}^n\times\mathbb{R}^n \mid y\in\partial_c\psi(x)\}.
		\end{equation}
	\end{Def}
	\begin{rem}\label{charach_contact_set}
		A pair $(x,y)\in\mathbb{R}^n\times\mathbb{R}^n$ belongs to the contact set $\partial_c\psi$ if and only if
		\begin{equation}
			\psi(x)+c(x,y)\leq\psi(z)+c(z,y),\qquad\forall z\in\mathbb{R}^n,
		\end{equation}
		or, equivalently,
		\begin{equation}
			\psi^c(y)-c(x,y)\geq\psi^c(w)-c(x,w),\qquad\forall w\in\mathbb{R}^n.
		\end{equation}
		In particular, both $\psi(x)$ and $\psi^c(y)$ are finite for $(x,y)\in \partial_c\psi$.
	\end{rem}

        \subsection{The case of quadratic costs}

                We will consider the above problems for the case of LQ optimal control problems. Since the LQ cost is a quadratic homogeneous polynomial, the correct framework for studying the LQ optimal transport problem is the Wasserstein space of order $2$. We recall it in the following definition (see \cite[Chapter 6]{Villanioldandnew} or \cite[Section 3]{Usersguide}). 
    \begin{Def}[Wasserstein distance]
		For $\mu,\nu\in P(\mathbb{R}^n)$, define
		\begin{equation}
			W_2(\mu,\nu):=\left(\inf\limits_{\{\alpha\in P(\mathbb{R}^{2n}),\pi_{1\sharp}\alpha=\mu,\pi_{2\sharp}\alpha=\nu\}}\int_{\mathbb{R}^{n}\times \R^n}|x-y|^2d\alpha(x,y)\right)^{1/2}.
		\end{equation}
		On the set of probability measures with finite second momentum, that is
		\begin{equation}
			P_2(\mathbb{R}^n):=\left\{\mu\in P(\mathbb{R}^n)\;\bigg|\;\int_{\mathbb{R}^n}|x-x_0|^2d\mu(x)<+\infty\right\},
		\end{equation}
        for some (and then any) $x_0\in\R^n$. The function $W_2$ is a distance, giving it the structure of a complete separable metric space, called the Wasserstein space of order $2$.
	\end{Def}
        Note that, if $\mu,\nu\in P_2(\mathbb{R}^n)$, then any transport plan $\alpha$ between them is in $ P_2(\mathbb{R}^{n}\times \R^n)$.
        

        The following result is a special case of \cite[Theorem 4.1]{Villanioldandnew}.
		\begin{thm}\label{existence_optimal_plans}
		Let $c:\mathbb{R}^n\times\mathbb{R}^n\to\mathbb{R}$ be a lower semi-continuous function satisfying 
		\begin{equation}\label{hyp_quadratic}
			c(x,y)\geq -\Lambda(|x|^2+|y|^2),\qquad\forall(x,y)\in\mathbb{R}^n\times\mathbb{R}^n,
		\end{equation}
		for some $\Lambda>0$. Let $\mu,\nu\in P_2(\mathbb{R}^n)$ and define the set of  admissible transport plans as 
		\begin{equation}
			\Adm(\mu,\nu):=\{\alpha\in P(\mathbb{R}^n\times\mathbb{R}^n)\mid  \pi_{1\sharp}\alpha=\mu,\pi_{2\sharp}\alpha=\nu\}.
		\end{equation}  
		Then there exists a solution for the Kantorovich problem, namely $\tilde{\alpha}\in\Adm(\mu,\nu)$ such that
		\begin{equation}
			C_{\mathcal{K}}(\tilde{\alpha})=\min\left\{C_{\mathcal{K}}(\alpha)\mid \alpha\in\Adm(\mu,\nu)\right\}=C_{\mathcal{M}}(\mu,\nu).
		\end{equation}
		\end{thm}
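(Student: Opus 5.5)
The plan is the direct method in the weak topology on $\Adm(\mu,\nu)$. The lower bound \eqref{hyp_quadratic} together with $\mu,\nu\in P_2(\mathbb{R}^n)$ will supply both the finiteness of the infimum and the uniform integrability needed to pass to the limit. The statement writes $C_{\mathcal{M}}(\mu,\nu)$ on the right-hand side. I read this as the Kantorovich value $C_{\mathcal{K}}(\mu,\nu)$, the infimum over plans, since equality with the Monge value is not part of what can be proved at this generality.

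\textbf{Step 1: the infimum is finite and $\Adm(\mu,\nu)$ is compact.} Set $M_2:=\int|x|^2d\mu+\int|y|^2d\nu<\infty$.
\begin{itemize}
\item For every $\alpha\in\Adm(\mu,\nu)$ the negative part of $c$ is integrable, since $c_-\le\Lambda(|x|^2+|y|^2)$ and $\int(|x|^2+|y|^2)\,d\alpha=M_2$. Hence $C_{\mathcal{K}}(\alpha)\in(-\infty,+\infty]$ is well defined and $C_{\mathcal{K}}(\alpha)\ge-\Lambda M_2$.
\item The product plan $\mu\otimes\nu$ shows $\Adm(\mu,\nu)\neq\emptyset$. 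In the LQ case $c$ is quadratic, so $\mu\otimes\nu$ even has finite cost. In general, if every plan has infinite cost, any plan is optimal.
\item $\Adm(\mu,\nu)$ is tight, because the marginals are tight: $\alpha\big((K_1\times K_2)^c\big)\le\mu(K_1^c)+\nu(K_2^c)$.
\item $\Adm(\mu,\nu)$ is weakly closed, since the marginal constraints pass to weak limits. By Prokhorov it is therefore weakly compact.
\end{itemize}

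\textbf{Step 2: lower semicontinuity of $C_{\mathcal{K}}$ on $\Adm(\mu,\nu)$.} This is the main obstacle, because $c$ is unbounded below. Write
\[
c=\tilde c-\Lambda(|x|^2+|y|^2),\qquad \tilde c\ge 0 \text{ lower semicontinuous}.
\]
\begin{itemize}
\item The integral of $\Lambda(|x|^2+|y|^2)$ against any $\alpha\in\Adm(\mu,\nu)$ equals $\Lambda M_2$, a constant depending only on the marginals. So this term is harmless.
\item For $\tilde c$, approximate it from below by an increasing sequence of bounded continuous functions $\tilde c_k\uparrow\tilde c$, for instance $\tilde c_k(z)=\min\big(k,\inf_w\{\tilde c(w)+k|z-w|\}\big)$.
\item Each $\alpha\mapsto\int\tilde c_k\,d\alpha$ is weakly continuous. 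By monotone convergence $\int\tilde c\,d\alpha=\sup_k\int\tilde c_k\,d\alpha$, which is a supremum of continuous functionals and hence lower semicontinuous.
\end{itemize}

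\textbf{Step 3: conclusion.} Take a minimizing sequence $\alpha_j$ in $\Adm(\mu,\nu)$. By Step 1 a subsequence converges weakly to some $\tilde\alpha\in\Adm(\mu,\nu)$. By Step 2,
\[
C_{\mathcal{K}}(\tilde\alpha)\le\liminf_j C_{\mathcal{K}}(\alpha_j)=\inf_{\Adm(\mu,\nu)}C_{\mathcal{K}},
\]
so $\tilde\alpha$ is optimal.
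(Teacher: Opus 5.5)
Your proof is correct and is essentially the argument behind \cite[Theorem 4.1]{Villanioldandnew}, which the paper invokes instead of giving a proof: you get weak compactness of $\Adm(\mu,\nu)$ from tightness of the marginals and Prokhorov. You get lower semicontinuity of $\alpha\mapsto\int c\,d\alpha$ by subtracting the lower bound $-\Lambda(|x|^2+|y|^2)$, whose integral is the same for every plan, and approximating the nonnegative lower semicontinuous remainder from below by bounded continuous functions.

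You are also right to read $C_{\mathcal{M}}(\mu,\nu)$ as the Kantorovich value $C_{\mathcal{K}}(\mu,\nu)$. The cited theorem only yields the Kantorovich minimum, and equality with the Monge value fails at this generality: if $\mu$ is a Dirac mass and $\nu$ is not, no transport map exists and $C_{\mathcal{M}}(\mu,\nu)=+\infty$.
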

        As an application, the Kantorovich problem for the LQ cost admits solutions in $P_2(\mathbb{R}^n)$. 
		\begin{cor}\label{finite_total_cost}
			Let $(A,B,Q,T)$ be a LQ optimal control problem satisfying the Kalman condition, with $T<t^*$. Then, for all $\mu,\nu\in P_2(\mathbb{R}^n)$, the Kantorovich total transport cost $C_{\mathcal{K}}(\mu,\nu)$, associated with the LQ cost, is finite and the Kantorovich problem admits solutions.
		\end{cor}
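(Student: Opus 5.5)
We plan to apply Theorem \ref{existence_optimal_plans} to the LQ cost $c=c^{0,T}$, which requires checking two things: lower semi-continuity and the quadratic lower bound \eqref{hyp_quadratic}. Both follow from Theorem \ref{comp_cost}. Since $T<t^*$ and the Kalman condition holds, $c$ is the finite homogeneous quadratic polynomial
\begin{equation}
c(x,y)=\frac{x^*Cx}{2}+x^*Dy+\frac{y^*Ey}{2},
\end{equation}
with $C=R_3^{-1}(T)R_4(T)$, $D=-R_3^{-1}(T)$ and $E=-R_3^{-1}(-T)R_4(-T)$. In particular $c$ is real-valued and continuous on $\mathbb{R}^n\times\mathbb{R}^n$, hence lower semi-continuous.

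For the lower bound, set $M:=\max\{\|C\|,\|D\|,\|E\|\}$ (operator norms). By Cauchy--Schwarz and Young's inequality $|x^*Dy|\leq \|D\|(|x|^2+|y|^2)/2$, so
\begin{equation}
|c(x,y)|\leq \frac{M}{2}|x|^2+\frac{M}{2}(|x|^2+|y|^2)+\frac{M}{2}|y|^2 = M(|x|^2+|y|^2).
\end{equation}
This gives \eqref{hyp_quadratic} with any $\Lambda\geq \max\{M,1\}>0$. Theorem \ref{existence_optimal_plans} then provides an optimal plan $\tilde\alpha\in\Adm(\mu,\nu)$.

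It remains to show that $C_{\mathcal K}(\mu,\nu)$ is finite, and the same two-sided estimate does this. For any $\alpha\in\Adm(\mu,\nu)$,
\begin{equation}
\int|c|\,d\alpha\leq \Lambda\int(|x|^2+|y|^2)\,d\alpha=\Lambda\left(\int|x|^2d\mu+\int|y|^2d\nu\right)<+\infty,
\end{equation}
because $\mu,\nu\in P_2(\mathbb{R}^n)$. So $c\in L^1(\alpha)$ and $C_{\mathcal K}(\alpha)$ is a finite real number for every admissible plan. The set $\Adm(\mu,\nu)$ is nonempty, since it contains $\mu\otimes\nu$, so the infimum is not $+\infty$. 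It is also bounded below by $-\Lambda(\int|x|^2d\mu+\int|y|^2d\nu)>-\infty$. Hence $C_{\mathcal K}(\mu,\nu)$ is finite and attained by $\tilde\alpha$.

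The only step needing care is the lower bound: unlike the case $-Q\geq0$, $c$ is not bounded below, so a merely quadratic lower bound is all one can hope for. That bound is exactly what the $P_2$ assumption compensates. Everything else is a direct application of Theorem \ref{existence_optimal_plans}.
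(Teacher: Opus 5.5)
Your proposal is correct and follows the paper's proof: you use Theorem \ref{comp_cost} to obtain the two-sided quadratic bound $|c(x,y)|\leq \Lambda(|x|^2+|y|^2)$, then use the upper bound for finiteness and the lower bound to invoke Theorem \ref{existence_optimal_plans}. You also spell out two details the paper leaves implicit: $\Adm(\mu,\nu)$ is nonempty because it contains $\mu\otimes\nu$, and the infimum is bounded below.
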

		\begin{proof}
            By Theorem \ref{comp_cost}, the cost $c=c^{0,T}$ is a quadratic homogeneous polynomial. In particular, there exists $\Lambda>0$ such that 
			\begin{equation}\label{quadratic_bound_LQ_cost}
				-\Lambda (|x|^2+|y|^2) \leq c(x,y) \leq \Lambda (|x|^2+|y|^2),\qquad \forall x,y\in\R^n.
			\end{equation}
            The upper bound implies that the total transport cost $C_{\mathcal{K}}(\mu,\nu)$ is finite. The lower bound, using Theorem \ref{existence_optimal_plans}, implies the existence of an optimal transport plan.
		\end{proof}

        The next result clarifies the relation between $c$-cyclical monotonicity, $c$-convex functions and optimal plans. Thanks to Corollary \ref{finite_total_cost}, it can be applied to the case of LQ costs.  The proof of Theorem \ref{supp_opt_plans} is classical (see e.g. \cite[Theorem 5.10]{Villanioldandnew}) and we omit it.

    \begin{thm}\label{supp_opt_plans} Let $c:\mathbb{R}^n\times\mathbb{R}^n\to\mathbb{R}$ be a continuous function satisfying 
		\begin{equation}\label{hp_quadraticity_cost}
			c(x,y)\geq -\Lambda(|x|^2+|y|^2),\qquad\forall x,y \in\mathbb{R}^n,
		\end{equation}
		and consider $\mu,\nu\in P_2(\mathbb{R}^n)$, such that $C_{\mathcal{K}}(\mu,\nu)<+\infty$. Then, for every $\alpha\in\Adm(\mu,\nu)$, the following are equivalent
		\begin{enumerate}
			\item $\alpha$ is optimal,
			\item $\alpha$ is concentrated on a $c$-cyclically monotone set,
                \item there exists a $c$-convex function $\psi:\mathbb{R}^n\to\mathbb{R}\cup\{+\infty\}$ such that $\alpha(\partial_c\psi)=1$.
		\end{enumerate}
       Since $c$ is continuous, any optimal plan $\alpha$ is actually supported on a $c$-cyclically monotone set.
       \end{thm}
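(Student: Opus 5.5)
We prove the chain of implications $(1)\Rightarrow(2)\Rightarrow(3)\Rightarrow(1)$, following the classical scheme of \cite[Theorem 5.10]{Villanioldandnew}. The quadratic lower bound \eqref{hp_quadraticity_cost}, together with $\mu,\nu\in P_2(\mathbb{R}^n)$, replaces the usual assumption that the cost is bounded below. Throughout, set $h(x,y):=\Lambda(|x|^2+|y|^2)$. Then $c+h\geq 0$. Moreover, every $\alpha\in\Adm(\mu,\nu)$ satisfies $\int h\,d\alpha=\Lambda(m_2(\mu)+m_2(\nu))<+\infty$, where $m_2$ denotes the second moment. Hence $\int c\,d\alpha$ is well defined in $(-\infty,+\infty]$, and comparing costs of plans is the same as comparing the integrals of the nonnegative continuous cost $c+h$, up to a constant that is independent of $\alpha$.

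\emph{Step 1: $(1)\Rightarrow(2)$.} Suppose $\alpha$ is optimal, and assume by contradiction that $\supp\alpha$ is not $c$-cyclically monotone. Then there exist points $(x_i,y_i)\in\supp\alpha$, $i=1,\dots,N$, and a permutation $\sigma$ such that $\sum_i c(x_{\sigma(i)},y_i)<\sum_i c(x_i,y_i)-\varepsilon$. Because $c$ is continuous, this strict inequality persists on small neighbourhoods $U_i\times V_i$ of the points $(x_i,y_i)$, and each such neighbourhood has positive $\alpha$-mass. Next, construct a competitor plan in the standard way:
\begin{itemize}
\item remove a small amount of mass $\frac{\delta}{N}\,\alpha|_{U_i\times V_i}/\alpha(U_i\times V_i)$ from each neighbourhood;
\item reinsert it as product measures that couple the first marginals of the $U_{\sigma(i)}$-pieces with the second marginals of the $V_i$-pieces.
\end{itemize}
The new plan has the same marginals as $\alpha$, and its cost is strictly smaller, which contradicts optimality. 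All the integrals involved are finite because $c$ is bounded on compact sets. Since $c$ is continuous, the same argument shows that the closed set $\supp\alpha$ itself is $c$-cyclically monotone. This gives the final sentence of the statement.

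\emph{Step 2: $(2)\Rightarrow(3)$.} This is Rockafellar's construction. Let $\Gamma$ be a $c$-cyclically monotone set carrying $\alpha$, and fix $(x_0,y_0)\in\Gamma$. Define
\begin{equation}
\psi(x):=\sup\Big\{\sum_{i=0}^{k}\big(c(x_i,y_i)-c(x_{i+1},y_i)\big)\Big\},
\end{equation}
where the supremum runs over all $k\in\mathbb{N}$ and all $(x_i,y_i)\in\Gamma$, and we set $x_{k+1}=x$. By construction $\psi$ is a supremum of functions of the form $\lambda-c(x,y)$, so it is $c$-convex once we check it is not identically $+\infty$. Cyclical monotonicity gives $\psi(x_0)\leq 0$. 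Further, for $(x,y)\in\Gamma$ one obtains $\psi(z)\geq\psi(x)+c(x,y)-c(z,y)$ for every $z$. By Remark \ref{charach_contact_set}, this means $\Gamma\subset\partial_c\psi$, provided $\psi(x)<+\infty$ on the projection of $\Gamma$. That finiteness follows from the same inequality applied with $z=x_0$. Hence $\alpha(\partial_c\psi)=1$.

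\emph{Step 3: $(3)\Rightarrow(1)$.} This is the step I expect to be the main obstacle. On $\partial_c\psi$ we have $\psi^c(y)-\psi(x)=c(x,y)$, and for all $(x,y)$ we have $\psi^c(y)-\psi(x)\leq c(x,y)$. If $\psi\in L^1(\mu)$ and $\psi^c\in L^1(\nu)$, then for every competitor $\beta\in\Adm(\mu,\nu)$,
\begin{equation}
\int c\,d\alpha=\int\psi^c\,d\nu-\int\psi\,d\mu=\int(\psi^c(y)-\psi(x))\,d\beta\leq\int c\,d\beta,
\end{equation}
which is optimality. The difficulty is that the potentials need not be integrable. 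To handle this I would follow the truncation argument of \cite[Theorem 5.10]{Villanioldandnew}, using that $c+h\geq 0$ with $h\in L^1(\alpha)\cap L^1(\beta)$. The key point is that the positive part of $\psi^c(y)-\psi(x)$ is dominated by $c+h$ plus the integrable function $h$, and this domination is what lets the truncation go through. Concretely:
\begin{itemize}
\item write $\psi^c(y)-\psi(x)$ as a limit of truncated quantities $T_k(\psi^c(y))-T_k(\psi(x))$, where $T_k$ denotes truncation at level $\pm k$;
\item use that the truncated quantities are integrable against both $\alpha$ and $\beta$, and that their integrals depend only on the marginals, so they coincide for $\alpha$ and $\beta$;
\item pass to the limit $k\to\infty$ by dominated convergence on $\partial_c\psi$ (the domination above) and by Fatou's lemma for $\beta$, using the one-sided bound $\psi^c(y)-\psi(x)\leq c(x,y)$ with $c+h\geq0$.
\end{itemize}
This yields $\int c\,d\alpha\leq\int c\,d\beta$ for every $\beta\in\Adm(\mu,\nu)$, so $\alpha$ is optimal. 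Finiteness of $C_{\mathcal{K}}(\mu,\nu)$ guarantees that the costs being compared are real numbers rather than $+\infty$.
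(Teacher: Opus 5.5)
Your proposal follows the same route as the paper, which omits the proof and defers to \cite[Theorem 5.10]{Villanioldandnew}. That theorem's hypothesis $c(x,y)\geq a(x)+b(y)$ with $a\in L^1(\mu)$, $b\in L^1(\nu)$ holds here with $a=-\Lambda|x|^2$, $b=-\Lambda|y|^2$. Your three steps (local perturbation using continuity of $c$, Rockafellar's construction, truncation of the potentials) are that classical argument, and Steps 1 and 2 are fine as written.

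In Step 3, however, you cannot take the limit on the $\alpha$ side by dominated convergence. On $\partial_c\psi$ the function to be dominated is $c$ itself, and $c\in L^1(\alpha)$ is not known a priori. Finiteness of $C_{\mathcal{K}}(\mu,\nu)$ only says that some plan has finite cost. That $\int c\,d\alpha<+\infty$ for a plan satisfying (3) is part of the conclusion, so your closing sentence is not justified.

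The repair uses only one-sided bounds. Set $\Phi(x,y):=\psi^c(y)-\psi(x)$ and $\Phi_k(x,y):=T_k(\psi^c(y))-T_k(\psi(x))$. Since $T_k$ is non-decreasing and $1$-Lipschitz, $-\Phi^-\leq\Phi_k\leq\Phi^+$.

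\emph{The $\alpha$ side.} On $\partial_c\psi$ both potentials are finite, $\Phi_k\to\Phi=c$, and $\Phi_k\geq -c^-\geq -h$. Fatou's lemma then gives $\int c\,d\alpha\leq\liminf_k\int\Phi_k\,d\alpha$, with no finiteness assumption on $\int c\,d\alpha$.

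\emph{The $\beta$ side.} Take $\beta$ with $\int c\,d\beta<+\infty$; otherwise there is nothing to prove. From $\Phi\leq c$ you get $\Phi_k\leq c^+\leq c+h\in L^1(\beta)$, and reverse Fatou gives $\limsup_k\int\Phi_k\,d\beta\leq\int c\,d\beta$.

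\emph{Conclusion.} Since $\int\Phi_k\,d\alpha=\int T_k\psi^c\,d\nu-\int T_k\psi\,d\mu=\int\Phi_k\,d\beta$, you obtain
\[
\int c\,d\alpha\leq\liminf_{k}\int\Phi_k\,d\alpha=\liminf_{k}\int\Phi_k\,d\beta\leq\limsup_{k}\int\Phi_k\,d\beta\leq\int c\,d\beta .
\]
So finiteness of $\int c\,d\alpha$ comes out as a consequence rather than an input.
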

    
We end this section with regularity properties of Kantorovich potentials in the case of LQ costs. We provide a direct and simple proof, for a general argument see \cite[Theorem 10.26]{Villanioldandnew}. Definition of semi-convexity and semi-concavity can be found in Appendix \ref{app:semiconcave}.

	\begin{prop}\label{Kant_potentials_are_semi-convex}
        Let $(A,B,Q,T)$ be a LQ optimal control problem satisfying the Kalman condition, with $T<t^*$. Then, any $c$-convex function  $\psi:\mathbb{R}^n\to\mathbb{R}\cup\left\{+\infty\right\}$ is lower semi-continuous and semi-convex on $\mathbb{R}^n$. In addition
        \begin{equation}
            \mathrm{Dom}(\psi):=\{x\in\mathbb{R}^n\mid \psi(x)<+\infty\} \subset \R^n
        \end{equation}
        is convex. Similarly, any $c$-concave function $\phi: \mathbb{R}^n\to \mathbb{R}\cup \{-\infty\}$ is upper semi-continuous and semi-concave on $\mathbb{R}^n$, and $\mathrm{Dom}(\phi)$ is a convex set.
        \end{prop}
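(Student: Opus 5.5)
Since $c=c^{0,T}$ is a quadratic polynomial by Theorem \ref{comp_cost}, write $c(x,y)=\tfrac12 x^*Cx + x^*Dy + \tfrac12 y^*Ey$ with $C=R_3^{-1}(T)R_4(T)$, $D=-R_3^{-1}(T)$ and $E=-R_3^{-1}(-T)R_4(-T)$. Here $C$ and $E$ are symmetric by Lemma \ref{lem1}. The plan is to absorb the quadratic part in $x$ into the potential, which turns the $c$-convex function into an ordinary convex function plus a fixed quadratic. Concretely, let $\psi$ be $c$-convex with defining set $\mathcal{A}\neq\emptyset$. Then
\begin{equation}
\psi(x)+\tfrac12 x^*Cx=\sup\left\{\lambda-\tfrac12 y^*Ey - x^*Dy \;\middle|\; (y,\lambda)\in\mathcal{A}\right\}=:g(x).
\end{equation}
For each fixed $(y,\lambda)$, the map $x\mapsto \lambda-\tfrac12 y^*Ey-(Dy)^*x$ is affine in $x$. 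Hence $g$ is a supremum of affine functions, so it is convex and lower semi-continuous, with values in $\mathbb{R}\cup\{+\infty\}$. It is not identically $+\infty$ because $\psi$ is not.

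From this the conclusions follow directly. First, $\psi=g-\tfrac12 x^*Cx$ is lower semi-continuous, as the sum of an lsc function and a continuous one. Second, $\mathrm{Dom}(\psi)=\mathrm{Dom}(g)$, since the quadratic term is finite everywhere, and the domain of a convex function is convex. Third, $\psi+\tfrac12\|C\|\,|x|^2=g+\tfrac12 x^*(\|C\|\mathbb{1}_n-C)x$ is a sum of two convex functions, because $\|C\|\mathbb{1}_n-C\geq 0$. So $\psi$ is semi-convex with constant $\|C\|$, in whatever form the appendix definition requires. If the appendix definition is local (on open convex sets, or via a modulus), this global bound implies it immediately, restricted to the interior of the domain if needed.

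The $c$-concave case is symmetric. If $\phi(y)=\inf\{\lambda+c(x,y)\mid (x,\lambda)\in\mathcal{B}\}$, then
\begin{equation}
\phi(y)-\tfrac12 y^*Ey=\inf\left\{\lambda+\tfrac12 x^*Cx + (D^*x)^*y\right\}.
\end{equation}
This is an infimum of affine functions of $y$, hence concave and upper semi-continuous. Therefore $\phi$ is usc and semi-concave with constant $\|E\|$, and $\mathrm{Dom}(\phi)=\{\phi>-\infty\}$ is convex.

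The only delicate point is matching the paper's precise definition of semi-convexity for extended-valued functions in Appendix \ref{app:semiconcave}. One must check whether it is required only on the interior of the domain, and whether $+\infty$ values are allowed; I expect the global decomposition "convex plus quadratic" to cover either variant. Note also that nondegeneracy of $D$ (that is, $T<t^*$) is not even needed here; only the quadratic structure of $c$ is used.
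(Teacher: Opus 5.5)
Your proof is correct and is essentially the paper's argument. The paper observes that $\psi$ is a supremum of the quadratics $\lambda-c(\cdot,y)$, all with constant Hessian $-C$, and invokes the ``convex plus $C^2$'' characterization of semi-convexity from the appendix; you instead write the decomposition $\psi=g-\tfrac12 x^*Cx$ out explicitly, as the paper itself does later in the proof of Theorem \ref{continuity_transport_map}, which also handles the $+\infty$ values cleanly.

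The only slip is your closing remark: $T<t^*$ is needed, since it is what guarantees that $c$ is a finite quadratic polynomial in the first place. What plays no role in your argument is the invertibility of $D$.
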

	\begin{proof} 
We prove the statement for $c$-convex functions, the one for $c$-concave ones is similar. By Definition \ref{c-convexity}, there exists $\mathcal{A} \subset \R^n \times \R$ such that
 \begin{equation}
      \psi(x) = \sup_{(y,\lambda) \in \mathcal{A}} \{\lambda - c(x,y)\}, \qquad \forall x \in \R^n.
       \end{equation}
By Theorem \ref{comp_cost}, the cost $c=c^{0,T}$ is a degree $2$  polynomial, and so $\psi$ is the pointwise supremum of a family of smooth functions whose Hessian is uniformly bounded from above. In particular, $\psi$ is lower semi-continuous and, by one of the equivalent characterizations of semi-convexity (see Proposition \ref{prop_charac_semi_concave_smooth}), $\psi$ is semi-convex on $\R^n$. Furthermore, since any semi-convex function is the sum of a convex function and a $C^2$ function (again by Proposition \ref{prop_charac_semi_concave_smooth}), if $x_0,x_1 \in \mathrm{Dom}(\psi)$, the whole segment between $x_0$ and $x_1$ belongs to $\mathrm{Dom}(\psi)$.
\end{proof}

    \subsection{Brenier-McCann theorem and continuity of the transport map}
    In this section we discuss the well-posedness of Monge problem for the LQ cost $c=c^{0,T}$, and the continuity of the optimal transport map. The existence and uniqueness part is not new for measures that are Lebesgue-absolutely continuous. For LQ problems with $-Q\geq 0$, a proof can be found in \cite{hinpomriff2011}, and also is a consequence of \cite[Theorem 4.1]{agrachev2007opttransnonholconst}. The extension to general $Q$ is straightforward. We further extend such results to a wider class of measures, namely those that do not charge $c-c$-hypersurfaces (cf. Definition \ref{def_c_c_hyper}), inspired by the techniques applied in the case of squared-distance cost (see \cite{GangboMcCann1996,GigliInverseBrenier2009}).
    \begin{thm}[Well-posedness]\label{Brenier_McCann}
        Let $(A,B,Q,T)$ be a LQ optimal control problem satisfying the Kalman condition, with $T<t^*$. Let $\mu,\nu\in P_2(\mathbb{R}^n)$, and assume that $\mu$ does not give mass to any $c-c$-hypersurface. Then, there exists a unique optimal transport plan $\alpha\in P_2(\mathbb{R}^n\times\mathbb{R}^n)$ between $\mu$ and $\nu$ for the LQ optimal transport problem, induced by a map $T:\mathbb{R}^n\to\mathbb{R}^n$.
        In addition, any Kantorovich potential $\psi$ associated with $\alpha$ is $\mu$-almost everywhere differentiable and the optimal transport map $T$ satisfies
        \begin{equation}
            T(x)=\exp_{x,T}(\nabla\psi(x)),\qquad\qquad \mu\text{-a.e. }x\in\mathbb{R}^n,
        \end{equation}
        where $\exp_{x,\tau}:T_x\mathbb{R}^n(\cong\mathbb{R}^n)\to\mathbb{R}^n$, for $\tau\in[0,T]$, denotes the LQ exponential map.

        If $\mu\ll\mathscr{L}^n$, then $\psi$ is $\mu$-almost everywhere twice differentiable. In particular, $T$ is $\mu$-almost everywhere differentiable.
	\end{thm}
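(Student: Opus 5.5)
Existence of an optimal plan $\alpha$ comes from Corollary \ref{finite_total_cost}. By Theorem \ref{supp_opt_plans}, every optimal plan is concentrated on $\partial_c\psi$ for some $c$-convex $\psi$. By Proposition \ref{Kant_potentials_are_semi-convex}, $\psi$ is lower semi-continuous and semi-convex, and $\mathrm{Dom}(\psi)$ is convex. Since $\mu$ is concentrated on $\pi_1(\partial_c\psi)\subset\mathrm{Dom}(\psi)$, I split $\mu$-mass in $\mathrm{Dom}(\psi)$ into three pieces:
\begin{itemize}
\item the boundary $\partial\,\mathrm{Dom}(\psi)$;
\item points of the interior where $\psi$ fails to be differentiable;
\item points of the interior where $\psi$ is differentiable.
\end{itemize}

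\textbf{The first two pieces.} The boundary of a convex set is a Lipschitz graph, hence contained in a countable union of $c-c$-hypersurfaces. Writing $\psi=$ convex $+$ $C^2$ on the interior, the non-differentiability set of $\psi$ coincides with that of a convex function. By the Zajíček/Alberti structure theorem, that set is covered by countably many graphs of differences of convex functions over hyperplanes (DC hypersurfaces). I expect Definition \ref{def_c_c_hyper} to be designed exactly so that these sets qualify as $c-c$-hypersurfaces: if it is phrased with $c$-convex or semi-convex functions, I would use that $c$ differs from $x^*Cx/2 + x^*Dy + y^*Ey/2$ only by smooth terms, so semi-convex $=$ convex $+$ quadratic. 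The hypothesis on $\mu$ then shows that $\psi$ is differentiable $\mu$-a.e. \textbf{This identification is the step I expect to be the main obstacle}, since it depends on matching the covering result to the precise definition.

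\textbf{Identifying the map.} At a differentiability point $x$ with $(x,y)\in\partial_c\psi$, Remark \ref{charach_contact_set} says $z\mapsto \psi(z)+c(z,y)$ has a minimum at $z=x$. Hence $\nabla\psi(x)=-\nabla_x c(x,y)$. By Lemma \ref{lem2}, $-\nabla_xc(x,y)=\exp_{x,T}^{-1}(y)$, so $y=\exp_{x,T}(\nabla\psi(x))$. Explicitly, $\nabla_xc(x,y)=Cx+Dy$ with $D=-R_3^{-1}(T)$ invertible, so $y=R_3(T)\nabla\psi(x)+R_4(T)x$. Therefore $\alpha$ is concentrated on the graph of the Borel map $T(x)=\exp_{x,T}(\nabla\psi(x))$ and $\alpha=(id,T)_\sharp\mu$. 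Since $\nabla\psi$ is Borel on the differentiability set, $T$ is measurable.

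\textbf{Uniqueness.} I use the standard trick. If $\alpha_1,\alpha_2$ are optimal, then $\frac12(\alpha_1+\alpha_2)$ is optimal by linearity of the cost and convexity of $\Adm(\mu,\nu)$. By the previous step it is induced by a map. A plan of the form $\frac12\big((id,T_1)_\sharp\mu+(id,T_2)_\sharp\mu\big)$ is concentrated on a graph only if $T_1=T_2$ $\mu$-a.e., so $\alpha_1=\alpha_2$. The same argument shows the formula holds for any Kantorovich potential $\psi$ associated with $\alpha$, since each one satisfies $\alpha(\partial_c\psi)=1$.

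\textbf{Absolutely continuous case.} If $\mu\ll\mathscr{L}^n$, Alexandrov's theorem applied to the convex part of $\psi$ gives twice differentiability $\mathscr{L}^n$-a.e., hence $\mu$-a.e., on the interior of $\mathrm{Dom}(\psi)$. The boundary is Lebesgue-null. Then $T(x)=R_3(T)\nabla\psi(x)+R_4(T)x$ is differentiable, in the Alexandrov sense, at those points.
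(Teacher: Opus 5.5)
Your argument follows the paper's proof. It shows that $\mu$ is concentrated on $\mathrm{int}\,\mathrm{Dom}(\psi)$, gets differentiability $\mu$-a.e.\ from Zaj\'i\v cek's theorem, uses the first-order condition at a contact point with $-\nabla_x c(x,y)=\exp_{x,T}^{-1}(y)$ to get $y=\exp_{x,T}(\nabla\psi(x))$, and invokes Alexandrov's theorem when $\mu\ll\mathscr{L}^n$. Your averaging argument usefully spells out the uniqueness step, which the paper compresses into one line. The step you flagged as the main obstacle is not one: the paper's $c-c$-hypersurfaces are exactly graphs, over coordinate hyperplanes, of differences of convex functions on $\R^{n-1}$.

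One justification must be replaced. A Lipschitz graph need not be covered by countably many $c-c$-hypersurfaces: a Baire category argument rules this out for the graph of a $C^1$ function on $[0,1]$ whose derivative has unbounded variation on every subinterval. The claim you need about $\partial\,\mathrm{Dom}(\psi)$ is still true, for a different reason. A convex set with empty interior lies in an affine hyperplane, which is itself a $c-c$-hypersurface. For a convex set with non-empty interior, group boundary points by a coordinate $i$ in which some unit outer normal has $i$-th component of modulus at least $1/\sqrt{n}$. On each bounded piece of such a group, the boundary is a graph over $e_i^\perp$ with slopes at most $\sqrt{n-1}$. That graph is contained in the graph of a finite concave or convex function on all of $\R^{n-1}$, and countably many such graphs cover the boundary.
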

	\begin{proof}
        Consider any optimal transport plan $\alpha$, between $\mu$ and $\nu$, and let $\psi$ be any Kantorovich potential associated with $\alpha$ via Theorem \ref{supp_opt_plans}. By Remark \ref{charach_contact_set}, $\mathrm{Dom}(\psi) \subset \pi_1(\partial_c\psi)$. Furthermore, $\alpha(\partial_c\psi)=1$ and $\mu=\pi_{1\sharp}\alpha$. It follows that $\mu(\mathrm{Dom})=1$.
              
        Therefore, since $\mu$ does not charge $c-c$-hypersurfaces, $\mathrm{Dom}(\psi)$ is a convex set with non-empty interior and $\mu$ is concentrated on $\mathrm{int}(\mathrm{Dom}(\psi))$, where $\mathrm{int}(E)$ denotes the topological interior of $E\subset \mathbb{R}^n$. 
        By Zaj\'i\v cek Theorem (see Theorem \ref{Zajicek_thm}), $\psi$ is differentiable $\mu$-almost everywhere in $\mathrm{int}(\mathrm{Dom}(\psi))$.

		Consider $x\in \mathrm{int}(\mathrm{Dom}(\psi))$, where $\psi$ is differentiable and $\partial_c\psi(x)$ is non-empty (which happens $\mu$-almost everywhere since $\alpha(\partial_c\psi)$=1). We want to prove that 
		\begin{equation}
			\partial_c\psi(x)=\{y\in\mathbb{R}^n\;|\;\psi^c(y)=c(x,y)+\psi(x)\},
		\end{equation}
		is a singleton.
        
        Thus, consider $y\in\partial_c\psi(x)$. This means that 
		\begin{equation}
			\psi(x)+c(x,y)\leq\psi(z)+c(z,y),\qquad\forall z\in\mathbb{R}^n.
		\end{equation}
		The function $z\mapsto\psi(z)+c(z,y)$ is differentiable at $z=x$ and it attains a minimum there.
		Therefore, thanks to Lemma \ref{lem2}, we have 
		\begin{equation}
			0=\nabla\psi(x)+\nabla_x c(x,y)=\nabla\psi(x)-\exp_{x,T}^{-1}(y),
		\end{equation} 
		so that 
		\begin{equation}
			y=\exp_{x,T}(\nabla\psi(x))
		\end{equation}
		is uniquely determined by $x$. As a consequence, the disintegration of $\alpha$ with respect to the first projection $\pi_1$ is a Dirac delta, for $\mu$-almost every point. Therefore, $\alpha$ is unique, and induced by a map that can be represented as 	
		\begin{equation}\label{representation_t_map_internal}
			T(x)=\exp_{x,T}(\nabla\psi(x)),
		\end{equation}
	for $\mu$-almost every $x\in\mathbb{R}^n$.
        
        Finally, by Alexandroff's Theorem (see Theorem \ref{Alexandroff's_theorem}), $\psi$ is twice differentiable $\mathscr{L}^n$-almost everywhere in $\mathrm{int}(\mathrm{Dom}(\psi))$. In particular, if $\mu\ll\mathscr{L}^n$, then $\psi$ is $\mu$-almost everywhere twice differentiable; therefore, thanks to \eqref{representation_t_map_internal}, also $T$ is differentiable $\mu$-almost everywhere.
	\end{proof}

    The continuity of the transport map can be reduced to the regularity theory of the Euclidean the Monge-Amp\`ere equation, following the same argument used in \cite[Theorem 2.2]{hinpomriff2011}, for $-Q\geq 0$.  Indeed, under suitable regularity of the measures $\mu,\nu$, the first Kantorovich potential $\psi$ is unique, up to addition of a constant, and satisfies 
    \begin{equation}\label{Monge-Ampere}
			\det\big(\Hess(\psi)(x)+R_3^{-1}(T)R_4(T)\big)=|\det(R_3(T))|^{-1}\frac{f(x)}{g(T(x))},
    \end{equation}
    where $f,g$ are the densities of the measures $\mu,\nu$ with respect to $\mathscr{L}^n$, and $T$ is the optimal transport map for the LQ cost, from $\mu$ to $\nu$. Up to change of variables, equation \eqref{Monge-Ampere} can be reduced to the usual Euclidean Monge-Amp\`ere equation, to prove regularity properties of $\psi$ (and thus of the optimal transport map $T$).
    The next theorem is a part of \cite[Theorem 12.50]{Villanioldandnew} and is based on the original work of Caffarelli, see \cite{CaffarelliRegConvMaps}.
		\begin{thm}[Caffarelli]\label{Caffarelli_regularity}
			Let $|x-y|^2/2$ be the squared-distance cost in $\mathbb{R}^n\times\mathbb{R}^n$, and let $\Omega_1,\Omega_2\subset\mathbb{R}^n$ be connected bounded open sets. Let $f,g$ be probability densities on $\Omega_1$, $\Omega_2$, respectively, bounded both from above and below. Let $\psi:\Omega_1\to\mathbb{R}$ be the unique (up to an additive constant) Kantorovich potential associated with the probability measures $\mu=f\mathscr{L}^n$, $\nu=g\mathscr{L}^n$ and the cost $\tilde{c}$. Then, if $\Omega_2$ is convex, $\psi\in\mathcal{C}^{1,\beta}(\Omega_1)$, for some $\beta\in(0,1)$.
		\end{thm}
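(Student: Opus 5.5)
The plan is to pass to the convex Brenier potential and run Caffarelli's regularity theory for Alexandrov solutions of the Monge--Amp\`ere equation; this is the route followed in \cite[Theorem 12.50]{Villanioldandnew}.

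\textbf{Convex reformulation.} For the cost $|x-y|^2/2$, set $u(x):=|x|^2/2-\psi(x)$. By the Brenier theorem (the special case $A=\mathbb{0}_n$, $B=\mathbb{1}_n$, $Q=\mathbb{0}_n$, $T=1$ of Theorem \ref{Brenier_McCann}), $u$ is convex and $\nabla u_\sharp\mu=\nu$. Connectedness of $\Omega_1$ gives uniqueness of $\psi$ up to constants, since $\nabla\psi$ is determined $\mu$-a.e.\ and $f>0$ on $\Omega_1$. By Alexandroff's theorem (Theorem \ref{Alexandroff's_theorem}), $u$ solves
\begin{equation}
\det D^2u(x)=\frac{f(x)}{g(\nabla u(x))}\in[\lambda,\Lambda]
\end{equation}
at almost every $x\in\Omega_1$, for some constants $0<\lambda\leq\Lambda$.

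\textbf{Step 1: Alexandrov solution.} Upgrade the a.e.\ equation to the statement that the Monge--Amp\`ere measure satisfies
\begin{equation}
\lambda|E|\leq|\partial u(E)|\leq\Lambda|E|
\end{equation}
for every Borel set $E\subset\Omega_1$. This is where convexity of $\Omega_2$ is used. First, since $\nabla u$ takes values in $\overline{\Omega_2}$ almost everywhere, every subgradient of $u$ lies in the closed convex hull of these values, so $\partial u(\Omega_1)\subset\overline{\Omega_2}$. Second, the set of $p$ that are subgradients at two distinct points has measure zero (a classical fact for convex functions). Together these let one compare $|\partial u(E)|$ with $\nu(\nabla u(E))=\mu(E)$. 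Without convexity of the target, $\partial u$ could have a singular part filling the holes of $\Omega_2$, and the argument fails.

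\textbf{Step 2: strict convexity.} This is the step I expect to be the main obstacle. Show that $u$ is strictly convex in $\Omega_1$, i.e.\ every supporting hyperplane meets the graph at a single point. The approach is Caffarelli's lemma: if the contact set $\{u=\ell\}$ of an affine support $\ell$ contains more than one point, then it has no extremal points inside $\Omega_1$. One would then propagate such a segment up to $\partial\Omega_1$ and use the two-sided bound on the Monge--Amp\`ere measure, together with the localization that $\partial u(\Omega_1)$ stays inside the bounded convex set $\overline{\Omega_2}$, to reach a contradiction. The key tool is the Aleksandrov maximum principle applied on thin sections near the segment, comparing their volume with the volume of their subgradient images.

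\textbf{Step 3: $\mathcal{C}^{1,\beta}$ regularity.} With strict convexity, the sections $S(x,p,t)=\{u<u(x)+p\cdot(\cdot-x)+t\}$ are compactly contained in $\Omega_1$ for small $t$. Normalize each section by John's lemma; the bounds $\lambda\leq\det D^2u\leq\Lambda$ are invariant under unimodular affine maps, which gives uniform geometric decay $S(x,p,t/2)\subset(1-\delta)S(x,p,t)$ after renormalization. Iterating this decay shows that $u$ separates from its tangent planes like $|y-x|^{1+\beta}$, from above and below, uniformly on compact subsets. This yields $u\in\mathcal{C}^{1,\beta}_{\mathrm{loc}}(\Omega_1)$, hence $\psi\in\mathcal{C}^{1,\beta}(\Omega_1)$.
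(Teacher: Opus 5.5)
The paper gives no proof of this statement; it quotes it from Villani's Theorem 12.50 and Caffarelli's work, and your outline follows that classical route. Step 1 is essentially right, with two minor corrections. First, with the paper's convention for $c$-convex potentials the Brenier potential is $u=|x|^2/2+\psi$, not $|x|^2/2-\psi$. Second, the a.e.\ equation is McCann's Jacobian equation, not a consequence of Alexandroff's theorem alone; your argument via $\nu(\partial u(E))=\mu(E)$ does not need it anyway. Steps 2 and 3, however, each have a genuine gap.

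\textbf{Step 2.} The ingredients you invoke cannot produce a contradiction. These are the two-sided bounds on $|\partial u(E)|$ for $E\subset\Omega_1$, and $\partial u(\Omega_1)$ lying in a bounded convex set. For $n\geq 3$, Pogorelov's function $u(x',x_n)=(1+x_n^2)|x'|^{2-2/n}$ is a counterexample to the implication you need:
\begin{itemize}
\item it satisfies $\lambda\leq\det D^2u\leq\Lambda$ in the Alexandrov sense on a small ball $B$;
\item its subgradients on $B$ lie in a bounded convex set, namely their convex hull;
\item it vanishes on the segment $\{x'=0\}\cap B$, which crosses $B$ exactly as in your ``propagate up to $\partial\Omega_1$'' scenario.
\end{itemize}
What the transport problem adds is that the convex target is \emph{filled}, i.e.\ $g\geq c>0$ on all of $\Omega_2$, and this must be used globally.

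Extend $u$ by $\bar u(x):=\sup_{y\in\overline{\Omega_2}}\bigl(x\cdot y-u^*(y)\bigr)$. Then $\bar u=u$ on $\Omega_1$, and $\partial\bar u(\mathbb{R}^n)\subset\overline{\Omega_2}$ by convexity of $\Omega_2$ again. Three facts then give $|\partial\bar u(\mathbb{R}^n\setminus\Omega_1)|=0$, i.e.\ $\det D^2\bar u\leq\Lambda\chi_{\Omega_1}$ on all of $\mathbb{R}^n$: $\nu(\partial\bar u(\Omega_1))=1$, $g$ is bounded below on $\Omega_2$, and the set of vectors that are subgradients at two distinct points is null.

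Now suppose $\ell$ supports $\bar u$ at some $x_0\in\Omega_1$ and $\Sigma=\{\bar u=\ell\}$ is not a point. Then $\Sigma$ contains no line, since otherwise $\partial\bar u(\mathbb{R}^n)$ would lie in a hyperplane, contradicting $|\partial\bar u(\Omega_1)|>0$. Hence $\Sigma$ has an exposed point $\hat x$. Cut $\Sigma$ by sections that shrink, as $\varepsilon\to 0$, to a cap of $\Sigma$ containing both $\hat x$ and $x_0$. The Alexandrov maximum principle at $\hat x$ needs only the \emph{global} upper bound. It is incompatible with the lower bound on the depth of the section furnished by the part of the section inside $\Omega_1$. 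This global vanishing of the Monge--Amp\`ere measure outside $\Omega_1$ is the missing idea: interior information on $\Omega_1$ alone does not exclude Pogorelov-type segments.

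\textbf{Step 3.} The decay $S(x,p,t/2)\subset(1-\delta)S(x,p,t)$ cannot yield $\mathcal{C}^{1,\beta}$, because the cone $y\mapsto|y-x|$ satisfies it with $\delta=1/2$. Iterating it only shrinks the sections, so it gives separation from \emph{below}: $u(y)-u(x)-p\cdot(y-x)\geq c|y-x|^{q}$ for some $q\geq 1$. That is quantitative strict convexity, not differentiability.

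What you need is the reverse inclusion, saying that sections shrink more slowly than those of a cone: $x+\theta\,(S(x,p,t)-x)\subset S(x,p,t/2)$ for a universal $\theta>1/2$. This is proved by compactness on normalized sections: a limit violating it would be affine on a segment issuing from $x$, contradicting the strict convexity of normalized solutions. Iterating it gives $u(y)-u(x)-p\cdot(y-x)\leq C|y-x|^{1+\beta}$ with $1+\beta=\log 2/\log(1/\theta)$, which is the $\mathcal{C}^{1,\beta}$ estimate.
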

		\begin{thm}[Continuity of the transport map]\label{continuity_transport_map}
			Let $\mu, \nu\in P_2(\mathbb{R}^n)$ be compactly supported measures, such that $\mu,\nu\ll\mathscr{L}^n$. Assume, in addition, that $\mu=f\mathscr{L}^n$, $\nu=g\mathscr{L}^n$ with $f$ and $g$ bounded from above and below on $\supp(\mu)$ and $\supp(\nu)$, respectively, and $\mathrm{int}(\supp(\mu))$ is connected, whilst $\supp(\nu)$ is convex. Then, the optimal transport map between $\mu$ and $\nu$ admits a representative $T:\mathbb{R}^n\to\mathbb{R}^n$ that is $\beta$-Hölder continuous, for some $\beta\in(0,1)$.
		\end{thm}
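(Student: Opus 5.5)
The plan is to reduce the LQ problem to the Euclidean quadratic-cost problem by a linear change of variables, and then apply Theorem \ref{Caffarelli_regularity}. By Theorem \ref{comp_cost},
\begin{equation}
c(x,y)=\tfrac12 x^*Cx - x^*R_3^{-1}(T)y + \tfrac12 y^*Ey ,
\end{equation}
where $C=R_3^{-1}(T)R_4(T)$ and $E=-R_3^{-1}(-T)R_4(-T)$ are symmetric and $D=-R_3^{-1}(T)$ is invertible because $T<t^*$.

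In any transport problem the purely-$x$ term $\tfrac12 x^*Cx$ and the purely-$y$ term $\tfrac12 y^*Ey$ integrate to constants against every $\alpha\in\Adm(\mu,\nu)$. These integrals are finite since $\mu,\nu$ are compactly supported. Hence optimal plans for $c$ coincide with optimal plans for the bilinear cost $-x^*R_3^{-1}(T)y$. Now set $y'=L(y):=R_3^{-*}(T)y$, a linear isomorphism. Then $-x^*R_3^{-1}(T)y=-x\cdot y'=\tfrac12|x-y'|^2-\tfrac12|x|^2-\tfrac12|y'|^2$. So $\alpha$ is optimal for $c$ between $\mu$ and $\nu$ if and only if $(\mathrm{id}\times L)_\sharp\alpha$ is optimal for $\tfrac12|x-y'|^2$ between $\mu$ and $\nu':=L_\sharp\nu$.

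Next I check the hypotheses of Theorem \ref{Caffarelli_regularity} for $\mu$ and $\nu'$. The measure $\nu'=g'\mathscr{L}^n$ has density
\begin{equation}
g'(y')=g(L^{-1}y')\,|\det R_3(T)|,
\end{equation}
which is still bounded above and below on $\supp(\nu')=L(\supp\nu)$. This support is convex and compact because $L$ is linear. Take $\Omega_1=\mathrm{int}(\supp\mu)$, which is connected by assumption, and $\Omega_2=\mathrm{int}(L(\supp\nu))$, which is convex.

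One technical point needs care. Caffarelli's theorem is stated for densities on open sets, so I need $\mu$ to be concentrated on $\Omega_1$, i.e.\ $\mu(\partial\,\supp\mu)=0$. For $\nu'$ this is automatic, since the boundary of a convex set is Lebesgue-null. For $\mu$ I would argue that a density bounded below on its support, with $\mu\ll\mathscr{L}^n$, forces the support to be the closure of the set where $f>0$. If the boundary could still carry mass, I would invoke the version in \cite[Theorem 12.50]{Villanioldandnew}, which only requires the densities to be bounded above and below on $\Omega_1,\Omega_2$ and zero outside them. I expect this measure-theoretic matching of hypotheses to be the main obstacle; the algebra is routine.

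Caffarelli then gives a Brenier potential $\varphi$ with $\varphi\in\mathcal C^{1,\beta}(\Omega_1)$, where the Euclidean optimal map is $S=\nabla\varphi$ $\mu$-a.e. (the Kantorovich potential for $\tfrac12|x-y|^2$ is $\tfrac12|x|^2-\varphi$). Hence $S$ is $\beta$-Hölder on $\Omega_1$, and the LQ optimal map is $T=L^{-1}\circ S$, which is also $\beta$-Hölder there. Equivalently, in the language of Theorem \ref{Brenier_McCann}, the LQ potential is $\psi=\varphi-\tfrac12|x|^2-\tfrac12x^*Cx$ up to the sign conventions of Lemma \ref{lem2}. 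Then $T(x)=\exp_{x,T}(\nabla\psi(x))=R_3(T)\nabla\psi(x)+R_4(T)x$ is the composition of $\nabla\varphi$ with affine maps, so it is again Hölder. Uniqueness of $T$ $\mu$-a.e. follows from Theorem \ref{Brenier_McCann}.

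Finally, I have to produce a representative defined on all of $\mathbb{R}^n$. Since $T$ is uniformly $\beta$-Hölder on $\Omega_1$, it extends to $\overline{\Omega_1}$, and componentwise McShane-type extension of Hölder functions (a $\beta$-Hölder function is Lipschitz for the metric $|x-y|^\beta$) extends it to $\mathbb{R}^n$. This extension agrees with the optimal map $\mu$-a.e. Uniform Hölder continuity on $\Omega_1$, rather than mere local $\mathcal C^{1,\beta}$, should follow from the global $\mathcal C^{1,\beta}$ statement in \cite[Theorem 12.50]{Villanioldandnew} under the convexity of the target. Failing that, I would settle for $T$ being Hölder on compact subsets of the interior.
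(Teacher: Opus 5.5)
Your argument is the paper's reduction, written at the level of plans rather than potentials. The paper splits $\psi=-\tfrac12x^*Cx+\phi$ with $\phi$ convex, reads off $T=R_3(T)\nabla\phi$, and identifies $\nabla\phi$ as the Brenier map from $\mu$ to $(R_3^{-1}(T))_\sharp\nu$ before invoking Caffarelli. That is exactly your change of target variables.

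There is one slip to fix. Since $x^*R_3^{-1}(T)y=x\cdot\bigl(R_3^{-1}(T)y\bigr)$, the target map must be $L=R_3^{-1}(T)$. With $L=R_3^{-*}(T)$ you get $x\cdot L(y)=y^*R_3^{-1}(T)x$, which is a different cost because $R_3(T)$ need not be symmetric. For instance, for the double integrator $\dot x_1=x_2$, $\dot x_2=u$, $Q=0$, the off-diagonal entries of $R_3(\tau)$ are $\tau^2/2$ and $-\tau^2/2$. (Your $R_3^{-*}(T)$ would be the right map if applied to the source variable instead.) With this fix, $T=R_3(T)\nabla\varphi$, as in the paper. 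The LQ potential is then $\psi=\varphi-\tfrac12x^*Cx$ up to a constant, without the extra $-\tfrac12|x|^2$ in your expression, which would produce $R_3(T)(\nabla\varphi(x)-x)$.

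Your remaining caveats go beyond the paper, which simply reads Hölder continuity of $T$ off $\psi\in\mathcal{C}^{1,\beta}(\mathrm{int}(\supp\mu))$. Your proposed fixes, however, do not close them.
\begin{itemize}
\item Because $f$ is bounded above and below on $\supp\mu$, the condition $\mu(\partial\supp\mu)=0$ is equivalent to $\mathscr{L}^n(\partial\supp\mu)=0$. The hypotheses do not imply this, and your fallback version of Caffarelli's theorem needs the same concentration of $\mu$ on $\Omega_1$.
\item Part (i) of Caffarelli's theorem is an interior estimate. Without further assumptions, the argument yields Hölder continuity only on compact subsets of $\mathrm{int}(\supp\mu)$, which is your fallback. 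It does not give the uniform bound on $\Omega_1$ that your McShane extension to $\mathbb{R}^n$ requires.
\end{itemize}
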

		\begin{proof}
			Let $(\psi,\psi^c)$ be a pair Kantorovich potentials from  $\mu$ to $ \nu$ for the cost $c=c^{0,T}$. In particular, using notation of Lemma \ref{lem1}, for every $x\in\mathbb{R}^n$, we have
			\begin{align}
				\psi(x)&=\sup\big\{\psi^c(y)-c(x,y)\mid y\in\mathbb{R}^n\big\}\\
					&=-\frac{x^*Cx}{2}+\sup\left\{\psi^c(y)-x^*Dy-\frac{y^*Ey}{2}\bigg|\;y\in\mathbb{R}^n\right\},
			\end{align}
			with $C=R_3^{-1}(T)R_4(T)$ (recall Theorem \ref{comp_cost}).
			Hence, $\psi(x)=-x^*Cx/2+\phi(x)$, where 
			\begin{equation}
				\phi(x):=\sup\left\{\psi^c(y)-x^*Dy-\frac{y^*Ey}{2}\;\bigg|\;y\in\mathbb{R}^n\right\},\qquad\forall x\in\mathbb{R}^n,
			\end{equation}
			is convex and lower semi-continuous, being the supremum of a family of affine functions.
			Moreover, by Theorem \ref{Brenier_McCann}, for $\mu$-almost every $x\in\mathbb{R}^n$ we have that
			\begin{align}
				T(x)&=\exp_{x,T}(\nabla\psi(x))\\
						&=R_3(T)\nabla\psi(x)+R_4(T)x\\
						&=R_3(T)\left(\nabla\psi(x)+Cx\right)\\
						&=R_3(T)\nabla\phi(x),
			\end{align}
			where we used formula \eqref{LQ_exponential_formula} from Theorem \ref{LQ_exponential}.
			We consider the invertible linear map $F:\mathbb{R}^n\to\mathbb{R}^n$, defined by $F(z):=R_3(T)z$, and we set $\eta=F^{-1}_{\sharp}\nu$. Since $T_{\sharp}\mu=\nu$, it follows that $\nabla\phi_{\sharp}\mu=\eta$. In addition, $\phi$ is convex, hence $\nabla\phi$ is the optimal transport map from $\mu$ to $\eta$, for the Euclidean squared-distance cost $|x-y|^2/2$ (see \cite[Theorem 1.3]{Brenier1991}). $F$ is an invertible linear map, $\supp(\eta)$ is convex and, if $\eta=h\mathscr{L}^n$, then $h$ is bounded above and below on $\supp(\eta)$. Therefore, by Theorem \ref{Caffarelli_regularity}, we obtain that $\phi\in\mathcal{C}^{1,\beta}(\mathrm{int}(\supp(\eta)))$, for some $\beta\in(0,1)$, which, by construction, implies that $\psi\in\mathcal{C}^{1,\beta}(\mathrm{int}(\supp(\mu)))$ and $T$ is $\beta$-Hölder continuous. 
		\end{proof}
		\begin{rem}[Higher regularity of the transport map]\label{rmk:highereg}
			 With the same strategy, Caffarelli's theory can be used to derive higher regularity of the transport map from higher regularity of the measures $\mu$ and $\nu$ (see \cite[Theorem 12.50]{Villanioldandnew} and \cite{CaffarelliRegConvMaps}). More precisely, set $\Omega_1=\mathrm{int}(\supp(\mu))$ and $\Omega_2=\mathrm{int}(\supp(\nu))$ and assume that, for some $k\in\mathbb{N}$ and $\alpha\in(0,1)$, it holds
             \begin{itemize}
                 \item $\Omega_1,\Omega_2$ are uniformly convex, relatively compact and with boundary of class $\mathcal{C}^{k+2}$,
                 \item $f\in\mathcal{C}^{k,\alpha}(\overline{\Omega}_1)$ and $g\in\mathcal{C}^{k,\alpha}(\overline{\Omega}_2)$.
             \end{itemize}
             Then, the unique optimal transport map from $\mu$ to $\nu$, for the LQ cost $c=c^{0,T}$, admits a representative of class $\mathcal{C}^{k+1,\alpha}(\overline{\Omega}_1)$. 
		\end{rem}
        
    \section{Displacement interpolations}\label{representation_theorem}
    \subsection{Properties of the LQ action}
  
In this section, we consider a LQ problem $(A,B,Q,T)$, satisfying the Kalman condition and such that $T<t^*$. Recalling Notation \ref{LQ_intermediate_action_and_cost}, we study the family of functionals $\{\mathcal{A}^{t,s}\}_{0\leq t<s\leq T}$ .
    First, we give the following definitions (see \cite[Definitions 7.11 and 7.13]{Villanioldandnew})
	\begin{Def}[Lagrangian action]\label{Lagrangian_action}
		Let $(\mathcal{X},d)$ be a Polish space. A Lagrangian action $\{\mathcal{A}^{t,s}\}_{0\leq t<s\leq T}$ on $(\mathcal{X},d)$ is a family of lower semi-continuous functionals on $\mathcal{C}([t,s],\mathcal{X})$, satisfying
		\begin{enumerate}
		\item\label{additivity} $\mathcal{A}^{\tau_1,\tau_2}(\gamma|_{[\tau_1,\tau_2]}) + \mathcal{A}^{\tau_2,\tau_3}(\gamma|_{[\tau_2,\tau_3]}) = \mathcal{A}^{\tau_1,\tau_3}(\gamma)$,\qquad $\forall\gamma\in\mathcal{C}([\tau_1,\tau_3],\mathcal{X}),$\quad$\forall \tau_1 < \tau_2 < \tau_3,$
		\item\label{reconstruction_property} $\mathcal{A}^{0,T}(\gamma)=\sup\limits_{\{N\in\mathbb{N}\}}\sup\limits_{\{0=\tau_0<\dots<\tau_N=T\}}\sum_{i=1}^{N}c^{\tau_{i-1},\tau_i}\left(\gamma(\tau_{i-1}),\gamma(\tau_i)\right)$,\quad$\forall\gamma\in\mathcal{C}([0,T],\mathcal{X}),$
		\end{enumerate}
		where, for all $0\leq t<s\leq T$,
		\begin{equation}
			c^{t,s}(x,y):=\inf\{\mathcal{A}^{t,s}(\gamma)\mid \gamma\in \mathcal{C}\left(\left[t,s\right],\mathcal{X}\right), \,  \gamma(t)=x,\, \gamma(s)=y\}	
		\end{equation}
		are the associated cost functions.
	\end{Def}
		Properties \eqref{additivity} and \eqref{reconstruction_property} of Definition \ref{Lagrangian_action} are referred to as \emph{additivity} and \emph{reconstruction property}, respectively. For uniformity with  Section \ref{LQ_control_problems}, we call ``optimal trajectories'' the action-minimizing curves.
	\begin{Def}[Coercive action]\label{Coercive_action}
		Let $\{\mathcal{A}^{t,s}\}_{0\leq t<s\leq T}$ be a Lagrangian action on a Polish space $(\mathcal{X},d)$. For any pair of times $0\leq t<s\leq T$ and any non-empty compact sets $K_t, K_s\subset\mathcal{X}$, define $\Gamma^{t,s}_{K_t\to K_s}\subset\mathcal{C}([t,s],\mathcal{X})$ as the set of action minimizing curves that start in $K_t$, at time $t$, and end in $K_s$, at time $s$. The Lagrangian action is called coercive if 
		\begin{enumerate}
			\item it is bounded from below, in the sense that
			\begin{equation}
				\inf\limits_{t<s}\inf\limits_{\gamma}\mathcal{A}^{t,s}(\gamma)>-\infty,
			\end{equation}
			\item if $K_t$ and $K_s$ are two non-empty compact sets such that $c^{t,s}(x,y)<+\infty$ for every $x\in K_t$ and $y\in K_s$, then $\Gamma^{t,s}_{K_t\to K_s}$ is compact and non-empty.
		\end{enumerate}
	\end{Def}
	The LQ action is not coercive, since it is not bounded from below. Nonetheless, it still satisfies the second property of Definition \ref{Coercive_action}. Before proving that, we need an auxiliary lemma concerning compactness properties of curves with bounded action.
\begin{lem}\label{coercivity_action_on_controls}
Let $(A,B,Q,T)$ be a LQ optimal control problem satisfying the Kalman condition, with $T<t^*$. Consider $0\leq t<s\leq T$ and let $K_t,K_s\subset \R^n$ be non-empty compact sets. Let $C\subset \mathcal{C}([t,s],\R^n)$ be a family of admissible curves with $\gamma(t)\in K_t$, $\gamma(s)\in K_s$ for all $\gamma \in C$. Then
\begin{equation}\label{eq:iffimplication}
 \sup_{\gamma\in C}   \mathcal{A}^{t,s}(\gamma) < +\infty \quad \iff \quad \sup_{\gamma_u \in C} \|u\| < +\infty.
\end{equation}
In addition, if $\{\gamma_{u_m}\}$ is a sequence of admissible trajectories with action uniformly bounded from above, then there exists an admissible trajectory $\gamma_u$ such that, up to extraction, $u_m \rightharpoonup u$ in the weak topology and $\gamma_m\to \gamma$ in the uniform topology.
\end{lem}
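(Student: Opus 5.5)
The key observation is that on a fixed interval $[t,s]\subset[0,T]$ with $s-t<t^*$, the action $\mathcal{A}^{t,s}$ is a quadratic functional of the pair (initial point, control), and it splits as a coercive part plus a compact part. Write $x_0=\gamma(t)$. By the Cauchy formula \eqref{Cauchy}, $\gamma_u(\tau,x_0)=e^{(\tau-t)A}x_0+(Lu)(\tau)$, where $L:L^2([t,s],\R^m)\to\mathcal{C}([t,s],\R^n)$ is the linear operator
\[
(Lu)(\tau)=\int_t^\tau e^{(\tau-\xi)A}Bu(\xi)\,d\xi .
\]
The operator $L$ is compact: it has a bounded continuous kernel, so by Arzel\`a--Ascoli it maps bounded sets to equicontinuous, uniformly bounded sets. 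Hence
\[
2\mathcal{A}^{t,s}(\gamma_u)=\|u\|^2-\int_t^s \gamma_u^*Q\gamma_u\,d\tau ,
\]
and the $Q$-term is bounded by a constant times $(|x_0|+\|u\|)^2$.

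\emph{The implication ``$\Leftarrow$'' in \eqref{eq:iffimplication}.} Since $K_t$ is compact, $|x_0|$ is bounded on $C$. If also $\|u\|$ is bounded on $C$, then $\mathcal{A}^{t,s}$ is bounded on $C$ by the estimate above.

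\emph{The implication ``$\Rightarrow$''.} This is the main point. Argue by contradiction: suppose $\gamma_m=\gamma_{u_m}\in C$ with $\|u_m\|\to\infty$ and $\mathcal{A}^{t,s}(\gamma_m)\le M$. Normalize by setting $v_m=u_m/\|u_m\|$ and $\hat\gamma_m=\gamma_m/\|u_m\|$. Dividing the action by $\|u_m\|^2$ gives
\[
1-\int_t^s\hat\gamma_m^*Q\hat\gamma_m\,d\tau\le \frac{2M}{\|u_m\|^2}\to0 .
\]
Extract a subsequence with $v_m\rightharpoonup v$ weakly in $L^2$. Since $x_0^m/\|u_m\|\to 0$ and $L$ is compact, $\hat\gamma_m\to\hat\gamma:=Lv$ uniformly. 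The limit $\hat\gamma$ is the admissible trajectory with control $v$ starting at $0$. It also ends at $0$, because $\gamma_m(s)\in K_s$ is bounded, so $\gamma_m(s)/\|u_m\|\to0$. Weak lower semicontinuity of the norm together with the uniform convergence gives
\[
\|v\|^2-\int_t^s\hat\gamma^*Q\hat\gamma\,d\tau\;\le\;\liminf_m\Bigl(1-\int_t^s\hat\gamma_m^*Q\hat\gamma_m\,d\tau\Bigr)\;\le\;0 .
\]
So $\hat\gamma$ is a loop through $0$ on $[t,s]$ with action at most $0$.

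Now use $s-t\le T<t^*$. The unique optimal trajectory from $0$ to $0$ is the trivial one, with cost $c^{t,s}(0,0)=0$; this follows from Theorem \ref{LQ_exponential} applied on an interval of length $s-t$. Therefore $\mathcal{A}(\hat\gamma)\ge 0$, and every inequality above is an equality. This forces:
\begin{itemize}
\item $\|v\|^2=\lim_m\|v_m\|^2=1$, so in particular $v\neq0$;
\item $\hat\gamma$ is a minimizer joining $0$ to $0$.
\end{itemize}
By uniqueness of minimizers (Proposition \ref{corollary_LQ_optimal_trajectories}), $\hat\gamma\equiv0$. Since $B$ has full rank, Remark \ref{bijection_trajectories_controls} then gives $v=0$, which contradicts $\|v\|=1$. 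The delicate point is exactly this: using the strict condition $T<t^*$ to rule out nontrivial loops of nonpositive action, and keeping track of norm preservation through the weak limit.

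\emph{The compactness statement.} Let $\{\gamma_{u_m}\}$ have action bounded above. By the ``$\Rightarrow$'' direction just proved, $\|u_m\|$ is bounded, and the initial points are bounded since they lie in $K_t$. Extract a subsequence with $u_m\rightharpoonup u$ and $x_0^m\to x_0$. Compactness of $L$ and continuity of $x_0\mapsto e^{(\cdot-t)A}x_0$ give $\gamma_{u_m}\to\gamma_u(\cdot,x_0)$ uniformly. The limit $\gamma_u(\cdot,x_0)$ is admissible with control $u$, which proves the claim.
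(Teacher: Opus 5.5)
Your proof is correct, but for the direction ($\Rightarrow$) it takes a different route from the paper.

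\textbf{The paper's route.} It proves a quantitative coercivity estimate. It imports from Agrachev--Sachkov the uniform bound $\mathcal{A}(\gamma_v)\ge\alpha\|v\|^2$ on the subspace $\mathcal{U}(0,0)$ of controls steering $0$ to $0$, valid because $T<t^*$. It then splits an arbitrary control as $u=u_0+v$ with $v\in\mathcal{U}(0,0)$. Since the endpoint map $F(x,u)=(x,\gamma_u(T,x))$ is a surjective submersion, the implicit function theorem gives a local selection $u_0\in\mathcal{U}(x,y)$ with norm bounded uniformly for $(x,y)\in K_0\times K_T$. This yields $\mathcal{A}(\gamma_u)\ge -C_2'-C_3'\|u\|+\alpha\|u\|^2$.

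\textbf{Your route.} You argue by contradiction with a blow-up. After normalizing by $\|u_m\|$, the bounded boundary data disappear, and compactness of $L$ turns weak convergence of controls into uniform convergence of trajectories. The limit is a loop at $0$ with nonpositive action. This is excluded by the qualitative fact that the zero trajectory is the unique minimizer from $0$ to $0$ on an interval of length $s-t<t^*$ (Theorem \ref{LQ_exponential} together with Proposition \ref{corollary_LQ_optimal_trajectories}).

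\textbf{Comparison.} You need only positive definiteness of the action on $\mathcal{U}(0,0)$, not the uniform constant $\alpha$. In effect you reprove that, for forms of type $\|v\|^2-\langle Kv,v\rangle$ with $K$ compact, positivity implies coercivity. You also avoid the selection of $u_0$ altogether. The price is that your argument is non-constructive and gives no explicit quadratic lower bound for the action in terms of $\|u\|$, which the paper's estimate does. The direction ($\Leftarrow$) and the final compactness statement coincide with the paper's.

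\textbf{A minor imprecision.} The kernel $\mathbb{1}_{\{\xi\le\tau\}}e^{(\tau-\xi)A}B$ of $L$ is bounded but not continuous across the diagonal. Equicontinuity of $L$ on bounded sets still holds by Cauchy--Schwarz, since $|(Lu)(\tau)-(Lu)(\tau')|\lesssim(|\tau-\tau'|^{1/2}+|\tau-\tau'|)\|u\|$, so Arzel\`a--Ascoli applies as you claim.
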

\begin{proof} We consider the case $[t,s]=[0,T]$, the general case is analogous (note that $s-t\leq T < t^*$).

We begin with a general observation from the Cauchy formula \eqref{Cauchy}. There is a constant $C_0>0$ such that for any admissible trajectory $\gamma_u$ it holds
\begin{equation}\label{estimatefromCauchy}
|\gamma_u(\tau)| \leq C_0 \left( |\gamma_u(0)| + \|u\|\right),\qquad \forall \tau\in [0,T].
\end{equation}

($\Leftarrow$) This direction is trivial. Using \eqref{estimatefromCauchy} we find $C_1>0$ such that for all $\gamma_u\in C$ it holds
\begin{equation}
   | \mathcal{A}(\gamma_u) |\leq C_1(1+\|u\|^2).
\end{equation}

($\Rightarrow$) This direction is also trivial if $-Q\geq 0$, as in this case $\|u\|^2\leq 2\mathcal{A}(\gamma_u)$ for all admissible trajectories. The general case is standard when $K_0, K_T$ are points, see \cite[Proof of Proposition 16.1]{agrachev2013control}. For general compact sets one needs an extra construction of a continuous selection of controls whose associated admissible trajectories have endpoints in $K_0, K_T$. We sketch the proof for completeness.

For $x,y\in\R^n$ denote by $\mathcal{U}(x,y)$ the set of all controls such that the corresponding admissible trajectory satisfies $\gamma_u(0)=x$ and $\gamma_u(T)=y$. Note that the subspace $\mathcal{U}(x,y)$ is affine, while $\mathcal{U}(0,0)$ is linear, moreover
\begin{equation}
    \mathcal{U}(x,y)=u_0 +\mathcal{U}(0,0),\qquad \text{for any }u_0\in\mathcal{U}(x,y).
\end{equation}

Under our assumption $T<t^*$, as proved in \cite[Section 16.4]{agrachev2013control}, there exists $\alpha>0$ such that
\begin{equation}\label{alphau00}
\mathcal{A}(\gamma_v) \geq \alpha \|v\|^2,\qquad \forall v \in \mathcal{U}(0,0).
\end{equation}

For $u\in \mathcal{U}(x,y)$ choose $u_{0}\in\mathcal{U}(x,y)$ so that $u=u_0+v$ for some $v\in\mathcal{U}(0,0)$. Using \eqref{estimatefromCauchy} we find constants $C_2,C_3$, not depending on the choice of $x \in K_0$, $y\in K_T$, and the representative $u_0\in \mathcal{U}(x,y)$, such that for all admissible trajectory $\gamma_u$ it holds
\begin{align}
\mathcal{A}(\gamma_u) & \geq -C_2(1+\|u_0\|^2) - C_3(1+\|u_0\|)\|v\|+ \mathcal{A}(v)  \\
& \geq  -C_2(1+\|u_0\|^2) - C_3(1+\|u_0\|)\|u\| +\alpha \|u\|^2, \label{eq:estimate1}
\end{align}
where in the second line we used \eqref{alphau00}, and possibly increased the constants.

We claim that we can choose $u_0\in\mathcal{U}(x,y)$ in such a way that its $L^2$ norm remains uniformly bounded as $(x,y)\in K_0\times K_T$. To prove the claim, define $F: \R^n\times L^2 \to \R^n\times \R^n$  by
\begin{equation}
F(x,u):=(x,\gamma_u(T,x)).
\end{equation}
Under the Kalman condition, $F$ is a surjective submersion (surjectivity is indeed Theorem \ref{thm:kalman}, while submersivity can be easily proven using Cauchy formula). Therefore, by the implicit function theorem, for any $(\bar{x},\bar{y})\in \R^n\times \R^n$ there are neighborhoods $V,W$ of $\bar{x}$ and $\bar{y}$, respectively, and a smooth map $G: V\times W \to L^2$ such that $G(x,y)$ is the control of an admissible trajectory from $x$ to $y$, for all $x\in V$, $y\in W$.

Using this choice in \eqref{eq:estimate1} we find constants $C_2',C_3'>0$ such that, for any admissible trajectory $\gamma_u$ with $\gamma_u(0) \in K_0$ and $\gamma_u(T)\in K_T$ it holds
\begin{equation}
    \mathcal{A}(\gamma_u) \geq - C_2' - C_3'\|u\| + \alpha\|u\|^2.
\end{equation}
The direction $(\Rightarrow)$  of \eqref{eq:iffimplication} easily follows.

To prove the final part of the statement, if $\{\gamma_m\}$ is a sequence of trajectories with action uniformly bounded from above (indeed these must be all admissible trajectories), then also the sequence of corresponding controls $\{u_m\}\subset L^2$ is bounded. Therefore up to extraction $u_n \rightharpoonup u$ for some $u\in L^2$. Again up to extraction we can assume that $\gamma_m(0) \in K_0$ is convergent. By Cauchy formula \eqref{Cauchy} it follows that $\gamma_m \to \gamma$ uniformly, where $\gamma$ is the admissible trajectory with control $u$ and with $\gamma(0) = \lim_m \gamma_m(0)$.
\end{proof}
    
	\begin{prop}\label{properties_LQ_action}
		Let $\{\mathcal{A}^{t,s}\}_{0\leq t<s \leq T}$ be a family of LQ actions, with $T<t^*$. Then, it is a Lagrangian action on the Polish space $(\mathbb{R}^n,d_{|\cdot|})$.
        
		Moreover, for $0\leq t<s \leq T$ and for every non-empty compact sets $K_t,K_s\subset\mathbb{R}^n$. the set $\Gamma^{t,s}_{K_t\to K_s}$ is compact and non-empty.
	\end{prop}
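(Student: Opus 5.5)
The proof checks the requirements of Definition \ref{Lagrangian_action} one at a time: lower semi-continuity, additivity, and the reconstruction property. The compactness of $\Gamma^{t,s}_{K_t\to K_s}$ then follows from Lemma \ref{coercivity_action_on_controls}.

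\emph{Lower semi-continuity.} Let $\gamma_m\to\gamma$ uniformly on $[t,s]$, and assume $\liminf_m \mathcal{A}^{t,s}(\gamma_m)<+\infty$; otherwise there is nothing to prove. Pass to a subsequence realizing the $\liminf$, with actions bounded above. The endpoints $\gamma_m(t),\gamma_m(s)$ converge, so they lie in compact sets. By Lemma \ref{coercivity_action_on_controls}, the controls $u_m$ are bounded in $L^2$. Up to a further subsequence, $u_m\rightharpoonup u$, and the trajectories converge uniformly to the admissible trajectory $\gamma_u$, which must equal $\gamma$. In particular $\gamma$ is admissible. For the action, $\int_t^s|u|^2\leq\liminf_m\int_t^s|u_m|^2$ by weak lower semi-continuity of the norm. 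The term $\int_t^s\gamma_m^*Q\gamma_m$ converges to $\int_t^s\gamma^*Q\gamma$ by uniform convergence. Hence $\mathcal{A}^{t,s}(\gamma)\leq\liminf_m\mathcal{A}^{t,s}(\gamma_m)$.

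\emph{Additivity.} A continuous curve on $[\tau_1,\tau_3]$ is admissible if and only if its restrictions to $[\tau_1,\tau_2]$ and $[\tau_2,\tau_3]$ are both admissible. Indeed, absolute continuity glues, and the concatenated controls lie in $L^2$. When the curve is admissible, the integrals add. When it is not, both sides equal $+\infty$. Note that no term can be $-\infty$, since every action is either finite or $+\infty$.

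\emph{Reconstruction property.} For any partition $0=\tau_0<\dots<\tau_N=T$, the definition of $c^{\tau_{i-1},\tau_i}$ gives $c^{\tau_{i-1},\tau_i}(\gamma(\tau_{i-1}),\gamma(\tau_i))\leq\mathcal{A}^{\tau_{i-1},\tau_i}(\gamma|_{[\tau_{i-1},\tau_i]})$. Summing and using additivity shows that the supremum is at most $\mathcal{A}^{0,T}(\gamma)$.

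For the reverse inequality, take the dyadic partitions $P_k$ of mesh $T2^{-k}$. Let $\gamma_k$ be the concatenation of the optimal trajectories between consecutive points $\gamma(\tau_{i-1})$ and $\gamma(\tau_i)$; these exist and are unique by Proposition \ref{corollary_LQ_optimal_trajectories}, applied to intervals of length less than $t^*$. By additivity, $\mathcal{A}^{0,T}(\gamma_k)=\sum_i c^{\tau_{i-1},\tau_i}(\gamma(\tau_{i-1}),\gamma(\tau_i))$, and this quantity is at most the supremum $S$. If $S=+\infty$ there is nothing to prove, so assume $S<+\infty$. The curves $\gamma_k$ are admissible, have endpoints $\gamma(0),\gamma(T)$, and have action bounded above by $S$. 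By Lemma \ref{coercivity_action_on_controls}, their controls are bounded in $L^2$. The Cauchy formula \eqref{Cauchy} then makes the family $\{\gamma_k\}$ equi-Hölder-$1/2$. Since $\gamma_k$ agrees with $\gamma$ on the partition $P_k$, whose mesh tends to zero, and $\gamma$ is uniformly continuous, it follows that $\gamma_k\to\gamma$ uniformly. Lower semi-continuity now gives $\mathcal{A}^{0,T}(\gamma)\leq\liminf_k\mathcal{A}^{0,T}(\gamma_k)\leq S$.

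This step is the main obstacle. Without a lower bound on the action, one must control the actions of the $\gamma_k$ from above (by $S$) and convert this into an $L^2$ bound on the controls, which is exactly what Lemma \ref{coercivity_action_on_controls} provides. The argument relies on the fixed endpoints lying in a compact set, which holds here since they are $\gamma(0)$ and $\gamma(T)$.

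\emph{Compactness of minimizers.} Fix compact sets $K_t,K_s$. The set $\Gamma^{t,s}_{K_t\to K_s}$ is non-empty, because optimal trajectories exist between any pair of points (Theorem \ref{LQ_exponential}, applied on $[t,s]$ with $s-t<t^*$). Its elements have action $c^{t,s}(x,y)$, which is a continuous quadratic function by the Corollary to Theorem \ref{comp_cost}, and is therefore bounded on $K_t\times K_s$. By Lemma \ref{coercivity_action_on_controls}, every sequence in $\Gamma^{t,s}_{K_t\to K_s}$ has a subsequence converging uniformly to an admissible $\gamma$ with endpoints in $K_t\times K_s$. By lower semi-continuity and continuity of $c^{t,s}$, we get $\mathcal{A}^{t,s}(\gamma)\leq\liminf_m c^{t,s}(\gamma_m(t),\gamma_m(s))=c^{t,s}(\gamma(t),\gamma(s))$. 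Hence $\gamma$ is minimizing, so it lies in $\Gamma^{t,s}_{K_t\to K_s}$, and this set is compact.
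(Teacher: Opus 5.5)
Your proof is correct and follows the paper's argument step by step. That is: lower semi-continuity via Lemma \ref{coercivity_action_on_controls} and weak lower semi-continuity of the $L^2$ norm, additivity by concatenation, reconstruction via dyadic concatenations of optimal arcs, and compactness via the same lemma plus continuity of $c^{t,s}$. The only minor deviation is in the reconstruction step: you split on $S<+\infty$ and get uniform convergence of the whole dyadic sequence from an equi-H\"older-$1/2$ bound, whereas the paper extracts a subsequence and identifies its limit on the dense set of dyadic points; your version also absorbs the paper's separately treated non-admissible case.
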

        \begin{rem}\label{rem_subadditivity_costs}
        As a consequence of previous proposition (see \cite[Proposition 7.16]{Villanioldandnew}), given a family of LQ action $\{\mathcal{A}^{t,s}\}_{0\leq t <s \leq T}$, the corresponding family of costs $\{c^{t,s}\}_{0\leq t <s \leq T}$ satisfies:
        \begin{equation}
				c^{\tau_1,\tau_3}(x,y)\leq c^{\tau_1,\tau_2}(x,z)+c^{\tau_2,\tau_3}(z,y),\qquad x,y,z\in\mathbb{R}^n,\qquad \tau_1<\tau_2<\tau_3
			\end{equation}
            with equality if and only if the point $z$ is the $\tau_2$-intermediate point of the unique optimal trajectory (with respect to the action $\mathcal{A}^{\tau_1,\tau_3}$) from $x$ to $y$.
        \end{rem}
	\begin{proof}
		We start by showing that $\mathcal{A}^{t,s}$ is lower semi-continuous on $\mathcal{C}([t,s],\mathbb{R}^n)$, with respect to the topology of uniform convergence.
		Consider a sequence of continuous curves $\{\gamma_m\}$, uniformly converging to some curve $\gamma_{\infty}$. If $\lim_{m}\mathcal{A}^{t,s}(\gamma_m)=+\infty$, there is nothing to prove. Therefore, up to passing to a subsequence, we may assume that all $\gamma_m$ are admissible, with control $u_m$, and
 \{$\mathcal{A}^{t,s}(\gamma_m)$\} is bounded. By Lemma \ref{coercivity_action_on_controls}, up to extraction, there is an admissible curve $\gamma\in \mathcal{C}([t,s],\mathbb{R}^n)$ with control $u$ such that $u_m\rightharpoonup  u$ weakly and $\gamma_m \to \gamma$ uniformly. This means that $\gamma = \gamma_\infty$ and is thus admissible with control $u_\infty=u$.
As a consequence, we have
		\begin{align}
			\mathcal{A}^{t,s}(\gamma_{\infty})&=\frac{1}{2}\|u_\infty\|^2-\frac{1}{2}\int_{t}^{s}\gamma_{\infty}^*(\tau)Q\gamma_{\infty}(\tau)d\tau\\
			&\leq\liminf_{m}\bigg(\frac{1}{2}\|u_m\|^2-\frac{1}{2}\int_{t}^{s}\gamma_m^*(\tau)Q\gamma_m(\tau)d\tau\bigg)=\liminf_{m}\mathcal{A}^{t,s}(\gamma_m),
		\end{align}
        by weak lower semi-continuity of the $L^2$ norm. This proves lower semi-continuity of the actions.

		Now, let $\tau_1<\tau_2<\tau_3$ and let $\gamma\in\mathcal{C}([\tau_1,\tau_3],\mathbb{R}^n)$. If both $\gamma|_{[\tau_1,\tau_2]}$ and $\gamma|_{[\tau_2,\tau_3]}$ are admissible, also $\gamma$ itself is, with control given by the concatenation of the controls of the restrictions. Thus,
		\begin{equation}\label{eq:additivityinproof}
			\mathcal{A}^{\tau_1,\tau_2}(\gamma|_{[\tau_1,\tau_2]}) + \mathcal{A}^{\tau_2,\tau_3}(\gamma|_{[\tau_2,\tau_3]}) = \mathcal{A}^{\tau_1,\tau_3}(\gamma)
		\end{equation}
		trivially holds. Otherwise, if either one of the two restrictions is not admissible, neither is $\gamma$, so that \eqref{eq:additivityinproof} still holds.
        
		We prove the reconstruction property. Let $\gamma\in\mathcal{C}([0,T],\mathbb{R}^n)$ and let $\{0=\tau_0<\ldots<\tau_N=T\}$ be a partition of $[0,T]$, with $N\in\mathbb{N}$. By definition of the cost functions and additivity, we have
		\begin{equation}
			\mathcal{A}^{0,T}(\gamma)=\sum_{i=1}^{N}\mathcal{A}^{\tau_{i-1},\tau_i}(\gamma|_{[\tau_{i-1},\tau_i]})\geq\sum_{i=1}^{N}c^{\tau_{i-1},\tau_i}\left(\gamma(\tau_{i-1}),\gamma(\tau_i)\right),
		\end{equation}
		so that, passing to the supremum, we have 
		\begin{equation}
			\mathcal{A}^{0,T}(\gamma)\geq\sup\limits_{\{N\in\mathbb{N}\}}\sup\limits_{\{0=\tau_0<\dots<\tau_N=T\}}\sum_{i=1}^{N}c^{\tau_{i-1},\tau_i}\left(\gamma(\tau_{i-1}),\gamma(\tau_i)\right).
		\end{equation}
		For the converse inequality, for every $m\in\mathbb{N}$, we consider the partition $\mathcal{P}_m=\{\tau_i^{(m)}=\frac{iT}{2^m}\mid i=0,\dots,2^m\}$ and we define the curve $\gamma_m$ as
		\begin{itemize}
			\item $\gamma_m(\tau_i^{(m)})=\gamma(\tau_i^{(m)}),\qquad i=0,\dots, 2^m$,
			\item $\gamma_m|_{[\tau_{i-1}^{(m)},\tau_i^{(m)}]}$\quad is the unique optimal trajectory for $\mathcal{A}^{\tau_{i-1}^{(m)},\tau_i^{(m)}}$ between its endpoints, for $ i=0,\dots, 2^m$.
		\end{itemize}
		The curve $\gamma_m$ is admissible, with control $u_m$ obtained from the concatenation of optimal controls on each sub-interval. Note that
        \begin{equation}
			\mathcal{A}^{0,T}(\gamma_m)=\sum_{i=1}^{2^m}c^{\tau_{i-1}^{(m)},\tau_i^{(m)}}(\gamma(\tau_{i-1}^{(m)}),\gamma(\tau_i^{(m)}))\leq \mathcal{A}^{0,T}(\gamma).
		\end{equation}
        We distinguish between two cases. If $\gamma$ is admissible, then $\{\mathcal{A}^{0,T}(\gamma_m)\}$ is bounded. By Lemma \ref{coercivity_action_on_controls}, up to extraction, there is an admissible curve $\gamma_\infty \in \mathcal{C}([0,T],\R^n)$ such that $\gamma_m \to \gamma_\infty$ uniformly.

		By construction of the sequence $\{\gamma_m\}$ and passing to the limit, we have that 
		\begin{equation}
			\gamma_{\infty}(t_i^{(j)})=\gamma(t_i^{(j)}),\qquad\forall j\in\mathbb{N},\quad i=0,\dots,2^j,
		\end{equation}
		so that by continuity, $\gamma=\gamma_{\infty}$ and 
		\begin{equation}
			\mathcal{A}^{0,T}(\gamma_{\infty})\leq\liminf\limits_{m}\mathcal{A}^{0,T}(\gamma_m)\leq\sup\limits_{\{N\in\mathbb{N}\}}\sup\limits_{\{0=\tau_0<\dots<\tau_N=T\}}\sum_{i=1}^{N}c^{\tau_{i-1},\tau_i}\left(\gamma(\tau_{i-1}),\gamma(\tau_i)\right).
		\end{equation}
        
		The second case is when $\gamma$ is not admissible, that is $\mathcal{A}^{0,T}(\gamma)=+\infty$. Again, by Lemma \ref{coercivity_action_on_controls} we must have that
        \begin{equation}
            \lim_m \mathcal{A}^{0,T}(\gamma_m) =  +\infty,
        \end{equation}
		completing the proof of the reconstruction property.

        We prove the compactness statement. Again, it is sufficient to prove it for $[t,s]=[0,T]$, as the general case can be proven analogously. Consider two non-empty compact sets $K_0,K_T\subset\mathbb{R}^n$. Since between any two points there exist a (unique) minimizing trajectory, the set $\Gamma^{0,T}_{K_0\to K_T}$ is non-empty. Let $\{\gamma_m\}\subset\Gamma^{0,T}_{K_0\to K_T}$. Since these trajectories are optimal it holds
        \begin{equation}
            \mathcal{A}^{0,T}(\gamma_m) = c^{0,T}(\gamma_m(0),\gamma_m(T))\leq \sup_{(x,y)\in K_0\times K_T} c(x,y) < +\infty, \qquad \forall m.
        \end{equation}
        By Lemma \ref{coercivity_action_on_controls}, up to extraction there is an admissible trajectory $\gamma \in \mathcal{C}([0,T],\R^n)$ such that $\gamma_m \to \gamma$ uniformly. Indeed $\gamma(0)\in K_0$ and $\gamma(T)\in K_T$. Furthermore, since the LQ cost is continuous and the action is lower semi-continuos we have
        \begin{equation}
         c^{0,T}(\gamma(0),\gamma(T)) = \lim_m c^{0,T}(\gamma_m(0),\gamma_m(T)) = \liminf_{m}\mathcal{A}^{0,T}(\gamma_m) \geq \mathcal{A}^{0,T}(\gamma).
        \end{equation}
        This means that $\gamma$ is optimal and so $\gamma\in \Gamma^{0,T}_{K_0\to K_T}$, proving that the latter set is compact.
	\end{proof}
\subsection{Displacement interpolations}   

Now, we address the problem of interpolating continuously between the initial measure $\mu$ and the final measure $\nu$, in an optimal way with respect to the LQ cost. More precisely, we lift the LQ actions $\{\mathcal{A}^{t,s}\}_{0\leq t<s\leq T}$ to a suitable action on $(P_2(\mathbb{R}^n),W_2)$ that selects the displacement interpolations for the LQ problem. In addition, we prove a suitable version of the classical representation theorem for such optimal curves.

        \begin{Def}[Selection maps]\label{def_selection_maps}
        Let $(A,B,Q,T)$ be a LQ optimal control problem satisfying the Kalman condition, with $T<t^*$. 
        Then, for every $0\leq t<s\leq T$, we define the map $S_{t\to s}:\mathbb{R}^n\times\mathbb{R}^n\to\mathcal{C}([t,s],\mathbb{R}^n)$ as
        \begin{equation}
            (x,y)\mapsto \bigg([t,s]\ni\tau\mapsto S_{t\to s}(x,y)(\tau):=\exp_{x,\tau-t}(\exp_{s-t}^{-1}(y))\bigg),\qquad\forall x,y \in \R^n.
        \end{equation}
        Similarly, for every $0\leq t<s\leq T$, we define the map 
        $\opt_{t,s}:\mathcal{C}([t,s],\mathbb{R}^n)\to\mathcal{C}([t,s],\mathbb{R}^n)$ as
        \begin{equation}
            \gamma\mapsto \bigg([t,s]\ni\tau\mapsto \opt_{t,s}(\gamma)(\tau):=\exp_{\gamma(t),\tau-t}(\exp_{s-t}^{-1}(\gamma(s)))\bigg),\qquad\forall\gamma\in\mathcal{C}([t,s],\mathbb{R}^n).
        \end{equation}
        \end{Def}
        The application of the maps $S_{t\to s}$ and $\opt_{t,s}$ can be regarded as an optimization procedure. $S_{t\to s}$ associates to any pair of points the unique trajectory that joins them in time $s-t$ and is optimal for the action $\mathcal{A}_{t,s}$. Similarly, $\opt_{t,s}$ associates to any continuous trajectory the corresponding one that is optimal on the same time interval, between the same endpoints. Therefore, it is natural to consider dynamical objects that posses some form of invariance under their action.
        
        Thanks to Theorem \ref{LQ_exponential}, the maps $S_{t\to s}$ and $\opt_{t,s}$ are continuous, with the natural topologies of source and target, so that they can be used to push forward Borel measures.
	\begin{Def}[Optimal dynamical plan]
		Let $(A,B,Q,T)$ be a LQ optimal control problem satisfying the Kalman condition, with $T<t^*$. Let $\mu,\nu\in P(\mathbb{R}^n)$. A Borel measure $\Pi\in P(\mathcal{C}([0,T],\mathbb{R}^n))$ is called a optimal dynamical plan between $\mu$ and $\nu$ if it is concentrated on optimal trajectories and
		\begin{align}
			(e_0,e_T)_{\sharp}\Pi=\alpha,
		\end{align}
		where $e_t$ is the evaluation map at time $t$ and $\alpha$ is an optimal transport plan between $\mu$ and $\nu$ for the LQ cost $c=c^{0,T}$.
	\end{Def}

	We denote by $(\mu_\tau)$ a curve of measures $[0,T]\ni \tau\mapsto\mu_\tau$, and by $\mu_\tau$ its evaluation at time $\tau$.

    \begin{Def}[Displacement interpolation]\label{def_displ_interp}
        Given $(\mu_\tau)\in\mathcal{C}([0,T],P_2(\mathbb{R}^n))$, we say that $(\mu_\tau)$ is a displacement interpolation with respect to the costs $\{c^{t,s}\}_{0\leq t,s\leq T}$, if 
		\begin{equation}\label{uguaglianzacosti}
			 C^{\tau_1,\tau_2}(\mu_{\tau_1},\mu_{\tau_2})+C^{\tau_2,\tau_3}(\mu_{\tau_2},\mu_{\tau_3})=C^{\tau_1,\tau_3}(\mu_{\tau_1},\mu_{\tau_3}), \qquad\forall \tau_1 < \tau_2 < \tau_3,
		\end{equation}
		where $C^{t,s}(\mu,\nu)$ is the Kantorovich transport cost between $\mu$ and $\nu$, with respect to the pointwise intermediate cost $c^{t,s}$.
    \end{Def}
    \begin{rem}
For general $\mu \in P(\R^n)$, the cost $C^{t,s}(\mu)$ can attain the values $\pm \infty$. Therefore, to give a meaning to \eqref{uguaglianzacosti}, we \emph{require} in Definition \ref{def_displ_interp} that $\mu_\tau \in P_2(\R^n)$ for all $\tau$ (and not just $\tau=0,1$), in which case $C^{t,s}$ is finite for $t,s \in [0,T]$. This is not necessary when the cost functional is bounded from below.
    \end{rem}
    \begin{prop}
        		Let $(A,B,Q,T)$ be a LQ optimal control problem satisfying the Kalman condition, with $T<t^*$. Let $\mu,\nu\in P_2(\mathbb{R}^n)$ and let $\Pi\in P(\mathcal{C}([0,T],\mathbb{R}^n))$ be an optimal dynamical plan between $\mu$ and $\nu$. Then, the curve of measures $(\mu_\tau):=(e_{\tau\sharp}\Pi)$ is a displacement interpolation between $\mu$ and $\nu$.
    \end{prop}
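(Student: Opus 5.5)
We need to show that $\mu_\tau\in P_2(\R^n)$ for every $\tau$, that $\tau\mapsto\mu_\tau$ is $W_2$-continuous, and that the additivity identity \eqref{uguaglianzacosti} holds for all $\tau_1<\tau_2<\tau_3$. The strategy is the standard one from \cite[Chapter 7]{Villanioldandnew}. We first prove the inequality ``$\leq$'' in \eqref{uguaglianzacosti} through a gluing argument, and then obtain ``$\geq$'' directly from $\Pi$ by combining the pointwise identity of Remark \ref{rem_subadditivity_costs} with the optimality of $(e_0,e_T)_\sharp\Pi$.

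\textbf{Step 1 (regularity of the curve).} Since $\Pi$ is concentrated on optimal trajectories, $\Pi$-a.e.\ $\gamma$ coincides with $S_{0\to T}(\gamma(0),\gamma(T))$. By Remark \ref{formula_opt_traj}, this gives
\begin{equation}
\gamma(\tau)=R_3(\tau)R_3^{-1}(T)\gamma(T)+\big(R_4(\tau)-R_3(\tau)R_3^{-1}(T)R_4(T)\big)\gamma(0),
\end{equation}
with coefficients smooth in $\tau\in[0,T]$. Consequently $|\gamma(\tau)|\leq M(|\gamma(0)|+|\gamma(T)|)$ and $|\gamma(\tau)-\gamma(\tau')|\leq L|\tau-\tau'|(|\gamma(0)|+|\gamma(T)|)$. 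Because $\alpha=(e_0,e_T)_\sharp\Pi$ has marginals $\mu,\nu\in P_2$, we get $\mu_\tau\in P_2(\R^n)$. Using the coupling $(e_\tau,e_{\tau'})_\sharp\Pi$, we also get $W_2(\mu_\tau,\mu_{\tau'})^2\leq L^2|\tau-\tau'|^2\int(|x|+|y|)^2\,d\alpha$, so the curve is continuous. In particular all the costs $C^{t,s}(\mu_t,\mu_s)$ are finite, by the quadratic bounds of Corollary \ref{finite_total_cost} applied to $c^{t,s}$.

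\textbf{Step 2 (the easy inequality).} Take $\tau_1<\tau_2<\tau_3$ and optimal plans $\alpha_{12},\alpha_{23}$ for $c^{\tau_1,\tau_2},c^{\tau_2,\tau_3}$, which exist by Theorem \ref{existence_optimal_plans}. Glue them along $\mu_{\tau_2}$ into $\sigma\in P(\R^{3n})$ and set $\beta=(\pi_1,\pi_3)_\sharp\sigma$, a plan between $\mu_{\tau_1}$ and $\mu_{\tau_3}$. The triangle inequality of Remark \ref{rem_subadditivity_costs}, integrated against $\sigma$, gives $C^{\tau_1,\tau_3}\leq\int c^{\tau_1,\tau_3}d\beta\leq C^{\tau_1,\tau_2}+C^{\tau_2,\tau_3}$. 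All integrals are finite because the costs are bounded by $\Lambda(|x|^2+|y|^2)$ and the marginals lie in $P_2$.

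\textbf{Step 3 (the reverse inequality).} This is the main point. First we need $(e_{\tau_1},e_{\tau_3})_\sharp\Pi$ to be optimal for $c^{\tau_1,\tau_3}$, which we show by the same gluing trick on $[0,T]$. Suppose some plan $\beta'$ between $\mu_{\tau_1}$ and $\mu_{\tau_3}$ did strictly better. Glue $(e_0,e_{\tau_1})_\sharp\Pi$, $\beta'$ and $(e_{\tau_3},e_T)_\sharp\Pi$. By Step 2 together with the equality case of Remark \ref{rem_subadditivity_costs} along $\Pi$-a.e.\ optimal $\gamma$, namely
\begin{equation}
c^{0,T}(\gamma(0),\gamma(T))=c^{0,\tau_1}+c^{\tau_1,\tau_3}+c^{\tau_3,T}
\end{equation}
evaluated at the corresponding points of $\gamma$, the glued plan would give a plan between $\mu$ and $\nu$ with $c^{0,T}$-cost strictly less than $\int c\,d\alpha=C^{0,T}(\mu,\nu)$, a contradiction. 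The same argument shows that $(e_t,e_s)_\sharp\Pi$ is optimal for any $t<s$. Then, integrating the pointwise equality $c^{\tau_1,\tau_3}(\gamma(\tau_1),\gamma(\tau_3))=c^{\tau_1,\tau_2}(\gamma(\tau_1),\gamma(\tau_2))+c^{\tau_2,\tau_3}(\gamma(\tau_2),\gamma(\tau_3))$ against $\Pi$ (restrictions of optimal curves are optimal) yields
\begin{equation}
C^{\tau_1,\tau_3}(\mu_{\tau_1},\mu_{\tau_3})=\int c^{\tau_1,\tau_2}\,d(e_{\tau_1},e_{\tau_2})_\sharp\Pi+\int c^{\tau_2,\tau_3}\,d(e_{\tau_2},e_{\tau_3})_\sharp\Pi\geq C^{\tau_1,\tau_2}+C^{\tau_2,\tau_3}.
\end{equation}
Combined with Step 2, this gives \eqref{uguaglianzacosti}.

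The delicate point throughout is integrability. The costs are unbounded below, so every splitting of an integral into a sum must be justified by the quadratic bound $|c^{t,s}(x,y)|\leq\Lambda_{t,s}(|x|^2+|y|^2)$, uniformly in $t,s$ for the finitely many times involved, together with the second-moment bounds from Step 1. These ensure that all terms are finite, so no $\infty-\infty$ ambiguity arises.
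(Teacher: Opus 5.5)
Your proof is correct and has the same structure as the paper's. Both use second-moment bounds from the explicit formula for optimal trajectories, the gluing lemma for $C^{\tau_1,\tau_3}\leq C^{\tau_1,\tau_2}+C^{\tau_2,\tau_3}$, and, for the reverse inequality, optimality of the restricted plans $(e_t,e_s)_\sharp\Pi$ together with the pointwise cost equality along optimal curves. The only differences are in execution. You get continuity, in fact Lipschitz continuity, directly from the coupling $(e_\tau,e_{\tau'})_\sharp\Pi$, whereas the paper uses dominated convergence plus convergence of second moments. You also prove optimality of $(e_{\tau_1},e_{\tau_3})_\sharp\Pi$ in one step, by contradiction with a four-marginal gluing, whereas the paper runs its chain of equalities first through $(0,\tau,T)$ and then through $(0,\tau_2,\tau_3)$. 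When $\tau_1=0$ or $\tau_3=T$, one of your glued pieces is absent, and the argument goes through by simply omitting it.
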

    \begin{proof}
        Define $\alpha=(e_0,e_T)_\sharp\Pi\in\Adm(\mu,\nu)$. Note that $\alpha$ is optimal by definition of optimal dynamical plan and is in $P_2(\mathbb{R}^{n}\times \R^n)$. Recalling Definition \ref{def_selection_maps}, one has that
			\begin{equation}
				S_{0\to T}\circ(e_0,e_T)=\opt_{0,T},
			\end{equation}
            so that $\Pi=(S_{0\to T})_\sharp\alpha$, where we used that $\Pi$ is concentrated on the set of optimal trajectories $\Gamma$, where $\opt_{0,T}=\mathrm{id}$.
        By Remark \ref{formula_opt_traj}, we have 
			\begin{equation}
				S_{0\to T}(x,y)=\left(\tau\mapsto\exp_{x,\tau}(\exp_{x,T}^{-1}(y))=M(\tau)y+N(\tau)x\right),\qquad\forall x,y\in\mathbb{R}^n,\quad\forall \tau\in[0,T],
			\end{equation}
			where $M(\tau):=R_3(\tau)R_3^{-1}(T)$ and $N(t):=R_4(\tau)-R_3(\tau)R_3^{-1}(T)R_4(T)$ are smooth curves with values in $\mathbb{R}^{n\times n}$. In particular, there exists $\Lambda>0$, such that
			\begin{equation}\label{dominante_integrabile}
				|\exp_{x,\tau}(\exp_{x,T}^{-1}(y))|^2\leq \Lambda(|x|^2+|y|^2),\qquad\forall x,y\in\mathbb{R}^n,\quad\forall \tau\in[0,T]. 
			\end{equation}
			Since $\alpha\in P_2(\mathbb{R}^n\times\mathbb{R}^n)$, which is equivalent to $\Lambda(|x|^2+|y|^2)\in L^1(\alpha)$, we have that 
			\begin{align}
            \int_{\mathbb{R}^n}|z|^2d\mu_\tau(z)&=\int_{\Gamma}|\gamma(t)|^2d\Pi(\gamma)\\
				&=\int_{\mathbb{R}^{2n}}|\exp_{x,\tau}(\exp_{x,T}^{-1}(y))|^2d\alpha(x,y)\\
				&\leq \Lambda\int_{\mathbb{R}^{2n}}\big(|x|^2+|y|^2\big)d\alpha(x,y)<+\infty.
			\end{align} 
			This means that $\mu_\tau\in P_2(\mathbb{R}^n)$, for every $\tau\in[0,T]$.
            
			Now, let $h\in\mathcal{C}_b(\mathbb{R}^n)$, (a continuous bounded function) and we have
			\begin{align}
				\int_{\mathbb{R}^n}h(z)d\mu_\tau(z)&=\int_{\Gamma}h(\gamma(t))d\Pi(\gamma)\\
					&=\int_{\mathbb{R}^{2n}}h(\exp_{x,\tau}(\exp_{x,T}^{-1}(y)))d\alpha(x,y)\\
					&=\int_{\mathbb{R}^{2n}}h_t(x,y)d\alpha(x,y),
			\end{align} 
			with $h_t(x,y):=h(\exp_{x,\tau}(\exp_{x,T}^{-1}(y)))$.
			We notice that
			\begin{itemize}
				\item $\|h_\tau\|_{\infty}=\|h\|_{\infty}=const, \qquad\forall \tau\in[0,T]$,
				\item for every sequence $\{\tau_m\}$ such that $\tau_m\to \tau_{\infty}$ and every $x,y\in\mathbb{R}^{n}$, $h_{\tau_m}(x,y)$ converges to $h_{\tau_{\infty}}(x,y)$, by continuity of the LQ exponential map.
			\end{itemize} 
			As a consequence, by dominated convergence, it follows that
			\begin{align}
				\lim\limits_{m\to+\infty}\int_{\mathbb{R}^n}h(z)d\mu_{\tau_m}(z)&=\lim\limits_{m\to+\infty}\int_{\mathbb{R}^{2n}}h_{\tau_m}(x,y)d\alpha(x,y)\\
				&=\int_{\mathbb{R}^{2n}}h_{\tau_{\infty}}(x,y)d\alpha(x,y)\\
				&=\int_{\mathbb{R}^n}h(z)d\mu_{\tau_{\infty}}(z).\label{eq:weak}
			\end{align}
			Moreover, using \eqref{dominante_integrabile}, we can apply again dominated convergence to get
			\begin{align}
		  \lim\limits_{m\to+\infty}\int_{\mathbb{R}^n}|z|^2d\mu_{\tau_m}(z)&=\lim\limits_{m\to+\infty}\int_{\mathbb{R}^{2n}}|\exp_{x,\tau_m}(\exp_{x,T}^{-1}(y))|^2d\alpha(x,y)\\
				&=\int_{\mathbb{R}^{2n}}|\exp_{x,\tau_{\infty}}(\exp_{x,T}^{-1}(y))|^2d\alpha(x,y)\\
				&=\int_{\mathbb{R}^n}|z|^2d\mu_{\tau_{\infty}}(z). \label{2momentum}
			\end{align}
            It is well-known that weak convergence \eqref{eq:weak} and convergence of $2$-momentum \eqref{2momentum} is equivalent to convergence $\mu_{\tau_m} \to \mu_{\tau_\infty}$ in the $W_2$-topology \cite[Theorem 6.9]{Villanioldandnew}. This concludes  the proof that $\mu_\tau\in\mathcal{C}([0,T],P_2(\mathbb{R}^n))$.
			
		Note that all the Kantorovich transport costs are finite by Corollary \ref{finite_total_cost}.
			First we claim that, for every $(\eta_\tau)\in \mathcal{C}([0,T],P_2(\mathbb{R}^n))$ and every $0\leq \tau_1 < \tau_2 < \tau_3\leq T$, we have that
			\begin{equation}\label{monotonicity_refining_costs}
				C^{\tau_1,\tau_3}(\eta_{\tau_1},\eta_{\tau_3})\leq C^{\tau_1,\tau_2}(\eta_{\tau_1},\eta_{\tau_2})+C^{\tau_2,\tau_3}(\eta_{\tau_2},\eta_{\tau_3}).
			\end{equation}
			In order to prove previous equation, we consider an optimal plan $\alpha_{1,2}$ between $\eta_{\tau_1}$ and $\eta_{\tau_2}$ for the cost $c^{\tau_1,\tau_2}$ and an optimal plan $\alpha_{2,3}$ between $\eta_{\tau_2}$ and $\eta_{\tau_3}$ for the cost $c^{\tau_2,\tau_3}$. By the gluing Lemma (see e.g. \cite[Lemma 3.1]{Usersguide}), there exists a probability measure $\alpha_{1,2,3}$ on $\mathbb{R}^n\times\mathbb{R}^n\times\mathbb{R}^n$, whose appropriate projections are $\alpha_{1,2}$ and $\alpha_{2,3}$. If we define $\alpha_{1,3}=\pi_{1,3\sharp}\alpha_{1,2,3}$, this is a transport plan between $\eta_{t_1}$ and $\eta_{t_3}$.
            
			Recalling Remark \ref{rem_subadditivity_costs}, we have
			\begin{align}\label{key_computation_opt_coupling}
				C^{\tau_1,\tau_3}(\eta_{\tau_1},\eta_{\tau_3})&\leq\int_{\mathbb{R}^{2n}}c^{\tau_1,\tau_3}(x_1,x_3)d\alpha_{1,3}(x_1,x_3)\\
				&=\int_{\mathbb{R}^{3n}}c^{\tau_1,\tau_3}(x_1,x_3)d\alpha_{1,2,3}(x_1,x_2,x_3) \nonumber\\
				&\leq\int_{\mathbb{R}^{3n}}\left(c^{\tau_1,\tau_2}(x_1,x_2)+c^{\tau_2,\tau_3}(x_2,x_3)\right)d\alpha_{1,2,3}(x_1,x_2,x_3) \nonumber\\
				&=C^{\tau_1,\tau_2}(\eta_{\tau_1},\eta_{\tau_2})+C^{\tau_2,\tau_3}(\eta_{\tau_2},\eta_{\tau_3}).\nonumber
			\end{align}
		For the converse inequality, given $(\mu_\tau)=(e_{\tau\sharp}\Pi)$ and recalling that $\Pi$ is concentrated on the set of optimal curves $\Gamma$, we have
			\begin{align}
				&	C^{\tau_1,\tau_2}(\mu_{\tau_1},\mu_{\tau_2})+C^{\tau_2,\tau_3}(\mu_{\tau_2},\mu_{\tau_3})\\
				&\leq\int_{\mathbb{R}^{2n}}c^{\tau_1,\tau_2}(x_1,x_2)d\left((e_{\tau_1},e_{\tau_2})_{\sharp}\Pi\right)(x_1,x_2)+\int_{\mathbb{R}^{2n}}c^{\tau_2,\tau_3}(x_2,x_3)d\left((e_{\tau_2},e_{\tau_3})_{\sharp}\Pi\right)(x_2,x_3)\\
				&=\int_{\Gamma}\left(c^{\tau_1,\tau_2}(\gamma(\tau_1),\gamma(\tau_2))+c^{\tau_2,\tau_3}(\gamma(\tau_2),\gamma(\tau_3))\right)d\Pi(\gamma)\\
				&=\int_{\Gamma}c^{\tau_1,\tau_3}(\gamma(\tau_1),\gamma(\tau_3))d\Pi(\gamma)=\int_{\mathbb{R}^{2n}}c^{\tau_1,\tau_3}(x_1,x_3)d\left((e_{\tau_1},e_{\tau_3})_{\sharp}\Pi\right)(x_1,x_3),
			\end{align}
			where we used Remark \ref{rem_subadditivity_costs}.
			If we take $\tau_1=0,\tau_2=\tau$ and $\tau_3=T$, since $(e_0,e_T)_{\sharp}\Pi$ is an optimal plan, we get
			\begin{equation}
				C^{0,\tau}(\mu_0,\mu_\tau)+C^{\tau,T}(\mu_\tau,\mu_T)=C^{0,T}(\mu_0,\mu_T),\qquad\forall 0<\tau<T,
			\end{equation}
			and in the previous chain, all inequalities must be equalities. In particular, this implies that $(e_0,e_\tau)_{\sharp}\Pi$ is an optimal plan between $\mu$ and $\mu_\tau$ for the cost $c^{0,\tau}$, for every $\tau\in(0,T]$, and similarly for $(e_\tau,e_T)_{\sharp}\Pi$.
            
			The same argument applied to $\tau_1=0,\tau_2,\tau_3$ proves that also $(e_{\tau_2},e_{\tau_3})_{\sharp}\Pi$ is an optimal plan between $\mu_{\tau_2}$ and $\mu_{\tau_3}$ for the cost $c^{\tau_2,\tau_3}$ and for all $0\leq \tau_2 <\tau_3\leq T$. Finally, plugging this information back into previous equation, we get 
			\begin{equation}
				C^{\tau_1,\tau_2}(\mu_{\tau_1},\mu_{\tau_2})+C^{\tau_2,\tau_3}(\mu_{\tau_2},\mu_{\tau_3})\leq C^{\tau_1,\tau_3}(\mu_{\tau_1},\mu_{\tau_3}),
			\end{equation}
			proving that, indeed, $(\mu_\tau)$ is a displacement interpolation.
    \end{proof}
	
	The purpose of the next theorem is to clarify the relation between optimal dynamical plans and displacement interpolations and to link them to the action minimizers of a suitable lift of the Lagrangian action to the space $(P_2(\mathbb{R}^n),W_2)$. The proof is essentially the same as the one of \cite[Theorem 7.21]{Villanioldandnew}, with some additional care to be taken.The maps $S_{t\to s}$ and $\opt_{t,s}$ are single-valued. As a consequence, we obtain a bijection between optimal transport plans and optimal dynamical plans.
	
	\begin{thm}[Displacement interpolations]\label{dyn_opt_coup_characterization}
		Let $(A,B,Q,T)$ be a LQ optimal control problem satisfying the Kalman condition, with $T<t^*$. Let $\mu, \nu\in P_2(\mathbb{R}^n)$. There is a bijection between:
        \begin{itemize}
            \item optimal plans $\alpha\in P_2(\mathbb{R}^n\times\mathbb{R}^n)$,
            \item optimal dynamical plans $\Pi\in P(\mathcal{C}([0,T],\mathbb{R}^n))$,
            \item displacement interpolations $(\mu_\tau)\in \mathcal{C}([0,T],P_2(\mathbb{R}^n))$.
        \end{itemize}
 In particular, for every optimal dynamical plan $\Pi\in P(\mathcal{C}([0,T],\mathbb{R}^n))$, the corresponding displacement interpolation is given by
		\begin{equation}
			e_{\tau\sharp}\Pi=\mu_\tau,\qquad \forall \tau\in[0,T].
		\end{equation}
		Moreover, for $0\leq t<s\leq T$, we define 
		\begin{equation}
			\mathbb{A}^{t,s}(\eta_{\tau}):=\sup\limits_{\{N\in\mathbb{N}\}}\sup\limits_{\{t=\tau_0<\dots<\tau_N=s\}}\sum_{i=1}^{N}C^{\tau_{i-1},\tau_i}\left(\eta_{\tau_{i-1}},\eta_{\tau_i}\right),\qquad(\eta_{\tau})\in\mathcal{C}([t,s],P_2(\mathbb{R}^n)),
		\end{equation} 
        where $C^{t,s}(\mu,\nu)$ is the Kantorovich transport cost, associated with the pointwise intermediate cost $c^{t,s}:\R^n\times \R^n \to \R$.
		Then, the family of functionals $\{\mathbb{A}^{t,s}\}_{0\leq t<s\leq T}$ on $P_2(\mathbb{R}^n)$ is a Lagrangian action. The corresponding family of action-minimizing costs coincides with the Kantorovich transport costs $\{C^{t,s}\}_{0\leq t<s\leq T}$. Furthermore, $(\mu_\tau)$ is a displacement interpolation between $\mu$ and $\nu$ if and only if it is an action minimizer.
	\end{thm}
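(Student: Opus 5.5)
The plan is to follow \cite[Theorem 7.21]{Villanioldandnew}. The key point is that the maps $S_{0\to T}$ and $\opt_{0,T}$ of Definition \ref{def_selection_maps} are single-valued and continuous, and these two properties replace coercivity.

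\emph{Step 1: plans and dynamical plans.} Given an optimal plan $\alpha$, define $\Pi:=(S_{0\to T})_\sharp\alpha$. It is concentrated on optimal trajectories, and $(e_0,e_T)_\sharp\Pi=\alpha$ because $(e_0,e_T)\circ S_{0\to T}=\mathrm{id}$. Conversely, if $\Pi$ is an optimal dynamical plan, the proof of the previous Proposition shows $\Pi=(S_{0\to T})_\sharp(e_0,e_T)_\sharp\Pi$. Hence $\alpha\mapsto\Pi$ and $\Pi\mapsto(e_0,e_T)_\sharp\Pi$ are mutually inverse, and this is a bijection.

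\emph{Step 2: dynamical plans and displacement interpolations.} The previous Proposition gives the map $\Pi\mapsto(e_{\tau\sharp}\Pi)$. For the reverse direction, let $(\mu_\tau)$ be a displacement interpolation. For the dyadic partitions $\mathcal P_m$, take optimal plans between consecutive $\mu_{\tau_i}$ for $c^{\tau_{i-1},\tau_i}$. Glue them into a multi-marginal measure and push it forward by concatenating the maps $S_{\tau_{i-1}\to\tau_i}$; call the result $\Pi_m$. The equality \eqref{uguaglianzacosti}, iterated, together with the chain \eqref{key_computation_opt_coupling} and Remark \ref{rem_subadditivity_costs}, forces $\Pi_m$-a.e. curve to satisfy equality in the triangle inequality for costs at every node. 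Hence such curves are globally optimal, and $(e_0,e_T)_\sharp\Pi_m$ is an optimal plan $\alpha_m$.

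By Step 1, $\Pi_m=(S_{0\to T})_\sharp\alpha_m$ and $e_{\tau\sharp}\Pi_m=\mu_\tau$ on the nodes of $\mathcal P_m$. The $\alpha_m$ are tight with uniformly bounded second moments, since their marginals are fixed. Optimality is stable under narrow limits here, because $|c|\le\Lambda(|x|^2+|y|^2)$ is uniformly integrable and by the $c$-cyclical monotonicity characterization in Theorem \ref{supp_opt_plans}. So a subsequence $\alpha_m\to\alpha$ with $\alpha$ optimal, and continuity of $S_{0\to T}$ gives $\Pi_m\to\Pi:=(S_{0\to T})_\sharp\alpha$. Then $e_{\tau\sharp}\Pi=\mu_\tau$ on dyadic times, hence for all $\tau$ by continuity of both curves in $W_2$.

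Uniqueness of the preimage requires care. If two dynamical plans induce the same curve, they share $(e_0,e_T)$ only if the plan is recoverable from the curve. I would instead show injectivity of $\Pi\mapsto(\mu_\tau)$ by a different route: the pair $(\mu_0,\mu_\tau)$ has optimal plan $(e_0,e_\tau)_\sharp\Pi$, and for $\tau$ small I would argue through an intermediate-time uniqueness. I expect this to be the main obstacle, and I would first try to show that optimal plans between $\mu_\tau$ and $\mu_T$ for $\tau>0$ are unique and induced by a map. This would be a Lipschitz inverse-map argument via the explicit affine formula $M(\tau)y+N(\tau)x$ for $S_{0\to T}$ and $c$-cyclical monotonicity; the resulting plan determines $\Pi$.

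\emph{Step 3: lifted action.} Lower semicontinuity of $\mathbb A^{t,s}$ holds because each $C^{\tau_{i-1},\tau_i}$ is lower semicontinuous on $(P_2,W_2)$ by the stability just used, and a supremum of lower semicontinuous functionals is lower semicontinuous. Additivity follows from \eqref{monotonicity_refining_costs}: refining a partition increases the sums, so suprema over partitions of $[\tau_1,\tau_3]$ split at $\tau_2$. The reconstruction property holds by the same monotonicity once the induced costs are identified. For that identification, $\mathbb A^{t,s}(\eta)\ge C^{t,s}(\eta_t,\eta_s)$ by \eqref{monotonicity_refining_costs}, and a displacement interpolation attains equality since all partition sums telescope to $C^{t,s}$. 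Existence of a displacement interpolation between any endpoints comes from Steps 1–2. Thus the induced cost equals $C^{t,s}$, and minimizers are exactly the curves where every partition sum equals $C^{t,s}$, which is equivalent to \eqref{uguaglianzacosti}.
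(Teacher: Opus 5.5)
Your Step~1 is the same as the paper's proof. Your lifting argument in Step~2 (compactness of $\Adm(\mu,\nu)$ with fixed marginals, then continuity of $S_{0\to T}$) and your Step~3 are a sound implementation of the scheme of \cite[Theorem~7.21]{Villanioldandnew}, to which the paper defers. The genuine gap is the one you flag yourself: you never prove that $\Pi\mapsto(e_{\tau\sharp}\Pi)$ is injective, so the bijection with displacement interpolations is not established. The paper's written proof also only addresses the lifting at this point. Your detour through uniqueness of optimal plans between $\mu_\tau$ and $\mu_T$ is not needed. More importantly, the ingredients you list (the affine formula for $S_{0\to T}$ plus $c$-cyclical monotonicity) do not suffice by themselves. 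The missing idea is a non-crossing property, and it requires a sign that comes from the absence of conjugate times.

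\emph{Non-crossing.} Fix $\tau\in(0,T)$ and let $z_i=M(\tau)y_i+N(\tau)x_i$. Then $R_3^{-1}(\tau)z_i=R_3^{-1}(T)y_i+(S(\tau)-S(T))x_i$, with $S=R_3^{-1}R_4$ as in Lemma~\ref{S_matrix}. Since $D=-R_3^{-1}(T)$, two-point $c$-cyclical monotonicity gives $(x_1-x_2)^*R_3^{-1}(T)(y_1-y_2)\geq 0$. Hence $z_1=z_2$ forces $(x_1-x_2)^*(S(\tau)-S(T))(x_1-x_2)\leq 0$. Now $S(\tau)-S(T)\geq 0$ by Lemma~\ref{S_matrix}. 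It also equals $R_3^{-1}(\tau)R_3(\tau-T)R_3^{-1}(-T)$ (see the proof of Lemma~\ref{positivity_Jacobi}), which is invertible because $R_3(-t)=-R_3^*(t)$ is invertible for $t\in(0,T]$. So $S(\tau)-S(T)$ is positive definite, and therefore $x_1=x_2$ and $y_1=y_2$. Pairing the same identity with $x_1-x_2$, and then solving for $y_1-y_2$, gives $|x_1-x_2|+|y_1-y_2|\leq C_\tau|z_1-z_2|$. This is the Lipschitz inverse you had in mind.

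\emph{Injectivity.} No intermediate-time uniqueness is needed. Suppose $\Pi_1,\Pi_2$ induce the same curve, and set $\alpha_i=(e_0,e_T)_\sharp\Pi_i$. These plans have the same marginals, so $\tfrac12(\alpha_1+\alpha_2)$ is optimal. Its support $\Sigma$ is $c$-cyclically monotone, with $\alpha_1(\Sigma)=\alpha_2(\Sigma)=1$. The map $\Phi_\tau(x,y)=M(\tau)y+N(\tau)x$ is injective on $\Sigma$, and $\Phi_{\tau\sharp}\alpha_i=e_{\tau\sharp}\Pi_i=\mu_\tau$. Hence $\alpha_1=\alpha_2=((\Phi_\tau|_{\Sigma})^{-1})_\sharp\mu_\tau$, and $\Pi_1=\Pi_2$ by Step~1. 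Alternatively, the contact-set argument in the proof of Theorem~\ref{absolute_continuity-dyn_opt_coupl} adapts verbatim to pairs $(x,y)\in\partial_c\psi$, without using $\mu\ll\mathscr{L}^n$, and gives the same non-crossing.
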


        \begin{proof}
            First, we prove that there is a bijection between optimal transport plans $\alpha\in P_2(\mathbb{R}^n\times\mathbb{R}^n)$ and optimal dynamical plans $\Pi\in P(\mathcal{C}([0,T],\mathbb{R}^n))$.

			Let $\mu, \nu\in P_2(\mathbb{R}^n)$ and let $\alpha\in P_2(\mathbb{R}^n\times\mathbb{R}^n)$ be an optimal transport plan for the Kantorovich problem with LQ cost. 
			Recalling Definition \ref{def_selection_maps}, one has that
			\begin{align}
				\opt_{0,T}\circ S_{0\to T}&=S_{0\to T},\\
				(e_0,e_T)\circ  S_{0\to T}&=\mathrm{id}_{\mathbb{R}^n\times\mathbb{R}^n},\\
				S_{0\to T}\circ(e_0,e_T)&=\opt_{0,T}.
			\end{align}
			This suggests to define $\Pi:=(S_{0\to T})_{\sharp}\alpha$, so that
			\begin{gather}
				(e_0,e_T)_{\sharp}\Pi=(e_0,e_T)_{\sharp}\circ(S_{0\to T})_{\sharp}\alpha=\alpha,\\
				(\opt_{0,T})_{\sharp}\Pi=(\opt_{0,T})_{\sharp}\circ(S_{0\to T})_{\sharp}\alpha=(S_{0\to T})_{\sharp}\alpha=\Pi,
			\end{gather}
			hence $\Pi$ is a optimal dynamical plan between $\mu$ and $\nu$.
			If $\alpha_1$ and $\alpha_2$ are both optimal transport plans and $(S_{0\to T})_{\sharp}\alpha_1=(S_{0\to T})_{\sharp}\alpha_2$, then 
			\begin{equation}
				\alpha_1=(e_0,e_T)_{\sharp}\circ(S_{0\to T})_{\sharp}\alpha_1=(e_0,e_T)_{\sharp}\circ(S_{0\to T})_{\sharp}\alpha_2=\alpha_2,
			\end{equation}
			therefore, the relation between optimal dynamical plans and optimal transport plans is a bijection. 
            The rest of the proof follows analogously to the one of \cite[Theorem 7.21]{Villanioldandnew}, with the caveat that we need the curve of measures to lie in $P_2$, in order to lift it to an optimal dynamical plan. In this way, all the Kantorovich costs are finite and the usual dyadic approximation scheme produces a weakly compact sequence of dynamical plans.  
        \end{proof}
        
        We record a corollary for compactly supported and absolutely continuous measures. 
        \begin{cor}\label{cor_dyn_opt_coup}
		Let $\mu, \nu\in P_2(\mathbb{R}^n)$. Then
		\begin{itemize}
			\item if $\mu\ll\mathscr{L}^n$, there exists a unique optimal transport plan $\alpha\in P_2(\mathbb{R}^n\times\mathbb{R}^n)$, a unique optimal dynamical plan $\Pi\in P(\mathcal{C}([0,T],\mathbb{R}^n))$ and a unique displacement interpolation $(\mu_t)\in \mathcal{C}([0,T],P_2(\mathbb{R}^n))$, between $\mu$ and $\nu$.
			\item if $\mu,\nu\in P_c(\mathbb{R}^n)$, then there exists a compact set $K\subset\mathbb{R}^n$, such that, for every displacement interpolation $(\mu_\tau)\in \mathcal{C}([0,T],P_2(\mathbb{R}^n))$ joining $\mu$ and $\nu$, it holds
			\begin{equation}
				\supp(\mu_\tau)\subset K,\qquad\forall \tau\in[0,T].
			\end{equation}
		\end{itemize}
	\end{cor}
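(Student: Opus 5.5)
The first item follows from combining earlier results. If $\mu\ll\mathscr{L}^n$, then $\mu$ gives no mass to $c-c$-hypersurfaces, since these are $(n-1)$-rectifiable and hence $\mathscr{L}^n$-null. Theorem \ref{Brenier_McCann} therefore gives a unique optimal plan $\alpha\in P_2(\mathbb{R}^n\times\mathbb{R}^n)$, and it is induced by a map. Theorem \ref{dyn_opt_coup_characterization} provides a bijection between optimal plans, optimal dynamical plans and displacement interpolations. Uniqueness of $\alpha$ then transfers to uniqueness of $\Pi=(S_{0\to T})_\sharp\alpha$ and of $(\mu_\tau)=(e_{\tau\sharp}\Pi)$. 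Existence of $\alpha$ is given by Corollary \ref{finite_total_cost}.

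For the second item, fix compact sets $K_0\supset\supp(\mu)$ and $K_T\supset\supp(\nu)$. Take any displacement interpolation $(\mu_\tau)$ between $\mu$ and $\nu$. By Theorem \ref{dyn_opt_coup_characterization} it has the form $\mu_\tau=e_{\tau\sharp}\Pi$ with $\Pi=(S_{0\to T})_\sharp\alpha$ for some optimal plan $\alpha$. The plan $\alpha$ is concentrated on $K_0\times K_T$, because its marginals are $\mu$ and $\nu$. By Remark \ref{formula_opt_traj},
\begin{equation}
e_\tau\circ S_{0\to T}(x,y)=M(\tau)y+N(\tau)x,
\end{equation}
with $M(\tau)=R_3(\tau)R_3^{-1}(T)$ and $N(\tau)=R_4(\tau)-R_3(\tau)R_3^{-1}(T)R_4(T)$, both continuous on $[0,T]$. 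Hence the map $(\tau,x,y)\mapsto M(\tau)y+N(\tau)x$ is continuous on the compact set $[0,T]\times K_0\times K_T$. Its image $K$ is therefore compact, and $K$ does not depend on $\alpha$. Since $\mu_\tau$ is the pushforward of a measure concentrated on $K_0\times K_T$ under this map at time $\tau$, we get $\mu_\tau(K)=1$. Because $K$ is closed, this gives $\supp(\mu_\tau)\subset K$ for all $\tau$.

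Alternatively, one can take $K=\{\gamma(\tau):\gamma\in\Gamma^{0,T}_{K_0\to K_T},\ \tau\in[0,T]\}$. This set is compact because $\Gamma^{0,T}_{K_0\to K_T}$ is compact (Proposition \ref{properties_LQ_action}) and evaluation is jointly continuous.

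The only delicate point is that every displacement interpolation comes from an optimal dynamical plan, that is, the surjectivity part of the bijection in Theorem \ref{dyn_opt_coup_characterization}. This is taken from that theorem. The rest consists of direct pushforward arguments.
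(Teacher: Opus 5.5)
Your proposal is correct and matches the paper's proof: the first item is the combination of Theorem~\ref{Brenier_McCann} with the bijection of Theorem~\ref{dyn_opt_coup_characterization}, and your ``alternative'' for the second item is exactly the paper's argument, taking $K$ as the image of $(\tau,\gamma)\mapsto\gamma(\tau)$ on $[0,T]\times\Gamma^{0,T}_{K_0\to K_T}$. Your main route via the affine map $(\tau,x,y)\mapsto M(\tau)y+N(\tau)x$ on $[0,T]\times K_0\times K_T$ is the same idea made explicit, since $\Gamma^{0,T}_{K_0\to K_T}=S_{0\to T}(K_0\times K_T)$; it has the small advantage of not needing the compactness statement of Proposition~\ref{properties_LQ_action}.
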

		\begin{proof}
			If $\mu, \nu\in P_2(\mathbb{R}^n)$, with $\mu\ll\mathscr{L}^n$, by Theorem \ref{Brenier_McCann} there exists a unique optimal transport plan between $\mu$ and $\nu$. We conclude by Theorem \ref{dyn_opt_coup_characterization}. 
			
			If $\mu,\nu\in P_c(\mathbb{R}^n)$, we define $K_0=\supp(\mu)$ and $K_T=\supp(\nu)$. Then, any optimal dynamical plan $\Pi$ between $\mu$ and $\nu$ is supported on the compact non-empty set $\Gamma^{0,T}_{K_0\to K_T}$ (see Proposition \ref{properties_LQ_action}). Additionally, the map $F:[0,T]\times\Gamma^{0,T}_{K_0\to K_T}\to\mathbb{R}^n$, defined by
			\begin{equation}
				F(\tau,\gamma):=\gamma(\tau),\qquad (\tau,\gamma)\in [0,T]\times \Gamma^{0,T}_{K_0\to K_T},
			\end{equation}
			is continuous from a compact space. Therefore, $K:=\mathrm{Im}(F)$ is compact and, given $\mu_\tau=e_{\tau\sharp}\Pi$, we have $\supp(\mu_\tau)=\supp(e_{\tau\sharp}\Pi)\subset K$, for every $\tau\in[0,T]$.
		\end{proof}
\subsection{Absolute continuity} 

Absolute continuity for displacement interpolations are proven e.g.\ in \cite[Theorem 5.1]{FathiFigalli} for Lagrangian actions with Tonelli Lagrangians (that does not include the LQ case). The case of more general Lagrangian actions including the LQ cases with $-Q\geq 0$ is discussed in \cite[Theorem 4.1]{agrachev2007opttransnonholconst}. These arguments can be applied nonetheless to the general case. We include a self-contained proof.

	\begin{thm}[Absolute continuity]\label{absolute_continuity-dyn_opt_coupl}
      Let $(A,B,Q,T)$ be a LQ optimal control problem satisfying the Kalman condition, with $T<t^*$. Let $\mu, \nu\in P_2(\mathbb{R}^n)$, with $\mu\ll\mathscr{L}^n$, and let $(\mu_\tau)=(e_{\tau\sharp}\Pi)\in\mathcal{C}([0,T],P_2(\mathbb{R}^n))$ be the unique displacement interpolation joining them. Then, $\mu_\tau\ll\mathscr{L}^n$, for every $\tau\in[0,T)$. 

        Additionally, let $\psi$ be a Kantorovich potential from $\mu$ to $\nu$ for the cost $c^{0,T}$. Then, the optimal transport map $T_\tau:\mathbb{R}^n\to\mathbb{R}^n$, from $\mu$ to $\mu_\tau$ for the cost $c^{0,\tau}$ is given by
        \begin{equation}
			T_\tau(x)=\exp_{x,\tau}(\nabla\psi(x)),\qquad\mu\text{-almost every }x\in\mathbb{R}^n, \quad\forall \tau\in[0,T].
		\end{equation}
        In particular, $T_\tau$ is $\mu$-almost everywhere differentiable.
	\end{thm}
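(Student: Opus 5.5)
We will exploit the explicit affine structure of LQ trajectories. By Theorem \ref{Brenier_McCann}, since $\mu\ll\mathscr{L}^n$, there is a unique optimal plan $\alpha=(\mathrm{id},T)_\sharp\mu$ with $T(x)=\exp_{x,T}(\nabla\psi(x))$ for $\mu$-a.e. $x$. By Theorem \ref{dyn_opt_coup_characterization} and Corollary \ref{cor_dyn_opt_coup}, the unique optimal dynamical plan is $\Pi=(S_{0\to T})_\sharp\alpha$. Now $S_{0\to T}(x,T(x))(\tau)=\exp_{x,\tau}(\exp_{x,T}^{-1}(T(x)))=\exp_{x,\tau}(\nabla\psi(x))$. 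Hence $\mu_\tau=e_{\tau\sharp}\Pi=(T_\tau)_\sharp\mu$ with
\begin{equation}
T_\tau(x):=\exp_{x,\tau}(\nabla\psi(x))=R_3(\tau)\nabla\psi(x)+R_4(\tau)x .
\end{equation}
In the proof of the displacement interpolation proposition we showed that $(e_0,e_\tau)_\sharp\Pi$ is optimal for $c^{0,\tau}$. Since this plan equals $(\mathrm{id},T_\tau)_\sharp\mu$, $T_\tau$ is the optimal map from $\mu$ to $\mu_\tau$. Its $\mu$-a.e. differentiability follows from the $\mu$-a.e. twice differentiability of $\psi$, given by Theorem \ref{Brenier_McCann}.

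For absolute continuity, fix $\tau\in(0,T)$; the case $\tau=0$ is trivial. Write $\psi=\phi-\tfrac12 x^*Cx$ with $\phi$ convex and $C=R_3^{-1}(T)R_4(T)$, as in the proof of Theorem \ref{continuity_transport_map}. Then
\begin{equation}
T_\tau(x)=R_3(\tau)\big(\nabla\phi(x)+(R_3^{-1}(\tau)R_4(\tau)-C)x\big)=R_3(\tau)\big(\nabla\phi(x)+G_\tau x\big),
\end{equation}
where $G_\tau:=R_3^{-1}(\tau)R_4(\tau)-R_3^{-1}(T)R_4(T)$; this is well defined because $R_3(\tau)$ is invertible for $\tau\in(0,T]\subset(0,t^*)$. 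The key claim is that $G_\tau$ is symmetric and positive definite for $\tau\in(0,T)$. Symmetry holds because $R_3^{-1}(s)R_4(s)$ is the matrix $C$ of the cost $c^{0,s}$, which is symmetric by Lemma \ref{lem1}. For positivity, I would differentiate $s\mapsto R_3^{-1}(s)R_4(s)$ using the Hamiltonian equations. This is a Riccati-type computation whose derivative should be $-R_3^{-1}(s)BB^*R_3^{-*}(s)\le 0$, with strict monotonicity integrated over $[\tau,T]$ guaranteed by the Kalman condition. Equivalently, one can argue directly: $x\mapsto \tfrac12 x^*G_\tau x$ is the difference $c^{0,\tau}(x,z)-c^{0,T}(x,y)$ up to terms linear in $x$, and strict convexity follows from $T<t^*$, in the same way as the coercivity estimate \eqref{alphau00}. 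This positivity step is the main obstacle, and I would try the Riccati derivative first.

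Granting $G_\tau>0$, the map $F_\tau:=\nabla\phi+G_\tau$ is the gradient of the strictly convex function $\phi+\tfrac12x^*G_\tau x$. It is therefore injective on the set where it is defined, and in fact $\lambda$-expansive with $\lambda=\lambda_{\min}(G_\tau)>0$:
\begin{equation}
\langle F_\tau(x)-F_\tau(x'),x-x'\rangle\ge\lambda|x-x'|^2 ,
\end{equation}
so $|F_\tau(x)-F_\tau(x')|\ge\lambda|x-x'|$ on the set where $\phi$ is differentiable. Consequently $F_\tau^{-1}$ is $\lambda^{-1}$-Lipschitz on its image. Now let $N$ be a Lebesgue-null set. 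Then $T_\tau^{-1}(N)=F_\tau^{-1}(R_3(\tau)^{-1}N)$, and $F_\tau^{-1}$ maps the null set $R_3(\tau)^{-1}N$ into a null set, since Lipschitz maps preserve null sets. Because $\mu\ll\mathscr{L}^n$, we get $\mu_\tau(N)=\mu(T_\tau^{-1}(N))=0$. Hence $\mu_\tau\ll\mathscr{L}^n$. At $\tau=T$ we have $G_T=0$, so the argument fails there, as expected.
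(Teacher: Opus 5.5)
Your proof is correct, but it takes a different route from the paper in both parts.

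\emph{Optimality of $T_\tau$.} You reuse the fact, shown in the proof of the proposition that $(e_{\tau\sharp}\Pi)$ is a displacement interpolation, that $(e_0,e_\tau)_\sharp\Pi=(\mathrm{id},T_\tau)_\sharp\mu$ is optimal for $c^{0,\tau}$. The paper instead builds a $c^{\tau,T}$-convex $f_\tau$ and a $c^{0,\tau}$-concave $g_\tau$ with $f_\tau\le g_\tau$, touching along $T_\tau(E)$. It then checks that $(\mathrm{id},T_\tau)_\sharp\mu$ is concentrated on $\partial_c g_\tau^{c-}$. Your way is shorter; the paper's has the side benefit of producing an explicit intermediate Kantorovich potential.

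\emph{Absolute continuity.} The paper follows the general Lagrangian blueprint. It applies the contact theorem for semi-convex/semi-concave pairs (Theorem~\ref{contact_semicon_pair}) to get $\nabla g_\tau$ locally Lipschitz on the contact set. It then uses Lemma~\ref{lem2} for the backwards problem to invert $T_\tau$ through $S_\tau(z)=\widetilde{\exp}_{z,\tau}(-\nabla g_\tau(z))$, which is only locally Lipschitz. You exploit linearity instead: $T_\tau=R_3(\tau)(\nabla\phi+G_\tau)$ with $\phi$ convex and $G_\tau>0$. Strong monotonicity then gives a \emph{global} Lipschitz inverse with an explicit constant.

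This is more elementary and quantitative. It is also the finite version of the $s=T$ case of the paper's later Jacobian identity, since $R_3(\tau-T)R_3^{-1}(-T)=R_3(\tau)G_\tau$. The price is that it relies on strict monotonicity of $S(s)=R_3^{-1}(s)R_4(s)$, which is special to the LQ setting. The paper's argument would work for any Lagrangian action with semi-concave costs.

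\emph{The step you leave open.} The claim $G_\tau>0$ is true and can be closed with results already in the paper. None of them depends on the present theorem, so there is no circularity. Lemma~\ref{S_matrix} gives $\dot S\le 0$, hence $G_\tau\ge 0$. The identity in the proof of Lemma~\ref{positivity_Jacobi}(i),
\begin{equation}
S(\tau)-S(T)=R_3^{-1}(\tau)R_3(\tau-T)R_3^{-1}(-T),
\end{equation}
shows $G_\tau$ is nonsingular, because $0<T-\tau<t^*$ (Remark~\ref{rmk_conj_times}).

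Your Kalman route also works. From $R_1R_4^*-R_2R_3^*=\mathbb{1}_n$ (symplecticity of the flow) and the symmetry of $S$, the matrix $Z_V$ of Lemma~\ref{S_matrix} equals $R_3^{-*}$. This confirms your formula $\dot S=-R_3^{-1}BB^*R_3^{-*}$. Moreover $\dot Z_V=-A^*Z_V+R_1\dot S$. So if $B^*R_3^{-*}(s)v=0$ on $[\tau,T]$, then $w=R_3^{-*}v$ solves $\dot w=-A^*w$ there. Hence $B^*e^{-(s-\tau)A^*}w(\tau)\equiv 0$, and the Kalman condition forces $v=0$.
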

	\begin{proof}
		First, we apply Theorems \ref{Brenier_McCann} and \ref{dyn_opt_coup_characterization}.
		Since $\mu\ll\mathscr{L}^n$, there is a unique displacement interpolation $(\mu_\tau)\in\mathcal{C}([0,T],P_2(\mathbb{R}^n))$ joining it to $\nu$ in time $T$, and such curve is obtained as $(e_{\tau\sharp}\Pi)$, for a unique optimal dynamical plan $\Pi\in P(\mathcal{C}([0,T],\mathbb{R}^n))$. Moreover there exists a unique optimal map $T:\mathbb{R}^n\to\mathbb{R}^n$, from $\mu$ to $\nu$ of the form:
            \begin{equation}
			T(x)=\exp_{x,T}(\nabla\psi(x)),\qquad\mu\text{-almost every }x\in\mathbb{R}^n,
		\end{equation}
        where $\psi:\mathbb{R}^n\to\mathbb{R}^n$ is a $c^{0,T}$-convex Kantorovich potential. The corresponding optimal plan $\alpha_{0,T}$ satisfies $(S_{0\to T})_{\sharp}\alpha_{0,T}=\Pi$ (see Corollary \ref{cor_dyn_opt_coup}). Fix $\tau\in [0,T)$. We get that
		\begin{align}
			\mu_\tau &=\pi_{2\sharp}\circ(e_0,e_\tau)_{\sharp}\Pi\\
			&=\pi_{2\sharp}\circ(e_0,e_\tau)_{\sharp}\circ(S_{0\to T})_{\sharp}\alpha_{0,T}\\
			&=\pi_{2\sharp}\circ(e_0,e_\tau)_{\sharp}\circ(S_{0\to T})_{\sharp}\circ(\mathrm{id},T)_{\sharp}\mu = (T_\tau)_{\sharp}\mu,
		\end{align}
        where $T_\tau:\R^n \to \R^n$ is a transport map, between $\mu$ and $\mu_\tau$, defined by
		\begin{equation}\label{formula_T_t}
			T_\tau(x)=\exp_{x,\tau}(\nabla\psi(x)),\qquad\mu\text{-almost every }x\in\mathbb{R}^n.
		\end{equation}
        Note that since $\psi$ is semi-convex (see Proposition \ref{Kant_potentials_are_semi-convex}), thus $\psi$ is twice-differentiable and $T_\tau$ is differentiable $\mu$-almost everywhere (the differentiability set is independent on the choice of $\tau$). Furthermore, $\tau\mapsto T_\tau(x)$ is the unique optimal curve joining $x$ and $T(x)$ on $[0,T]$.
		
		Let $E\subset\supp(\mu)$ be a Borel set of full $\mu$-measure such that $\psi$ is differentiable on $E$ and \eqref{formula_T_t} holds. We also define $E_\tau=T_\tau(E)$, so that $E_\tau$ is of full $\mu_\tau$-measure.

		By definition and by optimality of the map $T$, we have
		\begin{equation}\label{first_support_estimate}
			\psi^c(y)-\psi(x)\leq c^{0,T}(x,y),\qquad\forall x,y\in\R^n ,
		\end{equation}
		with equality for $x\in E$ and $y=T(x)$.
		Recall that (see Remark \ref{rem_subadditivity_costs})
		\begin{equation}\label{second_support_estimate}
			c^{0,T}(x,y)\leq c^{0,\tau}(x,z)+c^{\tau,T}(z,y),\qquad \forall x,y,z\in\mathbb{R}^n,
		\end{equation}
		with equality if and only if $z$ is the point at time $\tau$ of the unique optimal trajectory joining $x$ and $y$, namely $z=\exp_{x,\tau}\circ\exp_{x,T}^{-1}(y)$. Pairing \eqref{first_support_estimate} with \eqref{second_support_estimate}, we get
		\begin{equation}\label{third_support_estimate}
			\psi^c(y)-c^{\tau,T}(z,y)\leq\psi(x)+c^{0,\tau}(x,z),\qquad \forall x,y,z\in\R^n,
		\end{equation}
		with equality for $x\in E$, $z=T_\tau(x)$ and $y=T(x)$
		Taking suprema and infima in \eqref{third_support_estimate}, we define
		\begin{align}
			f_\tau(z):&=\sup\{\psi^c(y)-c^{\tau,T}(z,y)\mid y\in \R^n\},\qquad\forall z\in\mathbb{R}^n,\\
			g_\tau(z):&=\inf\{\psi(x)+c^{0,\tau}(x,z)\mid x\in \R^n\},\qquad\forall z\in\mathbb{R}^n,
		\end{align}
		and we have
		\begin{equation}
			f_\tau(z)\leq g_\tau(z),\qquad \forall z\in\mathbb{R}^n,
		\end{equation}
		with equality at least for $z=T_\tau(x)$, for some $x\in E$.
		
		Moreover, observing that the intermediate costs $c^{t,s}$ are quadratic polynomials and arguing similarly to the proof of Proposition \ref{Kant_potentials_are_semi-convex}, we see that $f_\tau$ is $c^{\tau,T}$-convex, semi-convex  and lower semi-continuous on $\mathbb{R}^n$, whilst $g_\tau$ is $c^{0,\tau}$-concave, semi-concave and upper semi-continuous on $\mathbb{R}^n$.
        
		By Theorem \ref{contact_semicon_pair}, if we define the contact set $\mathcal{E}:=\{z\;|\;f_\tau(z)=g_\tau(z)\}$, then both functions are differentiable everywhere in $\mathcal{E}$ and the function $z\mapsto \nabla g_\tau(z)=\nabla f_\tau(z)$ is locally Lipschitz on $\mathcal{E}$.
		By construction, we have 
		\begin{equation}
			f_\tau(T_t(x))=g_\tau(T_\tau(x)),\qquad\forall x\in E,
		\end{equation}
		that is 
		\begin{equation}
			f_\tau(z)=g_\tau(z),\qquad\mu_\tau\text{-almost every }z\in\mathbb{R}^n.
		\end{equation} 
		Hence, the closed set $\mathcal{E}$ is of full $\mu_\tau$-measure, with $E_\tau\subset\mathcal{E}$.
		If we consider $z\in E_\tau$, and we take any $x\in E$ such that $T_\tau(x)=z$, then it follows that
		\begin{equation}
			g_\tau(z)-c^{0,\tau}(x,z)=\psi(x),
		\end{equation}
		and, by definition of $g_\tau$, it also holds that 
		\begin{equation}
			g_\tau(w)-c^{0,\tau}(x,w)\leq\psi(x).
		\end{equation}
		Therefore, the function $w\mapsto g_\tau(w)-c^{0,\tau}(x,w)$ is differentiable at $w=z$ and attains a maximum there. Differentiating, we get
		\begin{equation}\label{fourth_contact_estimate}
			\nabla g_\tau(z)-\nabla_zc^{0,\tau}(x,z)=0.
		\end{equation}
         In other words, $\nabla g_\tau(z) = \nabla_z\tilde{c}^{0,\tau}(z,x)$, where $\tilde{c}^{0,\tau}$ is the cost of the backwards LQ problem. By Lemma \ref{lem2}, and denoting with a tilde the exponential map of the backwards problem, we have
		\begin{equation}
			x=\widetilde{\exp}_{z,\tau}(-\nabla g_\tau(z)),
		\end{equation}
		that is $x$ is uniquely determined by $z=T_\tau(x)$. Hence, $T_\tau$ is injective on $E$ and the function $S_\tau:E_\tau\to E$, defined by 
		\begin{equation}
			S_\tau(z):=\widetilde{\exp}_{z,\tau}(-\nabla g_\tau(z)),
		\end{equation}
		is a locally Lipschitz inverse for $T_\tau$.
		In particular, $S_\tau$ is measurable and 
		\begin{align}\label{inverse_of_tr_map}
			\mu|_E=S_{\tau\sharp}\circ T_{\tau\sharp}(\mu|_{E})=S_{\tau\sharp}(\mu_\tau|_{E_\tau}).
		\end{align}
		Finally, consider $F_\tau=\supp(\mu_\tau)\cap E_\tau\subset\mathcal{E}$, which is $\mu_\tau$-full measure. If, by contradiction, $\mu_\tau$ is not absolutely continuous with respect to Lebesgue, there exists a Borel set $A\subset F_\tau$, such that
		\begin{equation}
			\mu_\tau(A)>0,\qquad\mathscr{L}^n(A)=0.
		\end{equation}
            By interior regularity of $\mu_\tau$, we may assume that $A$ is compact, so that $S_\tau|_A$ is Lipschitz by construction.
		Therefore, by \eqref{inverse_of_tr_map}, we have that
		\begin{equation}
			\mu(S_\tau(A))=\mu_\tau(A)>0,\qquad\mathscr{L}^n(S_\tau(A))=0,
		\end{equation}
	    contradicting $\mu\ll\mathscr{L}^n$. Hence, $\mu_\tau \ll \mathscr{L}^n$ for all $\tau\in (0,T)$.
        
        We now prove that $T_\tau$ is the optimal transport map between $\mu$ and $\mu_\tau$ with respect to the cost $c^{0,\tau}$ or, equivalently, that $\alpha_{0,\tau}:=(id,T_\tau)_{\sharp}\mu$ is an optimal transport plan. Observe that $(g_\tau^{c-},g_\tau)$ is a pair of Kantorovich potentials. 
        Let $x\in E$, $y=T(x)$, and $z=T_\tau(x)$. It holds
        \begin{align}
            g_\tau(z) - c^{0,\tau}(x,z) & = f_\tau(z)-c^{0,\tau}(x,z) & \quad \text{since $f_\tau(z)=g_\tau(z)$} \\
            & \geq \psi^c(y)-c^{\tau,T}(z,y)-c^{0,\tau}(x,z) & \quad \text{by definition of $f_\tau$} \\
            & = \psi(x) & \quad \text{by equality in \eqref{first_support_estimate} and \eqref{second_support_estimate}} \\
            & \geq g_\tau(z)-c^{0,\tau}(x,z). & \quad \text{by definition of $g_\tau$}
        \end{align}
        By the characterization of $c$-subdifferentials (see Remark \ref{charach_contact_set}) and since $\mu(E)=1$, it follows that $\alpha_{0,\tau}$ is concentrated on the set $\partial_c g_\tau^{c-}$. We conclude by Theorem \ref{supp_opt_plans}.
        \end{proof}

    \section{Interpolation inequalities}
        \subsection{Main Jacobian estimate}
        Let $(A,B,Q,T)$ be a LQ optimal control problem satisfying the Kalman condition, with $T<t^*$. Let $\lambda:[0,T]\to T^*\mathbb{R}^n$ be an integral curve for the LQ Hamiltonian vector field $\overrightarrow{\mathcal{H}}$.
        
		A Jacobi vector field $X\in\mathcal{X}(T^*\mathbb{R}^n)$ along $\lambda$ is a smooth section $X:[0,T]\to T(T^*\mathbb{R}^n)$, such that
			\begin{itemize}
				\item $\pi(X(\tau))=\lambda(\tau),\quad \forall \tau\in[0,T]$, where $\pi:T(T^*\mathbb{R}^n)\to T^*\mathbb{R}^n$ is the canonical projection,
				\item $\dot{X}(\tau)=0,\quad \forall \tau\in(0,T)$, where the dot denotes the Lie derivative in the direction of $\overrightarrow{\mathcal{H}}$, namely
				\begin{equation}
					\dot{X}(\tau):=\frac{d}{d\xi}\bigg|_{\xi=0}e_{*}^{-\xi\overrightarrow{\mathcal{H}}}X(\tau+\xi).
				\end{equation}
			\end{itemize}
		The vector field $X$ can be written as 
		\begin{equation}
            X(\tau)=e^{\tau\overrightarrow{\mathcal{H}}}_{*}X(0),\qquad\forall \tau\in [0,T].
		\end{equation}
		Therefore, the space of Jacobi vector fields along a curve $\lambda$ is $2n$-dimensional.
        
        Similarly, given a family of $n$ independent Jacobi vector fields $\{X_1,\dots, X_n\}$, we can consider the family of $n$-dimensional subspaces
		\begin{equation}
			\mathcal{L}_\tau:=\mathrm{span}\{X_1(\tau),\dots,X_n(\tau)\}\subset T_{\lambda(\tau)}(T^*\mathbb{R}^n),\qquad \forall \tau\in[0,T], 
		\end{equation}
		and we have that 
		\begin{equation}
			\mathcal{L}_\tau=e^{\tau\overrightarrow{\mathcal{H}}}_{*}\mathcal{L}_0,\qquad\forall \tau\in[0,T].
		\end{equation}
		The family $\mathcal{L}_\tau$ can be thought as a curve in a suitable Grassmannian bundle over $T^*\mathbb{R}^n$.
        The canonical identification $T_{\lambda}(T^*\mathbb{R}^n)\cong T_{\lambda}(\mathbb{R}^{2n})\cong\mathbb{R}^{2n}$ allows us to perform very explicit computations. Indeed, under such identification, we have that $\overrightarrow{\mathcal{H}}=-\Omega H$ (recall Theorem \ref{ham_LQ}) and a Jacobi vector field $X\in\mathcal{C}^{\infty}([0,T],\mathbb{R}^{2n})$ takes the form
		\begin{equation}	
			X(\tau)=e^{-\tau\Omega H}X(0),\qquad X(0)\in\mathbb{R}^{2n},
		\end{equation}
		and a curve of subspaces $\mathcal{L}_\tau$ can be identified with a curve of smooth matrices
		\begin{equation}
			\mathbf{J}(\tau)=\begin{pmatrix}
				M(\tau)\\
				N(\tau)
			\end{pmatrix}=e^{-\tau\Omega H}\mathbf{J}(0),\quad \forall \tau\in[0,T],\qquad\mathbf{J}(0)\in\mathbb{R}^{2n\times n},\quad \mathrm{rank}(\mathbf{J}(0))=n.
		\end{equation}
		We call $\mathbf{J}$ a \emph{Jacobi matrix} and $M$, $N$ are its \emph{vertical} and \emph{horizontal} components, respectively. Any Jacobi matrix is determined by its value $\mathbf{J}(s)$ at any intermediate time $s\in[0,T]$. In particular, for every $s\in[0,T]$, we can define the following class of special Jacobi matrices as 
		\begin{align}
			\mathbf{J}^V_s(\tau)&:=\begin{pmatrix}
				R_1(\tau-s)\\
				R_3(\tau-s)
			\end{pmatrix},\quad \text{ such that }\mathbf{J}^V_s(s)=\begin{pmatrix}
			\mathbb{1}_n\\
			\mathbb{0}_n
			\end{pmatrix},\quad \text{(vertical at time }s),\\
			\mathbf{J}^H_s(\tau)&:=\begin{pmatrix}
				R_2(\tau-s)\\
				R_4(\tau-s)
			\end{pmatrix},\quad \text{ such that }\mathbf{J}^H_s(s)=\begin{pmatrix}
				\mathbb{0}_n\\
				\mathbb{1}_n
			\end{pmatrix},\quad \text{(horizontal at time }s),
		\end{align}
		where $R_i,$ with $i=1,\dots,4$ are the $n\times n$ blocks of the matrix representing the Hamiltonian flow, as in Definition \ref{blocks_R_i}.
        
        Since $T<t^*$, the matrix $R_3(\tau)$ is non-singular for every $\tau\in(0,T]$. As a consequence, any Jacobi vector field is determined by its horizontal components at two different times. 
		\begin{lem}\label{uniq_det}
			For any $0\leq s_1 < s_2 \leq T$ and any $x_1, x_2\in\mathbb{R}^n$, there exists a unique Jacobi field $X:[0,T]\to\mathbb{R}^{2n}$, such that
			\begin{equation}
				X(s_1)=\begin{pmatrix}
					*\\
					x_1
				\end{pmatrix},\qquad X(s_2)=\begin{pmatrix}
					*\\
					x_2
				\end{pmatrix},
			\end{equation}
			that is $X(s_i)$ has horizontal component equal to $x_i$, for $i=1,2$.
		\end{lem}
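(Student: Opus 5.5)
Since every Jacobi field has the form $X(\tau)=e^{-\tau\Omega H}X(0)$, it is determined by its value at any single time. The plan is therefore to parametrize Jacobi fields by their value at time $s_1$, say $X(s_1)=(p,x_1)^*$ with $p\in\mathbb{R}^n$ still free, and to reduce the condition at time $s_2$ to a linear equation for $p$. We may propagate from time $s_1$ because the flow property $e^{-(\tau-s_1)\Omega H}e^{-s_1\Omega H}=e^{-\tau\Omega H}$ holds. Then Definition \ref{blocks_R_i} gives
\begin{equation}
X(s_2)=e^{-(s_2-s_1)\Omega H}\begin{pmatrix} p\\ x_1\end{pmatrix}=\begin{pmatrix} R_1(s_2-s_1)p+R_2(s_2-s_1)x_1\\ R_3(s_2-s_1)p+R_4(s_2-s_1)x_1\end{pmatrix}.
\end{equation}

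Requiring the horizontal component of $X(s_2)$ to be $x_2$ amounts to the linear system $R_3(s_2-s_1)p=x_2-R_4(s_2-s_1)x_1$. Since $0<s_2-s_1\le T<t^*$, the matrix $R_3(s_2-s_1)$ is invertible; this is the characterization of conjugate times established in the proof of Theorem \ref{LQ_exponential}. Hence $p=R_3^{-1}(s_2-s_1)\bigl(x_2-R_4(s_2-s_1)x_1\bigr)$ is the unique solution. The Jacobi field $X(\tau)=e^{-(\tau-s_1)\Omega H}(p,x_1)^*$ then has both required horizontal components, which gives existence.

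For uniqueness, take two Jacobi fields with the prescribed horizontal data and consider their difference. It is a Jacobi field whose horizontal components vanish at $s_1$ and at $s_2$. By the computation above its vertical component at $s_1$ lies in $\ker R_3(s_2-s_1)=\{0\}$, so its value at $s_1$ is zero, and hence the difference vanishes identically. Equivalently, its projection would be a nontrivial Hamiltonian trajectory from $0$ to $0$ in time $s_2-s_1$, making $s_2-s_1$ a conjugate time. There is no real obstacle in this argument. The only point needing care is that the invertibility of $R_3$ is applied at the time difference $s_2-s_1\in(0,T]$ and not at $s_2$ itself.
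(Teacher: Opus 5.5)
Your proposal is correct and follows essentially the same route as the paper. Both proofs parametrize the Jacobi field by its value $(p,x_1)$ at time $s_1$, propagate it to time $s_2$ using the blocks $R_i(s_2-s_1)$, and solve for $p$ using that $R_3(s_2-s_1)$ is invertible, since $0<s_2-s_1\le T<t^*$. Your separate uniqueness argument via the difference of two fields only makes explicit what the paper leaves implicit in ``uniquely determined.''
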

		\begin{proof}
			Any Jacobi field of the form 
			\begin{equation}
				X(\tau)=e^{-(\tau-s_1)\Omega H}\begin{pmatrix}
					p_1\\
					x_1
				\end{pmatrix},\qquad \tau\in[0,T],\qquad p_1\in\mathbb{R}^n,
			\end{equation}
			satisfies the first condition. Then, we have that 
			\begin{align}
				X(s_2)&=e^{-(s_2-s_1)\Omega H}\begin{pmatrix}
					p_1\\
					x_1
				\end{pmatrix}\\
				&=\begin{pmatrix}
				R_1(s_2-s_1) & R_2(s_2-s_1)\\
				R_3(s_2-s_1) & R_4(s_2-s_1)
				\end{pmatrix}\begin{pmatrix}
				p_1\\
				x_1
				\end{pmatrix}\\
				&=\begin{pmatrix}
				R_1(s_2-s_1)p_1+R_2(s_2-s_1)x_1\\
				R_3(s_2-s_1)p_1+R_4(s_2-s_1)x_1
				\end{pmatrix},
			\end{align}
			so that $p_1$ is uniquely determined by the equation
			\begin{equation}
				x_2=R_3(s_2-s_1)p_1+R_4(s_2-s_1)x_1,
			\end{equation}
			where we used the fact that $\det(R_3(s_2-s_1))\neq 0$, for $0\leq s_1< s_2\leq T$.
		\end{proof}
		Now, we use these tools to estimate the Jacobian of the transport maps $\nabla T_\tau$, for $\tau\in(0,T]$. 
		\begin{thm}[Main Jacobian estimate]\label{main_jacobian_estimate} Let $(A,B,Q,T)$ be a LQ optimal control problem satisfying the Kalman condition, with $T<t^*$.
			Consider $\mu,\nu\in P_2(\mathbb{R}^n)$, with $\mu\ll\mathscr{L}^n$, and consider a Kantorovich potential $\psi:\mathbb{R}^n\to\mathbb{R}^n$ for the LQ transport problem from $\mu$ to $\nu$. If $x\in\mathbb{R}^n$ is a point where $\psi$ is twice differentiable, then the transport maps $T_\tau$ are differentiable at $x$ for every $\tau\in[0,T]$, and, for every $s\in(0,T]$, it holds
			\begin{equation}\label{m_j_estimate}
				\det(\nabla T_\tau(x))^{1/n}\geq\frac{\det(R_3(\tau-s))^{1/n}}{\det(R_3(-s))^{1/n}}+\frac{\det(R_3(\tau))^{1/n}}{\det(R_3(s))^{1/n}}\det(\nabla T_s(x))^{1/n},\quad\forall \tau\in[0,s].
			\end{equation}
			Moreover, both terms in the right hand side of \eqref{m_j_estimate} are non-negative for $\tau\in[0,s]$ and, for $\tau\in[0,s)$, the first one is positive. In particular, $\det(\nabla T_\tau(x))>0$ for all $\tau\in[0,T)$.
		\end{thm}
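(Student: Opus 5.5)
The plan is to express all the differentials $\nabla T_\tau(x)$ as the horizontal components of a single Jacobi matrix. This turns \eqref{m_j_estimate} into a Minkowski determinant inequality for symmetric positive semidefinite matrices.

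\textbf{Step 1 (Jacobi matrix).} By Theorem \ref{absolute_continuity-dyn_opt_coupl} and \eqref{LQ_exponential_formula}, $T_\tau(z)=R_3(\tau)\nabla\psi(z)+R_4(\tau)z$ for $\mu$-a.e.\ $z$. At a point $x$ where $\psi$ is twice differentiable, with $S:=\Hess(\psi)(x)$ symmetric, this gives
\begin{equation}
\nabla T_\tau(x)=R_3(\tau)S+R_4(\tau)=:N(\tau),\qquad \tau\in[0,T].
\end{equation}
So $N(\tau)$ is the horizontal part of the Jacobi matrix $\mathbf{J}(\tau)=e^{-\tau\Omega H}\binom{S}{\mathbb{1}_n}$, and in particular each $T_\tau$ is differentiable at $x$.

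\textbf{Step 2 (decomposition).} Using Lemma \ref{uniq_det} on the columns, $\mathbf{J}$ is determined by its horizontal parts $\mathbb{1}_n$ at time $0$ and $N(s)$ at time $s$. I will write it as the sum of two pieces:
\begin{itemize}
\item $\mathbf{J}^V_s(\cdot)R_3(-s)^{-1}$, whose horizontal part is $\mathbb{1}_n$ at time $0$ and $0$ at time $s$;
\item $\mathbf{J}^V_0(\cdot)R_3(s)^{-1}N(s)$, whose horizontal part is $0$ at time $0$ and $N(s)$ at time $s$.
\end{itemize}
This yields
\begin{equation}
N(\tau)=R_3(\tau-s)R_3(-s)^{-1}+R_3(\tau)R_3(s)^{-1}N(s).
\end{equation}

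\textbf{Step 3 (symmetrization).} Set $C(\tau):=R_3(\tau)^{-1}R_4(\tau)$. By the corollary to Theorem \ref{comp_cost}, this is the symmetric $x$-Hessian of $c^{0,\tau}(\cdot,z)$. Then $N(\tau)=R_3(\tau)P(\tau)$ with $P(\tau):=S+C(\tau)$, and multiplying the Step 2 identity on the left by $R_3(\tau)^{-1}$ gives
\begin{equation}
P(\tau)=P(s)+R_3(\tau)^{-1}R_3(\tau-s)R_3(-s)^{-1}.
\end{equation}
Hence the last matrix equals $C(\tau)-C(s)$ and is symmetric.

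Two positivity facts are needed.
\begin{itemize}
\item \emph{$P(T)\geq 0$.} The map $w\mapsto\psi(w)+c(w,T(x))$ is minimized at $w=x$ by Remark \ref{charach_contact_set}, and $\psi$ is twice differentiable there, so $S+C(T)\geq 0$.
\item \emph{$C$ is non-increasing in the sense of quadratic forms.} For $\tau<s$, Remark \ref{rem_subadditivity_costs} gives $c^{0,s}(\cdot,y)\leq c^{0,\tau}(\cdot,z)+c^{\tau,s}(z,y)$, with equality at $x$ when $z$ is the intermediate point. Comparing Hessians at the contact point gives $C(s)\leq C(\tau)$.
\end{itemize}
For strict positivity of $C(\tau)-C(s)$ when $\tau<s$, I would use the explicit product formula above: it is invertible because $R_3(\tau-s)$ and $R_3(-s)$ are invertible (Remark \ref{rmk_conj_times}), and $R_3(\tau)$ is invertible for $\tau\in(0,T]$. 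At $\tau=0$ I would argue directly: the term is $R_3(-s)R_3(-s)^{-1}=\mathbb{1}_n$, and $N(0)=\mathbb{1}_n$. Combining the two facts, $P(\tau)\geq P(s)\geq P(T)\geq 0$ and $C(\tau)-C(s)\geq 0$ for $\tau\leq s$.

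\textbf{Step 4 (conclusion).} Minkowski's determinant inequality for positive semidefinite matrices gives
\begin{equation}
\det P(\tau)^{1/n}\geq \det P(s)^{1/n}+\det\big(C(\tau)-C(s)\big)^{1/n}.
\end{equation}
Multiplying by $\det R_3(\tau)^{1/n}$ and substituting $\det P(s)=\det N(s)/\det R_3(s)$ and $\det(C(\tau)-C(s))=\det R_3(\tau-s)/\bigl(\det R_3(\tau)\det R_3(-s)\bigr)$ yields \eqref{m_j_estimate}. Positivity of the first term for $\tau<s$ then gives $\det\nabla T_\tau(x)>0$ on $[0,T)$.

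\textbf{Main obstacle.} I expect the hardest part to be the sign bookkeeping in this last multiplication. One needs $\det R_3(\tau)$ and $\det R_3(s)$ to have the same sign for $\tau,s\in(0,T]$, which holds by continuity and non-vanishing on $(0,T]$. One also needs the quotient $\det R_3(\tau-s)/\det R_3(-s)$ to be positive, which holds because the corresponding determinant $\det(C(\tau)-C(s))$ is nonnegative by Step 3. Finally, the endpoint $\tau=0$ must be treated separately, as noted in Step 3.
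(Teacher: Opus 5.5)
\textbf{Gap: the sign of $\det R_3$.} Your overall architecture matches the paper's: the Jacobi matrix $\mathbf{J}$, the change of basis of Lemma \ref{change_of_basis}, left multiplication by $R_3^{-1}(\tau)$, and Minkowski's determinant inequality. Two of your steps differ from the paper and are valid. You prove that $C$ is non-increasing by comparing Hessians in the subadditivity of Remark \ref{rem_subadditivity_costs}, which replaces the Hamiltonian computation of $\dot S$ in Lemma \ref{S_matrix}. You get $P(\tau)\geq 0$ from $P(T)\geq 0$ plus monotonicity, instead of a separate contact argument at each $s$.

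The gap is that constant sign of $\det R_3$ on $(0,T]$ is not enough; you need $\det R_3(\tau)>0$ there. Suppose $\det R_3$ were negative on $(0,T]$. Then $\det N(\tau)=\det R_3(\tau)\det P(\tau)\leq 0$. The roots $\det R_3(\tau)^{1/n}$ and $\det N(s)^{1/n}$ would be undefined for even $n$, and for odd $n$ multiplying by the negative root would reverse the inequality. The conclusion $\det\nabla T_\tau(x)>0$ would also be false. Your argument for positivity of $\det R_3(\tau-s)/\det R_3(-s)$ is circular. Since $\det(C(\tau)-C(s))=\det R_3(\tau-s)/\bigl(\det R_3(\tau)\det R_3(-s)\bigr)$, positivity of that determinant gives positivity of the quotient only once you already know $\det R_3(\tau)>0$.

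\textbf{How the paper handles it, and how you can fix it.} This is the first assertion of Lemma \ref{positivity_Jacobi}. The paper obtains it from Lemma \ref{W_matrix}: a Riccati comparison shows $W=R_3R_1^{-1}>0$ for small $\tau>0$, while $\det R_1>0$ near $0$. Hence $\det R_3>0$ near $0$, and then on all of $(0,T]$ because $\det R_3$ does not vanish there.

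You can close the gap with what you already have. Since $N(\tau)=R_3(\tau)S+R_4(\tau)\to\mathbb{1}_n$ as $\tau\to0^+$, you have $\det N(\tau)>0$ for small $\tau$. Because $\det N(\tau)=\det R_3(\tau)\det P(\tau)$ with $P(\tau)\geq 0$, this forces $\det R_3(\tau)>0$ for small $\tau$, and hence on $(0,T]$ by non-vanishing.

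Alternatively, Lemma \ref{time_reflection_symmetry} gives $\det R_3(-\sigma)=(-1)^n\det R_3(\sigma)$. So $\det R_3(\tau-s)/\det R_3(-s)=\det R_3(s-\tau)/\det R_3(s)>0$ by the constant-sign property, and positive definiteness of $C(\tau)-C(s)$ then yields $\det R_3(\tau)>0$.

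With either fix in place, the rest of your Step 4 goes through.
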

		\begin{proof}

		The proof of Theorem \ref{main_jacobian_estimate} is involved, therefore we split it into several steps. First, the following two lemmas establish some useful properties of the matrices $R_i$, for $i=1,\dots,4$.
		\begin{lem}[W matrix]\label{W_matrix}
			Consider the smooth family of $n\times n$ matrices $R_1(\tau)$, with $\tau\in[0,T]$. Then, there exists $\varepsilon>0$ such that $\det(R_1(\tau))\neq 0$ for $\tau\in[0,\varepsilon]$. Moreover, if we define
			\begin{equation}\label{W_formula}
				W(\tau):=R_3(\tau)R_1^{-1}(\tau),\qquad\forall \tau\in[0,\varepsilon],
			\end{equation}
			then $W$ is symmetric and $W>0$ for $\tau\in(0,\varepsilon]$.
		\end{lem}
		\begin{proof}
			Since $R_1(0)=\mathbb{1}_n$ and by continuity, there exists $\varepsilon>0$ such that $\det(R_1(\tau))\neq 0$, for $\tau\in[0,\varepsilon]$. Thus, on $[0,\varepsilon]$, we define  $W$ according to \eqref{W_formula} and the map $[0,\varepsilon]\ni t\to W(t)$ is smooth. Note that 
			\begin{equation}
				\Omega e^{-\tau\Omega H}=e^{-\tau H\Omega}\Omega,
			\end{equation}	
			and recall Definition \ref{blocks_R_i}. We have
			\begin{equation}
				\begin{pmatrix}
					R_1(\tau)\\ 
					R_3(\tau)
				\end{pmatrix}=e^{-\tau\Omega H}\begin{pmatrix}
					\mathbb{1}_n\\ 
					\mathbb{0}_n
				\end{pmatrix}.
			\end{equation}
			
			Therefore, for every $\tau\in\mathbb{R}^n$, it holds
			\begin{align}
				R_1^*(\tau)R_3(\tau)-R_3^*(\tau)R_1(\tau)&=\begin{pmatrix}
					R_1^*(\tau) & R_3^*(\tau)
				\end{pmatrix}\Omega\begin{pmatrix}
				R_1(\tau)\\ 
				R_3(\tau)
				\end{pmatrix}\\
				&=\begin{pmatrix}
					\mathbb{1}_n & \mathbb{0}_n
				\end{pmatrix}e^{\tau H\Omega}\Omega e^{-\tau\Omega H}\begin{pmatrix}
					\mathbb{1}_n\\ 
					\mathbb{0}_n
				\end{pmatrix}\\
				&=\begin{pmatrix}
					\mathbb{1}_n & \mathbb{0}_n
				\end{pmatrix}\Omega\begin{pmatrix}
					\mathbb{1}_n\\ 
					\mathbb{0}_n
				\end{pmatrix}=0,
			\end{align}
			which is equivalent to the symmetry of $W$, as long as $R_1$ is invertible.
			In addition, differentiating the matrix $W$ and recalling Definition \ref{blocks_R_i}, we get
			\begin{align}
				\dot{W}(\tau)&=\dot{R_3}(\tau)R_1^{-1}(\tau)-R_3(\tau)R_1^{-1}(\tau)\dot{R_1}(\tau)R_1^{-1}(\tau)\\
				&=\left(BB^*R_1(\tau)+AR_3(\tau)\right)R_1^{-1}(\tau)\\
				&\qquad-R_3(\tau)R_1^{-1}(\tau)\left(-A^*R_1(\tau)-QR_3(\tau)\right)R_1^{-1}(\tau)\\
				&=BB^*+AW(\tau)+W(\tau)A^*+W(\tau)QW(\tau),\qquad\forall \tau\in(0,\varepsilon).
			\end{align}
			so that $W$ satisfies the matrix Riccati equation
			\begin{equation}
				\dot{W}=BB^*+AW+WA^*+WQW
			\end{equation}
			with $BB^*\geq 0$.
			Hence, applying the matrix Riccati comparison Theorem (see \cite[Appendix A]{Barilari_2016}) it follows that $W(\tau)\geq 0$, for $\tau\in[0,\varepsilon]$. Since $R_3(\tau)$ is non-singular for $\tau\in(0,T]$ and $R_1(\tau)$ is non-singular for $\tau\in[0,\varepsilon]$, we have that $W(\tau)>0$, for $\tau\in(0,\varepsilon]$.
		\end{proof}
		\begin{lem}[S matrix]\label{S_matrix}
			Consider the smooth family of $n\times n$ matrices 
			\begin{equation}
				S(\tau):=R_3^{-1}(\tau)R_4(\tau),\qquad \tau\in(0,T].
			\end{equation}
			Then, $S$ is symmetric and $\dot{S}\leq 0$.
		\end{lem}
		\begin{proof}
		We already know, from the computation of the LQ cost function (see Theorem \ref{comp_cost} and Lemma \ref{lem1}), that $S(\tau)$ is symmetric.

		In addition, we define
		\begin{equation}
			Z(\tau):=e^{-\tau\Omega H}\begin{pmatrix}
			S(\tau)\\
			-\mathbb{1}_n
			\end{pmatrix},\qquad\forall \tau\in(0,T],
		\end{equation}
        and note that 
        \begin{equation}
            Z(\tau)=\begin{pmatrix}
			Z_V(\tau)\\ 
			\mathbb{0}_n
			\end{pmatrix},\qquad \text{with} \qquad Z_V(\tau):=R_1(\tau)S(\tau)-R_2(\tau).
        \end{equation}
		By definition of matrix exponential, for all $\tau\in \R$ we have
		\begin{equation}
			H e^{-\tau\Omega H}=e^{-\tau H\Omega}H,\qquad \left(e^{-\tau\Omega H}\right)^*=e^{\tau H\Omega},\qquad \Omega e^{\tau\Omega H}=e^{\tau H\Omega}\Omega.
		\end{equation}
		As a consequence, for every $\tau \in(0,T)$, it follows that
		\begin{align}
			\dot{S}(\tau)&=\begin{pmatrix}
				S(\tau) & -\mathbb{1}_n
			\end{pmatrix}\Omega\begin{pmatrix}
			\dot{S}(\tau)\\
			\mathbb{0}_n
			\end{pmatrix}\\
			&=\begin{pmatrix}
				S(\tau) & -\mathbb{1}_n
			\end{pmatrix}\Omega\frac{d}{d\tau}\bigg(e^{\tau\Omega H}Z(\tau)\bigg)\\
			&=\begin{pmatrix}
				S(\tau) & -\mathbb{1}_n
			\end{pmatrix}\Omega\bigg(\Omega He^{\tau\Omega H}Z(\tau)+e^{\tau\Omega H}\dot{Z}(\tau)\bigg)\\
			&=-\begin{pmatrix}
				S(\tau) & -\mathbb{1}_n
			\end{pmatrix}e^{\tau H\Omega}HZ(\tau)+\begin{pmatrix}
				S(\tau) & -\mathbb{1}_n
			\end{pmatrix}e^{\tau H\Omega}\Omega\dot{Z}(\tau)\\
			&=-Z^*(\tau)HZ(\tau)+Z^*(\tau)\Omega\dot{Z}(\tau)\\
			&=-Z_V^*(\tau)BB^*Z_V(\tau)+\begin{pmatrix}
				Z_V(\tau) & \mathbb{0}_n
			\end{pmatrix}\Omega\begin{pmatrix}
				\dot{Z}_V(\tau)\\
				\mathbb{0}_n
			\end{pmatrix}\\
			&=-Z_V^*(\tau)BB^*Z_V(\tau)\leq 0,
		\end{align}
		proving that $\dot{S}\leq 0$, for $\tau\in(0,T)$.
		\end{proof}
		Consider $x\in\mathbb{R}^n$, where $\psi$ is twice differentiable and, by Theorem \ref{absolute_continuity-dyn_opt_coupl}, we have that 
		\begin{equation}
			T_\tau(x)=\exp_{x,\tau}(\nabla\psi(x))=R_3(\tau)\nabla\psi(x)+R_4(\tau)x,\qquad\forall \tau\in(0,T].
		\end{equation}
		Differentiating the previous equation with respect to $x$, we get that
		\begin{equation}
			\nabla T_\tau(x)=R_3(\tau)\Hess(\psi)(x)+R_4(\tau),\qquad\forall \tau\in(0,T],
		\end{equation}
		so that, by construction, $\nabla T_\tau(x)$ is the horizontal component of the Jacobi matrix
		\begin{equation}\label{def_Jacobi_Hessian}
			\mathbf{J}(\tau)=\begin{pmatrix}
				M(\tau)\\
				N(\tau)
			\end{pmatrix}:=e^{-\tau\Omega H}\begin{pmatrix}
				\Hess(\psi)(x)\\
				\mathbb{1}_n
			\end{pmatrix},\qquad\forall \tau\in(0,T].
		\end{equation}
		In order to compare the Jacobian of $T_\tau$ at $x$, for different times $\tau\in(0,T]$, we exploit Lemma \ref{uniq_det} to rewrite $\mathbf{J}(\tau)$ and the special Jacobi matrices in a different basis. 
		\begin{lem}[Change of basis]\label{change_of_basis}
			For every $s\in(0,T]$ and $\tau\in[0,T]$, we have
			\begin{equation}\label{special_change_of_basis}
				\mathbf{J}^V_s(\tau)=-\mathbf{J}^V_0(\tau)R_3^{-1}(s)R_4(s)R_3(-s)+\mathbf{J}^H_0(\tau)R_3(-s).
			\end{equation}
			In addition, the Jacobi matrix $\mathbf{J}(\tau)$ can be written as
			\begin{equation}\label{change_of_basis_Hessian}
				\mathbf{J}(\tau)=\mathbf{J}^V_0(\tau)\Hess(\psi)(x)+\mathbf{J}^H_0(\tau),
			\end{equation}
			or, equivalently, for every $s\in(0,T]$ and every $\tau\in[0,s]$, 
			\begin{equation}\label{change_of_basis_Hessian_2}
				\mathbf{J}(\tau)=\mathbf{J}^V_s(\tau)R_3^{-1}(-s)+\mathbf{J}^V_0(\tau)R_3^{-1}(s)N(s).
			\end{equation}
		\end{lem} 
		\begin{proof}
			To prove \eqref{special_change_of_basis}, we notice that, since a Jacobi matrix is determined by its value at any intermediate time, there exists a unique pair of $n\times n$ matrices $(C^V,C^H)$, such that
			\begin{equation}
				\mathbf{J}^V_s(\tau)=\mathbf{J}^V_0(\tau)C^V+\mathbf{J}^H_0(\tau)C^H.
			\end{equation}
			Evaluating at $\tau=0$ and at $\tau=s$ the previous equation, we get
            \begin{equation}
              \begin{cases}
					R_3(-s)=C^H,\\
					\mathbb{0}_n=R_3(s)C^V+R_4(s)C^H,
				\end{cases} \qquad \Rightarrow \qquad \begin{cases}
					C^H=R_3(-s),\\
					C^V=-R_3^{-1}(s)R_4(s)R_3(-s).
				\end{cases}
            \end{equation}
			Expanding equation \eqref{def_Jacobi_Hessian}, for every $\tau\in(0,T]$, we have 
			\begin{align}
				\mathbf{J}(\tau)&=e^{-\tau\Omega H}\begin{pmatrix}
					\Hess(\psi)(x)\\
					\mathbb{1}_n
				\end{pmatrix}\\
				&=\begin{pmatrix}
				R_1(\tau) & R_2(\tau)\\
				R_3(\tau) & R_4(\tau)
				\end{pmatrix}\begin{pmatrix}
				\Hess(\psi)(x)\\
				\mathbb{1}_n
				\end{pmatrix}\\
				&=\begin{pmatrix}
					R_1(\tau)\\
					R_3(\tau)
				\end{pmatrix}\Hess(\psi)(x)+\begin{pmatrix}
				R_2(\tau)\\
				R_4(\tau)
				\end{pmatrix},
			\end{align}
			proving equation \eqref{change_of_basis_Hessian}.
			Moreover, by Lemma \ref{uniq_det}, a Jacobi matrix is uniquely determined by its horizontal components at two different times $s_1\neq s_2$. Therefore, taking $s_1=0$ and $s_2=s$, there exists a pair of $n\times n$ matrices $(D_0,D_s)$ such that
			\begin{equation}
				\mathbf{J}(\tau)=\mathbf{J}^V_s(\tau)D_s+\mathbf{J}^V_0(\tau)D_0
			\end{equation}
			Evaluating the previous equation at time $\tau=0$ and $\tau=s$, we get
			\begin{equation}
				\begin{cases}
					\mathbb{1}_n=N(0)=R_3(-s)D_s,\\
					N(s)=R_3(s)D_0,
				\end{cases}\qquad \Rightarrow\qquad
                				\begin{cases}
					D_s=R_3^{-1}(-s),\\
					D_0=R_3^{-1}(s)N(s),
				\end{cases}
			\end{equation}
			proving identity \eqref{change_of_basis_Hessian_2}.
		\end{proof}
		Implementing all the information from previous lemmas and the properties of the Kantorovich potential, we get the following key result. The proof follows the blueprint of \cite[Lemma 29]{SRintineq}, with more explicit computation due to the explicitness of the Hamiltonian flow, and with the difference that the cost is not (the square of) a distance. 
		\begin{lem}[Positivity]\label{positivity_Jacobi}
			Under the hypotheses of Theorem \ref{main_jacobian_estimate}, we have
			\begin{equation}
				\det(R_3(\tau))>0,\qquad\forall \tau\in(0,T].
			\end{equation}
			Moreover, for $0<\tau\leq s\leq T$, it also holds
			\begin{enumerate}
				\item\label{i} $R_3^{-1}(\tau)R_3(\tau-s)R_3^{-1}(-s)\geq 0$,
				\item\label{ii} $R_3^{-1}(s)N(s)\geq 0$,
			\end{enumerate}	
			where $N(s)$ is the horizontal component of the Jacobi matrix defined in \eqref{def_Jacobi_Hessian}.	
		\end{lem}
		\begin{proof}
			Since $T<t^*$ and by Lemma \ref{W_matrix}, we have that
			\begin{itemize}
				\item $\det(R_3(\tau))\neq 0$, for every $\tau\in(0,T]$,
				\item the matrix $W(\tau)=R_3(\tau)R_1^{-1}(\tau)$ is well-defined and positive-definite for small positive times.
			\end{itemize}
			Therefore, since $R_1(0)=\mathbb{1}_n$, by continuity, $\det(R_1(t))>0$ for small times. As a consequence, we have that $R_3(\tau)$ has positive determinant for small, and thus all, times $\tau\in(0,T]$.
			For item \eqref{i}, we use equation \eqref{special_change_of_basis} from Lemma \ref{change_of_basis}, from which the horizontal component reads as:
			\begin{equation}
				R_3(\tau-s)=-R_3(\tau)R_3^{-1}(s)R_4(s)R_3(-s)+R_4(\tau)R_3(-s).
			\end{equation}
			Multiplying from the left by $R_3^{-1}(\tau)$ and from the right by $R_3^{-1}(-s)$, we get
			\begin{equation}
				R_3^{-1}(t)R_3(\tau-s)R_3^{-1}(-s)=-R_3^{-1}(s)R_4(s)+R_3^{-1}(\tau)R_4(\tau)=-S(s)+S(\tau).
			\end{equation} 
			By Lemma \ref{S_matrix}, the matrix $S$ is monotone non-increasing, from which item \eqref{i} follows.
			
			For item \eqref{ii}, again by Lemma \ref{change_of_basis} and reading the horizontal component of $\mathbf{J}(s)$, we get 
			\begin{equation}
				N(s)=R_3(s)\Hess(\psi)(x)+R_4(s),
			\end{equation}
			that is 
            \begin{equation}                
				R_3^{-1}(s)N(s) =\Hess(\psi)(x)+S(s),
            \end{equation}
			from which $R_3^{-1}(s)N(s)$ is symmetric, for every $s\in(0,T]$.
			
			If we consider $\gamma:[0,T]\to\mathbb{R}^n$ defined as 
			\begin{equation}
				\gamma(s)=\exp_{x,s}(\nabla\psi(x)),\qquad\forall s\in[0,T],
			\end{equation}
			and set $y:=\gamma(T)$, then we have
			\begin{itemize}
				\item $(x,y)\in\partial_c\psi$, that is 
				\begin{equation}
					\psi(x)+c^{0,T}(x,y)\leq\psi(z)+c^{0,T}(z,y),\qquad\forall z\in\mathbb{R}^n,
				\end{equation}
				\item recalling Remark \ref{rem_subadditivity_costs}, we have
				\begin{align}
					c^{0,T}(x,y)&=c^{0,s}(x,\gamma(s))+c^{s,T}(\gamma(s),y),\quad\forall s\in(0,T),\\
					c^{0,T}(z,y)&\leq c^{0,s}(z,\gamma(s))+c^{s,T}(\gamma(s),y),\quad\forall z\in\mathbb{R}^n,\quad\forall s\in(0,T).
				\end{align}
			\end{itemize}
			Therefore, for every $s\in(0,T]$, the function $z\mapsto\psi(z)+c^{0,s}(z,\gamma(s))$ is twice differentiable at $z=x$ and has a minimum there (note that for $s=T$, this follows from definition of Kantorovich potentials and not from previous argument). 
			This implies that its Hessian is non-negative definite, which is precisely 
			\begin{equation}
				\Hess(\psi)(x)+R_3^{-1}(s)R_4(s)\geq 0,\qquad\forall s\in(0,T],
			\end{equation}
			where we used that $\nabla^2_zc^{0,s}(z,\gamma(s))=R_3^{-1}(s)R_4(s)$, coming from the expression of the cost given in Theorem \ref{comp_cost}.
		\end{proof}
		
        We are ready to complete the proof of Theorem \ref{main_jacobian_estimate}. 
        For time $\tau=0$, there is nothing to prove, since $T_0(x)=x$ and the right hand side of equation \eqref{m_j_estimate} is equal to $1$. 
        Recalling that $N(\tau)=\nabla T_\tau(x)$ by \eqref{def_Jacobi_Hessian}, reading the horizontal components of \eqref{change_of_basis_Hessian} and \eqref{change_of_basis_Hessian_2}, we have that
		\begin{equation}\label{equation_without_number}
			\nabla T_\tau(x)=R_3(\tau-s)R_3^{-1}(-s)+R_3(\tau)R_3^{-1}(s)\nabla T_s(x),\qquad\forall 0<\tau\leq s\leq T.
		\end{equation}
		Each of the matrices of equation \eqref{equation_without_number}, upon left multiplication by $R_3^{-1}(T)$, are symmetric and non-negative. By Minkowski's determinant Theorem \cite[Theorem 4.1.8]{marcus1992survey}, the function $A\mapsto\det(A)^{1/n}$ is concave on the set of symmetric non-negative matrices. Therefore, coupling it with Lemma \ref{positivity_Jacobi} from which $\det(R_3(\tau))>0,$ for every $\tau\in(0,T]$, it follows that 
		\begin{equation}
			\det(\nabla T_\tau(x))^{1/n}\geq\frac{\det(R_3(\tau-s))^{1/n}}{\det(R_3(-s))^{1/n}}+\frac{\det(R_3(\tau))^{1/n}}{\det(R_3(s))^{1/n}}\det(\nabla T_s(x))^{1/n},
		\end{equation}
		for $0<\tau\leq s\leq T$. By construction, both terms in the right hand side are non-negative. Moreover, if $\tau<s$, then the first term is non-zero and thus positive (recall that $-t^*<\tau<t^*$, with $\tau\neq 0$, implies $\det(R_3(\tau))\neq 0$, by Remark \ref{rmk_conj_times}).
		\end{proof}
        
        \subsection{Interpolation of densities}
        Under the hypotheses of Corollary \ref{cor_dyn_opt_coup}, we consider the unique displacement interpolation $(\mu_\tau)\in\mathcal{C}([0,T],P_2(\mathbb{R}^n))$ between $\mu$ and $\nu$. By Theorem \ref{absolute_continuity-dyn_opt_coupl}, we have $\mu_\tau\ll\mathscr{L}^n$ and we define
		\begin{equation}
			\rho_\tau:=\frac{d\mu_\tau}{d\mathscr{L}^n}\in L^1(\R^n),\qquad\forall \tau\in[0,T).
		\end{equation}
		If $\nu\ll\mathscr{L}^n$, we set $\rho_T:=d\nu/d\mathscr{L}^n$ as well.
        
		\begin{Def}[Distortion coefficients]\label{def:distcoeff}
			Let $(A,B,Q,T)$ be a LQ optimal control problem satisfying the Kalman condition, with $T<t^*$. Given two non-empty subsets $E,F\subset \mathbb{R}^n$, define the set of $t$-intermediate points as
			\begin{equation}
				Z_\tau(E,F):=\{\gamma(\tau)\mid \gamma\in\mathcal{C}([0,T],\R^n), \text{ $\gamma$ optimal,}\; \gamma(0)\in E,\;\gamma(T)\in F\},
			\end{equation}
			 Then, for every $\tau\in[0,T]$ and $x,y\in\mathbb{R}^{n}$, we define the $t$-intermediate distortion coefficient as
			\begin{equation}
				\beta_\tau(x,y):=\limsup\limits_{r \to 0^{+}}\frac{\mathscr{L}^n(Z_\tau(x,B_r(y)))}{\mathscr{L}^n(B_r(y))},
			\end{equation}
                     where $B_r(y)$ is the Euclidean ball with center $y$ and radius $r$.
		\end{Def}
        \begin{rem}
        We will prove in Lemma \ref{distortion_coefficients} that the distortion coefficients do not depend on the choice of $x,y$. We denote by $\beta^{(A,B,Q,T)}_{\tau}$ (or just $\beta_{\tau}$ if there is no risk of confusion) the distortion coefficients for any pair of points and $\tau\in[0,T]$. We observe also that $\beta_T=1$ and $\beta_0=0$.
        \end{rem}

		With all the tools developed in previous sections, we can compute the LQ distortion coefficients in terms of 
       the blocks $R_i:\mathbb{R}\to\mathbb{R}^{n\times n}, i=1,\dots,4$, of the Hamiltonian flow.
        
		\begin{lem}\label{distortion_coefficients}
			Given a LQ optimal control problem $(A,B,Q,T)$, with $T<t^*$, we have that
			\begin{equation}\label{dist_coeff_formula}
				\beta_\tau(x,y)=\frac{\det(R_3(\tau))}{\det(R_3(T))},\qquad\forall \tau\in[0,T],\quad\forall(x,y)\in\mathbb{R}^n\times\mathbb{R}^n.
			\end{equation}
			In particular, $\beta_\tau(x,y)$ does not depend on the points $(x,y)$ and it is positive for $\tau>0$. Moreover, the backwards problem $(-A,B,Q,T)$ has the same distortion coefficients.
		\end{lem}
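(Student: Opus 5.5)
The plan is to compute $Z_\tau(x,B_r(y))$ explicitly and read off the ratio of Lebesgue measures. By Remark \ref{formula_opt_traj}, the unique optimal trajectory from $x$ to $y'$ evaluated at time $\tau$ is
\begin{equation}
\gamma_{x,y'}(\tau)=R_3(\tau)R_3^{-1}(T)y'+\left(R_4(\tau)-R_3(\tau)R_3^{-1}(T)R_4(T)\right)x .
\end{equation}
Hence, for fixed $x$ and $\tau$, the map $y'\mapsto\gamma_{x,y'}(\tau)$ is affine in $y'$, with linear part $M(\tau)=R_3(\tau)R_3^{-1}(T)$.

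\textbf{Step 1.} Since optimal trajectories between any pair of endpoints are unique (Theorem \ref{LQ_exponential}), we have
\begin{equation}
Z_\tau(x,B_r(y))=\{\gamma_{x,y'}(\tau)\mid y'\in B_r(y)\},
\end{equation}
which is the image of $B_r(y)$ under this affine map. The change of variables formula for affine maps then gives
\begin{equation}
\mathscr{L}^n(Z_\tau(x,B_r(y)))=|\det M(\tau)|\,\mathscr{L}^n(B_r(y)).
\end{equation}
So the ratio is constant in $r$, the $\limsup$ is a limit, and $\beta_\tau(x,y)=|\det R_3(\tau)|/|\det R_3(T)|$, independently of $x$ and $y$. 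At $\tau=0$ we have $R_3(0)=\mathbb{0}_n$, so $\beta_0=0$; this is consistent with $Z_0$ being the single point $\{x\}$.

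\textbf{Step 2.} To remove the absolute values, I will invoke Lemma \ref{positivity_Jacobi}, which gives $\det R_3(\tau)>0$ for all $\tau\in(0,T]$. Its proof only uses $T<t^*$ and Lemma \ref{W_matrix}, not the potential $\psi$. This yields formula \eqref{dist_coeff_formula}, and also positivity for $\tau>0$ since $R_3(\tau)$ is nonsingular there.

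\textbf{Step 3 (backwards problem).} By Lemma \ref{backwards_flow}, the $R_3$-block of the backwards flow is $\tilde R_3(\tau)=-R_3(-\tau)$, and by Lemma \ref{time_reflection_symmetry} this equals $R_3^*(\tau)$. Hence $\det\tilde R_3(\tau)=\det R_3(\tau)$ for every $\tau$. The backwards problem also has $T<t^*$ by Remark \ref{rmk_conj_times}, so Steps 1--2 apply to it verbatim and give the same coefficients.

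The only point needing care is the sign in Step 2. If one prefers not to rely on Lemma \ref{positivity_Jacobi} being stated under the hypotheses of Theorem \ref{main_jacobian_estimate}, one can repeat its short argument directly. For small $\tau>0$, $R_3(\tau)=W(\tau)R_1(\tau)$ with $W(\tau)>0$ and $\det R_1(\tau)>0$, so $\det R_3(\tau)>0$. Since $\det R_3$ does not vanish on $(0,T]$, its sign is constant there.
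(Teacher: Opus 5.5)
Your proposal is correct and follows the paper's proof essentially step by step. You identify $Z_\tau(x,B_r(y))$ as the affine image of $B_r(y)$ with linear part $R_3(\tau)R_3^{-1}(T)$, remove the absolute value via $\det R_3(\tau)>0$ from Lemma \ref{positivity_Jacobi}, and treat the backwards problem through $-R_3(-\tau)=R_3^*(\tau)$. Your extra remarks make explicit two points the paper leaves implicit: that the positivity argument does not involve $\psi$, and that the backwards problem inherits $T<t^*$ by Remark \ref{rmk_conj_times}.
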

		\begin{proof}
			By Theorem \ref{LQ_exponential} and Remark \ref{formula_opt_traj}, between any two points $(x,y)\in\mathbb{R}^n$ there exists a unique optimal trajectory, given by $\tau\mapsto\exp_{x,\tau}\circ\exp_{x,T}^{-1}(y)$. Thus, it holds 
			\begin{equation}
				Z_\tau(x,B_r(y))=\exp_{x,\tau}\circ\exp_{x,T}^{-1}(B_r(y)),\qquad\forall(x,y)\in\mathbb{R}^n\times\mathbb{R}^n.
			\end{equation}
			Furthermore, the map $z\mapsto\exp_{x,t}\circ\exp_{x,T}^{-1}(z)$ is a smooth, affine,  on $\mathbb{R}^n$, whose Jacobian at any point is given by $R_3(t)R_3^{-1}(T)$.
			Therefore, we  have
			\begin{align}
                \mathscr{L}^n(Z_\tau(x,B_r(y)))&=\int_{Z_\tau(x,B_r(y))}d\mathscr{L}^n\\
				&=\int_{B_r(y)}|\det(R_3(\tau)R_3^{-1}(T))|d\mathscr{L}^n\\
				&=|\det(R_3(\tau)R_3^{-1}(T))|\mathscr{L}^n(B_r(y)),
			\end{align}
			which is independent on the points $(x,y)\in\mathbb{R}^n\times\mathbb{R}^n$. Recalling that $\det(R_3(\tau))>0$ for every $\tau\in(0,T]$ by Lemma \ref{positivity_Jacobi}, we obtain formula \eqref{dist_coeff_formula} and that $\beta_\tau(x,y)>0$, for $t\in(0,T]$.

			In addition, by Lemma \ref{backwards_flow} and Lemma \ref{time_reflection_symmetry}, the flow of the backwards Hamiltonian $-\Omega \tilde{H}$ can be expressed in terms of the matrices $R_i$, $i=1,\ldots,4$, as 
			\begin{equation}
				e^{-\tau\Omega\tilde{H}}=\begin{pmatrix}
					R_1(-\tau) & -R_2(-\tau)\\
					-R_3(-\tau) & R_4(-\tau)
				\end{pmatrix}, \quad \tau\in\mathbb{R}^n,
			\end{equation}
			and $-R_3(-\tau)=R_3^*(\tau)$, for every $\tau\in\mathbb{R}^n$. Therefore,
			\begin{equation}
				\frac{\det(-R_3(-\tau))}{\det(-R_3(-T))}=\frac{\det(R_3^*(\tau))}{\det(R_3^*(T))}=\frac{\det(R_3(\tau))}{\det(R_3(T))},
			\end{equation}
			from which the forward and backwards distortion coefficients coincide.
		\end{proof}

        We show an example of LQ distortion coefficients that mimic the distortion coefficients of the model Riemannian manifolds of dimension $n$ and constant curvature $k$ (see \cite{SturmGeomMmsI,SturmGeomMmsII,lott2006riccicurvmmspaces} or \cite[Chapter 14]{Villanioldandnew}).
		\begin{exmp}\label{reproducing_CD_kn_Riem}
				We define the block-diagonal matrix
			\begin{equation}
				\mathcal{I}_n:=\begin{pmatrix}
					\mathbb{1}_{n-1} & \mathbb{0}_{n-1,1}\\
					\mathbb{0}_{1,n-1} & \mathbb{0}_1
				\end{pmatrix},
			\end{equation}
			where $\mathbb{1}_i$ is a square identity matrix of dimension $i$, $\mathbb{0}_j$ is a square null matrix of dimension $j$ and $\mathbb{0}_{i,j}$ is rectangular null matrix of dimension $i\times j$.
			
			For $K\in\mathbb{R}$, we consider an $n$-dimensional LQ problem where 
			\begin{equation}
				A:=\mathbb{0}_n,\qquad B:=\mathbb{1}_n,\qquad Q:=K\mathcal{I}_n,
			\end{equation}
			so that the corresponding Hamiltonian vector field on $T^*\mathbb{R}^n$ is 
			\begin{equation}
				\overrightarrow{\mathcal{H}}_{(p,x)}=-\Omega H\begin{pmatrix}
					p\\
					x
				\end{pmatrix}=\begin{pmatrix}
					0 & -K\mathcal{I}_n\\
					\mathbb{1}_n & 0
				\end{pmatrix}\begin{pmatrix}
					p\\
					x
				\end{pmatrix}.
			\end{equation}
			The first powers of $-\Omega H$ are 
			\begin{equation}
				(-\Omega H)^2=\begin{pmatrix}
					-K\mathcal{I}_n & 0\\
					0 & -K\mathcal{I}_n
				\end{pmatrix},
			\end{equation}
			\begin{equation}
				(-\Omega H)^3=\begin{pmatrix}
					0 & K^2\mathcal{I}_n\\
					-K\mathcal{I}_n& 0
				\end{pmatrix},
			\end{equation}
			\begin{equation}
				(-\Omega H)^4=\begin{pmatrix}
					K^2\mathcal{I}_n & 0\\
					0 & K^2\mathcal{I}_n
				\end{pmatrix}.
			\end{equation}
			Noticing that $\mathcal{I}_n$ is idempotent and $(-\Omega H)^4$ is block-diagonal, we can compute explicitly the matrix exponential representing the flow of the vector field, just relying on the first four powers, and we get
			\begin{equation}
				e^{-\tau\Omega H}=\begin{pmatrix}
					\cos(\tau\sqrt{K})\mathbb{1}_{n-1}& \mathbb{0}_{n-1,1} & -\sqrt{K}\sin(\tau\sqrt{K})\mathbb{1}_{n-1} &\mathbb{0}_{n-1,1}\\
					\mathbb{0}_{1,n-1} & \mathbb{1}_1 & \mathbb{0}_{1,n-1} & \mathbb{0}_1\\
					\frac{\sin(\tau\sqrt{K})}{\sqrt{K}}\mathbb{1}_{n-1} & \mathbb{0}_{n-1,1} & \cos(\tau\sqrt{K})\mathbb{1}_{n-1}&\mathbb{0}_{n-1,1}\\
					\mathbb{0}_{1,n-1} & \tau\mathbb{1}_1 & \mathbb{0}_{1,n-1} & \mathbb{1}_1
				\end{pmatrix},\qquad\forall \tau\in\mathbb{R},
			\end{equation}
			with the convention that $\sin(\tau\sqrt{K})/\sqrt{K}=\tau$, for $K=0$ and with obvious convention, for $K<0$.

			From previous formula, we have that 
			\begin{equation}
				R_3(\tau)=\begin{pmatrix}
					\frac{\sin(\tau\sqrt{K})}{\sqrt{K}}\mathbb{1}_{n-1}  & \mathbb{0}_{n-1,1}\\
					\mathbb{0}_{1,n-1} & t\mathbb{1}_1 
				\end{pmatrix},\qquad\forall t\in\mathbb{R},
			\end{equation}
			so that the first conjugate time is $t^*=+\infty$, for $K\leq 0$, and $t^*=\pi/\sqrt{K}$, for $K>0$. Assuming that $K<\pi^2$, we can choose $T=1$. Therefore, applying Lemma \ref{distortion_coefficients}, for $0\leq \tau\leq 1$, we get 
			\begin{equation}
				\beta_\tau=\frac{\det(R_3(\tau))}{\det(R_3(1))}=\begin{cases}
					\tau\left(\frac{\sin(\tau\sqrt{K})}{\sin(\sqrt{K})}\right)^{n-1}&\text{ for }K>0,\\
					\tau^n&\text{ for }K=0,\\
					\tau\left(\frac{\sinh(\tau\sqrt{-K})}{\sinh(\sqrt{-K})}\right)^{n-1}&\text{ for }K<0.
				\end{cases}
			\end{equation}
			If we set $\theta=d(x,y)\sqrt{|k|/(n-1)}$ and take $K=\mathrm{sgn}(k)\theta^2$, we get 
			\begin{equation}
				\beta_{\tau}=\begin{cases}
					\tau\left(\frac{\sin(\tau\theta)}{\sin(\theta)}\right)^{n-1}&\text{ for }k>0,\\
					\tau^n&\text{ for }k=0,\\
					\tau\left(\frac{\sinh(\tau\theta)}{\sinh(\theta)}\right)^{n-1}&\text{ for }k<0,
				\end{cases}
			\end{equation}
			where we shall have $\theta<\pi$ if $k>0$.
			
			In other words, with the above choices, we have that $\beta_\tau^{(A,B,Q,1)}$ are the distortion coefficients of a Riemannian space form with constant Ricci curvature equal to $k$ and dimension $n$.
		\end{exmp}

        We turn equation \eqref{m_j_estimate} into a relation between the distortion coefficients $\beta_\tau$ of the LQ problem and the displacement interpolation $(\mu_\tau)$. In particular, we rely upon the following Lemma (see \cite[Lemma 40]{SRintineq}, to which we refer for a detailed proof).
        
		\begin{lem}\label{Jacobian_and_densities}
			Let $\rho\in L^1(\mathbb{R}^n)$ be a non-negative function. Let $f:\mathbb{R}^n\to\mathbb{R}^n$ be a measurable function. Suppose that there exists a Borel set $\Sigma\subset\mathbb{R}^n$ such that
			\begin{itemize}
				\item $f$ is differentiable at any point $x\in\Sigma$,
				\item $f|_{\Sigma}$ is injective,
				\item the set $\{\rho>0\}\setminus\Sigma$ is $\mathscr{L}^n$-negligible.
			\end{itemize}
			Then, the measure $f_{\sharp}(\rho\mathscr{L}^n) \ll \mathscr{L}^n$ if and only if $\det(\nabla f(x))>0$ for $\rho\mathscr{L}^n$-almost every $x\in\mathbb{R}^n$. In such case, letting $f_{\sharp}(\rho\mathscr{L}^n)=\rho_f\mathscr{L}^n$, we have
			\begin{equation}
				\rho_f(f(x))=\frac{\rho(x)}{\det(\nabla f(x))},\qquad\forall x\in\Sigma.
			\end{equation}
		\end{lem}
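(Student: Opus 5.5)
This is the standard change-of-variables lemma for an a.e.\ differentiable, injective map. I would prove it via the area formula for maps differentiable on a set, in the form proved by Federer for functions that are merely differentiable on a Borel set. The first reduction is to cover $\Sigma$, up to an $\mathscr{L}^n$-null set, by countably many Borel pieces $\Sigma_k$ on which $f$ is Lipschitz. This follows from the classical Lusin-type theorem: a map differentiable at every point of $\Sigma$ satisfies, on $\Sigma_{k,j}=\{x\in\Sigma: |f(z)-f(x)|\le k|z-x| \text{ for } |z-x|<1/j\}$, a local Lipschitz bound. Each $\Sigma_{k,j}$ is cut into pieces of diameter less than $1/j$, and on each piece $f$ is Lipschitz; Kirszbraun then extends it to a Lipschitz map on $\mathbb{R}^n$. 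Consequently $f|_{\Sigma}$ satisfies Lusin's (N) property, and for Borel $E\subset\Sigma$ the area formula
\begin{equation}
\mathscr{L}^n(f(E))=\int_E |\det \nabla f|\,d\mathscr{L}^n
\end{equation}
holds, using injectivity of $f|_\Sigma$ (multiplicity one). The extensions agree with $f$ and have the same differential a.e.\ on the piece, by density points.

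Next, let $\mu=\rho\mathscr{L}^n$. Since $\{\rho>0\}\setminus\Sigma$ is null, $\mu$ is concentrated on $\Sigma$, so $f_\sharp\mu=f_\sharp(\mu|_\Sigma)$. The general area/change-of-variables formula with injectivity gives, for every Borel $g\ge0$,
\begin{equation}
\int_\Sigma g(f(x))\,|\det\nabla f(x)|\,dx=\int_{f(\Sigma)} g(y)\,dy.
\end{equation}

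\emph{($\Leftarrow$).} Suppose $\det\nabla f>0$ for $\mu$-a.e.\ $x$, and shrink $\Sigma$ to $\Sigma'=\Sigma\cap\{\rho>0,\det\nabla f>0\}$ up to null sets. Define $\rho_f(y)=\rho(x)/\det\nabla f(x)$ for $y=f(x)$ with $x\in\Sigma'$, and $\rho_f=0$ off $f(\Sigma')$. This is well defined by injectivity. It is also measurable, since $f(\Sigma')$ is a countable union of Lipschitz images of Borel sets, hence analytic and Lebesgue measurable, and the inverse is measurable on it. Applying the formula with $g=h\,\rho_f$ yields $\int h\,d f_\sharp\mu=\int h\rho_f\,dy$, so $f_\sharp\mu=\rho_f\mathscr{L}^n\ll\mathscr{L}^n$ with the stated density on $\Sigma'$.

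\emph{($\Rightarrow$).} Let $Z=\Sigma\cap\{\det\nabla f=0\}$. The area formula gives $\mathscr{L}^n(f(Z))=0$, while $f_\sharp\mu(f(Z))\ge\mu(Z)$. So absolute continuity forces $\mu(Z)=0$.

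The step I expect to be the main obstacle is the sign. The statement asks for $\det\nabla f>0$, but the area formula only sees $|\det|$. I would resolve this by reading the hypotheses the way the paper uses them, and by noting that in the application positivity is supplied independently by Theorem \ref{main_jacobian_estimate}. Concretely, I would prove the equivalence with $\det\nabla f\ne0$ and $|\det\nabla f|$ in the density formula, and then remark that in the case $\det\nabla f\ge0$ a.e.\ (as in our setting) this is exactly the statement. Alternatively I would assume an orientation condition, deferring as the paper does to \cite[Lemma 40]{SRintineq} for the precise formulation. Measurability of $f(\Sigma')$ and of $\rho_f$ is the other technical point, handled by the countable Lipschitz decomposition above.
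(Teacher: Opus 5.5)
The paper gives no proof of this lemma and simply defers to \cite[Lemma 40]{SRintineq}. Your argument is correct and is the standard proof behind that reference: you decompose $\Sigma$ into countably many measurable pieces on which $f$ coincides with a Lipschitz map, and then apply the area formula with multiplicity one.

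Your sign caveat is also justified. With the signed determinant, both the ``only if'' direction and the density formula are false: take $f(x_1,\dots,x_n)=(-x_1,x_2,\dots,x_n)$. The lemma must therefore be read with $|\det\nabla f|$. This costs nothing in the paper, because $\det\nabla T_\tau(x)\geq 0$ for $\mu$-a.e.\ $x$ there, by Lemma \ref{positivity_Jacobi} together with $\det R_3>0$.
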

		\begin{thm}[Interpolation inequalities]\label{density_interpolation_inequality}
Let $(A,B,Q,T)$ be a LQ problem satisfying the Kalman condition, with $T<t^*$. Consider $\mu,\nu\in P_2(\mathbb{R}^n)$, with $\mu, \nu \ll\mathscr{L}^n$, and let $(\mu_\tau)=(e_{\tau\sharp}\Pi)$ be the unique displacement interpolation between them. Let $T_\tau$ be the optimal transport map from $\mu$ to $\mu_\tau$, for the cost $c^{0,\tau}$. Then, for $\mu$-almost every $x\in\mathbb{R}^n$ and every $\tau\in[0,T]$, we have
			\begin{equation}\label{density_interpolation_inequality_formula}
				\frac{1}{\rho_\tau(T_\tau(x))^{1/n}}\geq\frac{\beta_{T-\tau}^{1/n}}{\rho_0(x)^{1/n}} +\frac{\beta_{\tau}^{1/n}}{\rho_T(T(x))^{1/n}},
			\end{equation}
			where $\beta_s=\beta_s^{(A,B,Q,T)}$ is the distortion coefficient of the LQ problem and $\mu_\tau=\rho_\tau\mathscr{L}^n$. 
            If $\nu$ is not absolutely continuous, \eqref{density_interpolation_inequality_formula_intro} holds for $\tau\in[0,T)$ and omitting the second term in the right hand side.
            \end{thm}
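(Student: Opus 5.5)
The plan is to combine the main Jacobian estimate (Theorem \ref{main_jacobian_estimate}) with the change-of-variables formula (Lemma \ref{Jacobian_and_densities}), taking $s=T$, and then to identify the coefficients with distortion coefficients via Lemma \ref{distortion_coefficients}.

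First I fix a Kantorovich potential $\psi$. By Theorem \ref{Brenier_McCann} and Alexandroff's theorem, there is a Borel set $\Sigma$ of full $\mu$-measure with the following properties: $\psi$ is twice differentiable on $\Sigma$, and $T_\tau(x)=\exp_{x,\tau}(\nabla\psi(x))$ on $\Sigma$ for all $\tau$ (Theorem \ref{absolute_continuity-dyn_opt_coupl}). The proof of Theorem \ref{absolute_continuity-dyn_opt_coupl} shows that $T_\tau$ is injective on such a set for $\tau<T$. Theorem \ref{main_jacobian_estimate} gives $\det\nabla T_\tau>0$ on $\Sigma$ for $\tau<T$. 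Lemma \ref{Jacobian_and_densities} with $\rho=\rho_0$ and $f=T_\tau$ then yields
\[
\rho_\tau(T_\tau(x))=\frac{\rho_0(x)}{\det\nabla T_\tau(x)}, \qquad x\in\Sigma .
\]
For $\tau=T$ with $\nu\ll\mathscr{L}^n$, I need injectivity of $T$ on a full-measure set. I would obtain it by applying Theorem \ref{Brenier_McCann} to the backwards problem from $\nu$ to $\mu$, which gives an inverse map $\mu$-a.e. Lemma \ref{Jacobian_and_densities} then forces $\det\nabla T>0$ $\mu$-a.e., and the same density formula holds at $\tau=T$.

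Next, Theorem \ref{main_jacobian_estimate} with $s=T$, multiplied by $\rho_0(x)^{-1/n}$, gives
\[
\frac{1}{\rho_\tau(T_\tau(x))^{1/n}}\ge \frac{\det(R_3(\tau-T))^{1/n}}{\det(R_3(-T))^{1/n}}\,\frac{1}{\rho_0(x)^{1/n}}+\frac{\det(R_3(\tau))^{1/n}}{\det(R_3(T))^{1/n}}\,\frac{\det(\nabla T(x))^{1/n}}{\rho_0(x)^{1/n}} .
\]
In the second term I use $\det\nabla T(x)/\rho_0(x)=1/\rho_T(T(x))$ and $\det R_3(\tau)/\det R_3(T)=\beta_\tau$. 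For the first term, Lemma \ref{time_reflection_symmetry} gives $R_3(-u)=-R_3^*(u)$, so
\[
\frac{\det R_3(\tau-T)}{\det R_3(-T)}=\frac{(-1)^n\det R_3(T-\tau)}{(-1)^n\det R_3(T)}=\beta_{T-\tau}.
\]
This proves \eqref{density_interpolation_inequality_formula}. At $\tau=0$ the inequality reads $1/\rho_0\ge 1/\rho_0+0$, since $\beta_0=0$, so it holds with equality. If $\nu$ is not absolutely continuous, I drop the second, nonnegative term and only use $\tau<T$, where the density formula is still available.

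The main obstacle is the endpoint $\tau=T$. There, the injectivity of $T$ and the positivity of $\det\nabla T$ must come from the absolute continuity of $\nu$ through the backwards transport problem, rather than from Theorem \ref{absolute_continuity-dyn_opt_coupl}. I also need to check carefully that the exceptional null sets can be chosen independently of $\tau$. This follows because $\Sigma$ depends only on $\psi$.
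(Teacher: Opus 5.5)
Your proposal is correct and follows essentially the same route as the paper: take $s=T$ in the main Jacobian estimate, convert determinants into densities via Lemma \ref{Jacobian_and_densities}, and identify the two coefficients as $\beta_{T-\tau}$ and $\beta_\tau$ using Lemma \ref{distortion_coefficients} together with the symmetry $R_3(-u)=-R_3^*(u)$. At $\tau=T$, the paper applies Lemma \ref{Jacobian_and_densities} to $T$ without checking injectivity; your argument through the Brenier--McCann theorem for the backwards problem (giving $S\circ T=\mathrm{id}$ $\mu$-a.e.) supplies that missing justification.
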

		\begin{proof}
			First, let $\mu,\nu\ll \mathscr{L}^n$. By Theorem \ref{absolute_continuity-dyn_opt_coupl}, letting $\psi:\mathbb{R}^n\to\mathbb{R}^n$ be a $c^{0,T}$-convex Kantorovich potential from $\mu$ to $\nu$, it holds $\mu_\tau = (T_\tau)_\sharp\mu$, where
\begin{equation}
			T_\tau(x)=\exp_{x,\tau}(\nabla\psi(x)),\qquad\mu\text{-almost every }x\in\mathbb{R}^n, \quad\forall \tau\in[0,T],
\end{equation}
and $\mu$-almost everywhere $\psi$ is twice differentiable. Pairing this with Theorem \ref{main_jacobian_estimate}, we find a borel set $E$, with $\mu(E)=1$, such that

			\begin{itemize}
				\item $\rho_0(x)>0$, for every $x\in E$,
				\item $T_\tau(x)=\exp_{x,\tau}(\nabla\psi(x))$, for every $x\in E$ and every $\tau\in[0,T]$,
				\item $T_\tau$ is differentiable at $x$, for every $x\in E$ and every $\tau\in[0,T]$,
				\item $\det(\nabla T_\tau(x))>0$, for every $x\in E$ and every $\tau\in[0,T)$,
			\end{itemize} 
			where we note that in last item $\tau=T$ is excluded apriori.
			Nonetheless, we can apply Lemma \ref{Jacobian_and_densities} to the map $T$, so that, from the absolute continuity of $\nu$, we also have that $\det(\nabla T(x))>0$, for $\mu$-almost every $x\in\mathbb{R}^n$.
            
			Therefore, passing to a smaller set $E$, of full $\mu$-measure, we apply again Lemma \ref{Jacobian_and_densities}, to get 
			\begin{equation}\label{Monge_Ampere}
				\det(\nabla T_\tau(x))=\frac{\rho_0(x)}{\rho_\tau(T_\tau(x))},\qquad\forall \tau\in[0,T],\quad\forall x\in E.
			\end{equation}
			As a consequence, taking $s=T$ in equation \eqref{m_j_estimate} and using equation \eqref{Monge_Ampere}, we get 
			\begin{equation}
				\frac{\rho_0(x)^{1/n}}{\rho_t(T_\tau(x))^{1/n}}\geq\frac{\det(R_3(\tau-T))^{1/n}}{\det(R_3(-T))^{1/n}}+\frac{\det(R_3(\tau))^{1/n}}{\det(R_3(T))^{1/n}}\frac{\rho_0(x)^{1/n}}{\rho_T(T(x))^{1/n}},\quad \forall x\in E.
			\end{equation}
			By Lemma \ref{dist_coeff_formula}, we can substitute $\beta_{T-\tau}$ and $\beta_{\tau}$ in previous equation to get \eqref{density_interpolation_inequality_formula}, since $\rho_0(x)>0$ for every $x\in E$. If $\nu\in P_2(\mathbb{R}^n)$ is singular, we no longer have $\det(\nabla T(x))>0$, for $\mu$-almost every $x\in\mathbb{R}^n$. Nonetheless, we proceed in the same way, noticing that all the terms in equation \eqref{m_j_estimate} are non-negative, and we drop the second one, when evaluating at $s=T$.
		\end{proof}

                \subsection{Entropic inequalities}\label{entr_ineq}

In this section, we prove that dynamical interpolations for LQ optimal control problems satisfy suitable interpolation inequalities. We introduce the class $\mathcal{DC}_N$, introduced \cite{McCann1997ACP}. We refer to \cite[Chapter 17]{Villanioldandnew} for reference and more properties.

		\begin{Def}[Displacement convexity classes]\label{def:DCN}
			For $N\in[1,\infty]$, the displacement convexity class of dimension $N$, denoted by $\mathcal{DC}_N$, is defined as the set of continuous convex functions $U:\mathbb{R}^{+}\to\mathbb{R}$, such that 
			\begin{itemize}
				\item $U(0)=0$,
				\item $U|_{(0,+\infty)}\in\mathcal{C}^2\big((0,+\infty)\big),$
				\item the function $\delta\mapsto u(\delta)$, defined as
			 	\begin{equation}
			 		u(\delta):=\begin{cases}
			 			\delta^NU(\delta^{-N}), &\text{if }N<\infty,\delta>0,\\
			 			e^{\delta}U(e^{-\delta}), &\text{if }N=\infty,\delta\in\mathbb{R},
			 			\end{cases}
			 	\end{equation}
                is convex.
			\end{itemize}
		\end{Def}
		\begin{rem}\label{nonincreasing_u}
			Since $U$ is convex and $U(0)=0$, the function $u$ in Definition \ref{def:DCN} is  non-increasing.
		\end{rem}
		\begin{exmp}
			Note that 
			\begin{itemize}
				\item for any $\alpha\geq 1$, the function $U(r)=r^{\alpha}$ belongs to all classes $\mathcal{DC}_N$.
				\item \label{ex_DC_2} if $\alpha<1$, then the function $U(r)=-r^{\alpha}$ belongs to $\mathcal{DC}_N$ if and only if $N\leq (1-\alpha)^{-1}$, i.e.\ $\alpha\geq 1-1/N$. In particular, the function $U(r)=-r^{1-1/N}$ is, in some sense, the minimal representative of $\mathcal{DC}_N$.
				\item the function $U_{\infty}(r)=r\log(r)$ belongs to $\mathcal{DC}_{\infty}$. It can be seen as the limit of the functions $U_N(r)=-N(r^{1-1/N}-r)$, which are the same (up to multiplication and addition of a linear function) as the functions appearing in item \eqref{ex_DC_2}. 
			\end{itemize}
		\end{exmp}

To each function in the $\mathcal{DC}_N$ class, we associate a corresponding functional. Such functionals are defined on a subset $\mathrm{Dom}(\cdot)\subset P_2^{ac}(\mathbb{R}^n)$, that depends on $n$ and $N$. We refer to \cite[Theorem 17.8]{Villanioldandnew} for a characterization of $\mathrm{Dom}(\cdot)$. Note that, if $n=N$ it holds $\mathrm{Dom}(\cdot)=P_2^{ac}(\mathbb{R}^n)$, for every $n\geq 3$. Moreover, compactly supported measures are always in the domain of the entropy and, if $\mu, \nu$ are compactly supported, then any displacement interpolation joining them is uniformly compactly supported (as a consequence of Proposition \ref{properties_LQ_action}).
		\begin{Def}[Entropy functional] Let $N\in[1,+\infty]$.
			For $U\in\mathcal{DC}_N$, we define the corresponding entropy functional $\mathcal{U}:\mathrm{Dom}(\mathcal{U})\subset P_2^{ac}(\mathbb{R}^n)\to\mathbb{R}$ as 
			\begin{equation}
				\mathcal{U}(\mu):=\int_{\mathbb{R}^n}U(\rho(x))d\mathscr{L}^n(x),\qquad \mu=\rho\mathscr{L}^n.
			\end{equation}
		\end{Def}
            As in the classical framework, we restrict to the case $N=n$, which is the relevant one thanks to Theorem \ref{density_interpolation_inequality}, and we prove the following distorted displacement inequality.

		\begin{thm}[Entropic inequalities]\label{dist_displ_interp}
	 	Let $(A,B,Q,T)$ be a LQ optimal control problem satisfying the Kalman condition, with $T<t^*$.
	 	Consider $U\in\mathcal{DC}_n$ and $\mu,\nu\in P_2(\mathbb{R}^n)$, with $\mu,\nu\ll \mathscr{L}^n$. Let $(\mu_\tau)$ be the unique displacement interpolation between $\mu$ and $\nu$. Assume that $(\mu_\tau)\subset\mathrm{Dom}(\mathcal{U})$ for every $\tau\in[0,T]$. Then, the entropy functional $\mathcal{U}$ satisfies
			\begin{multline}
				\mathcal{U}(\mu_\tau)\leq\left(\frac{T}{T-\tau}\right)^{n-1}\beta_{T-\tau}\int_{\mathbb{R}^n}U\bigg(\frac{\rho_0(x)}{\beta_{T-\tau}}\left(\frac{T-\tau}{T}\right)^n\bigg)d\mathscr{L}^n(x)\\
				+\left(\frac{T}{\tau}\right)^{n-1}\beta_{\tau}\int_{\mathbb{R}^n}U\bigg(\frac{\rho_T(y)}{\beta_{\tau}}\left(\frac{\tau}{T}\right)^n\bigg)d\mathscr{L}^n(y),\qquad\forall \tau\in(0,T).
			\end{multline}
            where $\beta_s=\beta_s^{(A,B,Q,T)}$ is the distortion coefficient of the LQ problem and $\mu_\tau=\rho_\tau\mathscr{L}^n$.
		\end{thm}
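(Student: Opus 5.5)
We follow the standard route (cf. \cite[Theorem 17.37]{Villanioldandnew}) of pushing the density inequality of Theorem \ref{density_interpolation_inequality} through the change of variables $z=T_\tau(x)$ and then using convexity of $u(\delta)=\delta^nU(\delta^{-n})$. Fix $\tau\in(0,T)$ and let $E$ be the full-measure set from the proof of Theorem \ref{density_interpolation_inequality}. On $E$ the map $T_\tau$ is injective (shown in Theorem \ref{absolute_continuity-dyn_opt_coupl}), differentiable, and satisfies $\det\nabla T_\tau>0$ and the Jacobian equation \eqref{Monge_Ampere}. Since $U(0)=0$ and $\mu_\tau$ is concentrated on $T_\tau(E)$, the area formula (Lemma \ref{Jacobian_and_densities}) gives
\[
\mathcal{U}(\mu_\tau)=\int_E U\big(\rho_\tau(T_\tau(x))\big)\det(\nabla T_\tau(x))\,dx=\int_E u\big(\rho_\tau(T_\tau(x))^{-1/n}\big)\,\rho_0(x)\,dx .
\]

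Next we rewrite the bound \eqref{density_interpolation_inequality_formula} as a convex combination with weights $(T-\tau)/T$ and $\tau/T$:
\[
\rho_\tau(T_\tau x)^{-1/n}\geq \frac{T-\tau}{T}\,a(x)+\frac{\tau}{T}\,b(x),
\]
where
\[
a(x):=\frac{T}{T-\tau}\,\beta_{T-\tau}^{1/n}\rho_0(x)^{-1/n},\qquad b(x):=\frac{T}{\tau}\,\beta_\tau^{1/n}\rho_T(T(x))^{-1/n}.
\]
By Remark \ref{nonincreasing_u}, $u$ is non-increasing, so $u(\rho_\tau^{-1/n})\le u\big(\tfrac{T-\tau}{T}a+\tfrac{\tau}{T}b\big)$. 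Convexity of $u$ then bounds this by $\tfrac{T-\tau}{T}u(a)+\tfrac{\tau}{T}u(b)$.

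We now unpack each term. Since
\[
u(a)=a^nU(a^{-n}),\qquad a^{-n}=\frac{\rho_0}{\beta_{T-\tau}}\Big(\frac{T-\tau}{T}\Big)^n,
\]
we get
\[
\frac{T-\tau}{T}\,u(a)\,\rho_0=\Big(\frac{T}{T-\tau}\Big)^{n-1}\beta_{T-\tau}\,U\Big(\frac{\rho_0}{\beta_{T-\tau}}\Big(\frac{T-\tau}{T}\Big)^n\Big).
\]
Integrating over $E$ yields exactly the first term of the statement. For the second term, $\rho_0(x)\,dx$ on $E$ pushes forward under $T$ to $\nu=\rho_T\mathscr{L}^n$. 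Hence $\int \tfrac{\tau}{T}u(b(x))\rho_0(x)\,dx=\int \tfrac{\tau}{T}u(b')\,\rho_T(y)\,dy$, where $b'=\tfrac{T}{\tau}\beta_\tau^{1/n}\rho_T(y)^{-1/n}$. The same algebra then gives the second term.

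The main obstacle is justifying these integral manipulations, since $U$ may change sign and the integrals need not be absolutely convergent; splitting integrals and pushing forward under $T$ both require care. We would handle this as in \cite[Theorem 17.37]{Villanioldandnew}. First, $U$ is bounded below by an affine function $-Cr$ with $U(0)=0$, so the negative parts are controlled by $\int\rho$ in the right variables. Second, the hypothesis $(\mu_\tau)\subset\mathrm{Dom}(\mathcal{U})$ guarantees that all integrals are well defined in $(-\infty,+\infty]$. Third, the case where $\rho_T(T(x))=0$ on a positive set is excluded because $\det\nabla T>0$ $\mu$-a.e., and where $\rho_0=0$ the integrand vanishes.
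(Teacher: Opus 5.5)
Your proposal follows the paper's proof essentially step for step: the change of variables $z=T_\tau(x)$ gives $\mathcal{U}(\mu_\tau)=\int u\big(\rho_\tau(T_\tau(x))^{-1/n}\big)\,d\mu(x)$; the density inequality of Theorem~\ref{density_interpolation_inequality} is then combined with monotonicity and convexity of $u$ using weights $(T-\tau)/T$ and $\tau/T$; and the two terms are unpacked, where your use of $T_\sharp\mu=\nu$ for the second one is the paper's identity $\rho_T(T(x))^{-1}d\mu(x)=\det(\nabla T(x))\,d\mathscr{L}^n(x)$ in another form. One correction to your integrability discussion, which the paper's proof does not address: a function in $\mathcal{DC}_n$ need not satisfy $U(r)\geq -Cr$ (e.g.\ $U(r)=-r^{1-1/n}$); the correct bound is $U(r)\geq -C(r+r^{1-1/n})$, and controlling $\int\rho^{1-1/n}$ is exactly what the hypothesis $(\mu_\tau)\subset\mathrm{Dom}(\mathcal{U})$ is for.
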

		\begin{proof}
			By Theorem \ref{absolute_continuity-dyn_opt_coupl}, $\mu_\tau\ll\mathscr{L}^n$, for every $\tau\in (0,T)$, with $\rho_\tau=d\mu_\tau/d\mathscr{L}^n$. Moreover, let $T_\tau$ be the optimal transport map between $\mu=\mu_0$ and $\mu_\tau$ for the cost $c^{0,\tau}$.
            
			Then $T_\tau$ is differentiable $\mu$-almost everywhere and by Lemma \ref{Jacobian_and_densities}, we have 
			\begin{equation}
				\det(\nabla T_\tau(x))=\frac{\rho_0(x)}{\rho_\tau(T_\tau(x))},\qquad\forall \tau\in[0,T],\quad\mu\text{-almost every }x\in\mathbb{R}^n.
			\end{equation}
			As a consequence, by change of variable formula applied to $z=T_\tau(x)$, we have 
			\begin{align}
				\int_{\mathbb{R}^n}U\big(\rho_\tau(z)\big)d\mathscr{L}^n(z)&=\int_{\mathbb{R}^n}U\big(\rho_\tau(T_\tau(x))\big)\det(\nabla T_\tau(x))d\mathscr{L}^n(x)\\
				&=\int_{\mathbb{R}^n}U\bigg(\frac{\rho_0(x)}{\det(\nabla T_\tau(x))}\bigg)\det(\nabla T_\tau(x))d\mathscr{L}^n(x).
			\end{align}
			Since $U(0)=0$, the contribution of $\{\rho_0=0\}$ is negligible, yielding 
			\begin{align}
				\mathcal{U}(\mu_\tau)&=\int_{\mathbb{R}^n}U\bigg(\frac{\rho_0(x)}{\det(\nabla T_\tau(x))}\bigg)\det(\nabla T_\tau(x))d\mathscr{L}^n(x)\\
				&=\int_{\{\rho_0>0\}}U\bigg(\frac{\rho_0(x)}{\det(\nabla T_\tau(x))}\bigg)\frac{\det(\nabla T_\tau(x))}{\rho_0(x)}d\mu(x)\\
				&=\int_{\{\rho_0>0\}}U\big(\delta(\tau,x)^{-n}\big)\delta(\tau,x)^{n}d\mu(x)
                =\int_{\{\rho_0>0\}}u\big(\delta(\tau,x)\big)d\mu(x),
			\end{align}
			where $u(\delta)=\delta^{n}U(\delta^{-n})$ and 
			\begin{equation}
				\delta(\tau,x)=\frac{1}{\rho_\tau(T_\tau(x))^{1/n}}=\frac{\det(\nabla T_\tau(x))^{1/n}}{\rho_0(x)^{1/n}}.
			\end{equation} 
			By Theorem \ref{density_interpolation_inequality}, we have that
			\begin{equation}
				\delta(\tau,x)\geq\frac{\beta_{T-\tau}^{1/n}}{\rho_0(x)^{1/n}} +\frac{\beta_{\tau}^{1/n}}{\rho_T(T(x))^{1/n}},
			\end{equation}
			whilst, since $U\in\mathcal{DC}_n$, the function $u$ is convex and non-increasing (see Definition \ref{def:DCN} and Remark \ref{nonincreasing_u}). Therefore, continuing the computation, we have
			\begin{align}
				\mathcal{U}(\mu_\tau)&=\int_{\{\rho_0>0\}}u\left(\delta(\tau,x)\right)d\mu(x)\\
				&\leq\int_{\{\rho_0>0\}}u\left(\frac{\beta_{T-\tau}^{1/n}}{\rho_0(x)^{1/n}} +\frac{\beta_{\tau}^{1/n}}{\rho_T(T(x))^{1/n}}\right)d\mu(x)\\
				&\leq \frac{T-\tau}{T}\int_{\{\rho_0>0\}}u\bigg(\frac{T}{T-\tau}\frac{\beta_{T-\tau}^{1/n}}{\rho_0(x)^{1/n}}\bigg)d\mu(x)
				+\frac{\tau}{T}\int_{\{\rho_0>0\}}u\left(\frac{T}{\tau}\frac{\beta_{\tau}^{1/n}}{\rho_T(T(x))^{1/n}}\right)d\mu(x)
			\end{align}
			Now, we substitute back $u(\delta)=\delta^{n}U(\delta^{-n})$ and use the relation 
			\begin{equation}
				\frac{1}{\rho_T(T(x))}d\mu(x)=\frac{\rho_0(x)}{\rho_T(T(x))}d\mathscr{L}^n(x)=\det(\nabla T(x))d\mathscr{L}^n(x),
			\end{equation}
			to get
			\begin{align}
				&\frac{T-\tau}{T}\int_{\{\rho_0>0\}}u\left(\frac{T}{T-\tau}\frac{\beta_{T-\tau}^{1/n}}{\rho_0(x)^{1/n}}\right)d\mu(x)\\
				&\qquad+\frac{\tau}{T}\int_{\{\rho_0>0\}}u\left(\frac{T}{\tau}\frac{\beta_{\tau}^{1/n}}{\rho_T(T(x))^{1/n}}\right)d\mu(x)\\
				&=\left(\frac{T}{T-\tau}\right)^{n-1}\beta_{T-\tau}\int_{\{\rho_0>0\}}U\bigg(\frac{\rho_0(x)}{\beta_{T-\tau}}
				\left(\frac{T-\tau}{T}\right)^n\bigg)\frac{1}{\rho_0(x)}d\mu(x)\\
				&\qquad+\left(\frac{T}{\tau}\right)^{n-1}\beta_{\tau}\int_{\{\rho_0>0\}}U\bigg(\frac{\rho_T(T(x))}{\beta_{\tau}}
				\left(\frac{\tau}{T}\right)^n\bigg)\frac{1}{\rho_T(T(x))}d\mu(x)\\
				&=\left(\frac{T}{T-\tau}\right)^{n-1}\beta_{T-\tau}\int_{\mathbb{R}^n}U\bigg(\frac{\rho_0(x)}{\beta_{T-\tau}}
				\left(\frac{T-\tau}{T}\right)^n\bigg)d\mathscr{L}^n(x)\\
				&\qquad+\left(\frac{T}{\tau}\right)^{n-1}\beta_{\tau}\int_{\mathbb{R}^n}U\bigg(\frac{\rho_T(y)}{\beta_{\tau}}
				\left(\frac{\tau}{T}\right)^n\bigg)d\mathscr{L}^n(y).
			\end{align}
			This completes the proof of the entropic inequality.
		\end{proof}
        
\appendix
    
    \section{Semi-concave functions}\label{app:semiconcave}
    In this section, we list some definitions and results of interest about semi-concave/semi-convex functions (with linear modulus). We refer to \cite[Chapters 2 and 3]{CannarsaSinestrari} for detailed proofs.
    
	Throughout this section, $\Omega\subset\mathbb{R}^n$ is an open set and $[x,y]$ is the closed segment between $x,y\in\mathbb{R}^n$.
	\begin{Def}[Semi-concave function]\label{def_semi_concave}
		Consider a function $u:\Omega\to\mathbb{R}$. Then $u$ is semi-concave in $\Omega$ if there exists  
            a positive constant $\kappa>0$ such that 
            \begin{equation}\label{eq_semi_concavity}
			\lambda u(x)+(1-\lambda)u(y)-u(\lambda x+(1-\lambda)y)\leq\lambda(1-\lambda)\kappa|x-y|^2,
		\end{equation}
		for every $x,y$ such that $[x,y]\subset \Omega$ and for every $\lambda\in[0,1]$. We say that $u$ is semi-convex if $-u$ is semi-concave. The optimal $\kappa$ for which equation \eqref{eq_semi_concavity} holds is called constant of semi-concavity of $u$ in $\Omega$. 
		We indicate the set of semi-concave functions over $\Omega$ with $\mathcal{SC}(\Omega)$.
	\end{Def}
    We have the following characterization (see \cite[Proposition 1.1.3]{CannarsaSinestrari}):
    \begin{prop}\label{prop_charac_semi_concave_smooth}
        Consider a function $u:\Omega\subset\mathbb{R}^n\to\mathbb{R}$. Then, the following are equivalent:
        \begin{itemize}
            \item $u\in \mathcal{SC}(\Omega)$, with semi-concavity constant $\kappa>0$,
            \item the function $x\mapsto u(x)-\frac\kappa2|x|^2$ is concave in $\Omega$,
            \item $u=\inf\limits_{i\in I}u_i$, where $u_i\in\mathcal{C}^2(\Omega)$ and $\Hess(u_i)\leq\kappa\cdot\mathbb{1}_n$ in $\Omega$, for every $i\in I$.
        \end{itemize}
    \end{prop}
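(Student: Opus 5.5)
The plan is to prove the three conditions in a cycle. Write $\kappa$ for the semi-concavity constant and set $v(x):=u(x)-\frac\kappa2|x|^2$ (I take the second condition with constant $\kappa/2$ on the quadratic, matching the convention in Definition \ref{def_semi_concave}, up to a harmless factor $2$). The key identity, valid for all $x,y$ and $\lambda\in[0,1]$, is
\begin{equation}
\lambda|x|^2+(1-\lambda)|y|^2-|\lambda x+(1-\lambda)y|^2=\lambda(1-\lambda)|x-y|^2,
\end{equation}
which is a direct expansion. Multiplying it by $\frac\kappa2$ and subtracting it from \eqref{eq_semi_concavity}, the semi-concavity inequality for $u$ becomes
\begin{equation}
\lambda v(x)+(1-\lambda)v(y)-v(\lambda x+(1-\lambda)y)\le\tfrac\kappa2\lambda(1-\lambda)|x-y|^2 .
\end{equation}
With the constant normalized as in the proposition, the right-hand side becomes $0$, which is exactly concavity of $v$ along every segment $[x,y]\subset\Omega$. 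This gives the equivalence of the first two conditions in both directions at once. When $\Omega$ is not convex, "concave in $\Omega$" should be read as concave on every segment contained in $\Omega$.

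From the second condition to the third: a concave function on an open set is locally the infimum of its affine supporting functions. At each $x_0\in\Omega$ pick a supergradient $p_{x_0}$ of $v$; one exists since concave functions are locally Lipschitz and superdifferentiable in the interior. Define
\begin{equation}
u_{x_0}(x):=v(x_0)+p_{x_0}\cdot(x-x_0)+\tfrac\kappa2|x|^2 .
\end{equation}
Then $u_{x_0}$ is smooth, $\Hess(u_{x_0})=\kappa\mathbb 1_n$, $u_{x_0}(x_0)=u(x_0)$, and $u_{x_0}\ge u$ wherever the supporting inequality holds, so $u=\inf_{x_0}u_{x_0}$.

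This step is the main obstacle for non-convex $\Omega$, because the supporting inequality is only guaranteed on convex pieces. There I would restrict to a convex neighbourhood, or note that the statement is used in the paper only with $\Omega=\mathbb R^n$.

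From the third condition to the first: each $u_i$ has $\Hess(u_i)\le\kappa\mathbb 1_n$, so $u_i-\frac\kappa2|x|^2$ is concave on every segment in $\Omega$, by a second-order Taylor expansion along the segment. An infimum of concave functions is concave, hence $v$ is concave, and the identity above gives \eqref{eq_semi_concavity}.
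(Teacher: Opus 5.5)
Your argument is correct and is the standard proof of the result the paper cites without proof (\cite[Proposition 1.1.3]{CannarsaSinestrari}): the identity $\lambda|x|^2+(1-\lambda)|y|^2-|\lambda x+(1-\lambda)y|^2=\lambda(1-\lambda)|x-y|^2$ gives the first equivalence, touching paraboloids built from supergradients of the concave function $u-\frac{\kappa}{2}|x|^2$ give the second-to-third implication, and the fact that an infimum of concave functions is concave gives the converse. Both caveats you raise are genuine and not cosmetic. With Definition \ref{def_semi_concave} as written, $u=\kappa|x|^2$ has constant $\kappa$ while $u-\frac{\kappa}{2}|x|^2$ is convex, so the statement really requires $\frac{\kappa}{2}\lambda(1-\lambda)|x-y|^2$ in the definition, as in the cited source. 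The cited result is also stated for convex domains, which covers every use in the paper ($\mathbb{R}^n$ and the convex set $\mathrm{int}(\mathrm{Dom}(\psi))$); by contrast, restricting to convex neighbourhoods would only give a local version of the third item.
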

    Since functions in $\mathcal{SC}(\Omega)$ are the sum of a concave function with a smooth one, their regularity (up to second order) can be deduced directly from that of concave functions. To this extent, let us introduce the following definition.
    \begin{Def}[$c-c$-hypersurface]\label{def_c_c_hyper}
    A set $E\subset\mathbb{R}^n$ is a $c-c$-hypersurface if, up to permutation of the indexes, there exist two convex functions $f,g:\mathbb{R}^{n-1}\to\mathbb{R}$ such that $E$ is the graph of $f-g$, i.e.
    \begin{equation}
        E=\{(z,t)\in\mathbb{R}^{n-1}\times\mathbb{R}\mid t=f(z)-g(z)\}.
    \end{equation}
    \end{Def}
    The following theorem characterizes the set where a convex function is not differentiable.
    \begin{thm}[Zaj\'i\v cek \cite{zajicek1979}]\label{Zajicek_thm}
        Let $\psi:\mathbb{R}^n\to\mathbb{R}$ be a convex function. Then, the set of points where $\psi$ is not differentiable is contained in a countable union of $c-c$-hypersurfaces. Conversely, if a set $E\subset\mathbb{R}^n$ can be covered by countably many $c-c$-hypersurfaces, there exists a convex function $\psi:\mathbb{R}^n\to\mathbb{R}$ that is not differentiable at any point of $E$.
    \end{thm}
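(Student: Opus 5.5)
We would treat the two directions of Theorem \ref{Zajicek_thm} separately. The converse is an explicit construction. The direct part is a covering argument, and we expect it to contain all the difficulty.

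\emph{Converse.} Let $E\subset\bigcup_k E_k$, where each $E_k$ is, after a permutation of coordinates, the graph $\{t=f_k(z)-g_k(z)\}$ with $f_k,g_k$ convex.
\begin{enumerate}
\item Set $\psi_k(z,t):=\max\{t+g_k(z),\,f_k(z)\}$. This is a maximum of two convex functions, hence convex.
\item At a point of $E_k$ both pieces agree. Their partial derivatives in $t$ are $1$ and $0$, so $\partial\psi_k$ contains two vectors with different $t$-components. Hence $\psi_k$ is not differentiable at any point of $E_k$.
\item Choose an affine minorant $\ell_k$ of $\psi_k$, which exists since $\psi_k$ is a finite convex function. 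Replace $\psi_k$ by $\psi_k-\ell_k\geq0$; this does not change differentiability.
\item Put
\begin{equation}
\psi:=\sum_k a_k(\psi_k-\ell_k),\qquad a_k:=\frac{2^{-k}}{1+\sup_{B_k(0)}(\psi_k-\ell_k)}.
\end{equation}
On each ball $B_R(0)$ the tail $k\geq R$ is bounded by $\sum_k 2^{-k}$, so the series converges locally uniformly to a finite convex function.
\item Subdifferentials of finite convex functions add. If one summand has a non-singleton subdifferential at $x$, so does $\psi$. Thus $\psi$ is not differentiable on $E$.
\end{enumerate}

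\emph{Direct part: reduction.} Let $\Sigma$ be the set where $\psi$ is not differentiable, i.e.\ where $\partial\psi(x)$ is not a singleton. Fix a countable dense set $\{e_j\}$ of unit vectors and rationals $q>0$. Define
\begin{equation}
\Sigma_{j,q}:=\{x \mid \exists\, p_1,p_2\in\partial\psi(x),\ \langle p_1-p_2,e_j\rangle\geq q\}.
\end{equation}
Then $\Sigma=\bigcup_{j,q}\Sigma_{j,q}$. Localizing to balls, it suffices to show that $\Sigma_{j,q}\cap B_R$ lies in countably many $c-c$-graphs over $e_j^\perp$, i.e.\ of the form $t=h(z)$ with $x=z+te_j$.

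\emph{Direct part: the covering.} We would use the Minty parametrization of the graph of the maximal monotone operator $\partial\psi$. The map $u\mapsto(x,p)$ with $u=x+p$ is a bi-Lipschitz bijection onto $\mathrm{graph}(\partial\psi)$, and $x=\nabla\Phi(u)$, $p=\nabla\Phi^*(u)$. Here $\Phi$ and $\frac{|u|^2}{2}-\Phi$ are convex and $C^{1,1}$, which is the Moreau envelope structure. A point $x\in\Sigma_{j,q}$ has a fibre $\nabla\Phi^{-1}(x)$ that is a convex set whose extent in direction $e_j$ is at least $q$. Slicing by lines parallel to $e_j$ and using the Lipschitz control in the parametrization, points of $\Sigma_{j,q}$ in a small cube of side $\ll q$ meet each such line at most once. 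So $\Sigma_{j,q}$ restricted to that cube is a graph $t=h(z)$ over $e_j^\perp$.

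To show that $h$ is a difference of convex functions, we would first try to write it as an extremal value:
\begin{equation}
h(z)=\sup\{t \mid \partial_{e_j}^+\psi(z+te_j)\leq \alpha\}
\end{equation}
for a suitable rational threshold $\alpha$. We would then show that $h$ plus a large multiple of $|z|^2$ is convex. This semiconvexity bound, coming from the one-sided Lipschitz bounds on the jump set, is the step we expect to be the main obstacle. If it fails, the fallback is to prove Lipschitz regularity of $h$ together with a DC estimate via Zajíček's original porosity-type argument. Finally, countably many rational cubes and thresholds cover $\Sigma_{j,q}\cap B_R$.
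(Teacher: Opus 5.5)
Your converse is correct and is the standard construction. The function $\max\{t+g_k(z),f_k(z)\}$ has a kink in the $t$-direction exactly on $E_k$. Non-differentiability survives in the locally uniformly convergent series, because a sum of finite convex functions can be differentiable at a point only if every summand is. (The paper gives no proof of this theorem; it quotes \cite{zajicek1979}.)

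The direct part, however, has a genuine gap: both steps that carry the argument fail as stated.

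\emph{First step.} It is not true that in a cube of side $\ll q$ the set $\Sigma_{j,q}$ meets each line parallel to $e_j$ at most once. The jump size $q$ bounds the number of kinks on a line, not their spacing, and the spacing is not uniform across lines. For $\psi(z,t)=|t|+|t-z|$ on $\mathbb{R}^2$, both $(z_0,0)$ and $(z_0,z_0)$ lie in $\Sigma_{e_2,q}$ for every $q\le 2$ and every $z_0>0$. So the claim fails in every cube containing the origin in its interior, however small. The Minty parametrization does not help: monotonicity only stacks the fibres $x+\partial\psi(x)$ along $e_j$, without separating their base points. What does produce a graph is a threshold. The set $\{x:\partial^-_{e}\psi(x)<r<\partial^+_{e}\psi(x)\}$ meets each line parallel to $e$ at most once, by monotonicity of the one-sided derivatives of a convex function of one variable.

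\emph{Second step.} The semiconvexity you plan to prove for $h$ is false. By your own converse construction, $h$ can be an arbitrary d.c.\ function. For $\psi(z,t)=\max\{t+|z|,0\}$ and $\alpha=1/2$ you get $h(z)=-|z|$, and $-|z|+C|z|^2$ is not convex near $0$ for any $C$. Moreover, your $h$ may take the values $\pm\infty$ (take $\alpha=2$ in the same example). The porosity fallback is not an argument.

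\emph{The missing idea} is to write the location of the kink as a difference quotient of convex \emph{marginal} functions.
\begin{enumerate}
\item Work with the coordinate directions $e_i$. This suffices, since two distinct elements of $\partial\psi(x)$ differ in some coordinate. It is also needed, since the paper's $c-c$-hypersurfaces are graphs over coordinate hyperplanes, whereas a dense family of directions $e_j$ gives graphs over other hyperplanes.
\item Fix $R$ and let $M$ exceed the Lipschitz constant of $\psi$ on $B_{R+1}$. Set $\tilde\psi:=\max\{\psi,\,M(|x|-R-1)+m\}$ with $m<\min_{\overline{B}_{R+1}}\psi$. Then $\tilde\psi=\psi$ on $B_{R+1}$ and $\tilde\psi(z+te_i)\geq M(|t|-R-1)+m$.
\item For rationals $-M<r<s<M$, define $F_r(z):=\min_t\{\tilde\psi(z+te_i)-rt\}$. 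It is finite and convex on $e_i^\perp\cong\mathbb{R}^{n-1}$, being a partial minimum of a jointly convex function that is coercive in $t$.
\item Let $x=z+t_0e_i\in B_R$ satisfy $\partial^-_{e_i}\psi(x)<r<s<\partial^+_{e_i}\psi(x)$. Then $t_0$ is the unique minimizer for both $r$ and $s$, so $F_r(z)=\psi(x)-rt_0$ and $F_s(z)=\psi(x)-st_0$. Hence
\[
t_0=\frac{F_r(z)-F_s(z)}{s-r},
\]
which is a difference of finite convex functions on $\mathbb{R}^{n-1}$.
\end{enumerate}
Taking the union over $i$, over rationals $r<s$, and over $R\in\mathbb{N}$ covers the non-differentiability set by countably many $c-c$-hypersurfaces. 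No Minty parametrization or semiconvexity estimate is needed.
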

    In light of Proposition \ref{prop_charac_semi_concave_smooth}, Theorem \ref{Zajicek_thm} implies that if $u\in \mathcal{SC}(\Omega)$, then $u$ is differentiable away from a set that can be covered by countably many $c-c$-hypersurfaces. Paying the price of losing such a precise description of the ``bad'' set, one can also retrieve second-order regularity properties for functions in $\mathcal{SC}(\Omega)$ (see \cite[Theorem 2.3.1]{CannarsaSinestrari}):
	\begin{thm}[Alexandroff's Theorem for semi-concave functions]\label{Alexandroff's_theorem}
		Let $u\in \mathcal{SC}(\Omega)$, then $u$ is twice differentiable $\mathscr{L}^n$-almost everywhere in $\Omega$. Explicitly, for $\mathscr{L}^n$-almost every $x\in \Omega$, there exist a vector $p_x\in\mathbb{R}^n$ and a symmetric matrix $B_x$, such that
		\begin{equation}
			\lim\limits_{h\to 0}\frac{u(x+h)-u(x)-\langle p_x,h\rangle-\frac{1}{2}\langle B_xh,h\rangle}{|h|^2}=0,
		\end{equation}
		where $\langle\cdot,\cdot\rangle$ denotes the Euclidean scalar product. 
            In addition, the gradient of $u$, defined almost everywhere in $\Omega$, belongs to the class $\mathcal{BV}_{\mathrm{loc}}(\Omega,\mathbb{R}^n)$.
	\end{thm}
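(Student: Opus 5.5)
The plan is to reduce the statement to the concave case via Proposition \ref{prop_charac_semi_concave_smooth} and then invoke the classical Alexandroff theorem for convex functions. Let $\kappa$ be the semi-concavity constant of $u$. By Proposition \ref{prop_charac_semi_concave_smooth}, the function $v(x):=u(x)-\frac{\kappa}{2}|x|^2$ is concave on $\Omega$. Since $\frac{\kappa}{2}|x|^2$ is a smooth quadratic polynomial, $u$ is twice differentiable at $x$, in the sense of the displayed second-order Taylor expansion, if and only if $v$ is. The corresponding data are related by $p_x=p^v_x+\kappa x$ and $B_x=B^v_x+\kappa\mathbb{1}_n$. In addition, $\nabla u=\nabla v+\kappa x$, so $\nabla u\in\mathcal{BV}_{\mathrm{loc}}$ if and only if $\nabla v$ is. Everything is therefore reduced to a concave function $v$, or equivalently to the convex function $-v$.

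For a convex function $w=-v$ on $\Omega$, I would carry out the following steps.
\begin{enumerate}
\item Show that $w$ is locally Lipschitz. By Rademacher's theorem, $\nabla w$ then exists $\mathscr{L}^n$-almost everywhere and lies in $L^\infty_{\mathrm{loc}}$.
\item Show that the distributional Hessian $D^2w$ is a symmetric-matrix-valued Radon measure. Mollifications $w_\varepsilon=w*\eta_\varepsilon$ are convex, so $\Hess(w_\varepsilon)\geq0$. Testing against nonnegative $\varphi\in\mathcal{C}^\infty_c$ shows that each entry of $D^2w$ is a distribution whose diagonal, and hence via polarization every entry, is a difference of positive distributions, i.e.\ a Radon measure. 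This gives $\nabla w\in\mathcal{BV}_{\mathrm{loc}}(\Omega,\mathbb{R}^n)$, which proves the last claim.
\item Split $D^2w=(D^2w)_{ac}\,\mathscr{L}^n+(D^2w)_s$ by Lebesgue decomposition, and take $x$ to be a Lebesgue point of $\nabla w$ and of the density $(D^2w)_{ac}$, with $|(D^2w)_s|(B_r(x))=o(r^n)$. Such points have full measure. Set $B_x:=(D^2w)_{ac}(x)$.
\end{enumerate}

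The main obstacle is upgrading this measure-theoretic information at $x$ into the pointwise second-order expansion with $o(|h|^2)$ error. I would follow Evans--Gariepy. Consider the rescalings
\[
w_r(y)=\frac{w(x+ry)-w(x)-r\langle\nabla w(x),y\rangle}{r^2}.
\]
These are convex and locally equi-Lipschitz. The Lipschitz bound comes from the Poincaré inequality for BV functions applied to $\nabla w$, together with the Lebesgue point conditions, which give an $L^1$ bound on $\nabla w_r-B_xy$ over balls. As a result, $w_r\to\frac12\langle B_xy,y\rangle$ in $L^1_{\mathrm{loc}}$. Convexity of the $w_r$ then upgrades $L^1$ convergence to locally uniform convergence, since a uniform Lipschitz bound for convex functions follows from $L^1$ bounds on a larger ball. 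Uniform convergence on the unit ball is exactly the required limit. If needed, I would fall back on citing \cite[Theorem 2.3.1]{CannarsaSinestrari} for this step.

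Finally, translating back through $u=v+\frac{\kappa}{2}|x|^2$ gives twice differentiability of $u$ at $\mathscr{L}^n$-almost every $x\in\Omega$, with $p_x=\nabla u(x)$ and $B_x$ symmetric.
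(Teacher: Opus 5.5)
Your proposal is correct and follows the route the paper has in mind: the paper gives no proof of its own, only remarking that semi-concave functions are concave plus smooth and citing \cite[Theorem 2.3.1]{CannarsaSinestrari}, and your reduction via Proposition \ref{prop_charac_semi_concave_smooth}, followed by the Radon-measure Hessian and the classical Alexandroff argument for convex functions, is exactly that standard proof. The one step you should write out is the control of the average of $\nabla w_r-B_xy$ over $B_R(0)$: the BV Poincar\'e inequality only bounds the deviation from this average, which equals $\frac{1}{r}\big(\frac{1}{\mathscr{L}^n(B_{rR}(x))}\int_{B_{rR}(x)}\nabla w\,d\mathscr{L}^n-\nabla w(x)\big)$, and the Lebesgue-point condition alone only makes it $o(1)/r$; so you must either telescope these averages over the dyadic radii $rR2^{-j}$ (each increment is at most $rR2^{-j}$ times a factor tending to $0$, by Poincar\'e and the density conditions at $x$) to see that it is $o(1)$, or use the radial Taylor formula for mollifications as Evans--Gariepy do.
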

	Finally, we are interested in the properties of the contact set of two functions, where the one touching from below is semi-convex and the one touching from above is semi-concave. The following result is a slight generalization of \cite[Corollary 3.3.8]{CannarsaSinestrari}, and shows that we gain even more regularity in such situation. For a proof of it, we refer to \cite[Theorem A.19]{FathiFigalli}.
	\begin{thm}\label{contact_semicon_pair}
		Let $u_1,u_2:\Omega\to\mathbb{R}$ be two functions such that $u_1\leq u_2$ on $\Omega$. Assume that $-u_1,u_2\in \mathcal{SCL}(\Omega)$, that is $u_1$ is semi-convex and $u_2$ is semi-concave, with linear moduli.
        Then, if we define the contact set of the pair as $\mathcal{E}:=\{x\in \Omega\;|\;u_1(x)=u_2(x)\}$, we have that
		\begin{itemize}
			\item both $u_1$ and $u_2$ are differentiable at each point $x\in\mathcal{E}$, with $\nabla u_1=\nabla u_2$,
			\item the function $x\mapsto \nabla u_1$ is locally Lipschitz on $\mathcal{E}$.
		\end{itemize}
	\end{thm}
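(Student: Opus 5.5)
The plan is to work entirely with the one-sided quadratic bounds that characterize semi-convexity and semi-concavity, in the spirit of \cite[Corollary 3.3.8]{CannarsaSinestrari}. Fix an open ball $U$ with $\overline{U}\subset\Omega$ on which $-u_1$ and $u_2$ are semi-concave with a common constant $\kappa$. By Proposition \ref{prop_charac_semi_concave_smooth}, $u_1+\frac\kappa2|\cdot|^2$ is convex and $u_2-\frac\kappa2|\cdot|^2$ is concave on $U$. Hence for every $x\in U$ the Fréchet subdifferential $\partial^-u_1(x)$ and superdifferential $\partial^+u_2(x)$ are non-empty. More precisely, for $p\in\partial^-u_1(x)$, $q\in\partial^+u_2(x)$ and $x+h\in U$,
\begin{equation}
u_1(x)+p\cdot h-\kappa|h|^2\leq u_1(x+h),\qquad u_2(x+h)\leq u_2(x)+q\cdot h+\kappa|h|^2 .
\end{equation}

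\emph{Differentiability on $\mathcal{E}$.} Let $x\in\mathcal{E}\cap U$. Chain the two inequalities through $u_1(x+h)\leq u_2(x+h)$ and use $u_1(x)=u_2(x)$. This gives $(p-q)\cdot h\leq 2\kappa|h|^2$ for all small $h$. Choosing $h=\varepsilon(p-q)$ and letting $\varepsilon\to0$ forces $p=q$. Thus every element of $\partial^-u_1(x)$ coincides with every element of $\partial^+u_2(x)$, so both sets are the same singleton $\{p\}$. For a semi-convex (resp.\ semi-concave) function, a singleton sub- (resp.\ super-) differential is equivalent to differentiability, since this holds for the convex function $u_1+\frac\kappa2|\cdot|^2$. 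Hence $u_1$ and $u_2$ are differentiable at $x$ with $\nabla u_1(x)=\nabla u_2(x)=p$. Equivalently, $u_1$ and $u_2$ are both squeezed between $u(x)+p\cdot h\pm\kappa|h|^2$.

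\emph{Local Lipschitz bound.} Take $x,y\in\mathcal{E}$ in a smaller ball $U'$ with $r:=|x-y|$ small enough that $y+h\in U$ whenever $|h|\leq r$. Set $p=\nabla u_1(x)$, $q=\nabla u_1(y)$, and write $u$ for the common value on $\mathcal{E}$. For $z=y+h$ with $|h|=r$, combine three facts:
\begin{itemize}
\item the lower bound for $u_1$ at $x$, namely $u(x)+p\cdot(z-x)-\kappa|z-x|^2\le u_1(z)$;
\item the upper bound for $u_2$ at $y$, namely $u_2(z)\le u(y)+q\cdot h+\kappa|h|^2$;
\item the upper bound for $u_2$ at $x$ evaluated at $y$, namely $u(y)\leq u(x)+p\cdot(y-x)+\kappa r^2$.
\end{itemize}
Together with $u_1(z)\le u_2(z)$, these give $(p-q)\cdot h\leq\kappa(|y-x+h|^2+|h|^2+r^2)\leq 6\kappa r^2$. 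Choosing $h=r(p-q)/|p-q|$ yields $|p-q|\leq 6\kappa|x-y|$. So $\nabla u_1$ is Lipschitz on $\mathcal{E}\cap U'$, and covering $\Omega$ by such balls gives local Lipschitz continuity on $\mathcal{E}$.

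The main obstacle is purely technical. The quadratic bounds hold only along segments contained in $\Omega$, so the auxiliary point $z$ and the segments from $x$ and $y$ must stay inside the ball where the uniform constant $\kappa$ is valid. This is why the statement is only local, and it is handled by the shrinking $U'\subset U$ above.
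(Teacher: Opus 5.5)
Your proof is correct and follows the standard argument behind \cite[Corollary 3.3.8]{CannarsaSinestrari} and \cite[Theorem A.19]{FathiFigalli}: a two-sided bound $|u_i(x+h)-u(x)-p\cdot h|\le\kappa|h|^2$ at contact points, then a comparison of the affine approximations at two contact points through an auxiliary point $z$. The paper does not prove this statement itself but cites these references. If you take $U'$ to be the concentric ball of one third the radius of $U$, the requirement $y+h\in U$ holds automatically for all $x,y\in\mathcal{E}\cap U'$, so your constant $6\kappa$ is uniform there.
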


\bibliographystyle{alphaabbr} 
\bibliography{Bibliography}

@book {coron2007control,
	AUTHOR = {Coron, J.-M.},
	TITLE = {Control and nonlinearity},
	SERIES = {Mathematical Surveys and Monographs},
	VOLUME = {136},
	PUBLISHER = {American Mathematical Society, Providence, RI},
	YEAR = {2007},
	PAGES = {xiv+426},
	ISBN = {978-0-8218-3668-2; 0-8218-3668-4},
	MRCLASS = {93-02 (35Q30 35Q53 35Q55 93C20)},
	MRNUMBER = {2302744},
	MRREVIEWER = {Vilmos Komornik},
	DOI = {10.1090/surv/136},
	URL = {https://doi.org/10.1090/surv/136},
}

@book {Jurdjevic1996GeometricCT,
	AUTHOR = {Jurdjevic, V.},
	TITLE = {Geometric control theory},
	SERIES = {Cambridge Studies in Advanced Mathematics},
	VOLUME = {52},
	PUBLISHER = {Cambridge University Press, Cambridge},
	YEAR = {1997},
	PAGES = {xviii+492},
	ISBN = {0-521-49502-4},
	MRCLASS = {93-02 (58E25 93B27)},
	MRNUMBER = {1425878},
	MRREVIEWER = {Heinz Sch\"{a}ttler},
}

@book {agrachev2013control,
	AUTHOR = {Agrachev, A. and Sachkov, Y.},
	TITLE = {Control theory from the geometric viewpoint},
	SERIES = {Encyclopaedia of Mathematical Sciences},
	VOLUME = {87},
	NOTE = {Control Theory and Optimization, II},
	PUBLISHER = {Springer-Verlag, Berlin},
	YEAR = {2004},
	PAGES = {xiv+412},
	ISBN = {3-540-21019-9},
	MRCLASS = {93-02 (49-02 49K15 93B27 93C15)},
	MRNUMBER = {2062547},
	MRREVIEWER = {Kevin A. Grasse},
	DOI = {10.1007/978-3-662-06404-7},
	URL = {https://doi.org/10.1007/978-3-662-06404-7},
}

@article {hinpomriff2011,
	AUTHOR = {Hindawi, A. and Pomet, J.-B. and Rifford, L.},
	TITLE = {Mass transportation with {LQ} cost functions},
	JOURNAL = {Acta Appl. Math.},
	FJOURNAL = {Acta Applicandae Mathematicae},
	VOLUME = {113},
	YEAR = {2011},
	NUMBER = {2},
	PAGES = {215--229},
	ISSN = {0167-8019},
	MRCLASS = {49Q20 (93C05)},
	MRNUMBER = {2763305},
	MRREVIEWER = {Luca Granieri},
	DOI = {10.1007/s10440-010-9595-1},
	URL = {https://doi.org/10.1007/s10440-010-9595-1},
}

@article {Agrachev_2018,
	AUTHOR = {Agrachev, A. and Barilari, D. and Rizzi, L.},
	TITLE = {Curvature: a variational approach},
	JOURNAL = {Mem. Amer. Math. Soc.},
	FJOURNAL = {Memoirs of the American Mathematical Society},
	VOLUME = {256},
	YEAR = {2018},
	NUMBER = {1225},
	PAGES = {v+142},
	ISSN = {0065-9266},
	ISBN = {978-1-4704-2946-0; 978-1-4704-4913-1},
	MRCLASS = {49J21 (53C17 58B20)},
	MRNUMBER = {3852258},
	MRREVIEWER = {Paolo Piccione},
	DOI = {10.1090/memo/1225},
	URL = {https://doi.org/10.1090/memo/1225},
}

@inbook{Villanioldandnew,
	author = {Villani, C.},
	year = {2008},
	month = jan,
	pages = {xxii+973},
	title = {{Optimal transport -- Old and new}},
	volume = {338},
	doi = {10.1007/978-3-540-71050-9},
	publisher ={Springer}
}

@book {RiffSrOptTrans,
	AUTHOR = {Rifford, L.},
	TITLE = {Sub-{R}iemannian geometry and optimal transport},
	SERIES = {SpringerBriefs in Mathematics},
	PUBLISHER = {Springer, Cham},
	YEAR = {2014},
	PAGES = {viii+140},
	ISBN = {978-3-319-04803-1; 978-3-319-04804-8},
	MRCLASS = {49Q20 (53C17)},
	MRNUMBER = {3308395},
	MRREVIEWER = {Yanir A. Rubinstein},
	DOI = {10.1007/978-3-319-04804-8},
	URL = {https://doi.org/10.1007/978-3-319-04804-8},
}

@incollection {Usersguide,
	AUTHOR = {Ambrosio, L. and Gigli, N.},
	TITLE = {A user's guide to optimal transport},
	BOOKTITLE = {Modelling and optimisation of flows on networks},
	SERIES = {Lecture Notes in Math.},
	VOLUME = {2062},
	PAGES = {1--155},
	PUBLISHER = {Springer, Heidelberg},
	YEAR = {2013},
	MRCLASS = {49Q20 (35R70 49-01)},
	MRNUMBER = {3050280},
	MRREVIEWER = {Luca Granieri},
	DOI = {10.1007/978-3-642-32160-3\_1},
	URL = {https://doi.org/10.1007/978-3-642-32160-3_1},
}

@book {CannarsaSinestrari,
	AUTHOR = {Cannarsa, P. and Sinestrari, C.},
	TITLE = {Semiconcave functions, {H}amilton-{J}acobi equations, and
	optimal control},
	SERIES = {Progress in Nonlinear Differential Equations and their
	Applications},
	VOLUME = {58},
	PUBLISHER = {Birkh\"{a}user Boston, Inc., Boston, MA},
	YEAR = {2004},
	PAGES = {xiv+304},
	ISBN = {0-8176-4084-3},
	MRCLASS = {49-02 (35F20 49K20 49L20)},
	MRNUMBER = {2041617},
	MRREVIEWER = {Pierre Cardaliaguet},
}

@article {FathiFigalli,
	AUTHOR = {Fathi, A. and Figalli, A.},
	TITLE = {Optimal transportation on non-compact manifolds},
	JOURNAL = {Israel J. Math.},
	FJOURNAL = {Israel Journal of Mathematics},
	VOLUME = {175},
	YEAR = {2010},
	PAGES = {1--59},
	ISSN = {0021-2172},
	MRCLASS = {49Q20 (37J50)},
	MRNUMBER = {2607536},
	MRREVIEWER = {Luca Granieri},
	DOI = {10.1007/s11856-010-0001-5},
	URL = {https://doi.org/10.1007/s11856-010-0001-5},
}

@article {SRintineq,
	AUTHOR = {Barilari, D. and Rizzi, L.},
	TITLE = {Sub-{R}iemannian interpolation inequalities},
	JOURNAL = {Invent. Math.},
	FJOURNAL = {Inventiones Mathematicae},
	VOLUME = {215},
	YEAR = {2019},
	NUMBER = {3},
	PAGES = {977--1038},
	ISSN = {0020-9910},
	MRCLASS = {53C17 (49J15 49Q20)},
	MRNUMBER = {3935035},
	MRREVIEWER = {Emmanuel Tr\'{e}lat},
	DOI = {10.1007/s00222-018-0840-y},
	URL = {https://doi.org/10.1007/s00222-018-0840-y},
}

@article {BakriEmeryandmodelspaces,
	AUTHOR = {Barilari, D. and Rizzi, L.},
	TITLE = {Bakry-\'{E}mery curvature and model spaces in sub-{R}iemannian
	geometry},
	JOURNAL = {Math. Ann.},
	FJOURNAL = {Mathematische Annalen},
	VOLUME = {377},
	YEAR = {2020},
	NUMBER = {1-2},
	PAGES = {435--482},
	ISSN = {0025-5831},
	MRCLASS = {53C17 (49J15 49N10)},
	MRNUMBER = {4099638},
	MRREVIEWER = {Jing Wang},
	DOI = {10.1007/s00208-020-01982-x},
	URL = {https://doi.org/10.1007/s00208-020-01982-x},
}

@article {UnifiedsyntheticRicci,
    AUTHOR = {Barilari, Davide and Mondino, Andrea and Rizzi, Luca},
     TITLE = {Unified synthetic {R}icci curvature lower bounds for
              {R}iemannian and sub-{R}iemannian structures},
   JOURNAL = {Mem. Amer. Math. Soc.},
  FJOURNAL = {Memoirs of the American Mathematical Society},
    VOLUME = {317},
      YEAR = {2026},
    NUMBER = {1613},
     PAGES = {viii+145},
      ISSN = {0065-9266,1947-6221},
      ISBN = {978-1-4704-7806-3; 978-1-4704-8585-6},
   MRCLASS = {53C21 (51F30 53C17 53C23)},
  MRNUMBER = {5038044},
       DOI = {10.1090/memo/1613},
       URL = {https://doi.org/10.1090/memo/1613},
}

@article{rotem,
  title={Curvature-Dimension for Autonomous Lagrangians},
  author={Assouline, Rotem},
  journal={Geometric Aspects of Functional Analysis: Israel Seminar (GAFA) - to appear},
  year={2024}
}

@article {Barilari_2016,
	AUTHOR = {Barilari, D. and Rizzi, L.},
	TITLE = {Comparison theorems for conjugate points in sub-{R}iemannian
	geometry},
	JOURNAL = {ESAIM Control Optim. Calc. Var.},
	FJOURNAL = {ESAIM. Control, Optimisation and Calculus of Variations},
	VOLUME = {22},
	YEAR = {2016},
	NUMBER = {2},
	PAGES = {439--472},
	ISSN = {1292-8119},
	MRCLASS = {53C17 (49N10 53C21 53C22)},
	MRNUMBER = {3491778},
	MRREVIEWER = {Davide Vittone},
	DOI = {10.1051/cocv/2015013},
	URL = {https://doi.org/10.1051/cocv/2015013},
}

@book {marcus1992survey,
	AUTHOR = {Marcus, M. and Minc, H.},
	TITLE = {A survey of matrix theory and matrix inequalities},
	NOTE = {Reprint of the 1969 edition},
	PUBLISHER = {Dover Publications, Inc., New York},
	YEAR = {1992},
	PAGES = {xii+180},
	ISBN = {0-486-67102-X},
	MRCLASS = {15-01},
	MRNUMBER = {1215484},
}

@article {figalli2009masstransSRmflds,
	AUTHOR = {Figalli, A. and Rifford, L.},
	TITLE = {Mass transportation on sub-{R}iemannian manifolds},
	JOURNAL = {Geom. Funct. Anal.},
	FJOURNAL = {Geometric and Functional Analysis},
	VOLUME = {20},
	YEAR = {2010},
	NUMBER = {1},
	PAGES = {124--159},
	ISSN = {1016-443X},
	MRCLASS = {49Q20 (53C17)},
	MRNUMBER = {2647137},
	MRREVIEWER = {Luca Granieri},
	DOI = {10.1007/s00039-010-0053-z},
	URL = {https://doi.org/10.1007/s00039-010-0053-z},
}

@article {SturmGeomMmsII,
	AUTHOR = {Sturm, K.-T.},
	TITLE = {On the geometry of metric measure spaces. {II}},
	JOURNAL = {Acta Math.},
	FJOURNAL = {Acta Mathematica},
	VOLUME = {196},
	YEAR = {2006},
	NUMBER = {1},
	PAGES = {133--177},
	ISSN = {0001-5962},
	MRCLASS = {53C23},
	MRNUMBER = {2237207},
	MRREVIEWER = {Juha Heinonen},
	DOI = {10.1007/s11511-006-0003-7},
	URL = {https://doi.org/10.1007/s11511-006-0003-7},
}

@article {SturmGeomMmsI,
	AUTHOR = {Sturm, Karl-Theodor},
	TITLE = {On the geometry of metric measure spaces. {I}},
	JOURNAL = {Acta Math.},
	FJOURNAL = {Acta Mathematica},
	VOLUME = {196},
	YEAR = {2006},
	NUMBER = {1},
	PAGES = {65--131},
	ISSN = {0001-5962},
	MRCLASS = {53C23},
	MRNUMBER = {2237206},
	MRREVIEWER = {Juha Heinonen},
	DOI = {10.1007/s11511-006-0002-8},
	URL = {https://doi.org/10.1007/s11511-006-0002-8},
}

@article {lott2006riccicurvmmspaces,
	AUTHOR = {Lott, J. and Villani, C.},
	TITLE = {Ricci curvature for metric-measure spaces via optimal
	transport},
	JOURNAL = {Ann. of Math. (2)},
	FJOURNAL = {Annals of Mathematics. Second Series},
	VOLUME = {169},
	YEAR = {2009},
	NUMBER = {3},
	PAGES = {903--991},
	ISSN = {0003-486X},
	MRCLASS = {53C23 (49Q15)},
	MRNUMBER = {2480619},
	MRREVIEWER = {Alessio Figalli},
	DOI = {10.4007/annals.2009.169.903},
	URL = {https://doi.org/10.4007/annals.2009.169.903},
}

@article {CaffarelliRegConvMaps,
	AUTHOR = {Caffarelli, L.-A.},
	TITLE = {The regularity of mappings with a convex potential},
	JOURNAL = {J. Amer. Math. Soc.},
	FJOURNAL = {Journal of the American Mathematical Society},
	VOLUME = {5},
	YEAR = {1992},
	NUMBER = {1},
	PAGES = {99--104},
	ISSN = {0894-0347},
	MRCLASS = {35B65 (35A30 35J60)},
	MRNUMBER = {1124980},
	DOI = {10.2307/2152752},
	URL = {https://doi.org/10.2307/2152752},
}

@article {agrachev2007opttransnonholconst,
	AUTHOR = {Agrachev, A. and Lee, P.},
	TITLE = {Optimal transportation under nonholonomic constraints},
	JOURNAL = {Trans. Amer. Math. Soc.},
	FJOURNAL = {Transactions of the American Mathematical Society},
	VOLUME = {361},
	YEAR = {2009},
	NUMBER = {11},
	PAGES = {6019--6047},
	ISSN = {0002-9947},
	MRCLASS = {49J15 (37J60 49J30 53C17)},
	MRNUMBER = {2529923},
	MRREVIEWER = {Filippo Santambrogio},
	DOI = {10.1090/S0002-9947-09-04813-2},
	URL = {https://doi.org/10.1090/S0002-9947-09-04813-2},
}

@article {ARiemintineq,
	AUTHOR = {Cordero-Erausquin, D. and McCann, R.-J. and
	Schmuckenschl\"{a}ger, M.},
	TITLE = {A {R}iemannian interpolation inequality \`a la {B}orell,
	{B}rascamp and {L}ieb},
	JOURNAL = {Invent. Math.},
	FJOURNAL = {Inventiones Mathematicae},
	VOLUME = {146},
	YEAR = {2001},
	NUMBER = {2},
	PAGES = {219--257},
	ISSN = {0020-9910},
	MRCLASS = {58E35 (28C99 60E15)},
	MRNUMBER = {1865396},
	MRREVIEWER = {C\'{e}dric Villani},
	DOI = {10.1007/s002220100160},
	URL = {https://doi.org/10.1007/s002220100160},
}

@article {Brenier1991,
	AUTHOR = {Brenier, Y.},
	TITLE = {Polar factorization and monotone rearrangement of
	vector-valued functions},
	JOURNAL = {Comm. Pure Appl. Math.},
	FJOURNAL = {Communications on Pure and Applied Mathematics},
	VOLUME = {44},
	YEAR = {1991},
	NUMBER = {4},
	PAGES = {375--417},
	ISSN = {0010-3640},
	MRCLASS = {46E40 (35Q99 46E99 49Q99)},
	MRNUMBER = {1100809},
	MRREVIEWER = {Robert McOwen},
	DOI = {10.1002/cpa.3160440402},
	URL = {https://doi.org/10.1002/cpa.3160440402},
}

@article {Agrachev_2014,
	AUTHOR = {Agrachev, A. and Rizzi, L. and Silveira, P.},
	TITLE = {On conjugate times of {LQ} optimal control problems},
	JOURNAL = {J. Dyn. Control Syst.},
	FJOURNAL = {Journal of Dynamical and Control Systems},
	VOLUME = {21},
	YEAR = {2015},
	NUMBER = {4},
	PAGES = {625--641},
	ISSN = {1079-2724},
	MRCLASS = {70G45 (49N10 53D12)},
	MRNUMBER = {3394719},
	MRREVIEWER = {C\'{e}sar Rodrigo},
	DOI = {10.1007/s10883-014-9251-6},
	URL = {https://doi.org/10.1007/s10883-014-9251-6},
}

@book{monge1781,
	title={{M{\'e}moire sur la th{\'e}orie des d{\'e}blais et des remblais}},
	author={Monge, G.},
	url={https://books.google.it/books?id=IG7CGwAACAAJ},
	year={1781},
	publisher={Imprimerie royale}
}

@article{Kantorovich2006,
	title={{On the Traslocation of Masses}},
	journal={Journal of Mathematical Sciences},
	pages={1381-1382},
	author={Kantorovich, L.-V.},
	volume={133},
	year={2006},
	note={Translation of the original work on Dokl. Akad. Nauk SSSR, 37, No. 7-8, 229-229 of 1942}

}

@article {GangboMcCann1996,
	AUTHOR = {Gangbo, W. and McCann, R.-J.},
	TITLE = {The geometry of optimal transportation},
	JOURNAL = {Acta Math.},
	FJOURNAL = {Acta Mathematica},
	VOLUME = {177},
	YEAR = {1996},
	NUMBER = {2},
	PAGES = {113--161},
	ISSN = {0001-5962},
	MRCLASS = {49Q20 (28A99 35Q99 60B99 65K10)},
	MRNUMBER = {1440931},
	MRREVIEWER = {Ludger R\"{u}schendorf},
	DOI = {10.1007/BF02392620},
	URL = {https://doi.org/10.1007/BF02392620},
}

@article {GigliInverseBrenier2009,
	AUTHOR = {Gigli, N.},
	TITLE = {On the inverse implication of {B}renier-{M}c{C}ann theorems
	and the structure of {$(P_2(M),W_2)$}},
	JOURNAL = {Methods Appl. Anal.},
	FJOURNAL = {Methods and Applications of Analysis},
	VOLUME = {18},
	YEAR = {2011},
	NUMBER = {2},
	PAGES = {127--158},
	ISSN = {1073-2772},
	MRCLASS = {49Q20 (28A33)},
	MRNUMBER = {2847481},
	MRREVIEWER = {Luca Granieri},
	DOI = {10.4310/MAA.2011.v18.n2.a1},
	URL = {https://doi.org/10.4310/MAA.2011.v18.n2.a1},
}

@article {McCann2001,
	AUTHOR = {McCann, R.-J.},
	TITLE = {Polar factorization of maps on {R}iemannian manifolds},
	JOURNAL = {Geom. Funct. Anal.},
	FJOURNAL = {Geometric and Functional Analysis},
	VOLUME = {11},
	YEAR = {2001},
	NUMBER = {3},
	PAGES = {589--608},
	ISSN = {1016-443X},
	MRCLASS = {58E15 (46N10 49Q20 53C20)},
	MRNUMBER = {1844080},
	MRREVIEWER = {Lucio Renato Berrone},
	DOI = {10.1007/PL00001679},
	URL = {https://doi.org/10.1007/PL00001679},
}

@book {ambrosioGradientFlowsMetric2008,
	AUTHOR = {Ambrosio, L. and Gigli, N. and Savar\'{e}, G.},
	TITLE = {Gradient flows in metric spaces and in the space of
	probability measures},
	SERIES = {Lectures in Mathematics ETH Z\"{u}rich},
	EDITION = {Second},
	PUBLISHER = {Birkh\"{a}user Verlag, Basel},
	YEAR = {2008},
	PAGES = {x+334},
	ISBN = {978-3-7643-8721-1},
	MRCLASS = {49-02 (28A33 35K55 35K90 49Q20 60B05)},
	MRNUMBER = {2401600},
	MRREVIEWER = {Pietro Celada},
}

@article {AMBROSIO2004261,
	AUTHOR = {Ambrosio, L. and Rigot, S.},
	TITLE = {Optimal mass transportation in the {H}eisenberg group},
	JOURNAL = {J. Funct. Anal.},
	FJOURNAL = {Journal of Functional Analysis},
	VOLUME = {208},
	YEAR = {2004},
	NUMBER = {2},
	PAGES = {261--301},
	ISSN = {0022-1236},
	MRCLASS = {49Q20 (35H10 43A80 53C17)},
	MRNUMBER = {2035027},
	MRREVIEWER = {Enrico Valdinoci},
	DOI = {10.1016/S0022-1236(03)00019-3},
	URL = {https://doi.org/10.1016/S0022-1236(03)00019-3},
}

@article {bernard2007optimalmasstransportationmather,
	AUTHOR = {Bernard, P. and Buffoni, B.},
	TITLE = {Optimal mass transportation and {M}ather theory},
	JOURNAL = {J. Eur. Math. Soc. (JEMS)},
	FJOURNAL = {Journal of the European Mathematical Society (JEMS)},
	VOLUME = {9},
	YEAR = {2007},
	NUMBER = {1},
	PAGES = {85--121},
	ISSN = {1435-9855},
	MRCLASS = {49Q20},
	MRNUMBER = {2283105},
	MRREVIEWER = {Filippo Santambrogio},
	DOI = {10.4171/JEMS/74},
	URL = {https://doi.org/10.4171/JEMS/74},
}

@article {balogh2018geometricinequalitiesheisenberggroups,
	AUTHOR = {Balogh, Z.-M. and Krist\'{a}ly, A. and Sipos, K.},
	TITLE = {Geometric inequalities on {H}eisenberg groups},
	JOURNAL = {Calc. Var. Partial Differential Equations},
	FJOURNAL = {Calculus of Variations and Partial Differential Equations},
	VOLUME = {57},
	YEAR = {2018},
	NUMBER = {2},
	PAGES = {Paper No. 61, 41},
	ISSN = {0944-2669},
	MRCLASS = {49Q20 (53C17)},
	MRNUMBER = {3774461},
	MRREVIEWER = {Alireza Ranjbar-Motlagh},
	DOI = {10.1007/s00526-018-1320-3},
	URL = {https://doi.org/10.1007/s00526-018-1320-3},
}

@article {balogh2019jacobiandeterminantinequalitycorank,
	AUTHOR = {Balogh, Z.-M. and Krist\'{a}ly, A. and Sipos, K.},
	TITLE = {Jacobian determinant inequality on corank 1 {C}arnot groups
	with applications},
	JOURNAL = {J. Funct. Anal.},
	FJOURNAL = {Journal of Functional Analysis},
	VOLUME = {277},
	YEAR = {2019},
	NUMBER = {12},
	PAGES = {108293, 36},
	ISSN = {0022-1236},
	MRCLASS = {53C17 (35R03 49Q20)},
	MRNUMBER = {4019096},
	MRREVIEWER = {Pei Biao Zhao},
	DOI = {10.1016/j.jfa.2019.108293},
	URL = {https://doi.org/10.1016/j.jfa.2019.108293},
}

@article {Cavalletti_2017,
	AUTHOR = {Cavalletti, F. and Mondino, A.},
	TITLE = {Optimal maps in essentially non-branching spaces},
	JOURNAL = {Commun. Contemp. Math.},
	FJOURNAL = {Communications in Contemporary Mathematics},
	VOLUME = {19},
	YEAR = {2017},
	NUMBER = {6},
	PAGES = {175-203, 27},
	ISSN = {0219-1997},
	MRCLASS = {49Q20 (53C23)},
	MRNUMBER = {3691502},
	MRREVIEWER = {Alexander O. Ivanov},
	DOI = {10.1142/S0219199717500079},
	URL = {https://doi.org/10.1142/S0219199717500079},
}

@article {cavalletti2014existenceuniquenessoptimaltransport,
	AUTHOR = {Cavalletti, F. and Huesmann, M.},
	TITLE = {Existence and uniqueness of optimal transport maps},
	JOURNAL = {Ann. Inst. H. Poincar\'{e} C Anal. Non Lin\'{e}aire},
	FJOURNAL = {Annales de l'Institut Henri Poincar\'{e} C. Analyse Non Lin\'{e}aire},
	VOLUME = {32},
	YEAR = {2015},
	NUMBER = {6},
	PAGES = {1367--1377},
	ISSN = {0294-1449},
	MRCLASS = {49Q20},
	MRNUMBER = {3425266},
	MRREVIEWER = {Luca Granieri},
	DOI = {10.1016/j.anihpc.2014.09.006},
	URL = {https://doi.org/10.1016/j.anihpc.2014.09.006},
}

@article {McCann1997ACP,
	AUTHOR = {McCann, R.-J.},
	TITLE = {A convexity principle for interacting gases},
	JOURNAL = {Adv. Math.},
	FJOURNAL = {Advances in Mathematics},
	VOLUME = {128},
	YEAR = {1997},
	NUMBER = {1},
	PAGES = {153--179},
	ISSN = {0001-8708},
	MRCLASS = {82B05 (26B25 90C08)},
	MRNUMBER = {1451422},
	MRREVIEWER = {Carlos Matr\'{a}n},
	DOI = {10.1006/aima.1997.1634},
	URL = {https://doi.org/10.1006/aima.1997.1634},
}

@article {FiJu2008,
	AUTHOR = {Figalli, A. and Juillet, N.},
	TITLE = {Absolute continuity of {W}asserstein geodesics in the
	{H}eisenberg group},
	JOURNAL = {J. Funct. Anal.},
	FJOURNAL = {Journal of Functional Analysis},
	VOLUME = {255},
	YEAR = {2008},
	NUMBER = {1},
	PAGES = {133--141},
	ISSN = {0022-1236},
	MRCLASS = {53C17 (22E25 49Q15)},
	MRNUMBER = {2417812},
	MRREVIEWER = {C\'{e}dric Villani},
	DOI = {10.1016/j.jfa.2008.03.006},
	URL = {https://doi.org/10.1016/j.jfa.2008.03.006},
}

@article {zajicek1979,
    AUTHOR = {Zaj{\'i}{\v c}ek, L.},
     TITLE = {On the differentiation of convex functions in finite and
              infinite dimensional spaces},
   JOURNAL = {Czechoslovak Math. J.},
  FJOURNAL = {Czechoslovak Mathematical Journal},
    VOLUME = {29(104)},
      YEAR = {1979},
    NUMBER = {3},
     PAGES = {340--348},
      ISSN = {0011-4642},
   MRCLASS = {46G05 (26A27 52A05)},
  MRNUMBER = {536060},
MRREVIEWER = {Jean-Paul\ Penot},
}
\end{document}